\documentclass{article}
\usepackage[utf8]{inputenc}
\usepackage[T1]{fontenc}
\usepackage{lmodern}
\usepackage[a4paper]{geometry}
\usepackage[english]{babel}

\usepackage{fullpage}

\usepackage{amssymb,amsmath,mathtools,amsthm}
\usepackage{dsfont,color,psfrag}

\usepackage[all]{xy}

% % \usepackage{ucs}
% % \usepackage[utf8x]{inputenc}
% % \usepackage{wasysym}
% % \usepackage{aeguill}

\newtheorem{theorem}{Theorem}
\newtheorem{prop-f}[theoreme]{Proposition}
\newtheorem{prop}[theorem]{Proposition}

\newtheorem{lemma}[theorem]{Lemma}
\newtheorem{definition}[theorem]{Definition}
\newtheorem{question}[theorem]{Question}
\newtheorem{claim}[theorem]{Claim}

\newcommand{\E}{\mathbb{E}}

\newcommand{\N}{\mathbb{N}}

\renewcommand{\P}{\mathbb{P}}
\newcommand{\Q}{\mathbb{Q}}
\newcommand{\R}{\mathbb{R}}

\newcommand{\cA}{\mathcal{A}}
\newcommand{\cB}{\mathcal{B}}
\newcommand{\cC}{\mathcal{C}}
\newcommand{\cD}{\mathcal{D}}

\newcommand{\cF}{\mathcal{F}}

\newcommand{\cL}{\mathcal{L}}

\newcommand{\cO}{\mathcal{O}}

\newcommand{\cR}{\mathcal{R}}
\newcommand{\cS}{\mathcal{S}}

\renewcommand{\1}{\mathds{1}}
\renewcommand{\d}{\text{d}}

\renewcommand{\epsilon}{\varepsilon}
\renewcommand{\phi}{\varphi}

\newcounter{numeroexo}

\newcommand*\interior[1]{\mathring{#1}}
\newcommand{\card}{\mbox{card}}

\newcommand{\back}{\text{Back}}
\newcommand{\forward}{\text{For}}
\newcommand{\cluster}{\overleftrightarrow C}

\newcommand{\config}{\overline\cS}
\newcommand{\tribuconfig}{\cF^{\config}}

\newcommand{\configinfo}{\widehat\cS}
\newcommand{\tribuconfiginfo}{\cF^{\configinfo}}
\newcommand{\good}{\text{Good\,}}
\newcommand{\graphe}{\text{Graph}}
\newcommand{\grapheoriente}{\overrightarrow{\text{Graph}}}
\newcommand{\cross}{\text{Cross}}
\newcommand{\nice}{\text{Nice}}
\newcommand{\crosslocal}{\text{CrossLocal}}

\newcommand{\dyn}{\mbox{\scriptsize{dyn}}}

\newcommand{\blur}{\text{blur}}
\newcommand{\blurred}{\text{blurred-region}}

\newcommand{\jb}[1]{\textcolor{red}{[ #1 ]}}

\begin{document}

\selectlanguage{english}

\title{\LARGE{Absence of percolation for infinite Poissonian systems of stopped paths}}
\author{\large{David Coupier \thanks{Intitut Mines T\'el\'ecom Nord Europe, \texttt{david.coupier@imt-nord-europe.fr}}, David Dereudre\thanks{University of Lille, LPP UMR 8524, \texttt{david.dereudre@univ-lille.fr}}, Jean-Baptiste Gouéré \thanks{IDP, Universit\'e de Tours, \texttt{jean-baptiste.gouere@univ-tours.fr}}}}

\date{\small{\today}}
\maketitle
\begin{abstract}

The state space of our model is the Euclidean space in dimension $d=2$. Simultaneously, from all points of a homogeneous Poisson point process, we let grow independent and identically distributed random continuum paths. Each path stops growing at time $t>0$ if it hits the trace of the other curves realized up until time $t$. Such dynamic is well-defined as long as the distribution of paths has a finite second moment at each time $t>0$. Letting the time runs until infinity so that each path reaches its stopping curve, we study the connected property of the graph formed by all stopped curves. Our main result states the absence of percolation in this graph, meaning that each cluster consists of a finite number of curves. The assumptions on the distribution of paths are very mild, with the main one being the so-called 'loop assumption' which ensures that finite clusters (necessarily containing a loop) occur with positive probability. The main issue in this model comes from the long-range dependence arising from long sequences of causalities in the hitting/stopping procedure. Most methods based on block approaches fail to effectively address the question of percolation in this setting.
\end{abstract}

\tableofcontents

\section{Introduction}

Our model belongs to the family of stopped germ-grain models in $\R^2$. In this context, a point (germ) from a point process in $\R^2$ initiates the growth of a random set (grain), which stops growing when it encounters another grain. These models naturally arise in material science and biology, for example, to model impurities or cracks in materials or the propagation of infection in a cellular tissue. They are also a significant class of models in stochastic geometry, presenting interesting and challenging percolation issues. The first example of such a model, introduced by H{\"a}ggstr{\"o}m and Meester in 1996, is the lilypond model \cite{HM}, where the germs are given by an homogeneous Poisson point process and the grains are discs with radii that increase over time with constant velocity. They proved that the clusters of stopped discs do not percolate. The main conjecture, which emerged from Question 6.1 in \cite{HM}, can be rephrased as follows: for a large class of distributions of  grains, the associated stopped germ-grain model does not percolate. Since then, the absence of percolation has been investigated for many germ-grain models in which grains are mainly convex bodies \cite{EL,HL} or line-segments that can be unilateral or bilateral, with constant or i.i.d.\ (even unbounded) velocities, according to isotropic directions or not \cite{CDLT,CDlS-unbounded,DEL,Hirsch}.

We address this problem in the present article. The primary difficulties encountered in this class of models arise from the long-range dependence of the connection graph and the lack of monotony or usefull correlation inequalities (such as FKG  inequalities). 

\subsection*{Our model}

In our case, the point process $\xi$ is an homogeneous Poisson point process in $\mathbb{R}^2$ with intensity one (other intensities could be considered by a simple rescaling procedure). The space of grains $\mathcal{C}$ is the space of continuous functions $h : \mathbb{R}_+ \to \mathbb{R}^2$ such that $h(0) = 0$, and we denote by $\mu$ a probability measure on $\mathcal{C}$. Each point $x$ in $\xi$ is then equipped with a random path $h_x$ with distribution $\mu$. We assume that the paths $(h_x)_{x \in \xi}$ are independent of each other and independent of $\xi$. This description is rigorously defined by a marked Poisson point processes $\overline\xi$ on $\R^2\times \mathcal C$ with intensity $dx\otimes\mu$.

Now, from each point $x \in \xi$, we let the path $x+h_x$ starts to grow, and it stops growing at time $t > 0$ when it hits the curves of other grains produced until time $t$. If a grain does not meet any other grain, it continues to grow indefinitely. The existence of this infinite-dimensional dynamics is not obvious, but it is possible to prove its existence as soon as $\mathbb{E}(\sup_{0\le s\le t } \|h_x(s)\|^2) < +\infty$ for all positive time $t$ (see Proposition \ref{p:existence}). 

Denote by $\tau(x)$ the stopping time of the grain $x \in \xi$ (which is infinite if $x$ never stops growing).
The main question is whether the random set 
$$
\bigcup_{x \in \xi} \Big\{x+h_x(t), 0 \le t \le \tau(x) \Big\}
$$
percolates; i.e., whether some of the connected components of this random set consist of an infinite number of curves. The main conjecture since the paper by H{\"a}ggstr{\"o}m and Meester mentioned above is that, under reasonable assumptions, \textbf{percolation should not occur}. Let us be more specific about those reasonable assumptions. Consider the graph of connections $\grapheoriente(\overline\xi)$ with vertex set $\xi$ where we put an edge $x\to y$ from $x$ to $y$ if the path from $x$ hits the curve from $y$. If loops are not possible, then the connected components of this graph are infinite.
Therefore the possibility of existence of loops is a necessary condition for non percolation. Because of the continuous nature of our model, if one seeks for a simple general sufficient condition, it is natural to assume that loops arise at arbitrary small space and time scales. We thus formulate the following question.

\begin{question} \label{q} Do  the following assumptions imply absence of percolation?
\begin{itemize}
\item The model is well defined. (We give a precise meaning to this assumption in Section \ref{s:model_driven_by_mu} with the notion of tempered measure).
\item The "loop-assumption" holds at any small spatial and temporal scale $\alpha>0$. It means that for any $\alpha>0$, with positive probability, the points (and the paths) in the ball 
$B(0,\alpha)$ produce before time $\alpha>0$ a loop inside the ball that disconnects the center $0$ from the outside of the ball. See Figure \ref{f:loop_intro}
(We refer to Section \ref{s:absence-percolation} for a precise definition.)
\end{itemize}
\end{question}
\begin{figure}[!ht] \label{f:loop_intro}
\begin{center}
\includegraphics[width=5cm,height=5cm]{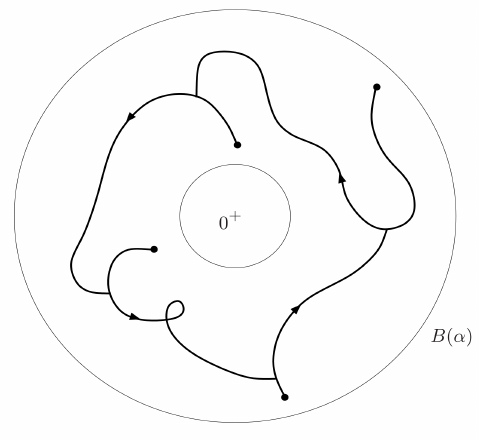}
\caption{In this picture, the loop is made up of $n=4$ points (the black points). The union of their curves separates $0$ from the outside of $B(\alpha)$. The black arrows indicate the direction in which the grains grow.}
\end{center}
\end{figure}

Let us mention that it is easy to see that some models lack the loop condition and therefore exhibit percolation. For instance, if the paths go in a straight line to the right or to the top with equal probability $1/2$, the loops are not possible. See also the navigation models studied in \cite{BB,BM} which are directed according to a given deterministic vector.

\subsection*{Results}

Our main result is a positive answer to Question \ref{q} under the following mild extra assumption on the distribution of paths $\mu$: there exists $t > 0$ such that $\E(\sup_{0\le s\le t } \|h_x(s)\|^2) < +\infty$. See Theorem \ref{t}.

%More marginally, note that we assume the second moment $\mathbb{E}(\sup_{0\le s\le t } \|h_x(s)\|^2) < +\infty$ only for a specific value of $t > 0$, not for all $t > 0$ as in the existence %Proposition \ref{p:existence} mentioned above. 

\subsection*{Comparison with models in the literature.}

As mentioned above, the first stopped germ-grain model in the literature is the lilypond model \cite{Daley-Last-05,DSS,HM}, where the grains are balls. In this case, the grains are thick, and achieving the locality of the interaction is easier, as the probability of a distant grain hitting another grain decreases exponentially with the distance. See also generalizations to convex grains \cite{EL,HL}. Studying thin grains, like curves, poses more challenges. 
The question has been raised in \cite{DEL} where line-segment models are introduced. 
The first non percolation result for this kind of model has been obtained in \cite{Hirsch} in a non isotropic setting.
Here, paths move with constant velocity in straight lines to the right, left, top, or down with equal probability $1/4$. Similar to the ball model, percolation does not occur, but proving this is significantly more challenging. The proof relies on a block method and a sprinkling procedure, and it heavily uses the fact that directions are limited to the four cardinal points and that the velocity is bounded. The absence of percolation for the general isotropic line-segment model with unbounded velocity $V$ have been proved later in \cite{CDLT,CDlS-unbounded} with a super exponential moment assumption $\E(e^{V^s})<+\infty$, $s>1$. 
Note that, as a corollary of our main theorem, we relax the super exponential moment in the line-segment model since we require only a finite second moment $\E(V^2)<+\infty$ (Proposition \ref{p:LineSegment}).

All strategies developed prior to this paper relied on paths moving in straight lines with constant or strongly integrable velocity. These settings enable the proof of good upper bounds on the probability of crossing a large box by introducing several obstacles to block any line. This leads to useful localization properties. Dealing with less stringent assumptions (whether on speed or on the paths of the grains) is more challenging and requires new approaches. Regarding paths, it appears that regularity beyond mere continuity is irrelevant. For example, rough curves (as Brownian curves, see Section \ref{sect:BrownianModel}) or space-filling curves (as Peano curves) are permitted. This allows for unusual hitting behavior, including simultaneous or coincident hits.

To sum up, in the present paper we greatly generalize the non percolation results for germ-grain models with thin grains in two main directions: on the growth of grains since a second moment condition is assumed instead of a super exponential one, and on the shape of grains since only the continuity of curves is assumed instead of line-segments.

This is achieved through two main ingredients.
\begin{itemize}
\item We develop a quantitative multi-scale analysis to control the dependencies in a related technical model.
\item As it not possible to localize in a useful way the properties of interest, we develop a notion of  scenarios in which what happens in some bounded window is only known up
to a finite number of locally defined scenarios.
\end{itemize}
We give a more detailed sketch in the next part.

\subsection*{Ingredients of the proof.}

Putting an edge between $x$ and $y$ when $x$ hits $y$ or $y$ hits $x$ defines an undirected graph with vertex set $\xi$, denoted by $\graphe(\overline\xi)$, for which we aim to state the absence of percolation (Theorem \ref{t}). Putting an arrow $x \to y$ from $x$ to $y$ when $x$ hits $y$ defines an out-degree one graph (up to adequat and irrelevant definitions about vertices hitting zero or more than two vertices) denoted by $\grapheoriente(\overline\xi)$. The forward set $\forward(x)$ of a vertex $x$ is defined as the set of all vertices $y$ such that there exists an oriented path from $x$ to $y$ in $\grapheoriente(\overline\xi)$. Soft arguments relying only on stationarity and the out-degree one property allows to reduce the proof of Theorem \ref{t} to that of the absence of infinite forward sets in $\grapheoriente(\overline\xi)$. Such arguments, sometimes quoted as \textit{mass transport principle}, were already present in \cite{DEL,Daley-Last-05}.

Our global strategy to prove the absence of infinite forward sets comes from \cite{CDLT,CDlS-unbounded} and can be explained in rough terms as follows. Consider by absurd an infinite forward set. When one travels along this forward set, one should encounter opportunities to add to the current configuration a suitable (small) loop that would break the considered forward set, making it finite and contradicting our assumption. The crux of the matter consists in proving that the number of these opportunities is infinite but the non-local character of the model makes this task tricky. Indeed, when one travels along a forward set, one gathers more and more information and it is difficult to rule out the fact that such opportunities could become increasingly rare. Moreover, this difficulty is compounded by the long range dependence in the model: modifying a grain arbitrarily far away from a given grain $x \in \xi$ can change the lifetime of $x$ by a domino effect. In particular, these strong dependencies forbid any useful stabilizing definition for $\tau(x)$ (or other relevant quantities) and therefore prevent the use of the associated tools from stochastic geometry.
%Pas sûr de la pertinence ici: see the works \cite{LRSY,PY} based on stabilization or exponential-stabilization.

Let us be more concrete about opportunities to add loops. We refer to Figure \ref{fig:good} for an illustration. Consider a point $x_0$. One looks for some $\alpha>0$ such that we can place a ball $B(v,2\alpha)$ \textit{just before} the impact of $x_0$ on its stopping grain $s(x_0)$ and overlapping no other grains. Henceforth, a loop suitably placed inside $B(v,2\alpha)$ will stop the grain $x_0$ without reducing its backward set $\back(x)$ (which is the set of vertices $y$ whose the forward set contains $x$). This idea was already present in \cite{CDLT,CDlS-unbounded}. Note that it is important not to alter the backward set: imagine that we are exploring a forward set and that, at some point, we arrive at $x_0$ where we want to create (say by a modification argument) a loop after $x_0$ in order to break the forward set. However if this creation modifies the backward of $x_0$ then the forward set we were exploring could by-pass $x_0$ and continues forever.

\begin{figure}[!ht]
\begin{center}
\includegraphics[width=7.5cm,height=4.2cm]{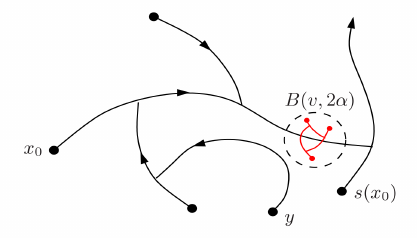}
\caption{\label{fig:good} Grains of $\overline\xi$, especially $x_0$ and its stopping grain $s(x_0)$, are represented in black. Black arrows indicate the sense in which these grains grow. A loop built with three new grains (in red) and located inside the small ball $B(v,2\alpha)$ allows to stop the grain $x_0$ without reducing its backward set. Note that if $B(v,2\alpha)$ overlapped the grain $y$, the red loop could stop it and then would reduce the backward set of $x_0$ (to which $y$ belongs).}
\end{center}
\end{figure}

Say that a vertex $x_0 \in \xi$ is $\alpha$-good if we can place a ball $B(v,2\alpha)$ as described above. Otherwise say that $x_0$ is $\alpha$-bad. Still by soft arguments, we can reduce our task to proving that, for small enough $\alpha>0$, there exists no infinite forward sets composed only with $\alpha$-bad points-- and we now have a parameter $\alpha>0$ to play with. Besides, determining a suitable location for the small ball $B(v,2\alpha)$ just before the impact of $x_0$ on its successor $s(x_0)$ requires to know the local picture around the grain created by $x_0$, especially who is its successor and the lifetime at which $x_0$ ceases growing, which are clearly non-local informations. This is a crucial obstacle in establishing the non existence of infinite forward sets of $\alpha$-bad points. This difficulty were overcome in \cite{CDlS-unbounded} by the introduction of an additional technical assumption called "shield" whose fulfillnes was only proved under strong integrability assumptions. As we aim at reducing such extra technical conditions, we need a new approach.

To overcome this difficulty, we develop in this article a way to witness $\alpha$-goodness without revealing too much of $\overline\xi$. There are two layers in our approach.

\begin{itemize}
\item First, \textbf{we introduce an augmented model, called the $\sharp$-model}, in which the augmented grain $G^\sharp(x) := x+h_x([0,\tau^\sharp(x)])$ contains the grain $x+h_x([0,\tau(x)])$ of the original model. One crucial point is that the dependencies are less strong in this model. Under our second moment assumption, the $\sharp$-model is amenable to some \textbf{quantitative multi-scale analysis} which enables us to prove that the $G^\sharp(x)$'s are not too large. See Proposition \ref{p:33} for a formal statement. The aforementioned multi-scale analysis provides a lower bound in the passage time in some kind of first-percolation problem associated with a subcritical Boolean model (see Proposition \ref{p:lestnsontsympas}) which could be of independent interest.
\item There is not enough information in the $\sharp$-neighbourhood of $x$ to determine exactly what happens around the grain $x$ (two vertices $x$ and $y$ are $\sharp$-neighbours when, roughly, $G^\sharp(x)$ and $G^\sharp(y)$ overlap). Therefore, in order to gain locality we have no choice but to lose information. \textbf{We achieve to determine what really happens around $x$ up to a finite number of scenarios} and we declare $x$ as $(\alpha,\sharp)$-good if it is $\alpha$-good in each of these scenarios. See Proposition \ref{p:DieseGoodImplyGood} for a formal statement. An advantage of the concept of $(\alpha,\sharp)$-goodness is locality: the fact that a vertex $x$ is $(\alpha,\sharp)$-good only depends on its $\sharp$-neighbours, and their $\sharp$-neighbours, and so on, up to four levels deep. Above all, $(\alpha,\sharp)$-goodness witnesses $\alpha$-goodness, i.e. $(\alpha,\sharp)$-goodness of a vertex $x$ implies $\alpha$-goodness of $x$.
\end{itemize}

Once the key Propositions \ref{p:33} and \ref{p:DieseGoodImplyGood} are proved, a multi-scale approach (initially introduced in \cite{G-AOP08}) can be used to prove that for small enough $\alpha>0$ there exists no infinite forward sets of $(\alpha,\sharp)$-bad points. Therefore, there exists no infinite forward sets of $\alpha$-bad points. As mentioned above, this is sufficient to ensure absence of percolation in the model.

\subsection*{Plan of the paper.}

The paper is organized as follows. In the next Section \ref{s2}, we introduce the model and present our main result, Theorem \ref{t}. Additionally, two main examples are provided: the line-segment model and the Brownian model. The general scheme of the proof of the theorem is given in Section \ref{s:proof}, which relies on two major Propositions, \ref{p:33} and \ref{p:DieseGoodImplyGood}, whose proofs are postponed to Sections \ref{s_prop17} and \ref{sect:ProofDieseGood}, respectively.

\section{Model and main results}\label{s2}

\subsection{Deterministic model}
\label{sect:Model}

\subsubsection{The space of configurations}

Let $\cC$ be the space of continuous functions $h : \R_+ \to \R^2$ such that $h(0)=0$. We equip $\cC$ with the product $\sigma$-algebra $\cF^{\cC}$. Let $\overline\pi :\R^2 \times \cC \to \R^2$ denotes the projection onto $\R^2$. The state space is
\[
\config = \{ \overline S \subset \R^2 \times \cC : \overline S \text{ is finite or countable and the restriction of } \overline\pi \text{ to } \overline S \text{ is one-to-one}\}.
\]
Let $\overline S \in \config$. We write $S = \overline\pi(\overline S)$. As $\overline\pi_{|\overline S}$ is one-to-one we can write
\[
\overline S = \{ (x,h_x), x \in S\}
\]
where $(h_x)_{x \in S}$ is a family of continous functions from $\R_+$ to $\R^2$ which vanish at $0$. Let $x \in S$. We define a continuous function $g_x:\R_+ \to \R^2$ by
\[
g_x(t)  = x + h_x(t).
\]
For all $t \ge 0$ we also set
\[
G_x(t) = g_x([0,t]).
\]
When considering some configuration $\overline S$ in the sequel, we freely use the notations $S, h_x, g_x$ and $G_x$ without introducing them.
For a  configuration $\overline S$ and a set $B  \in \cB(\R^2)$, we use the following short notation for the restriction of $\overline S$ to $B \times \cC$:
\[
\overline S_{|B} := \overline S \cap \big(B \times \cC).
\]

Write $\#X$ for the cardinality of a set $X$.
For any $A$ in $\cB(\R^2) \otimes \cF^{\cC}$, define a map $N_A$ from $\config$ to $\N \cup \{\infty\}$ by 
\[
N_A(\overline S) = \#(\overline S \cap A).
\]
We equip $\config$ with the $\sigma-$field $\tribuconfig$ generated by the family of maps $N_A, A \in \cB(\R^2) \otimes \cF^{\cC}$.

\subsubsection{The static point of view}\label{sdefstat}

Fix $\overline S \in \config$. In the introduction we gave an informal dynamical description of the model which will be formalized in Section \ref{sect:DynamicView}. It appears that a static point of view -- which focuses on the properties of the family of times at which each grain stops growing -- is more appropriate to formulate a robust definition. This approach was already used to define the Lilypond model in \cite{Daley-Last-05}. Here we follow the definition of \cite{CDlS-unbounded} with some adaptations. 

Let $\overline S  \in \config$ be a configuration. Let $\tau:S \to (0,+\infty]$. For any $x \in S$, we think about $\tau(x)$ as the time at which the grain $x$ stops growing (if $\tau(x)=\infty$, then the growth never stops). Before stating the properties we require about $\tau$, let us define a useful notion. For $x \neq y \in S$ and $t_x \ge 0$, we say that \textbf{$x$ $\tau$-hits $y$ at time $t_x$} if there exists $t_y \ge 0$ such that 
\[
t_x \le \tau(x) \text{ and } t_y \le \tau(y) \text{ and } g_x(t_x)=g_y(t_y) \text{ and } t_y \le t_x ~.
\]
The property in display can be rephrased informally as follows. 
\begin{itemize}
\item The grain $y$ is still growing at time $t_y$ (as $t_y \le \tau(y)$) and thus the point $g_y(t_y)$ belongs to the grain $y$ at time $t_y$.
\item The point $g_y(t_y)$ thus also belongs to the grain $y$ at time $t_x$ (as $t_x \ge t_y$).
\item But the grain $x$ is still growing at time $t_x$ (as $t_x \le \tau(x)$).
\item Therefore the grain $x$ hits the grain $y$ at time $t_x$ (as moreover $g_x(t_x)=g_y(t_y)$).
\end{itemize} 
Note that when \textbf{$x$ $\tau$-hits $y$ at time $t_x$}, it  does not mean that it is for the first time. We are now ready to give the property we require regarding $\tau$, meaning the first time of hitting. A \textbf{$\overline S$-lifetime function} is a function $\tau : S \to (0,+\infty]$ such that:
\begin{enumerate}
\item \textbf{Stopping property.} For all $x \neq y \in S$ and $t_x \ge 0$, if $x$ $\tau$-hits $y$ at time $t_x$, then $\tau(x)=t_x$.
\item \textbf{Hitting property.} For all $x \in S$ such that $\tau(x)<\infty$, there exists $y \in S \setminus \{x\}$ such that $x$ $\tau$-hits $y$ at time $\tau(x)$.
\end{enumerate}
Very roughly speaking, the Stopping and Hitting properties can be respectively understood as follows: ``a hit implies a stop'' and ``a stop requires a hit''. From now, a function $\tau:S \to (0,+\infty]$ will always denotes a \textbf{$\overline S$-lifetime function} with the stopping and hitting properties. 

As a first consequence of the Stopping property, the Hard-core property asserts that only the extremity $g_x(\tau(x))$ of a grain may hit another grain:

\begin{lemma}[Hard-core property]
\label{lem:hard-core}
For all $x \neq y \in S$, $g_x([0,\tau(x))) \cap g_y([0,\tau(y))) = \emptyset$.
\end{lemma}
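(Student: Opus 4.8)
The plan is to argue by contradiction, directly unwinding the definition of ``$\tau$-hits'' and invoking the Stopping property. Suppose there exist $x \neq y \in S$ and a point $p \in g_x([0,\tau(x))) \cap g_y([0,\tau(y)))$. Then, by definition of these half-open traces, there are times $t_x$ and $t_y$ with $0 \le t_x < \tau(x)$, $0 \le t_y < \tau(y)$, and $g_x(t_x) = g_y(t_y) = p$.

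Next I would split into two cases according to the order of $t_x$ and $t_y$; this step is needed because the relation ``$\tau$-hits'' is not symmetric, its definition requiring $t_y \le t_x$. If $t_y \le t_x$, then the four conditions defining ``$x$ $\tau$-hits $y$ at time $t_x$'' are all met: $t_x \le \tau(x)$ (indeed $t_x < \tau(x)$), $t_y \le \tau(y)$ (indeed $t_y < \tau(y)$), $g_x(t_x) = g_y(t_y)$, and $t_y \le t_x$. Hence by the Stopping property $\tau(x) = t_x$, which contradicts $t_x < \tau(x)$. If instead $t_x \le t_y$, the same argument with the roles of $x$ and $y$ exchanged shows $y$ $\tau$-hits $x$ at time $t_y$, so $\tau(y) = t_y$, contradicting $t_y < \tau(y)$. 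Either way we reach a contradiction, so the intersection is empty.

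I do not expect a genuine obstacle here: the statement is an immediate consequence of the two defining properties of an $\overline S$-lifetime function, and the only points requiring a little care are bookkeeping ones — keeping track of which inequalities are strict (the half-open intervals give $t_x < \tau(x)$ and $t_y < \tau(y)$, which is exactly what produces the contradiction), and remembering to handle both orderings of $t_x, t_y$ because of the asymmetry in the ``$\tau$-hits'' definition. The Hitting property plays no role in this particular lemma; only the Stopping property is used.
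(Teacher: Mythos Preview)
Your proof is correct and essentially identical to the paper's own argument: both pick times $t_x,t_y$ witnessing the intersection, split on whether $t_y\le t_x$ or $t_x\le t_y$, apply the Stopping property to force $t_x=\tau(x)$ or $t_y=\tau(y)$, and obtain a contradiction with the strict inequalities coming from the half-open intervals. The paper phrases it as a direct argument rather than by contradiction, but the logical content is the same.
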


\begin{proof}
Let $x \neq y \in S$ and $t_x, t_y \ge 0$ be such that $t_x \le \tau(x)$, $t_y \le \tau(y)$ and $g_x(t_x)=g_y(t_y)$. If $t_y \le t_x$ then $x$ $\tau$-hits $y$ at time $t_x$ and thus the Stopping property implies $t_x=\tau(x) \not\in [0,\tau(x))$. Otherwise $y$ $\tau$-hits $x$ at time $t_y$ and the Stopping property implies $t_y=\tau(y) \not \in [0,\tau(y))$. To sum up, we got $t_x \not\in [0,\tau(x))$ or $t_y \not\in [0,\tau(y))$ which is the searched result.
\end{proof}

Note that if $x$ $\tau$-hits $y \in S \setminus \{x\}$ it necessarily happens at time $\tau(x)$ by the Stopping property. Therefore, $x$ $\tau$-hits $y$ is equivalent to
\begin{equation}
\label{e:x-stopped-by-y}
\tau(x) < \infty \text{ and there exists } t_y \ge 0 \text{ such that } t_y \le \tau(y) \text{ and } g_x(\tau(x)) = g_y(t_y) \text{ and } t_y \le \tau(x) ~.
\end{equation}

In our model, the hits on itself are ignored so that a grain may overlap itself infinitely many times without being stopped (think of a Brownian trajectory). We say that \textbf{$x$ is stopped at time $\tau(x)$} if $\tau(x) < \infty$ is finite. In that case, the Hitting property means that $x$ $\tau$-hits at least one $y \in S \setminus\{x\}$, but nothing prevents $g_x(\tau(x))$ to belong to several grains.

We say that \textbf{$x$ $\tau$-hits $y$ in a regular way} if 
\begin{equation}
\label{e:x-stopped-by-y-in-a-regular-way}
\tau(x) < \infty \text{ and there exists } t_y \ge 0 \text{ such that } t_y < \tau(y) \text{ and } g_x(\tau(x)) = g_y(t_y) \text{ and } t_y \le \tau(x) ~.
\end{equation}
This is a stronger than \eqref{e:x-stopped-by-y} as we require $t_y < \tau(y)$. It is important to note that, by the Hard-core property (Lemma \ref{lem:hard-core}), if $x$ $\tau$-hits $y$ in a regular way then there exists a unique such $y$.\\ 

A configuration $\overline S$ is said \textbf{tempered} if there exists a unique $\overline S$-lifetime function $\tau$. In that case, the model is well defined (from the configuration $\overline S$). In the sequel, we will work with tempered configurations and shorten ``$x$ $\tau$-hits $y$'' to ``$x$ hits $y$''.

\subsubsection{The dynamic point of view}
\label{sect:DynamicView}

Let us present a procedure, called the \textbf{Dynamical algorithm}, corresponding to the dynamical description of the model given in introduction. The intuition is the following: for any $x \in S$ and any time $t \ge 0$, $G_x(t)$ is the state of the grain $x$ at time $t$ if its growth has not been stopped yet; its growth stops as soon as it hits another grain. From a \textit{finite} configuration $\overline S$, the Dynamical algorithm returns a function $\tau_{\dyn}:S \to (0,+\infty]$ such that $\tau_{\dyn}(x)$ is the time at which the grain $x$ is stopped and $\tau_{\dyn}(x) = \infty$ if it is not stopped. The Dynamical algorithm consists of a While loop which involves the set $S'\subset S$ of grains still alive (or growing) at each step of the loop.\\

\noindent
\hspace*{0.5cm} For any $x\in S$, let $\tau_{\dyn}(x) = +\infty$ and $g'_x([0,t]) = g_x([0,t])$ for any $t \geq 0$.\\
\hspace*{0.5cm} Let $S' = S$.\\
\hspace*{0.5cm} \textbf{While} $S'\not= \emptyset$:\\
\hspace*{1cm} For any $x \not= y$, $x \in S'$ and $y \in S$, let $t(x\to y) = \inf \{ t \geq 0 : \, g_x(t) \in g_y'([0,t]) \}$.\\
\hspace*{1cm} Let $t^{\ast} = \inf \{ t(x\to y) : x \not= y, x \in S' \mbox{ and } y \in S \}$.\\
\hspace*{1cm} \textbf{If} $t^{\ast} = +\infty$ \textbf{Then} $S' = \emptyset$. \textbf{Else}:\\
\hspace*{1.5cm} Let $S''=\{ x \in S' : \exists y \in S \setminus \{x\} , t(x\to y) = t^{\ast} \}$.\\
\hspace*{1.5cm} For any $x \in S''$, let $\tau_{\dyn}(x) = t^{\ast}$ and $g'_x([0,t]) = g'_x([0,t^{\ast}])$ for any $t \geq t^{\ast}$.\\
\hspace*{1.5cm} Let $S' = S' \setminus S''$.\\
\hspace*{1cm} \textbf{End If}\\
\hspace*{0.5cm} \textbf{End While}\\

Let us specify that the infimum $t(x\to y)$, for $x\not= y$, is reached since the $g_x$'s are continuous functions. The same holds for the infimum $t^{\ast}$ because the configuration $\overline S$ is finite. Also the set $S''$ is allowed, at a given step of the While loop, to contain several elements: this corresponds to simultaneous hits between different grains. Finally, each auxiliary function $g'_x$ is possibly \textit{frozen} during the algorithm. This ensures that the piece of trajectory of $(g_x(t))_{t\geq 0}$ after its stopping time will play no role in the sequel.\\

Let us prove that $\tau_{\dyn}$ is a $\overline S$-lifetime function and this is the only one. This means that any finite configuration $\overline S$ is tempered and both definitions of the model-- the static one with lifetime functions and the dynamical one --coincide on each finite $\overline S$.

\begin{lemma}[Reconciliation Lemma]\label{l_rec}
Let $\overline S$ be a finite configuration. The function $\tau_{\dyn}$ generated by the dynamical algorithm from $\overline S$ is the only $\overline S$-lifetime function.
\end{lemma}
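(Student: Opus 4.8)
The plan is to prove the Reconciliation Lemma in two stages: first, that $\tau_{\dyn}$ is an $\overline S$-lifetime function (it satisfies the Stopping and Hitting properties), and second, that it is the only one. Throughout, fix a finite configuration $\overline S$ and let $t^\ast_1 < t^\ast_2 < \cdots$ denote the successive (strictly increasing) values of $t^\ast$ produced by the While loop, with $S''_k$ the set of grains stopped at step $k$; since $S$ is finite the loop terminates after finitely many steps, each $g_x$ is continuous, and all infima are attained.

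\textbf{Step 1: $\tau_{\dyn}$ satisfies the Hitting property.} If $\tau_{\dyn}(x) = t^\ast_k < \infty$ then $x \in S''_k$, so by definition of $S''_k$ there is some $y \in S\setminus\{x\}$ with $t(x\to y) = t^\ast_k$ at step $k$. Unwinding the definition of $t(x\to y) = \inf\{t : g_x(t) \in g'_y([0,t])\}$ and using that $g'_y$ is the unfrozen trajectory truncated at $\tau_{\dyn}(y)$ (which is $\geq t^\ast_k$, since $y$ was still alive at the start of step $k$ or was stopped exactly at step $k$), the attained infimum gives $t_y \le t^\ast_k$ with $t_y \le \tau_{\dyn}(y)$ and $g_x(t^\ast_k) = g_y(t_y)$. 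This is precisely ``$x$ $\tau_{\dyn}$-hits $y$ at time $\tau_{\dyn}(x)$''.

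\textbf{Step 2: $\tau_{\dyn}$ satisfies the Stopping property.} Suppose $x$ $\tau_{\dyn}$-hits $y$ at time $t_x$, i.e.\ there is $t_y \le t_x$ with $t_x \le \tau_{\dyn}(x)$, $t_y \le \tau_{\dyn}(y)$, and $g_x(t_x) = g_y(t_y)$. We must show $t_x = \tau_{\dyn}(x)$. Certainly $t_x \le \tau_{\dyn}(x)$; for the reverse, note that at any step $k$ with $t^\ast_k \le t_x$ and $x$ still in $S'$ at the start of step $k$, the truncated trajectory $g'_y$ at that step still agrees with $g_y$ on $[0, t^\ast_k] \supseteq$ the relevant initial segment whenever $\tau_{\dyn}(y) \ge t^\ast_k$; one checks that $g_x(t) \in g'_y([0,t])$ holds for $t = \min(t_x, t_y)$-type witness, forcing $t(x\to y)$ at step $k$ to be $\le t_x$, hence $t^\ast_k \le t_x$ forces $x$ to be stopped no later than $t_x$. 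The bookkeeping to make this airtight — tracking exactly which trajectories are frozen and by when, and handling the case $t_y < t_x$ versus $t_y = t_x$ — is the fiddly part but is essentially a direct verification. I anticipate this is the main obstacle: one must be careful that the freezing of $g'_y$ does not destroy the witness $g_y(t_y)$, which works precisely because $t_y \le \tau_{\dyn}(y)$.

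\textbf{Step 3: uniqueness.} Let $\tau$ be any $\overline S$-lifetime function; I will show $\tau = \tau_{\dyn}$ by induction on the steps of the algorithm, proving that $\tau$ and $\tau_{\dyn}$ agree on $S''_1, \dots, S''_k$ and that $\tau(x) > t^\ast_k$ for all $x$ still alive after step $k$. For the base case, any grain $x \in S''_1$ $\tau_{\dyn}$-hits some $y$ at time $t^\ast_1$ with witness $t_y \le t^\ast_1 \le \tau_{\dyn}(y)$; since no grain is stopped before $t^\ast_1$ in the dynamical run, one argues $\tau(y) \ge t^\ast_1 \ge t_y$ (using that an earlier stop of $y$ under $\tau$ would, via the Hard-core property, contradict minimality of $t^\ast_1$ among all hitting times), so $x$ $\tau$-hits $y$ at time $t^\ast_1$, whence the Stopping property gives $\tau(x) = t^\ast_1 = \tau_{\dyn}(x)$; conversely, if some $\tau(x) < t^\ast_1$ then by the Hitting property $x$ $\tau$-hits some $z$, producing two grains overlapping at a time $< t^\ast_1$, contradicting the fact that $t^\ast_1$ is the first hitting time in the frozen-free regime. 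The inductive step repeats this argument on the configuration with the grains of $S''_1 \cup \dots \cup S''_{k}$ frozen, which is legitimate because both $\tau$ (by the induction hypothesis) and $\tau_{\dyn}$ assign those grains the already-determined values and the frozen trajectories are exactly the $g'_x$ used by the algorithm at step $k+1$. Since the algorithm terminates, this determines $\tau$ on all of $S$ and shows $\tau = \tau_{\dyn}$.
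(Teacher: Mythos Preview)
Your argument is correct in substance, though two points merit comment.

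For the Stopping property, your step-by-step tracking works but is heavier than necessary. The paper observes directly that, with $g'_y([0,t]) = g_y([0,\,t\wedge\tau_{\dyn}(y)])$ denoting the \emph{final} frozen trajectory, one has $\tau_{\dyn}(x) \le t(x\to y)$ for every $y\neq x$; then from $t_y \le \tau_{\dyn}(y)$ and $t_y \le t_x$ one gets $g_x(t_x)=g_y(t_y)\in g'_y([0,t_x])$, hence $t(x\to y)\le t_x$, and the chain $\tau_{\dyn}(x)\le t(x\to y)\le t_x\le \tau_{\dyn}(x)$ closes. Your ``$t=\min(t_x,t_y)$-type witness'' is a slip: the witness is $t=t_x$, and your appeal to the Hard-core property in Step~3 should really be to the Hitting property (as you in fact use two lines later). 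These are cosmetic, not fatal.

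For uniqueness you take a genuinely different route. The paper does \emph{not} induct on the algorithm: it argues by contradiction, picking $x$ minimizing $\tau_1(x)\wedge\tau_2(x)$ among grains with $\tau_1(x)\neq\tau_2(x)$, and shows via the Hitting and Stopping properties that the grain $y$ which stops $x$ under $\tau_1$ must satisfy $\tau_2(y)<\tau_1(y)$ with $\tau_1(y)\wedge\tau_2(y)<\tau_1(x)\wedge\tau_2(x)$, contradicting minimality. This is shorter and, more importantly, makes no reference to $\tau_{\dyn}$ at all: it shows any two lifetime functions agree, and hence generalizes immediately to infinite configurations once one can localize to a finite set (as the paper later does in the proof of Proposition~\ref{p:existence}). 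Your induction is tied to the dynamical algorithm and would need to be redone in that setting.
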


\begin{proof}
Let $\overline S$ be a finite configuration. Let us prove that $\tau_{\dyn}$ is a $\overline S$-lifetime function and first focus on the Stopping property. For all grains $x\not= y$, one has
\[
\tau_{\dyn}(x) \leq t(x\to y) \in [0,+\infty]
\]
where $t(x\to y)=\inf\{ t\geq 0 : g_x(t)\in g_y'([0,t])\}$ and $g_y'([0,t]) = g_y([0,t \wedge \tau_{\dyn}(y)])$. Assume that $x$ $\tau_{\dyn}$-hits $y$ at time $t_x$ for $x\not= y$ in $S$ and $t_x\geq 0$, i.e. there exists $t_y$ such that $g_x(t_x)=g_y(t_y)$ with $t_x\leq \tau_{\dyn}(x)$, $t_y\leq \tau_{\dyn}(y)$ and $t_y\leq t_x$. Then
\[
g_x(t_x) = g_y(t_y) \subset g_y([0,t_x \wedge \tau_{\dyn}(y)]) = g'_y([0,t_x])
\]
which means $t_x \geq t(x\to y)$. Hence we get $\tau_{\dyn}(x) \leq t(x\to y) \leq t_x \leq \tau_{\dyn}(x)$, i.e. $t_x = \tau_{\dyn}(x)$. The Stopping property holds.

In order to check the Hitting property, let us consider a grain $x \in S$ with $\tau_{\dyn}(x)<\infty$. By the Dynamical algorithm, there exists a grain $y \in S\!\setminus\!\{x\}$ such that $\tau_{\dyn}(x)=t(x\to y)<\infty$. We then have to prove that $x$ $\tau_{\dyn}$-hits $y$ at time $\tau_{\dyn}(x)$. Since the $g_x$'s are continuous, the infimum $t(x\to y)$ is reached :
\[
g_x(\tau_{\dyn}(x)) \in g'_y([0,\tau_{\dyn}(x)]) = g_y([0,\tau_{\dyn}(x) \wedge \tau_{\dyn}(y)]) ~.
\]
So there exists $t_y \leq \tau_{\dyn}(x) \wedge \tau_{\dyn}(y)$ satisfying $t_y \leq \tau_{\dyn}(y)$, $g_x(\tau_{\dyn}(x)) = g_y(t_y)$ and $t_y \leq \tau_{\dyn}(x)$. In other words, $x$ $\tau_{\dyn}$-hits $y$ at time $\tau_{\dyn}(x)$.\\

It remains to prove that $\tau_{\dyn}$ is the only $\overline S$-lifetime function. By absurd, assume that there exist two $\overline S$-lifetime function, say $\tau_1$ and $\tau_2$. Thus let us consider the grain $x$ minimizing $\tau_1(x)\wedge \tau_2(x)$ among grains satisfying $\tau_1(x)\not= \tau_2(x)$. This minimal element is well defined since $\tau_1\not= \tau_2$ and $\overline S$ is finite. Without loss of generality, we can assume $\tau_1(x)<\tau_2(x)$ which in particular implies that $\tau_1(x)<\infty$. By the Hitting property (for $\tau_1$), there exists a grain $y\not= x$ such that $x$ $\tau_1$-hits $y$ at time $\tau_1(x)$. Roughly speaking, the grain $x$ is stopped by $y$ for $\tau_1$ and it lives a little longer for $\tau_2$ than for $\tau_1$. Then the Hitting property (for $\tau_2$) forces the grain $y$ to live a little shorter for $\tau_2$ than for $\tau_1$. This contradicts the minimality of $x$. Precisely, $x$ $\tau_1$-hits $y$ at time $\tau_1(x)$ means $g_x(\tau_1(x))=g_y(t_y)$ for some time $t_y$ such that $t_y\leq\tau_1(y)$ and $t_y\leq\tau_1(x)$. The hypothesis $t_y\leq\tau_2(y)$ together with $g_x(\tau_1(x))=g_y(t_y)$, $\tau_1(x)\leq\tau_2(x)$ and $t_y\leq\tau_1(x)$ would mean that $x$ $\tau_2$-hits $y$ at time $\tau_1(x)$ and then $\tau_1(x) = \tau_2(x)$ by the Hitting property (for $\tau_2$). As this conclusion fails, we necessarily have $t_y > \tau_2(y)$. To sum up,
\[
\tau_2(y) < t_y \leq \tau_1(x) \wedge \tau_1(y) < \tau_2(x) ~.
\]
The grain $y$ then satisfies $\tau_1(y)\not= \tau_2(y)$ and $\tau_1(y)\wedge \tau_2(y) < \tau_1(x)\wedge \tau_2(x)$ which contradicts the minimality of $x$.
\end{proof}

\subsection{Model driven by a probability measure $\mu$}

\label{s:model_driven_by_mu}

We need to introduce a few objects to handle some measurability issues about the families of stopping times $\tau(x)$. Consider the set
\[
\configinfo = \{ \widehat S \subset \R^2 \times [0,+\infty] : 
\widehat S \text{ is finite or countable and the restriction of } \widehat\pi \text{ to } \widehat S
\text{ is one-to-one}\}
\]
where $\widehat\pi$ denotes the projection from $\R^2 \times  [0,+\infty]$ onto $\R^2$.
As before, if $\widehat S \in \configinfo$, we can write
\[
\widehat S = \{ (x,\tau(x)), x \in S\}
\]
where $S=\widehat\pi(\widehat S)$. For any $A$ in $\cB(\R^2) \otimes  \cB([0,+\infty])$, define a map $\widehat N_A$ from $\configinfo$ to $\N \cup \{\infty\}$ by 
\[
\widehat N_A(\widehat S) = \#(\widehat S \cap A).
\]
We equip $\configinfo$ with the $\sigma-$field $\tribuconfiginfo$ generated by the family of maps $\widehat N_A, A \in \cB(\R^2)  \otimes \cB([0,+\infty])$.

Let $\overline \cA \subset \config$. We say that $\overline \cA$ is a \textbf{tempered set of configurations} if the following conditions hold:
\begin{enumerate}
\item $\overline\cA$ is measurable, that is $\overline \cA \in \cF^{\overline\cS}$.
\item For all $\overline S \in \overline \cA$, $\overline S$ is a tempered configuration.
\item The set $\overline \cA$ is invariant under the action of spatial translations. 
In other words, for all $\overline S \in \overline \cA$ and all $u \in \R^2$,
\[
\overline S - u \in \overline\cA
\]
where $\overline S-u= \{(x-u, h), (x,h) \in \overline S\}$.
\item The map $\phi : \overline \cA \to \configinfo$ defined by
\[
\phi(\overline S) = \{(x,\tau(x ; \overline S)), x \in S\}
\]
is measurable where $\overline \cA$ is equipped with the $\sigma$-field induced by $\tribuconfig$ and $\configinfo$ is equipped with the $\sigma$-field $\tribuconfiginfo$. Here $\tau(\cdot, \overline S)$ denotes the unique $\overline S$-lifetime.
\end{enumerate}
Note that translating by a common vector the starting points of the grains does not modify the lifetime of the grains. More formally, if $\overline \cA$ is a tempered set of configuration, then for any $\overline S \in \overline \cA$ and any $u \in \R^2$ we have
\[
\phi(\overline S - u) = \phi(\overline S) - u
\]
where $\phi(\overline S) - u = \{(x-u, \tau), (x,\tau) \in \phi(\overline S)\}$.\\

Let $\mu$ be a probability measure on $(\cC,\cF^{\cC})$. Let $\overline\xi$ be a Poisson point process on $\R^2 \times \cC$ with intensity measure $\d x \otimes \mu$ where $\d x$ denotes the Lebesgue measure on $\R^2$. Equivalently, $\overline \xi$ is a marked Poisson point process on $\R^2$ with intensity $\d x$ and independent marks in $\cC$ with distribution $\mu$. We refer to the book by Last and Penrose \cite{Last-Penrose-livre} for background on Poisson point processes. The projection $\overline \pi$ from $\R^2 \times \cC$ onto $\R^2$ is almost surely one-to-one on $\overline \xi$. In the remaining of this article we assume that this condition holds. We can thus write, as before,
\[
\overline\xi = \{(x,h_x), x \in \xi\}
\]
where $\xi = \pi(\overline\xi)$ and $(h_x)_x$ is a family of elements of $\cC$. As before, we use the associated notations $g_x$ and $G_x$. The process $\xi$ is a Poisson point process on $\R^2$ with intensity $\d x$. Conditioned on $\xi$, $(h_x)_x$ is a family of i.i.d.r.v.\ with distribution $\mu$.

We say that \textbf{the measure $\mu$ is tempered} if there exists a tempered set of configurations $\overline \cA$ such that $\overline\xi \in \overline \cA$ with probability $1$. In this case, we always work on a full probability event such that $\overline\xi \in \overline\cA$. Moreover we use as before the notation $\tau(x ; \overline \xi)$ -- or simply $\tau(x)$ -- for $x \in \xi$.

In the article we focus on percolation questions. We however provide a simple sufficient condition (\ref{SuffTempered}) which ensures that a given measure $\mu$ is tempered. In other words, under (\ref{SuffTempered}), the model built from the marked Poisson point process $\overline \xi$ (and driven by the probability measure $\mu$) is well defined.

\begin{prop}
\label{p:existence}
A probability measure $\mu$ on $(\cC,\cF^{\cC})$ is tempered as soon as, for all $t \ge 0$,
\begin{equation}
\label{SuffTempered}
\E \left[ \sup_{s \in [0,t]} \|h(s)\|^2 \right] < \infty
\end{equation}
where $h$ is a random variable with distribution $\mu$.
\end{prop}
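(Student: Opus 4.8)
The plan is to exhibit a \emph{tempered set of configurations} $\overline\cA$ with $\overline\xi \in \overline\cA$ almost surely; by the definition of temperedness of $\mu$, this is all that is needed. Write $R_x(t) := \sup_{0\le s\le t}\|h_x(s)\|$, so that $G_x(t)\subset B(x,R_x(t))$. I would take $\overline\cA$ to be the set of configurations $\overline S\in\config$ satisfying, for every rational $t>0$, two regularity properties. (i) \emph{Local finiteness}: for every $n\in\N$, only finitely many grains $B(x,R_x(t))$, $x\in S$, meet $B(0,n)$. (ii) \emph{Absence of infinite descending chains}: there is no infinite sequence of pairwise distinct points $x_0,x_1,\ldots\in S$ together with times $\cdots<m_{i+1}<s_{i+1}\le m_i<\cdots\le t$ such that $g_{x_{i+1}}(s_{i+1})=g_{x_i}(m_i)$ for all $i\ge 0$. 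Restricting $t$ to rationals is harmless by monotonicity of $t\mapsto R_x(t)$ and continuity of the paths. Two of the four structural requirements are then easy: (i) and (ii) are invariant under a common translation of the germs (the $R_x(t)$ and the incidences $g_{x_{i+1}}(s_{i+1})=g_{x_i}(m_i)$ are unchanged), and $\overline\cA\in\tribuconfig$ because (i) is a countable condition on the maps $N_A$, while — granting (i) — the tree of finite descending chains issued from a fixed region before a fixed time is locally finite, so by a König's lemma argument the existence of an infinite chain is equivalent to the existence of arbitrarily long finite ones, again a countable condition.

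The probabilistic content is that $\overline\xi$ satisfies (i) and (ii) almost surely. For (i), the Mecke equation gives that the expected number of grains $B(x,R_x(t))$ meeting $B(0,n)$ equals $\int_{\R^2}\P\big(R(t)\ge\|x\|-n\big)\,\d x$, where $R(t)=\sup_{s\le t}\|h(s)\|$ for $h$ of law $\mu$; the integrand is $1$ for $\|x\|\le n$ and the integral is finite precisely when $\E[R(t)^2]<\infty$, so a union bound over $n\in\N$ and over rational $t$ yields (i). Property (ii) is the main obstacle. I would bound, via the multivariate Mecke formula, the expected number of descending chains of length $k$ starting in a fixed bounded region: the point is that the \emph{strict} decrease of the times $m_i$ forces the chain to move forward in time, which is what prevents this expectation from merely counting connected paths in the (possibly supercritical) Boolean model $\bigcup_x B(x,R_x(t))$, and $\E[R(t)^2]<\infty$ is exactly the input needed to make the resulting estimate summable in $k$. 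Combined with (i) — which upgrades ``no arbitrarily long chain'' to ``no infinite chain'' — this gives (ii) almost surely. This is the continuum-path analogue of the classical absence of infinite descending chains in the lilypond model; pinning down the exact combinatorics of the chain and carrying out the estimate is where the real work lies.

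Given (i)--(ii), uniqueness of the $\overline S$-lifetime function for $\overline S\in\overline\cA$ follows by the chain-extraction argument already visible in the proof of the Reconciliation Lemma. Suppose $\tau_1\ne\tau_2$ are two $\overline S$-lifetime functions; pick $x_0$ with, say, $\tau_1(x_0)<\tau_2(x_0)$ and set $m_0:=\tau_1(x_0)<\infty$. The Hitting property for $\tau_1$ yields $x_1\ne x_0$ and $s_1\le m_0$ with $g_{x_0}(m_0)=g_{x_1}(s_1)$ and $s_1\le\tau_1(x_1)$. If $s_1\le\tau_2(x_1)$, then using $\tau_1(x_0)<\tau_2(x_0)$ one checks that $x_0$ $\tau_2$-hits $x_1$ at time $m_0$, and the Stopping property forces $\tau_2(x_0)=m_0$, a contradiction. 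Hence $\tau_2(x_1)<s_1\le\tau_1(x_1)$, so $\tau_1(x_1)\ne\tau_2(x_1)$, the value $m_1:=\tau_1(x_1)\wedge\tau_2(x_1)=\tau_2(x_1)$ satisfies $m_1<s_1\le m_0$, and the roles of $\tau_1$ and $\tau_2$ are swapped. Iterating produces pairwise distinct $x_0,x_1,x_2,\ldots$ (their $\tau_1\wedge\tau_2$-values $m_0>m_1>m_2>\cdots$ are all distinct) and times with $\cdots<m_{i+1}<s_{i+1}\le m_i<\cdots\le m_0$ and $g_{x_{i+1}}(s_{i+1})=g_{x_i}(m_i)$, i.e.\ an infinite descending chain, contradicting (ii) for any rational $t\ge m_0$.

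It remains to construct a lifetime function for each $\overline S\in\overline\cA$. Here I would use the finite truncations $\overline S_{|B(0,n)}$, each carrying a unique lifetime function $\tau_n$ by the Reconciliation Lemma, and extract by a diagonal argument a subsequence along which $\tau_n(x)\to\tau(x)\in(0,+\infty]$ for every $x\in S$; positivity of the limit follows because, by local finiteness and continuity, only finitely many grains can meet $G_x$ before any fixed small time, each at a positive time, so $\liminf_n\tau_n(x)>0$. That the limit $\tau$ satisfies the Stopping and Hitting properties is then a passage to the limit along the subsequence, continuity of the paths being used to handle the boundary cases $t_y=\tau(y)$ and $t_x=\tau(x)$; the absence of infinite descending chains (ii) is what ensures this limiting $\tau$ is non-degenerate, i.e.\ that the dynamics does not get trapped in an infinite regress of mutual stoppings. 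This makes every $\overline S\in\overline\cA$ tempered. Finally, $\phi$ is measurable on $\overline\cA$: each $\overline S\mapsto\tau_n(\cdot\,;\overline S_{|B(0,n)})$ is a measurable (indeed bounded-region-local) function of $\overline S$, and $\phi(\overline S)$ is recovered from them via the subsequential limit above, which identifies it almost surely with a genuine measurable map $\overline\cA\to\configinfo$. To repeat, the only hard point is property (ii) and the probabilistic estimate behind it; everything else is soft.
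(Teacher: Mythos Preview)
Your framework differs substantially from the paper's. The paper never formulates a ``no descending chains'' condition; instead it subdivides $[0,\infty)$ into intervals $[t_n,t_{n+1})$ chosen (via a preliminary lemma on the increments of $h$) so that on each interval the Boolean model $\bigcup_{x}B\big(g_x(t_n),\sup_{s\in[t_n,t_{n+1}]}\|g_x(s)-g_x(t_n)\|\big)$ is \emph{subcritical}, and then builds the lifetime function interval by interval by running the finite-configuration dynamical algorithm on each (finite) cluster. Uniqueness is obtained by the same subdivision: any discrepancy between two lifetime functions first appears in some interval and hence in some finite cluster, where the Reconciliation Lemma argument applies.

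Your descending-chain framework is a reasonable alternative and the uniqueness argument via chain extraction is correct given (ii). The gap is at property (ii) itself: the Mecke argument you sketch does not work. The strict decrease $m_0>m_1>\cdots$ gives nothing on its own --- such a sequence in $[0,t]$ can be infinite, the times are dictated by the geometry rather than free parameters yielding a $1/k!$, and under only $\E[R(t)^2]<\infty$ the Boolean model $\bigcup_x B(x,R_x(t))$ may well be supercritical (the hypothesis is finiteness, not smallness), so the expected number of length-$k$ chains of intersecting grains grows with $k$ rather than decays. I do not see how the time ordering alone rescues this: the displacement between successive visit points $g_{x_i}(m_i)$ and $g_{x_{i-1}}(m_{i-1})$ is controlled only by the modulus of continuity of $h_{x_i}$ on $[m_i,m_{i-1}]$, which has no uniform bound for merely continuous (possibly space-filling) paths. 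Property (ii) \emph{does} hold almost surely, but the proof goes through the paper's idea: once all $m_i$ fall into a single interval $[t_N,t_{N+1})$, consecutive germs of the chain lie in a common cluster of the step-$N$ subcritical Boolean model, which is finite --- contradicting the pairwise distinctness of the $x_i$. So the ``real work'' you defer is, in the end, the paper's subdivision argument.

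Your existence step via subsequential limits of the finite-box lifetimes $\tau_n$ is also underspecified (boundary effects when passing to the limit in the Hitting property, identification of a limiting stopping grain, and the fact that you need uniqueness \emph{first} to upgrade subsequential convergence to full convergence and hence to get measurability of $\phi$), but this is secondary to the gap at (ii).
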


The proof of Proposition \ref{p:existence} essentially follows the same strategy as the one of Theorem 3.1 of \cite{CDlS-unbounded} but with some differences about the hypotheses on which these results are based. For the convenience of the reader, we provide a sketched proof in Appendix \ref{s:proof-p:existence}.

\subsection{Absence of percolation}
\label{s:absence-percolation}

Let $\overline\xi$ be a Poisson point process on $\R^2 \times \cC$ with intensity measure $\d x \otimes \mu$ where $\mu$ is a tempered probability measure. We put an edge between $x,y \in \xi$ if $x$ hits $y$ or if $y$ hits $x$. We write in this case $x \sim y$. We thus get an undirected graph structure $\graphe(\overline\xi)$ whose set of vertices is $\xi$. We say that $\graphe(\overline\xi)$ \textbf{percolates} if one of its connected component is infinite.

The \textbf{loop property} expresses the possibility for the model to contain loops as small as we want. Further, this property will be used to locally modify the current configuration by adding a loop at the right scale and at the right place so as to block or stop a connected component. Precisely, we say that the loop property at scale $\alpha>0$ holds if there exists $\cL(\alpha) \in \tribuconfig$ and $r(\alpha) \in (0,\alpha)$ such that:
\begin{enumerate}
\item The event $\cL(\alpha)$ is possible, that is $\P[\overline\xi \in \cL(\alpha)]>0$.
\item The event is local. More precisely $\cL(\alpha)$ only depends on the grains starting in $B(\alpha)$. In other words,
\[
\forall \overline S \in \config, \overline S \in \cL(\alpha) \iff \overline S_{|B(\alpha)} \in \cL(\alpha).
\]
\item On the event $\{\overline \xi \in \cL(\alpha)\}$, the set $\xi \cap B(\alpha)$ can be written as $\{x_1,\ldots,x_n\}$ for some integer $n \geq 2$. Furthermore, in the model associated to $\overline \xi_{|B(\alpha)}$ (whose existence is ensured by the Reconciliation lemma), the following properties hold:
\begin{enumerate}
\item For any $1\leq i\leq n-1$, $x_i$ is stopped by $x_{i+1}$ and $x_n$ is stopped by $x_{1}$.
\item For any $1\leq i\leq n$, $\tau(x_i ; \overline \xi_{|B(\alpha)} ) \le \alpha$.
\item The set
\[
\bigcup_{1\leq i\leq n} G_{x_i}(\tau(x_i ; \overline \xi_{|B(\alpha)} ))
\]
is included in the open annulus $\interior B(\alpha) \setminus B(r(\alpha))$ and it separates $B(r(\alpha))$ from $\R^2 \setminus\interior B(\alpha)$.
\end{enumerate}
\end{enumerate}

\begin{figure}[!ht]
\begin{center}
\includegraphics[width=6.5cm,height=6cm]{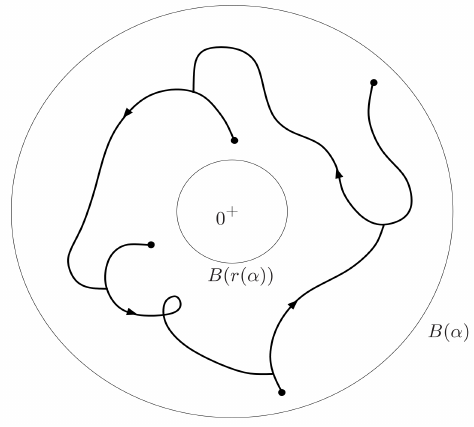}
\caption{\label{fig:loop} On this picture, $\xi \cap B(\alpha)$ is made up with $n=4$ points (the black points). The union of their grains separates $B(r(\alpha))$ from the outside of $B(\alpha)$. The black arrows indicate the sense in which grains grow.}
\end{center}
\end{figure}

Let us remark that in the loop property, the parameter $\alpha$ is used as a spatial parameter and also as a time parameter (Item 3.(b)). Note that the the radius $r(\alpha)$ could be chosen equals to $0$ in this definition without changing anything. Indeed, since the union of grains $\cup_{1\leq i\leq n} G_{x_i}(\tau(x_i ; \overline \xi_{|B(\alpha)} ))$ is a compact set, if Item 3.(c) holds for $r(\alpha)=0$ it holds for $r(\alpha)>0$ as well. By dominated convergence Theorem, if the loop property at scale $\alpha$ holds with $r(\alpha)=0$, it holds also for $r(\alpha)>0$. We keep the presence of $r(\alpha)>0$ in the definition since it plays a crucial role in the following constructions. Note also that the loop property is monotone w.r.t.\ $\alpha$: if it holds at scale $\alpha>0$ then it holds at any scale $\alpha' > \alpha$. 
Indeed, if $\cL(\alpha)$ and $r(\alpha)$ satisfy the above conditions for a given $\alpha>0$, 
then $\cL(\alpha') = \cL(\alpha) \cap \{ \xi \cap (B(\alpha') \setminus B(\alpha)) = \emptyset\}$ and $r(\alpha')=r(\alpha)$ satisfy the above conditions for $\alpha'>\alpha$.    

The main result of the article is the following result.

\begin{theorem}
\label{t}
Let $\overline\xi$ be a Poisson point process on $\R^2 \times \cC$ with intensity measure $\d x \otimes \mu$ where $\mu$ is a tempered probability measure. We denote by $h$ a random variable on $\cC$ with distribution $\mu$. Assume
\begin{itemize}
\item The loop property holds at any scale $\alpha > 0$.
\item There exists $t > 0$ such that
\[
\E \left[\sup_{s \in [0,t]} \|h(s)\|^2 \right] < \infty.
\]
\end{itemize}
Then, almost surely, $\graphe(\overline\xi)$ does not percolate.
\end{theorem}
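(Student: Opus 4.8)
\emph{Step 1 (soft reductions).} The first move is a mass transport argument using only the stationarity of $\overline\xi$ and the fact that every vertex has out-degree at most one in $\grapheoriente(\overline\xi)$ (as already used in \cite{DEL,Daley-Last-05}): it reduces the statement that $\graphe(\overline\xi)$ does not percolate to the statement that, almost surely, no vertex $x\in\xi$ has an infinite forward set $\forward(x)$. Next, invoking the loop property together with an insertion/modification argument in the spirit of \cite{CDLT,CDlS-unbounded}, one reduces further: it suffices to exhibit \emph{some} $\alpha>0$ for which, almost surely, $\grapheoriente(\overline\xi)$ has no infinite forward set all of whose vertices are $\alpha$-bad. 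The heuristic is that at each $\alpha$-good vertex $x_0$ along a forward set one may place a ball $B(v,2\alpha)$ just before the impact of $x_0$ on $s(x_0)$ overlapping no other grain, and then insert (via the loop property at scale $\alpha$) a small loop that stops $x_0$ without shrinking $\back(x_0)$; this breaks the forward set while leaving the rest of the exploration intact, so infinitely many such opportunities contradict infiniteness — hence an infinite forward set must be eventually, and in fact entirely after relocating its root, a forward set of $\alpha$-bad vertices. Since an infinite forward set is, up to the harmless multiplicity conventions on $\grapheoriente$, just an infinite directed path $x_0\to x_1\to\cdots$, the task becomes: for small $\alpha$, a.s. no infinite directed path of $\alpha$-bad vertices.

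\emph{Step 2 (localising goodness through the $\sharp$-model).} The obstruction to attacking this directly is that $\alpha$-goodness of $x_0$ depends on non-local data — the successor $s(x_0)$ and the lifetime $\tau(x_0)$ — which, because of the domino effect in the stopping/hitting procedure, are not determined by any bounded window. Here the two key propositions enter. On one hand, the augmented $\sharp$-model provides grains $G^\sharp(x)\supseteq G_x(\tau(x))$ with genuinely weaker dependencies, and Proposition~\ref{p:33} — proved by a quantitative multi-scale analysis resting on the subcritical Boolean first-passage estimate of Proposition~\ref{p:lestnsontsympas} — shows that the $\sharp$-grains, hence the iterated $\sharp$-neighbourhoods up to depth four, are spatially well confined (their diameters have light enough tails). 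On the other hand, Proposition~\ref{p:DieseGoodImplyGood} supplies the notion of $(\alpha,\sharp)$-goodness: the true local picture around $x$ is pinned down up to a finite list of scenarios, each readable from the depth-four $\sharp$-neighbourhood of $x$, and $x$ is declared $(\alpha,\sharp)$-good when it is $\alpha$-good in every scenario; $(\alpha,\sharp)$-goodness is local in the $\sharp$-graph and implies $\alpha$-goodness, so every $\alpha$-bad vertex is $(\alpha,\sharp)$-bad. It therefore suffices to rule out infinite directed paths of $(\alpha,\sharp)$-bad vertices for small $\alpha$.

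\emph{Step 3 (multi-scale conclusion).} This last point is obtained by the renormalisation scheme of \cite{G-AOP08}. One works with a geometric sequence of scales and declares a box at a given scale \emph{defective} if it contains an $(\alpha,\sharp)$-bad vertex or if some depth-four $\sharp$-neighbourhood meeting it is too large; the latter is rare by Proposition~\ref{p:33}, and the former has small density provided $\alpha$ is small (this is where the loop property is used a second time, making $\alpha$-goodness — hence $(\alpha,\sharp)$-goodness — plentiful). Using the $\sharp$-confinement of Proposition~\ref{p:33} to obtain decorrelation between well-separated boxes, one shows recursively that the probability of a defective box at scale $k+1$ is bounded by a superlinear power of that at scale $k$, so the recursion closes once the base probabilities are small enough. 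This forbids unbounded connected unions of defective boxes, a fortiori any infinite directed path of $(\alpha,\sharp)$-bad vertices, hence of $\alpha$-bad vertices. Combining with Steps~1 and~2 yields Theorem~\ref{t}.

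\emph{Main obstacle.} The genuine difficulty is making Steps~2 and~3 cooperate despite the long-range dependence: $(\alpha,\sharp)$-badness is never a strictly finite-range event, so the renormalisation must simultaneously exploit that it is determined by bounded depth in the $\sharp$-graph \emph{and} use Proposition~\ref{p:33} to guarantee that this bounded-depth $\sharp$-neighbourhood is, with overwhelming probability, spatially bounded — only the conjunction gives effective decorrelation. Consequently the real work is concentrated in Proposition~\ref{p:33} (the quantitative multi-scale control of $\sharp$-grains, through the subcritical Boolean first-passage estimate of Proposition~\ref{p:lestnsontsympas}) and Proposition~\ref{p:DieseGoodImplyGood} (the finite-scenario description witnessing $\alpha$-goodness locally); granting these two inputs, the concluding multi-scale argument and the soft reductions are comparatively routine.
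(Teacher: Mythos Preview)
Your outline is correct and tracks the paper's proof step for step: the soft reductions of Sections~\ref{Soutone}--\ref{Soutonefb}, the localisation via the $\sharp$-model and $(\alpha,\sharp)$-goodness through Propositions~\ref{p:33} and~\ref{p:DieseGoodImplyGood}, and the multi-scale recursion of Section~\ref{sect:ProofTh2From} (Lemmas~\ref{l:cross_local} and~\ref{l:multiscale}) are exactly as you describe. One small slip to fix: in Step~3 the implication you invoke runs the wrong way ($(\alpha,\sharp)$-good $\Rightarrow$ $\alpha$-good, not conversely), so the smallness of the density of $(\alpha,\sharp)$-bad points for small~$\alpha$ is not inherited from $\alpha$-goodness but is Item~4 of Proposition~\ref{p:DieseGoodImplyGood} directly; relatedly, Item~2 of that proposition only gives $(\alpha,\sharp)$-good $\Rightarrow$ $\alpha$-good for points stopped in a regular way, which the paper notes is automatic along an infinite forward set.
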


\subsection{Examples}

\subsubsection{The Brownian model}
\label{sect:BrownianModel}

The Brownian model consists in i.i.d. standard Brownian motions (BM) in $\R^2$ starting from the Poisson points of $\xi$. Equivalently, the Brownian model corresponds to the PPP $\overline \xi$ whose intensity measure $\d x \otimes \mu$ is such that $\mu$ denotes the probability distribution on $(\cC,\cF^{\cC})$ of the standard BM in $\R^2$ starting at the origin. Let $B = \{B_t\}_{t\geq 0}$ be a random variable with distribution $\mu$. We also write $B = \{(B^{(1)}_t,B^{(2)}_t) \}_{t\geq 0}$ where $\{B^{(1)}_t\}_{t\geq 0}$ and $\{B^{(2)}_t\}_{t\geq 0}$ are two i.i.d. one-dimensional (standard) BM starting at $0$. Using the fact that $\sup_{[0,t]} B^{(1)}_s$ is distributed as $|B^{(1)}_t|$, we easily check that
\[
\E \left[\sup_{s \in [0,t]} \|h(s)\|^2 \right]
\]
is finite for any $t$. By Proposition \ref{p:existence}, this suffices to ensure that $\mu$ is tempered, i.e. the Brownian model exists or is well-defined.

The existence of that model was already stated in \cite{CDlS-unbounded} (Corollary 3.2). But the absence of percolation in the Brownian model is a new result.

\begin{prop}
\label{p:NoPercoBM}
The Brownian model previously defined does not percolate (in the sense of Section \ref{s:absence-percolation}) with probability one.
\end{prop}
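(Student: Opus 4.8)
The plan is to derive Proposition~\ref{p:NoPercoBM} as an immediate application of the main Theorem~\ref{t} to the particular measure $\mu$ given by the law of planar Brownian motion. Concretely, I would verify the two hypotheses of Theorem~\ref{t} for this $\mu$: the second-moment condition and the loop property at every scale $\alpha>0$. The first hypothesis has already been checked in the excerpt just above the statement (using that $\sup_{[0,t]}B^{(1)}_s$ is distributed as $|B^{(1)}_t|$, so that $\E[\sup_{s\in[0,t]}\|h(s)\|^2]<\infty$ for all $t$, which in particular gives the needed single value of $t$). So the entire substance of the proof is to establish the loop property.

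To verify the loop property at scale $\alpha>0$, I would fix a small deterministic integer $n\ge 3$ (say $n=4$ as in Figure~\ref{fig:loop}) and a deterministic arrangement of $n$ points $z_1,\dots,z_n$ lying on a circle of radius, say, $3\alpha/4$ centered at the origin, ordered cyclically. The idea is to describe an explicit open set of trajectories for $n$ Brownian motions started near the $z_i$'s which, with positive probability, realizes the desired cyclic stopping pattern inside the annulus $\interior B(\alpha)\setminus B(r(\alpha))$ for a suitable $r(\alpha)\in(0,\alpha)$, and does so before time $\alpha$. Since one can rescale Brownian motion in both space and time ($\{cB_{t/c^2}\}_t$ is again a BM), it suffices to do this for one fixed $\alpha$, say $\alpha=1$, and transport; but it is just as easy to argue directly. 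The key points to nail down are: (i) the event $\cL(\alpha)$ must be expressible as depending only on the grains starting in $B(\alpha)$ — this is arranged by also imposing $\xi\cap B(\alpha)=\{x_1,\dots,x_n\}$ with each $x_i$ close to $z_i$, an event of positive probability for the Poisson process, and by choosing the trajectory tube so that the grains never leave $B(\alpha)$; (ii) the cyclic hitting pattern ``$x_i$ stopped by $x_{i+1}$, $x_n$ stopped by $x_1$'' must be forced; (iii) the union of the stopped grains must separate a small ball $B(r(\alpha))$ from $\R^2\setminus\interior B(\alpha)$.

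For (ii) and (iii), here is the scenario I would use. Choose the targets $z_1,\dots,z_n$ so that consecutive ones are at distance well under $\alpha/10$, and arrange the tube so that the path from $x_i$ first travels quickly and nearly straight toward a point just ``past'' $z_{i+1}$ along the circle, so that it is morally heading into the future grain of $x_{i+1}$, while the path from $x_{i+1}$ starts by lingering near its own starting point. Ordering the realized stopping times cyclically is the delicate part: one wants $x_{i+1}$ to still be alive (i.e.\ $t_{x_{i+1}}\le\tau(x_{i+1})$) at the moment $x_i$'s endpoint touches $x_{i+1}$'s trace, for every $i$ simultaneously around the cycle, which cannot happen if all hits are ``instantaneous'' but is easily arranged by a small staggering of speeds; concretely I would let grain $x_i$ move on a time-window $[0,\epsilon_i]$ with $\epsilon_1<\epsilon_2<\cdots<\epsilon_n<\alpha$ chosen so that $x_i$ reaches $x_{i+1}$'s trace only after $x_{i+1}$ has laid down enough of its own curve, and the last grain $x_n$ closes the loop onto the initial piece of $x_1$'s curve (which was laid down first). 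Each of these is a positive-probability constraint on the Brownian increments over disjoint short time intervals, so the intersection still has positive probability; by continuity of the trajectories, a sufficiently small open perturbation of this idealized picture still yields the cyclic stopping pattern and the separation property, hence $\P[\overline\xi\in\cL(\alpha)]>0$, and one takes $r(\alpha)$ to be, say, $\alpha/4$.

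I expect the main obstacle to be a careful, fully rigorous description of the tube of trajectories realizing the cyclic stopping pattern: because the model's lifetime function $\tau$ is defined by a global fixed-point condition (Stopping and Hitting properties), one must check that the \emph{unique} $\overline S$-lifetime function on the restricted configuration $\overline\xi_{|B(\alpha)}$ (which exists by the Reconciliation Lemma, Lemma~\ref{l_rec}, since the restriction is finite) indeed coincides with the intended cyclic pattern, rather than some degenerate alternative (e.g.\ a grain being stopped ``too early'' by an accidental crossing, or two grains hitting simultaneously in an unintended way). The way I would control this is to keep the $n$ trajectories spatially well-separated except near the $n$ intended contact points, so that the only possible hits are the intended ones, and to use the staggering of the time-windows $\epsilon_i$ to pin down the order in which hits occur; then the Dynamical algorithm of Section~\ref{sect:DynamicView} run on $\overline\xi_{|B(\alpha)}$ produces exactly $\tau(x_i)=\epsilon_i$ (or whatever the realized contact times are) and the cyclic pattern, and by Lemma~\ref{l_rec} this is the unique lifetime function. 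Everything else — measurability of $\cL(\alpha)$, its locality, the monotonicity in $\alpha$ already noted in the text — is routine. Once the loop property is in hand, Theorem~\ref{t} applies verbatim and gives the claimed absence of percolation, completing the proof. \qed
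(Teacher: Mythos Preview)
Your proposal is correct and follows essentially the same approach as the paper: reduce to Theorem~\ref{t}, note the moment condition is already checked, and verify the loop property at every scale by exhibiting an explicit positive-probability tube event for four Brownian paths forming a cyclic stopping pattern inside the annulus. The paper's construction differs only in a minor stylistic point: rather than staggering the time windows $\epsilon_1<\cdots<\epsilon_n$, it uses a fully symmetric two-phase scheme in which each grain first crosses a small square near its own corner during $[0,1/2]$ and then travels to cross the small square near the next corner during $[1/2,1]$, so that every hit of $x_i$ on $x_{i+1}$ automatically occurs after $x_{i+1}$ has laid down the relevant piece of its trace --- no asymmetry is needed to close the cycle. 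Your staggered variant would also work, but the symmetric version is slightly cleaner and sidesteps the worry you flag about ``instantaneous'' hits.
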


\begin{proof}
By Theorem \ref{t}, it is enough to prove that the loop property $\mathcal{L}(\alpha)$ holds at any space-time scale $\alpha > 0$ for the Brownian model since the moment condition has been checked just above. We only prove it for $\alpha = 1$ (the same construction works for any $\alpha > 0$, by a standard re-scaling procedure).  Let us first introduce some notations. Given $\varepsilon > 0$ (small), let us set
\begin{itemize}
\item $R_1^\varepsilon := [-\frac{1}{2} - 3\varepsilon , \frac{1}{2} - \varepsilon] \times [-\varepsilon , \varepsilon]$ ;
\item $R_2^\varepsilon := [0 , \frac{1}{2} + 3\varepsilon] \times [-\varepsilon , \varepsilon]$ ;
\item $R_3^\varepsilon := [\frac{1}{2} + \varepsilon , \frac{1}{2} + 3\varepsilon] \times [-\varepsilon , \varepsilon]$.
\end{itemize}
Let $B = \{B_t\}_{t\geq 0}$ be a standard BM in $\R^2$ starting at $(-\frac{1}{2} - 2\varepsilon , 0)$. We claim (without proof) that the event $\mathcal{A}$ depicted in the left hand side of Fig. \ref{fig:Brownian} has positive probability:
\[
\mathcal{A} := \big\{ \forall t \in [0 , 1/2] , \, B_t \in R_1^\varepsilon \big\} \cap \big\{ \forall t \in [1/2 , 1] , \, B_t \in R_2^\varepsilon \big\} \cap \big\{ B_1 \in R_3^\varepsilon \big\} ~.
\]
Thus, taking $\varepsilon$ small enough and four Poisson points respectively located nearby the four points $(\pm\frac{1}{2},\pm\frac{1}{2})$ and using the conjunction of four (translated and rotated) copies of the event $\mathcal{A}$, we can check that the loop property $\mathcal{L}(1)$ holds (with $r(1)$ small enough, say $r(1) = \frac{1}{4}$). See the right hand side of Fig. \ref{fig:Brownian}. The details are left to the reader.
\end{proof}

\begin{figure}[!ht]
\begin{center}
\begin{tabular}{cc}
\includegraphics[width=10cm,height=2.5cm]{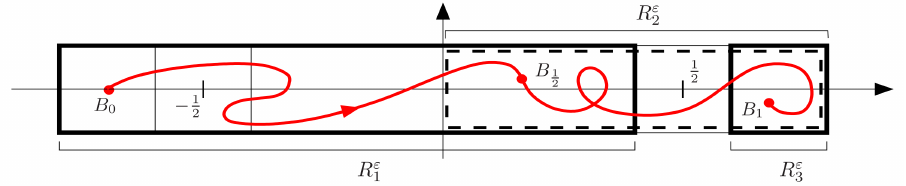} & \includegraphics[width=5cm,height=5cm]{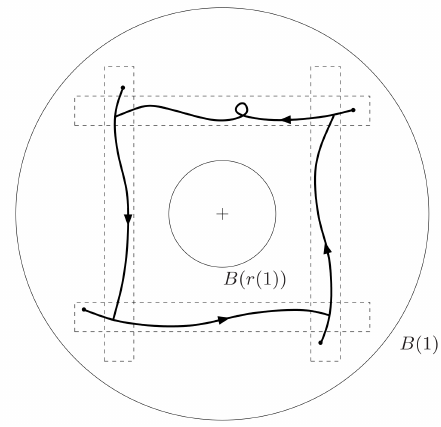}
\end{tabular}
\caption{\label{fig:Brownian} The picture on the left hand side represents the event $\mathcal{A}$. The Brownian trajectory (in red) starting at $B_0 = (-\frac{1}{2} - 2\varepsilon , 0)$ lies in $R_1^\varepsilon \cap R_2^\varepsilon$ at time $t = 1/2$. So, it horizontally crosses the square $(-\frac{1}{2} , 0) + [-\varepsilon , \varepsilon]^2$ during the time interval $[0 , 1/2]$. After that it remains trapped in $R_2^\varepsilon$ and finishes in $R_3^\varepsilon$ at time $t = 1$. In particular, it horizontally crosses the square $(\frac{1}{2} , 0) + [-\varepsilon , \varepsilon]^2$ during the time interval $[1/2 , 1]$. To the right hand side is represented the Brownian model built from four grains whose trajectories satisfy translated and rotated copies of the event $\mathcal{A}$. The resulting grain configuration forms a loop included in the open annulus $\interior B(1) \setminus B(r(1))$ which separates $B(r(1))$ from $\R^2 \setminus\interior B(1)$.}
\end{center}
\end{figure}

\subsubsection{The line-segment model}
\label{sect:Line-Segment}

The line-segment model corresponds to the PPP $\overline \xi$ whose intensity measure $\d x \otimes \mu$ is such that $\mu$ denotes the probability distribution on $(\cC,\cF^{\cC})$ of the trajectory $Y = \{Y(t)\}_{t \geq 0}$ defined as follows: for any $t \geq 0$,
\[
Y(t) := t V \big( \cos \Theta , \sin \Theta \big) \in \R^2
\]
where $V$ and $\Theta$ are two independent random variables such that $V > 0$ a.s. and $\Theta$ is uniformly distributed on $[0,2\pi]$. Hence $Y = \{Y(t)\}_{t \geq 0}$ is a unilateral line-segment growing linearly with velocity $V$ and according to the direction $\Theta$. 

The next result states the existence and the absence of percolation in the line-segment model improving moment conditions given in \cite{CDlS-unbounded} (finite fourth and exponential moments were resp. needed to ensure existence and absence of percolation in Proposition 3.4 and Theorem 3.5 of \cite{CDlS-unbounded}).

\begin{prop}
\label{p:LineSegment}
Whenever $\E[V^2] < \infty$, the line-segment model exists and does not percolate with probability one.
\end{prop}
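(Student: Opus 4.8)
The plan is to derive both assertions from results already established in this section: the existence of the model from Proposition~\ref{p:existence}, and the absence of percolation from Theorem~\ref{t}. In each case the only thing to check is a moment bound on a typical path and, for Theorem~\ref{t}, the loop property at every scale $\alpha>0$. The moment bound is immediate: a path $h$ with law $\mu$ has the form $h(s)=sV(\cos\Theta,\sin\Theta)$, so that $\sup_{s\in[0,t]}\|h(s)\|^{2}=t^{2}V^{2}$ and
\[
\E\Big[\sup_{s\in[0,t]}\|h(s)\|^{2}\Big]=t^{2}\,\E[V^{2}]<\infty\qquad\text{for every }t\ge0 .
\]
By Proposition~\ref{p:existence} the measure $\mu$ is then tempered, hence the line-segment model is well defined, and the integrability hypothesis of Theorem~\ref{t} holds as well. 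It remains to verify the loop property; note that from now on only $V>0$ a.s.\ is used.

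For the loop property at a scale $\alpha>0$ I would exhibit an explicit ``windmill'' configuration of $n=4$ germs with a $90^{\circ}$ rotational symmetry about the centre of $B(\alpha)$. After rescaling space by a small factor $s>0$, take the four ideal germ positions to be $s(-1,-\epsilon)$, $s(\epsilon,-1)$, $s(1,\epsilon)$, $s(-\epsilon,1)$ for a fixed small $\epsilon>0$, with directions east, north, west, south respectively and common speed $v>0$. An elementary computation shows that the half-line issued from the $i$-th germ meets the half-line issued from the $(i+1)$-th one (indices modulo $4$) at a point lying at distance $s(1+\epsilon)$ from the $i$-th germ but only $s(1-\epsilon)$ from the $(i+1)$-th one. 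Running the Dynamical algorithm, one checks that the four segments stop simultaneously, at time $s(1+\epsilon)/v$, each being stopped by the next, and that the union of their grains contains the boundary of the square $s[-\epsilon,\epsilon]^{2}$ (together with four outward ``whiskers'' that are harmless). Choosing $s$ small, as a function of $\alpha$ and $v$, makes $s(1+\epsilon)/v\le\alpha$ and places the whole configuration inside $\interior B(\alpha)$, while the enclosed square separates $B(r(\alpha))$ from $\R^{2}\setminus\interior B(\alpha)$ for, say, $r(\alpha):=s\epsilon/2$. Thus this ideal configuration realizes Items~3(a)--(c) of the loop property, with all stopping times strictly less than $\alpha$.

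Next I would promote this to a positive-probability event. All the constraints above — germs located in prescribed small balls, directions lying in prescribed small arcs $A_{1},\dots,A_{4}$ around the cardinal directions, and the hitting inequalities used in the Dynamical algorithm — are open, and every inequality involved is strict. A direct inspection of the Dynamical algorithm, using strictness, shows that there is $\delta>0$ such that \emph{any} four-germ configuration whose positions, directions and common speed are within $\delta$ of the ideal ones still stops cyclically, at times still below $\alpha$, with a separating loop: the ``who stops whom'' structure is locally constant and the stopping times vary continuously. The one delicate point is that the speeds cannot be prescribed, the law of $V$ being arbitrary subject only to $V>0$ a.s. To handle it, observe that $\P[V\in(1/N,N)]\to1$ as $N\to\infty$, so this probability is positive for some $N$; covering $(1/N,N)$ by finitely many intervals of radius $\delta$ then yields a value $v>0$ with $\P[\,|V-v|<\delta\,]>0$. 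Since shrinking $s$ only relaxes the constraints $s(1+\epsilon)/v\le\alpha$ and ``configuration $\subset\interior B(\alpha)$'', the windmill can be built around this particular $v$.

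Finally, let $\cL(\alpha)\in\tribuconfig$ be the (local) event that $\overline\xi_{|B(\alpha)}$ consists of exactly four points, one in each of the prescribed balls, with speeds in $(v-\delta,v+\delta)$ and directions in $A_{1},\dots,A_{4}$. By the independence of the points of $\overline\xi$ in disjoint regions and of their marks,
\[
\P[\overline\xi\in\cL(\alpha)]\ \ge\ \P[\text{four points placed as prescribed}]\cdot\prod_{i=1}^{4}\P[V\in(v-\delta,v+\delta)]\,\P[\Theta\in A_{i}]\ >\ 0 ,
\]
and on this event the loop structure of the previous paragraphs holds, so the loop property is satisfied at scale $\alpha$. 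Since $\alpha>0$ is arbitrary, Theorem~\ref{t} applies and $\graphe(\overline\xi)$ does not percolate almost surely. I expect the only real (and fairly mild) difficulty to be exactly the robustness step: arranging the windmill so that it survives perturbations and, in particular, works for speeds that the law of $V$ actually charges.
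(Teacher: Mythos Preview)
Your proposal is correct and follows essentially the same approach as the paper: compute the moment $\E[\sup_{s\in[0,t]}\|h(s)\|^{2}]=t^{2}\E[V^{2}]$ to invoke Proposition~\ref{p:existence} and Theorem~\ref{t}, then verify the loop property via a four-segment windmill. The paper in fact leaves the loop construction ``to the reader'', so your explicit windmill at $s(-1,-\epsilon),\,s(\epsilon,-1),\,s(1,\epsilon),\,s(-\epsilon,1)$, together with the robustness argument and the observation that some $v>0$ with $\P[|V-v|<\delta]>0$ always exists, supplies exactly the details the paper omits.
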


\begin{proof}
Let us first compute
\[
\E \left[\sup_{s \in [0,t]} \|Y(s)\|^2 \right] = \E \left[\sup_{s \in [0,t]} (s V)^2 \right] = t^2 \E \left[ V^2 \right] ~.
\]
So $\E[V^2] < \infty$ implies the moment conditions in Proposition \ref{p:existence}-- so the existence of the line-segment model --and in Theorem \ref{t}. It then remains to prove the loop property $\mathcal{L}(\alpha)$ at any space-time scale $\alpha > 0$. A basic construction involving four grains in the same spirit as the right hand side of Fig. \ref{fig:Brownian} will work. The details are left to the reader.
\end{proof}

\section{Proof of Theorem \ref{t}}

\label{s:proof}

\subsection{Preliminaries on Palm measure and mass transport principle}

In this section we summarize some standard results around the stationarity of point processes. In particular we recall the special form of the Palm measure for the Poisson point process with marks and the mass transport principle as well. We refer to \cite{Last-Penrose-livre} for a modern and pedagogical presentation.

Let us recall that $\overline \xi$ is a Poisson point process on $\overline S$ with intensity measure $\d x \otimes \mu$. The "heuristic" definition of the Palm measure of $\overline \xi$ at a point $x\in\R^d$ is the conditional distribution of $\overline \xi$ given the event "there is a marked point at the location $x$". Obviously this definition is senseless since this event has null probability. Nevertheless, using the independence properties of the Poisson point process (a spatial version of absence of memory) it is possible to provide a sens to this heuristic definition and show that the Palm measure of $\overline \xi$ at the location $x$ is simply the process $\overline \xi$ itself with an extra point at $x$ with an random mark $h$ following the distribution $\mu$ and being independent to $\overline \xi$. Precisely we define this point process by

$$   \overline \xi^x= \overline \xi \cup \{(x,h)\}$$

and the classical "Slivnyak-Mecke formula" provides a rigorous setting of the heuristic described above: For any  measurable and non-negative function $M : \overline S \times \overline\cS \to [0,+\infty)$ 

\begin{equation}\label{Slivnyak}
\E\left[ \sum_{y \in \overline\xi} M(y,\overline\xi)\right] =  \E\left[ \int_{\R^d} M((x,h),\overline\xi^x)dx\right].
\end{equation}

By stationarity of the Poisson point process $\overline \xi$, it is easy to see that $\overline \xi^x$ has the same distribution than $\overline \xi^0+x$ and therefore $\overline \xi^0$ is the main object of interest. In particular we have the mass transport principle.

\begin{lemma}\label{l:mass-transport-palm} Let $M : \R^d \times \R^d \times \overline\cS \to [0,+\infty)$ be measurable and non-negative.
We assume that $M$ is equivariant under the action of translations of $\R^d$:
\[
\forall x,y,t \in \R^d, \quad \forall \overline S \in \overline\cS, \quad M(x,y,\overline S) = M(x-t,y-t,\overline S-t).
\]
Then,
\[
\E\left[ \sum_{y \in \xi^0} M(0,y,\overline\xi^0)\right] =  \E\left[ \sum_{x \in \xi^0} M(x,0,\overline\xi^0)\right].
\]
\end{lemma}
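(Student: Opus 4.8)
The plan is to derive the mass transport principle of Lemma~\ref{l:mass-transport-palm} from the Slivnyak--Mecke formula~\eqref{Slivnyak} together with the stationarity of $\overline\xi$. The key observation is that $\overline\xi^0$ has the same law as $\overline\xi^x - x$ for every $x \in \R^d$; this is precisely the translation invariance of the Poisson point process applied to the Palm version at $x$. So the strategy is: apply Slivnyak--Mecke twice, once to each side of the desired identity, reduce both sides to an integral of the same deterministic (in $\overline\xi$) quantity against Lebesgue measure on $\R^d$, and conclude by a change of variables using the equivariance hypothesis on $M$.

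Concretely, I would first rewrite the left-hand side. Define $M_1 : \R^2 \times \overline\cS \to [0,+\infty)$ by $M_1((y,h),\overline S) = M(0,y,\overline S)$ and apply~\eqref{Slivnyak} to the function $(y,\overline\xi) \mapsto \1[\text{the extra point is there}]\cdots$ --- more carefully, I would work with $\overline\xi^0$ directly. The clean way: by the Slivnyak--Mecke formula applied to the marked process, for a nonnegative measurable $F$,
\[
\E\left[\sum_{y \in \overline\xi} F\big((0,h_0),y,\overline\xi \cup \{(0,h_0)\}\big)\right]
\]
is awkward because of the distinguished point at $0$; instead I would directly use the definition $\overline\xi^0 = \overline\xi \cup \{(0,h)\}$ with $h \sim \mu$ independent, and note that
\[
\E\left[\sum_{y \in \xi^0} M(0,y,\overline\xi^0)\right] = \E\left[\sum_{y \in \xi} M(0,y,\overline\xi^0)\right] + \E\big[M(0,0,\overline\xi^0)\big].
\]
The first term, by conditioning on $h$ and applying~\eqref{Slivnyak} to $\overline\xi$, equals $\E\big[\int_{\R^2} M(0,x,\overline\xi^0 \cup \{(x,h_x)\})\,dx\big]$, which after relabelling is $\E\big[\int_{\R^2} M(0,x,\overline\xi^{0,x})\,dx\big]$ where $\overline\xi^{0,x}$ carries two extra points. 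Symmetrically, the right-hand side $\E[\sum_{x\in\xi^0} M(x,0,\overline\xi^0)]$ expands to $\E\big[\int_{\R^2} M(x,0,\overline\xi^{0,x})\,dx\big] + \E[M(0,0,\overline\xi^0)]$. The diagonal terms match trivially, so it remains to compare the two integrals.

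For the integral terms, I would use the equivariance of $M$ with translation vector $t = x$: $M(0,x,\overline\xi^{0,x}) = M(-x,0,\overline\xi^{0,x}-x)$. Now $\overline\xi^{0,x} - x = (\overline\xi - x) \cup \{(-x,h_0)\} \cup \{(0,h_x)\}$, and since $\overline\xi - x \overset{d}{=} \overline\xi$ with the marks $h_0, h_x$ playing symmetric i.i.d.\ roles, the law of $\overline\xi^{0,x}-x$ is the same as that of $\overline\xi^{0,-x}$ (two extra points, one at $0$ and one at $-x$). Hence $\E\big[M(0,x,\overline\xi^{0,x})\big] = \E\big[M(-x,0,\overline\xi^{0,-x})\big]$, and integrating over $x \in \R^2$ and substituting $x \mapsto -x$ gives exactly $\E\big[\int_{\R^2} M(x,0,\overline\xi^{0,x})\,dx\big]$, matching the right-hand side.

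The main obstacle is purely bookkeeping: keeping track of the two auxiliary points and making sure the stationarity argument for $\overline\xi^{0,x} - x$ is applied rigorously (one should phrase it as an identity in distribution for the configuration together with its two marks, then invoke Fubini--Tonelli to exchange $\E$ and $\int dx$, which is licit by nonnegativity). There is no analytic difficulty; the only care needed is that all the rearrangements of sums and integrals are justified by Tonelli's theorem for nonnegative measurable integrands, and that the measurability of $\phi$ built into the notion of tempered set of configurations ensures $\tau(\cdot;\cdot)$ --- hence any $M$ depending on it --- is a legitimate measurable input.
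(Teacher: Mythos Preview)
Your proof is correct and follows essentially the same approach as the paper: split off the diagonal term $M(0,0,\overline\xi^0)$, apply Slivnyak--Mecke to introduce a second extra point, use the equivariance of $M$ to shift by $x$, invoke stationarity of $\overline\xi$ (with the two i.i.d.\ marks swapping roles), and change variables $x\mapsto -x$. The paper's proof is just a cleaner linear write-up of exactly this computation, writing the two-point Palm configuration explicitly as $\{(0,h),(y,h')\}\cup\overline\xi$ rather than using your shorthand $\overline\xi^{0,x}$.
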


In words, under the Palm distribution, the mass coming out of the origin is equal to the mass coming into the origin. The lemma admits a proof relying only on stationarity. 
For completeness, we choose to provide a very short and intuitive proof relying on the fact that $\overline\xi$ is a Poisson point process.

\begin{proof}
Let $h$ and $h'$ be two independent random variables with distribution $\mu$.
\begin{align*}
\E\left[ \sum_{y \in \xi^0} M(0,y,\overline\xi^0)\right] 
 & = \E \Big[M\big(0,0,\{(0,h)\} \cup \overline\xi\big) \Big] + \int_{\R^d} \E \Big[M\big(0,y,\{(0,h),(y,h')\} \cup \overline\xi\big) \Big] \d y\\
 & = \E \Big[M\big(0,0,\{(0,h)\} \cup \overline\xi\big) \Big] + \int_{\R^d} \E \Big[M\big(-y,0,\{(-y,h),(0,h')\} \cup (\overline\xi-y)\big) \Big] \d y \\
  & = \E \Big[M\big(0,0,\{(0,h)\} \cup \overline\xi\big) \Big] + \int_{\R^d} \E \Big[M\big(-y,0,\{(-y,h),(0,h')\} \cup \overline\xi\big) \Big] \d y \\
 & = \E \Big[M\big(0,0,\{(0,h)\} \cup \overline\xi\big) \Big] + \int_{\R^d} \E \Big[M\big(x,0,\{(x,h),(0,h')\} \cup \overline\xi\big) \Big] \d x \\
 & = \E\left[ \sum_{x \in \xi^0} M(x,0,\overline\xi^0)\right].
\end{align*}
We used the Slivnyak-Mecke formula in Steps 1 and 4, the equivariance of $M$ in step 2 and the stationarity of $\overline\xi$ in step 3.
\end{proof}

%
%
%\paragraph{All connections are regular:} 
%\begin{equation}\label{e:model_connexion_regular}
%\forall x \neq y \in \xi, \text{ if } (x \text{ hits } y \text{ or } y \text{ hits } x), 
%\text{ then } (x \text{ hits } y \text{ in a regular way or }  y \text{ hits } x \text{ in a regular way}).
%\end{equation}
%Clearly, \eqref{e:model_hit_regular} implies \eqref{e:model_connexion_regular}.
%
%\jb{le contre-exemple avec les grains qui tournent en rond vérifie sans doute cette condition (!?) ; on n'a pas d'exemple où cette condition n'est pas vérifiée (?) ;
%la réduction à des pog reste simple avec des hypothèses plus faibles ; développer ? on pourrait au contraire sucrer \eqref{e:model_connexion_regular} éventuellement mais 
%c'est une solution de facilité ! }

\subsection{Reduction to an out-degree one graph }\label{Soutone}

In the undirected graph $\graphe(\overline\xi)$ (see  Section \ref{s:absence-percolation}), 
two distinct vertices $x,y \in \xi$ are linked by an edge if $x$ hits $y$ or $y$ hits $x$.
We write $x \sim y$ when there is such an edge.

For all $x \in \xi$ we now define a successor $s(x)=s(x ; \overline\xi) \in \xi$ as follows:
\begin{enumerate}
\item If $\tau(x)=+\infty$, we set $s(x)=x$.
\item If $\tau(x)<+\infty$, there are two cases: 
\begin{enumerate}
\item If $x$ is stopped in a regular way by some $y$ (see \eqref{e:x-stopped-by-y-in-a-regular-way}), 
then there exists a unique such $y$ because of the hard-core property expressed in Lemma \ref{lem:hard-core}. In that case we set $s(x)=y$.
\item If $x$ is not stopped in a regular way by any $y$, we set $s(x)=x$.
\end{enumerate}
\end{enumerate}
We now define a directed graph $\grapheoriente(\overline\xi)$ with set of vertices $\xi$ by putting an arrow from $x$ to $y$ when $s(x)=y$.
We write $x \to y$ when there is such an arrow.
The graph $\grapheoriente(\overline\xi)$ satisfies two basic and elementary properties stated in the following lemma.

\begin{lemma} \label{l:c_est_un_pog} $ $
\begin{enumerate}
\item For any $x \in \xi$, the outdegree of $x$ in $\grapheoriente(\overline\xi)$ is $1$.
\item The graph $\grapheoriente(\overline\xi)$ is shift-invariant.
This means that for all $u \in \R^d$,
\[
\grapheoriente(\overline\xi - u) = \grapheoriente(\overline\xi) - u.
\]

\end{enumerate}
\end{lemma}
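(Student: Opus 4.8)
The statement to prove, Lemma \ref{l:c_est_un_pog}, asserts two elementary facts about $\grapheoriente(\overline\xi)$: that every vertex has out-degree one, and that the construction is equivariant under translations. Both are essentially bookkeeping, so the plan is to unwind the definitions carefully.

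\textbf{Out-degree one.} The plan is to check that the successor map $s(\cdot)$ is well-defined as a function $\xi \to \xi$, since an arrow $x \to y$ is placed precisely when $s(x) = y$, and a function assigns exactly one value. I would go through the three-way case split in the definition of $s$. If $\tau(x) = +\infty$ then $s(x) = x$ is unambiguously defined. If $\tau(x) < \infty$ and $x$ is stopped in a regular way by some $y$, then the key point is \emph{uniqueness} of such a $y$: this is exactly what the Hard-core property (Lemma \ref{lem:hard-core}) gives us, as already noted right after \eqref{e:x-stopped-by-y-in-a-regular-way}, so $s(x) = y$ is well-defined. If $\tau(x) < \infty$ but $x$ is stopped in a regular way by no vertex, then $s(x) = x$, again unambiguous. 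In every case $s(x)$ is a single well-defined element of $\xi$, so $x$ has exactly one outgoing arrow. The only subtlety worth a sentence is that these three cases are exhaustive and mutually exclusive, which is immediate from their phrasing.

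\textbf{Shift-invariance.} Here the plan is to reduce everything to the translation-equivariance of the lifetime function, which is recorded in Section \ref{s:model_driven_by_mu}: for a tempered set of configurations, $\tau(x ; \overline S - u) = \tau(x - u ; \overline S)$, equivalently $\phi(\overline S - u) = \phi(\overline S) - u$. I would first observe that $\overline\xi - u$ has the same law and still lies in $\overline\cA$, so the successor map is defined for it too. Then I would verify that the notion ``$x$ hits $y$'' is translation-equivariant: since $g_x(t ; \overline S - u) = (x - u) + h_{x-u}(t) = g_{x-u}(t;\overline S) $ for the shifted configuration (the marks $h$ are unchanged by the spatial translation), the relation \eqref{e:x-stopped-by-y} for $(x, y)$ in $\overline S - u$ is equivalent to the same relation for $(x-u, y-u)$ in $\overline S$, using the equivariance of $\tau$. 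The same holds verbatim for ``hits in a regular way'' \eqref{e:x-stopped-by-y-in-a-regular-way}. Consequently, running the definition of $s$ on $\overline\xi - u$ gives $s(x ; \overline\xi - u) = s(x - u ; \overline\xi) + u$ in each of the three cases, which is precisely the assertion $\grapheoriente(\overline\xi - u) = \grapheoriente(\overline\xi) - u$.

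I do not expect a genuine obstacle in this lemma; it is deliberately a ``soft'' preliminary. The only place one must be slightly careful is to make sure that the uniqueness needed for well-definedness of $s$ in case 2(a) is precisely supplied by the Hard-core property and nothing stronger is used, and that the equivariance of $\tau$ invoked in the second part is exactly the one guaranteed by the temperedness of $\overline\cA$ rather than an unproven stabilization property. I would therefore keep the proof to a few lines, citing Lemma \ref{lem:hard-core} and the equivariance relation $\phi(\overline S - u) = \phi(\overline S) - u$, and leaving the trivial verifications to the reader.
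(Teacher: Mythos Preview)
Your approach is correct and matches the paper's proof: item 1 is immediate from the definition of $s$ (with uniqueness in case 2(a) supplied by the Hard-core property), and item 2 follows from the translation-invariance of $\overline\cA$ together with the equivariance of $\tau$ coming from uniqueness of the lifetime function. One minor slip in your bookkeeping: since a point $x \in \xi - u$ corresponds to $x+u \in \xi$, the correct relations are $\tau(x;\overline\xi-u)=\tau(x+u;\overline\xi)$ and $s(x;\overline\xi-u)=s(x+u;\overline\xi)-u$ (equivalently, as the paper writes it, $\tau(x-u;\overline\xi-u)=\tau(x;\overline\xi)$ for $x\in\xi$), so your displayed formulas have the sign of $u$ reversed.
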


\begin{proof} Recall that we work on a full probability event on which $\overline\xi \in \overline\cA$ where $\overline\cA$ is a tempered set of configurations.
The first item is a straightforward consequence of the definition of the graph.
Let us check the second idem.
As $\overline\xi$ belongs to $\overline\cA$ and as $\overline\cA$ is invariant under spatial translations, $\overline\xi - u \in \overline\cA$.
Moreover, by uniqueness of the lifetime function, for all $x \in \xi$, $\tau(x-u ; \overline\xi - u) = \tau(x ; \overline\xi)$.
The result easily follows.
\end{proof}

We denote by $\cluster(x; \overline\xi)$ the undirected component of $x$ in $\grapheoriente(\overline\xi)$.
This is the set of vertices we can reach from $x$ by following the arrows forward or backwards.
More formally,
\begin{align}
\cluster(x) = \cluster(x; \overline\xi)=\{ y \in \xi :  & \text{ there exists } n \ge 0 \text{ and a sequence }x=a_0,\dots,a_n=y \nonumber \\
 & \text{  such that, for all }i \in \{1,\dots,n\}, a_{i-1} \to a_i \text{ or } a_i \to a_{i-1}\}. \label{e:cluster_non_oriente}
\end{align}
There is a strong link between percolation in the graphs $\graphe(\overline\xi)$ and $\grapheoriente(\overline\xi)$ as stated in the next lemma.

\begin{lemma}  \label{l:reduction-pog}
On a full probability event, the following holds.
For any $x \in \xi$, if the connected component of $x$ in $\graphe(\overline\xi)$ is infinite, 
then the undirected component $\cluster(x ; \overline\xi)$ of $x$ in $\grapheoriente(\overline\xi)$ is infinite.
\end{lemma}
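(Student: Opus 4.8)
The plan is to show that the connected component in $\graphe(\overline\xi)$ coincides, up to the ``hits on oneself are ignored'' ambiguity, with the undirected component $\cluster(x;\overline\xi)$, so that the former being infinite forces the latter to be infinite. The key observation is that every edge $x\sim y$ in $\graphe(\overline\xi)$ comes from a hit, say $x$ hits $y$; and a hit of $x$ on $y$ is either a regular hit (in the sense of \eqref{e:x-stopped-by-y-in-a-regular-way}), in which case $s(x)=y$ and thus $x\to y$ is an arrow of $\grapheoriente(\overline\xi)$, or it is a non-regular hit, i.e. $g_x(\tau(x))=g_y(t_y)$ with $t_y=\tau(y)$. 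In the latter case the grain $y$ is itself stopped, precisely at the point $g_y(\tau(y))=g_x(\tau(x))$, hence by the Hitting property there exists $z\in S\setminus\{y\}$ such that $y$ hits $z$ at time $\tau(y)$.

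First I would make this dichotomy precise: define for an edge $x\sim y$ of $\graphe(\overline\xi)$ (coming from ``$x$ hits $y$'') the endpoint $g:=g_x(\tau(x))=g_y(\tau(y))$ in the non-regular case, and note that the set of grains passing through $g$ is finite (it is contained in the locally finite set $\xi$, and only finitely many grains reach any fixed compact within any fixed lifetime window — more simply, local finiteness of $\xi$ suffices once one notes each such grain contributes one point $g$ in its range). Among the grains through $g$, at least one is stopped there in a regular way: indeed take $w$ among the grains through $g$ minimizing $\tau(w)$; then $w$ cannot be hitting another grain $w'$ through $g$ in a non-regular way, since that would require $\tau(w')=\tau(w)$ and, pushing the argument, would contradict minimality or the Hitting property. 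Thus for each edge $x\sim y$ of $\graphe(\overline\xi)$, the points $x$ and $y$ both lie in the undirected component in $\grapheoriente(\overline\xi)$ of whichever grain is regularly stopped at the shared point: $x$ reaches it by following arrows forward (since $x\to s(x)\to s(s(x))\to\cdots$ moves through grains whose endpoints all coincide at $g$ until a regularly-stopped one is hit), and similarly for $y$. In particular $x$ and $y$ lie in the same undirected component $\cluster$, so $\cluster(x;\overline\xi)=\cluster(y;\overline\xi)$.

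Given that each edge of $\graphe(\overline\xi)$ joins two vertices in a common $\cluster$, a straightforward induction along a path $x=a_0\sim a_1\sim\cdots\sim a_n$ in $\graphe(\overline\xi)$ shows $\cluster(a_0;\overline\xi)=\cluster(a_n;\overline\xi)$; hence the whole connected component of $x$ in $\graphe(\overline\xi)$ is contained in $\cluster(x;\overline\xi)$. Consequently, if the connected component of $x$ in $\graphe(\overline\xi)$ is infinite, then $\cluster(x;\overline\xi)$ is infinite, which is the assertion. I expect the main obstacle to be the bookkeeping around non-regular hits: carefully justifying that following the successor map $s(\cdot)$ from a grain through a given multiply-covered point $g$ leads, in finitely many steps, to a grain that is regularly stopped at $g$, and that all grains encountered along the way share the endpoint $g$ (so they are all edge-adjacent in $\graphe(\overline\xi)$ and all lie in one $\cluster$). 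This uses the Stopping and Hitting properties and the finiteness of the set of grains through $g$; everything else is routine once this structural fact is in place.
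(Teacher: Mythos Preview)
Your argument has a genuine gap at the step where you claim that ``among the grains through $g$, at least one is stopped there in a regular way.'' This is false in general. Consider two grains $x$ and $y$ whose trajectories first meet at a point $g$ at the same time $t$, with no other grain passing through $g$. Then $\tau(x)=\tau(y)=t$, both stop at $g$, and neither is stopped in a regular way (since $g\notin g_x([0,\tau(x)))$ and $g\notin g_y([0,\tau(y)))$). By the definition of the successor map, $s(x)=x$ and $s(y)=y$, so $x$ and $y$ lie in distinct undirected components of $\grapheoriente(\overline\xi)$, yet $x\sim y$ in $\graphe(\overline\xi)$. Your minimality argument does not rule this out: taking $w$ with minimal $\tau(w)$ among grains through $g$ can still give $\tau(w')=\tau(w)$ for another grain $w'$ through $g$, and nothing in the Stopping or Hitting properties forbids this simultaneous non-regular collision. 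Consequently your claimed containment $C(x)\subset\cluster(x;\overline\xi)$ fails as a deterministic statement.

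The paper's proof recognizes exactly this obstruction and calls such a point $g$ a \emph{singular point}. It first shows combinatorially (Claim~\ref{c:unique-singular-point}) that each connected component of $\graphe(\overline\xi)$ has at most one singular point, and then uses the mass transport principle (Claim~\ref{c:cluster-speciaux-finis}) to conclude that any component containing a singular point is almost surely finite. Only after restricting to an infinite component $C$---which therefore has no singular point---does the case analysis you sketch go through: for every edge $x\sim x'$ in $C$ one finds either a direct arrow or a common regular target $z$ with $x\to z$ and $x'\to z$. So the missing ingredient in your approach is precisely this probabilistic (mass-transport) step eliminating the singular-point configurations; without it the purely deterministic argument cannot succeed.
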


Thanks to Lemma \ref{l:reduction-pog}, in order to prove Theorem \ref{t}, it is sufficient to prove that all the undirected component of $\grapheoriente(\overline\xi)$
are finite almost surely.

\begin{proof}[Proof of Lemma \ref{l:reduction-pog}]

There is a simple proof in the case where all the hits are regular (see \eqref{e:x-stopped-by-y-in-a-regular-way} for the definition). This means that for all $x \neq y \in \xi$ then $x \sim y$ implies that $x$ hits $y$ or $y$ hits $x$ which is equivalent to $x \to y$ or $y \to x$.
Therefore the connected components of $\graphe(\overline\xi)$ coincide with the undirected components of $\grapheoriente(\overline\xi)$.

We now give the proof in the general case where non-regular hits are possible. The basic idea is that, if in a connected component $C$ of $\graphe(\overline\xi)$ there exists two points $x \sim y$ such that neither $x \to y$ nor $y \to x$, 
then one can associate with the connected component $C$ a unique special point in a translation equivariant invariant way. By a standard application of the mass transport principle, this implies that the connected component $C$ is finite.

We say that $x \in \xi$ stops in a singular way if
\begin{enumerate}
\item $\tau(x)$ is finite.
\item There exists no $y \in \xi$ (we do not exclude $y=x$) such that $g_x(\tau(x))$ belongs to $g_y([0,\tau(y)))$.
\end{enumerate}
In this case we say that $x$ stops in a singular way at $g_x(\tau(x))$.
Let $C$ be a connected component of $\graphe(\overline\xi)$.
We say that $s \in \R^2$ is a singular point of $C$ if there exists $x \in C$ such that $x$ stops in a singular way at $s$.
The proof relies on the following observation whose proof is postponed.

\begin{claim}\label{c:unique-singular-point} The number of singular points of a connected component of $\graphe(\overline\xi)$ is either $0$ or $1$.
\end{claim}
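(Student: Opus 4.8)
The plan is to argue by contradiction. Suppose some connected component $C$ of $\graphe(\overline\xi)$ had at least two distinct singular points $s \neq s'$, witnessed respectively by grains $x, x' \in C$ that stop in a singular way at $s$ and at $s'$. Since $C$ is connected in $\graphe(\overline\xi)$, there is a finite path $x = z_0 \sim z_1 \sim \cdots \sim z_k = x'$ in $C$. Along this path I would isolate a first index $i$ where something ``singular'' is traversed: more precisely, since $x$ stops singularly, by definition of $s(\cdot)$ we have $s(x) = x$, so the edge $z_0 \sim z_1$ is not of the form $z_0 \to z_1$; hence it must come from $z_1 \to z_0$, i.e. $s(z_1) = z_0 = x$. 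The key point is then to understand what it means for $z_1$ to be stopped (in a regular way, since $s(z_1) = z_0 \neq z_1$ forces regularity) by a grain $x$ that itself stopped in a singular way at $s$.

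The heart of the matter is the following tension. If $z_1 \to z_0 = x$, then $z_1$ hits $x$ in a regular way, so $g_{z_1}(\tau(z_1)) = g_x(t)$ for some $t < \tau(x)$, meaning $g_{z_1}(\tau(z_1)) \in g_x([0,\tau(x)))$. But $x$ stopping in a singular way at $s = g_x(\tau(x))$ means precisely that $g_x(\tau(x))$ belongs to no $g_y([0,\tau(y)))$; it says nothing directly about earlier points of $G_x$. So I expect the real work is in showing that a grain which stops singularly cannot be hit from ``both sides'' in an inconsistent way, and more importantly, that the existence of a singular stop at $s$ already pins down the entire backward structure feeding into $s$. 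The cleanest route is probably to show that the map sending a connected component to its set of singular points, restricted to components with $\geq 1$ singular point, when combined with the failure of uniqueness, would allow us to build a translation-equivariant rule selecting a distinguished ordered pair $(s,s')$ of distinct points of $\R^2$ attached to $C$ — but this does not immediately give finiteness. So instead I would look for a direct combinatorial contradiction: following the arrows \emph{forward} from any vertex of $C$, one eventually either runs to infinity or reaches a vertex $w$ with $s(w) = w$; such a $w$ either has $\tau(w) = \infty$, or is stopped but not regularly, or stops singularly. I would argue that within a single connected component there can be only one ``sink'' vertex $w$ with $s(w) = w$ that stops singularly, because two distinct such sinks would force two disjoint forward-trees inside $C$ whose union is all of $C$ (since every vertex flows forward to exactly one sink or to infinity), yet the edge witnessing connectivity between the two trees would have to be a non-oriented edge $u \sim v$ with neither $u \to v$ nor $v \to u$, i.e. a coincidence of two grain-endpoints $g_u(\tau(u)) = g_v(\tau(v))$ that is a non-regular hit for both — and I would show this coincidence point is forced to equal both $s$ and $s'$, contradicting $s \neq s'$.

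Let me make the last step precise, as I expect it to be the main obstacle. Suppose $u \sim v$ in $C$ with neither $u \to v$ nor $v \to u$. By definition of the successor, $u \sim v$ and $\neg(u \to v)$ means $u$ hits $v$ but not in a regular way, so $g_u(\tau(u)) = g_v(t_v)$ with $t_v = \tau(v)$ (equality forced since it is not a regular hit, using \eqref{e:x-stopped-by-y} versus \eqref{e:x-stopped-by-y-in-a-regular-way}); symmetrically $g_v(\tau(v)) = g_u(t_u)$ with $t_u = \tau(u)$. Hence $g_u(\tau(u)) = g_v(\tau(v)) =: p$, and by the Hard-core property (Lemma \ref{lem:hard-core}) no other grain passes through $p$ before its own lifetime, while $p$ lies at the endpoint of both $G_u$ and $G_v$. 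I would then check that such a $p$ is exactly a point where \emph{both} $u$ and $v$ stop in a singular way: indeed if some $w$ had $p \in g_w([0,\tau(w)))$ then $w$ would $\tau$-hit nothing but would be hit, contradicting either Hard-core or the Stopping property applied to $u$ or $v$. Therefore $p = s$ (from $u$) and $p = s'$ (from $v$), giving $s = s'$. The delicate bookkeeping is to guarantee that any connectivity edge between the forward-tree of $x$ and the forward-tree of $x'$ is necessarily of this non-oriented type and that both endpoints of such an edge stop singularly at the same point; I would handle this by tracing forward from the two endpoints of the offending edge and invoking that forward paths are eventually constant (reach a sink) or infinite, and that the only sinks reachable are $x$ and $x'$, forcing the endpoints' singular stops to be at $s$ and $s'$ respectively.
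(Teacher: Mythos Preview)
Your approach via the directed graph $\grapheoriente(\overline\xi)$ and forward-trees is natural but, as written, has a genuine gap. You assert that ``two distinct such sinks would force two disjoint forward-trees inside $C$ whose union is all of $C$'' and later that ``the only sinks reachable are $x$ and $x'$''. Neither is justified: a connected component $C$ of $\graphe(\overline\xi)$ can contain many sinks (vertices $w$ with $s(w)=w$) besides $x$ and $x'$---for instance vertices with $\tau(w)=\infty$, or vertices that stop non-regularly but \emph{not} singularly (because $g_w(\tau(w))$ happens to lie on $g_w([0,\tau(w)))$), or further vertices stopping singularly at $s$ or $s'$ themselves. Vertices of $C$ may also have infinite forward. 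Consequently, the ``boundary edge'' $u\sim v$ you locate need not have $u$ flowing to $x$ and $v$ flowing to $x'$; all you can say is that $u$ and $v$ flow to \emph{different} destinations. Your conclusion ``$p=s$ (from $u$) and $p=s'$ (from $v$)'' therefore does not follow. (Separately, your stated reason for $p$ being singular---``$w$ would $\tau$-hit nothing but would be hit''---is garbled; the correct argument is that $p\in g_w([0,\tau(w)))$ would force both $u\to w$ and $v\to w$, contradicting that $u,v$ lie in different forward-trees. But even with this fix, $p$ is merely \emph{some} singular point of $C$, not specifically $s$ or $s'$.)

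Your idea can be repaired by an inductive shortening of the path (showing that at every color change the left endpoint must coincide with the previous sink, hence all $z_i$ end up stopping singularly at $s$), but this is real extra work you did not supply. By contrast, the paper's proof is shorter and stays entirely inside $\graphe(\overline\xi)$: take a \emph{minimal-length} path $x=x_0\sim\cdots\sim x_n=x'$ between witnesses of $s$ and $s'$, and prove by induction on $i$ that $\tau(x_i)<\infty$ and $g_{x_i}(\tau(x_i))\in g_{x_{i-1}}([0,\tau(x_{i-1})))$. The three possible shapes of the edge $x_{i-1}\sim x_i$ are ruled out one by one using only the hard-core property (Lemma~\ref{lem:hard-core}) and minimality of $n$, leaving exactly the displayed inclusion. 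At $i=n$ this gives $s'=g_{x_n}(\tau(x_n))\in g_{x_{n-1}}([0,\tau(x_{n-1})))$, contradicting singularity of $s'$. No appeal to $\grapheoriente$, sinks, or forward-trees is needed.
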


The following claim follows from the previous one and stationarity through standard mass transport arguments. The proof is also postponed.

\begin{claim} \label{c:cluster-speciaux-finis} With probability one, any connected component of $\graphe(\overline\xi)$ which contains a singular point is finite.
\end{claim}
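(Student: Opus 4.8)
\textbf{Plan of proof of Claim \ref{c:cluster-speciaux-finis}.}

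The strategy is a standard mass transport argument, exactly of the kind already used for Lemma \ref{l:mass-transport-palm}. First I would combine Claim \ref{c:unique-singular-point} with a measurable selection of the singular point. Concretely, for $\overline S \in \overline\cA$ and $x,y \in S$, define $M(x,y,\overline S) = 1$ if the connected component $C$ of $x$ in $\graphe(\overline S)$ has a (necessarily unique, by Claim \ref{c:unique-singular-point}) singular point $s$, if $y \in C$, and if $x$ is the point of $C$ realizing $s$ in the sense that $x$ stops in a singular way at $s$; set $M(x,y,\overline S) = 0$ otherwise. A short check shows that $x$ is uniquely determined by $C$: if two distinct points $x, x'$ both stopped in a singular way at the same $s$, then one would hit the other (since $g_x(\tau(x)) = g_{x'}(\tau(x')) = s$ forces, by the definition of $\tau$-hitting, $x$ hits $x'$ or $x'$ hits $x$, hence $s = g_{x'}(t_{x'})$ with $t_{x'} \le \tau(x')$, and one must check carefully whether this contradicts singularity — actually $x$ hitting $x'$ means $g_x(\tau(x))$ belongs to $g_{x'}([0,\tau(x')])$ so if $\tau(x') > 0$ and the hitting time of $x'$ is in $[0,\tau(x'))$ we contradict that $x$ stops in a singular way; the boundary case $t_{x'} = \tau(x')$ has to be discussed, but then reversing roles $x'$ hits $x$ at an interior time of $x$ unless $\tau(x)$ too, and simultaneous endpoint-to-endpoint coincidences can be excluded or absorbed into a symmetric tie-break). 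This makes $M(x,\cdot,\cdot)$ well-defined and, by construction, translation-equivariant in the sense required by Lemma \ref{l:mass-transport-palm}; measurability follows from the measurability of $\phi$ in the definition of a tempered set of configurations.

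Next I would apply Lemma \ref{l:mass-transport-palm} to this $M$. On the left side, $\sum_{y \in \xi^0} M(0,y,\overline\xi^0)$ counts, if the origin is the singular vertex of its component $C$, the cardinality $\#C$, and is $0$ otherwise; so its expectation is $\E[\#C \cdot \1\{0 \text{ is the singular vertex of its component}\}]$. On the right side, $\sum_{x \in \xi^0} M(x,0,\overline\xi^0)$ is at most $1$ for every configuration: it equals $1$ precisely when the component of the origin has a singular point and $x$ is the (unique) vertex realizing it, and $0$ otherwise. Hence the right-hand expectation is $\le 1$, and therefore
\[
\E\big[\#C \cdot \1\{0 \text{ is the singular vertex of its component}\}\big] \le 1 < \infty.
\]
In particular, on the event that the origin is the singular vertex of its component, $\#C$ is almost surely finite.

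Finally I would upgrade this to the statement "with probability one, every component of $\graphe(\overline\xi)$ containing a singular point is finite". If some component $C_0$ containing a singular point were infinite with positive probability, then, writing $x_0$ for its singular vertex, an application of the Slivnyak--Mecke formula \eqref{Slivnyak} (or directly the ergodicity/stationarity of $\overline\xi$ together with the translation-invariance of the event) would produce, with positive probability under the Palm measure, a configuration in which the origin is the singular vertex of an infinite component; this contradicts the finiteness just obtained. I expect the only genuinely delicate point to be the verification that a component has at most one \emph{vertex} stopping in a singular way (as opposed to at most one singular \emph{point}) — i.e. the careful case analysis of simultaneous endpoint coincidences and of the degenerate situations permitted by rough or space-filling curves — and that $M$ is measurable; the mass-transport bookkeeping itself is routine once $M$ is in place.
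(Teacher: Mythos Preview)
Your proposal is correct and takes essentially the same mass-transport route as the paper (Slivnyak--Mecke plus Lemma \ref{l:mass-transport-palm}), using a vertex that stops in a singular way as the distinguished marker of its component. The subtlety you flag --- that Claim \ref{c:unique-singular-point} guarantees uniqueness of the singular \emph{point} in $\R^2$ but not a priori of the \emph{vertex} realizing it --- is genuine and is equally glossed over in the paper's own sketch; a clean fix is to share the unit mass equally among all singular vertices of the component (or to use any translation-equivariant tie-break such as smallest $\tau$, then lexicographic position).
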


We first show how to conclude using Claim \ref{c:cluster-speciaux-finis}. 
Fix $C$ an infinite connected component of $\graphe(\overline\xi)$.
By Claim \ref{c:cluster-speciaux-finis}, $C$ admits no singular point.
Let $x \neq x' \in C$ be such that $x \sim x'$.
By symmetry we can assume that $x$ hits $x'$.
There are two cases:
\begin{enumerate}
\item If $x$ hits $x'$ in a regular way, then there is an arrow $x \to x'$ in $\grapheoriente(\overline\xi)$.
\item Otherwise, $\tau(x)$ and $\tau(x')$ are finite and
\[
g_x(\tau(x))=g_{x'}(\tau(x')) \text{ and } g_x(\tau(x)) \not\in g_{x'}([0,\tau(x'))).
\]
But $C$ admits no singular point.
Therefore there exists $y \in \chi$ such that $g_x(\tau(x)) \in g_y([0,\tau(y)))$.
Now there are three subcases:
\begin{enumerate}
\item Subcase $y \not\in \{x,x'\}$. 
Then $x$ and $x'$ hits $y$ in a regular way: there are two arrows $x \to y$ and $x' \to y$ in $\grapheoriente(\overline\xi)$.
\item Subcase $y=x'$. Then $x$ hits $x'$ in a regular way. This subcase is ruled out by Case 1.
\item Subcase $y=x$. Then $x'$ hits $x$ in a regular way: there is an arrow $x' \to x$ in  $\grapheoriente(\overline\xi)$.
\end{enumerate}
\end{enumerate}
In all cases, $x$ and $y$ are in the same undirected cluster $\cluster$ of $\grapheoriente(\overline\xi)$.
Therefore $C$ is a subset of such an undirected cluster $\cluster$ .
As $C$ is infinite, $\cluster$ is infinite and the proof of the lemma  is over up to the two postponed proofs.

\begin{proof}[Proof of Claim \ref{c:cluster-speciaux-finis} using Claim \ref{c:unique-singular-point}.]
This is a standard application of stationarity through mass transport principle. Denote by $C(x ; \overline\xi)$ the connected component of $x \in \xi$ in $\graphe(\overline\xi)$. By Slivnyak-Mecke formula \eqref{Slivnyak} and Lemma \ref{l:mass-transport-palm}, for any bounded set $\Lambda\subset \R^2$

\begin{align}\label{proofFC}
   & \E\left[\sum_{x,y \in \xi} \1_{\{C(x; \overline\xi) \text{ admits the singular point } y \text{ and } y \text{ belongs to } \Lambda\}}\right]\nonumber\\
   & = \int_{\R^2}\E\left[ \sum_{x \in \xi^y} \1_{\{C(x; \overline\xi^y) \text{ admits the singular point } y \text{ and } y \text{ belongs to } \Lambda\}}\right]dy\nonumber\\    
   & = |\Lambda|\E\left[ \sum_{x \in \xi^0} \1_{\{C(x; \overline\xi^0) \text{ admits the singular point } 0 \}}\right]\nonumber\\
   & = |\Lambda|\E\left[ \sum_{x \in \xi^0} \1_{\{C(0; \overline\xi^0) \text{ admits the singular point } x \}}\right]\nonumber\\
    & \le |\Lambda|.
\end{align}

It is enough to show that for any bounded $\Lambda$, the variable  $\sum_{x,y \in \xi} \1_{\{C(x; \overline\xi) \text{ admits the singular point } y \text{ and } y \text{ belongs to } \Lambda\}}$ is finite almost surely. Therefore if it exists a connected component with a singular point, necessary this connected component is bounded.   

\end{proof}

\begin{proof}[Proof of Claim \ref{c:unique-singular-point}.]
Let $C$ be a connected component of $\graphe(\overline\xi)$.
Assume, for a contradiction, that $C$ admits two distinct singular points $s$ and $s'$.
Then, there exists $x, x' \in C$ and $n$ such that 
\begin{align*}
& \tau(x) < \infty \text{ and }  g_x(\tau(x))=s \\
\text{ and } & \tau(x') < \infty \text{ and } g_{x'}(\tau(x'))=s' \\
\text{ and } & \text{there exists a path } x=x_0, \dots, x_n=x' \text{ such that for all } i \in \{1,\dots,n\}, x_{i-1} \sim x_i.
\end{align*}
Choose $x, x'$ and $n$ as above and such that $n$ is minimal.
Necessarily, the $x_i$ are distinct (as the length of the path is minimal) and $n \ge 1$ (as $x \neq x'$ because $s \neq s'$).
We now show by induction on $i$ the following property:
\begin{equation}\label{e:tout_pointe_vers_s}
\forall i \in \{1,\dots,n\}, \; \tau(x_i) < \infty \text{ and } g_{x_i}(\tau(x_i)) \in g_{x_{i-1}}([0,\tau(x_{i-1}))).
\end{equation}
Let $i \in \{1,\dots,n\}$. 
As $x_i \sim x_{i-1}$, at least one of the following three conditions holds:
\begin{enumerate}
\item $\tau(x_{i-1})<\infty$ and $g_{x_{i-1}}(\tau(x_{i-1})) \in g_{x_i}([0,\tau(x_i)))$.
\item $\tau(x_i)<\infty$ and $g_{x_i}(\tau(x_i)) \in g_{x_{i-1}}([0,\tau(x_{i-1})))$.
\item $\tau(x_{i-1})<\infty$ and $\tau(x_i)<\infty$ and $g_{x_{i-1}}(\tau(x_{i-1}))=g_{x_i}(\tau(x_i))$.
\end{enumerate}
Consider the case $i=1$.
The first condition above can not hold. Otherwise we would have $s = g_x(\tau(x)) \in g_{x_1}([0,\tau(x_1)))$ and $s$ would not be a singular point.
The third condition above can not hold. Otherwise we could consider the path $x=x_1,\dots,x_n=x'$ and $n$ would not be minimal.
Therefore the second condition holds and the desired property is proven for $i=1$.

Consider now the case $i \in \{2,\dots,n\}$ and assume that the desired property is true for $i-1$.
The first condition can not hold. Otherwise $g_{x_{i-1}}(\tau(x_{i-1}))$ would belong to $g_{x_i}([0,\tau(x_i)))$ and to $g_{x_{i-2}}([0,\tau(x_{i-2})))$
(by the induction hypothesis) and therefore the intersection between $g_{x_i}([0,\tau(x_i)))$ and $g_{x_{i-2}}([0,\tau(x_{i-2})))$ would be non empty.
But as $x_{i-2} \neq x_i$, this is forbidden by the hard-core property, see Lemma \ref{lem:hard-core}.
The third condition can not hold. Otherwise, we would have $g_{x_i}(\tau(x_i)) = g_{x_{i-1}}(\tau(x_{i-1})) \in g_{x_{i-2}}([0,\tau(x_{i-2})))$ and thus
$x_i$ would hit $x_{i-2}$. We could then consider the path $x=x_0,\dots,x_{i-2},x_i,\dots,x_n=x'$ and $n$ would not be mimimal.
Therefore the second condition holds and the desired property is proven for $i$.

By induction this establishes \eqref{e:tout_pointe_vers_s}. 
In particular $s ' = g_{x'}(\tau(x')) = g_{x_n}(\tau(x_n)) \in g_{x_{n-1}}([0,\tau(x_{n-1})))$.
But this is not possible as $s'$ is a singular point.
This is our contradiction and this completes the proof of Claim \ref{c:unique-singular-point}.
\end{proof}
The proof of Lemma \ref{l:reduction-pog} is complete.
\end{proof}

\subsection{Absence of forward percolation is sufficient}\label{Soutonef}

For any $x \in \xi$ we define three components in $\grapheoriente(\overline\xi)$:
\begin{itemize}
\item The forward component of $x$.
This is the set of all vertices we can reach from $x$ by following the arrows.
More formally,
\begin{align*}
\forward(x)=\forward(x;\overline\xi)=\{ y \in \xi :  & \text{ there exists } n \ge 0 \text{ and a sequence }x=a_0,\dots,a_n=y \\
 & \text{  such that, for all }i \in \{1,\dots,n\}, a_{i-1} \to a_i\}.
\end{align*}
\item The backward component of $x$.
This is the set of all vertices we can reach from $x$ by following the arrows in reverse.
More formally,
\[
\back(x)=\back(x;\overline\xi) = \{z \in \xi : x \in \forward(z)\}.
\]
\item The undirected connected component of $x$. This is the set $\cluster(x) = \cluster(x ; \overline \xi)$ defined by \eqref{e:cluster_non_oriente}.
\end{itemize}

The following lemma is a known consequence of Lemmas \ref{l:mass-transport-palm} and \ref{l:c_est_un_pog}. A proof can find in \cite{CDLT} for instance, but for the convenience of the reader we provide a sketch of the proof below. 

\begin{lemma} \label{l:reduction-forward} 
One a full probability event, the following holds:
\[
\forall x \in \xi, \; \forward(x) \text{ is finite } \iff \cluster(x) \text{ is finite}.
\]
\end{lemma}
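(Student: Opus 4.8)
The plan is to prove the two implications separately, the forward one (from finiteness of $\cluster(x)$ to finiteness of $\forward(x)$) being trivial since $\forward(x) \subset \cluster(x)$ by definition. So the crux is the converse: on a full probability event, if $\forward(x)$ is finite for every $x \in \xi$ (or rather, pointwise, if $\forward(x)$ is finite then $\cluster(x)$ is finite). Actually the cleanest route is to argue the contrapositive at the level of the whole process: it suffices to show that, almost surely, no vertex has an infinite undirected component \emph{unless} some vertex has an infinite forward component. Equivalently, I would show: if with positive probability $\cluster(0)$ is infinite under the Palm measure, then with positive probability $\forward(0)$ is infinite under the Palm measure.

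First I would record the structural dichotomy coming from the out-degree one property (Lemma \ref{l:c_est_un_pog}): in $\grapheoriente(\overline\xi)$ every vertex has exactly one outgoing arrow, so each undirected component $\cluster$ is a functional graph. Such a component is either a finite union of ``trees hanging off a single cycle'', or it is infinite; and an infinite component of an out-degree one graph must contain a vertex whose forward component is infinite — indeed, following the unique out-arrows from any $x$, either we eventually revisit a vertex (so $\forward(x)$ is finite and lies in a cycle) or we never do (so $\forward(x)$ is infinite); and if \emph{every} vertex of an infinite component had a finite forward set, every forward orbit would be eventually trapped in a cycle, the component would be a finite union of such finite forward sets, hence finite — a contradiction. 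This is the combinatorial heart, and it is elementary.

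The remaining point is a soft ergodic/mass-transport argument to upgrade this pointwise statement to the almost sure statement in the lemma, i.e.\ to rule out the bad event that $\cluster(x)$ is infinite for some $x$ while $\forward(y)$ is finite for all $y$. By the previous paragraph this bad event is in fact empty: if $\cluster(x)$ is infinite, the combinatorial dichotomy applied inside $\cluster(x)$ already produces a vertex $y \in \cluster(x)$ with $\forward(y)$ infinite. So actually no mass transport is needed for this direction — the implication ``$\cluster(x)$ infinite $\Rightarrow$ $\forward(y)$ infinite for some $y\in\cluster(x)$'' is deterministic, valid on the full probability event where $\overline\xi \in \overline\cA$ so that $\grapheoriente(\overline\xi)$ is well-defined and out-degree one (Lemma \ref{l:c_est_un_pog}). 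To get the lemma as stated — finiteness of $\forward(x)$ \emph{for that same $x$} iff finiteness of $\cluster(x)$ — one still needs that $\cluster(x)$ infinite forces $\forward(x)$ (not just $\forward(y)$ for some $y$) infinite, and this is where stationarity and the mass transport principle (Lemma \ref{l:mass-transport-palm}) enter: one transports unit mass from each vertex $x$ with $\cluster(x)$ infinite but $\forward(x)$ finite to the ``root cycle'' its forward orbit lands in; finiteness of the forward set of every vertex on that cycle (each accumulates only finitely many trees in an infinite component would be contradictory) forces, via the mass transport balance, that such $x$ cannot exist with positive Palm density, hence does not exist a.s.

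\textbf{Main obstacle.} The genuinely delicate bookkeeping is the mass transport step: one must choose a translation-equivariant rule assigning to each offending vertex $x$ a target (naturally, the unique cycle in $\cluster(x)$ reached by the forward orbit, or a distinguished vertex of it), check the equivariance hypothesis of Lemma \ref{l:mass-transport-palm}, and verify that each target receives infinite mass on the bad event while sending out only mass $1$, contradicting the mass transport identity. Everything else — the forward implication, the out-degree one dichotomy — is routine; the care lies entirely in making the transport map measurable and equivariant and in correctly identifying the ``infinite in, finite out'' imbalance. Since this is exactly the argument carried out in \cite{CDLT}, I would follow that reference and keep the present write-up to a sketch, as the statement advertises.
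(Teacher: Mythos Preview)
Your combinatorial claim in the middle of the argument is false, and this is a genuine gap. You assert that ``an infinite component of an out-degree one graph must contain a vertex whose forward component is infinite,'' arguing that otherwise ``the component would be a finite union of such finite forward sets, hence finite.'' But the component is the union of forward sets over \emph{all} its vertices, which is an \emph{infinite} union of finite sets and can perfectly well be infinite. Concretely: take vertices $\{0,1\}\cup\{-1,-2,\dots\}$ with a $2$-cycle $0\to 1\to 0$ and $-n\to -(n-1)$ for all $n\ge 1$. This is an out-degree one graph with a single infinite undirected component, yet every forward set is finite (it eventually lands in $\{0,1\}$). So the implication ``$\cluster(x)$ infinite $\Rightarrow$ some $y\in\cluster(x)$ has $\forward(y)$ infinite'' is \emph{not} deterministic; it genuinely requires stationarity.

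The paper's proof acknowledges exactly this: its dichotomy is that either $\forward(x)$ is infinite (and $\cluster(x)$ has no loop), or $\forward(x)$ is finite and $\cluster(x)$ contains a \emph{unique} loop --- and in that second case $\cluster(x)$ may a priori be finite or infinite. The entire content of the lemma is ruling out ``infinite cluster with a unique loop,'' and this is done by mass transport (Lemma \ref{l:mass-transport-palm}): pick a translation-equivariant distinguished vertex of the unique loop (e.g.\ the rightmost one), and send unit mass from every vertex of the cluster to it; mass out is at most $1$ per vertex, so expected mass in equals expected mass out is finite, forcing the cluster to be a.s.\ finite. Your final paragraph does sketch this correctly, so your proof is salvageable --- but you should delete the erroneous deterministic argument and recognize that the mass transport is doing all the work, not just the upgrade from ``some $y$'' to ``same $x$.''
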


Combining Lemmas \ref{l:reduction-pog} and Lemmas \ref{l:reduction-forward} we get that, in order to prove that all the connected component of $\graphe(\overline\xi)$ are
almost surely finite, it is sufficient to prove that all forward components $\forward(x)$ of $\grapheoriente(\overline\xi)$ are finite.

\begin{proof} 

A loop in $\grapheoriente(\overline\xi)$ is a finite sequence $x_0,\dots,x_n$, $n \ge 0$, of points of $\xi$ such that $x_0=x_n$ and, for all $i \in \{1,\dots,n\}$, $x_{i-1} \to x_i$.
Using only the first property of Lemma \ref{l:c_est_un_pog} we get for each $x \in \xi$ that exactly one of the following properties is satisfied:
\begin{enumerate}
\item $\forward(x)$ is infinite and $C(x)$ contains no loop. In that case $C(x)$ is infinite.
\item $\forward(x)$ is finite and $C(x)$ contains a unique loop. In that case $C(x)$ can be finite or infinite.
\end{enumerate}
This can be proven as follows.
First one checks that $\cluster(x)$ is the union of $\forward(x)$ and of all $\back(y)$ for $y \in \forward(x)$.
Then one distinguishes two cases: $\forward(x)$ is finite (and thus contains a unique loop) or infinite (and thus contains no loop).
One concludes by noticing that adding the backwards components $\back(y)$ adds no loops.

The proof of Lemma \ref{l:reduction-forward} is then complete once we have proven
\begin{equation} \label{e:boucle_infini_impossible}
\P[\exists x \in \xi : C(x) \text{ is infinite and contains a unique loop}]=0.
\end{equation}

This property can be proved as in the Claim \ref{c:cluster-speciaux-finis}. Indeed a cluster with an unique loop contains a special point which is for instance the most right point in the loop. Following similar computation as in \eqref{proofFC}, we show that almost surely this cluster is finite. 

\end{proof}

\subsection{Absence of forward percolation of bad points is sufficient }\label{Soutonefb} 

In the previous sections, we showed that the proof of Theorem \ref{t} is reduced to the proof of the absence of forward percolation in the graph $\grapheoriente(\overline{\xi})$. In this section, we show that we can reduce the setting further since it is only necessary to prove the absence of forward percolation for a class of "bad" points that we define below. We start with the definition of "good" points.

\subsubsection{Definition of good points}\label{SectionGP}

\paragraph{Loops.} We say that $x \in \xi$ belongs to a loop if there exists $n \ge 1$ such that $s^{(n)}(x)=x$ where $s^{(n)}$ denotes the $n$-th iterate of $s$.
In particular, in the degenerate case where $s(x)=x$, $x$ belongs to a loop.
We say that $x \in \xi$ is followed by a loop if $x$ does not belong to a loop and if its successor $s(x)$ belongs to a loop.

\paragraph{Notion of $(\alpha,u)$-good points: informal definition.} 
Let $\alpha>0$ and $u\in\R^2$ be such that $0 \not\in B(u,\alpha)$.
Slightly informally, we say that $x \in \xi$ is an $(\alpha,u)$-good point if replacing the configuration inside $B(x+u,\alpha)$ by a local configuration belonging to $x+u+\cL(\alpha)$
creates a loop after $x$ and does not decrease the cardinality of the backward of $x$.

\paragraph{Notion of $(\alpha,u)$-good points: formal definition.} 
Let $\overline\cS^0$ denote the set of configurations $\overline S \in \cS$ such that $0$ belongs to $S$.
We can focus on such configurations thanks to the fact that everything is well behaved under the action of translations.

For any $\overline S, \overline S' \in \overline\cS$  we set
\begin{equation}\label{e:resample}
\text{resample}(\overline S, \overline S') = \overline S_{|B(u,\alpha)^c} \cup \overline S'_{|B(u,\alpha)}.
\end{equation}
This define a measurable map from $\overline\cS^2$ to $\overline\cS$.
Let $\overline\xi'$ be an independent copy of $\overline\xi$.
In words, $\text{resample}(\overline\xi,\overline\xi')$ is obtained from $\overline\xi$ by resampling inside $B(u,\alpha)$ using $\overline\xi'$.
Define $\good(\alpha,u)$ as the measurable subset of configurations $\overline S \in \overline\cS^0$ such that:
\begin{enumerate}
\item Almost surely, $\text{resample}(\overline S, \overline \xi')$ belongs to $\overline\cA$.
\item Almost surely, on the event $\{\overline\xi' \in u+\cL(\alpha)\}$, the following conditions hold:
\begin{enumerate}
\item In $\text{resample}(\overline S, \overline \xi')$, $0$ is followed by a loop. 
\item $\#\back(0,\overline S) \le \#\back(0,\text{resample}(\overline S, \overline \xi'))$.
\end{enumerate}
\end{enumerate}
We then say that $x \in S$ is an $(\alpha,u)$-good point in $\overline S$ if $\overline S-x \in \good(\alpha,u)$.

The first condition is needed to ensure that the second condition makes sense.
It is however harmless as stated in Lemma \ref{l:good-definition-fondee} in Appendix \ref{s:des_trucs_de_mesurabilite}.

%Let $\nu$ be the distribution of $\overline\xi$. This is a probability measure on $\cF^{\overline\cS}$.
%Let $\nu^0$ be the Palm distribution of $\overline\xi$. This is a probability measure on $\cF^{\overline\cS^0}$ defined by
%\[
%\nu^0(F) = \E\left[\sum_{x \in \xi \cap [0,1]^d} \1_F(\overline\xi - x)\right].
%\]
%The intuition is that $\nu^0$ is the distribution of the stationary marked point process $\overline\xi$ conditioned by having a point in $\{0\} \times \cC$.
%

\paragraph{Notion of $\alpha$-good point.}
Let $0<\alpha<1$. 
Recall that $r(\alpha)$ is part of the definition of $\cL(\alpha)$.
We fix a finite set $U(\alpha)$ in $\R^2$ such that
\[
B(0,\alpha^{-1})\setminus B(0,\alpha) \subset \bigcup_{u \in U(\alpha)} B(u,r(\alpha)) \subset \R^2 \setminus \{0\}.
\]
We say that $x \in \overline S$ is $\alpha$-good in the configuration $\overline S$ if there exists $u \in U(\alpha)$ such that $x$ is $(\alpha,u)$-good.
In other words, we set
\[
\good(\alpha) = \bigcup_{u \in U(\alpha)} \good(\alpha,u)
\]
and we say that $x\in \overline S$ is $\alpha$-good in the configuration $\overline S$ if $\overline S-x$ belongs to $\good(\alpha)$.
The definition depends on the choice of $U(\alpha)$ but it will not be as issue.
Recall that $U(\alpha)$ is fixed.

\subsubsection{The number of good points in a forward is finite}
\label{sect:NbGoodFinite}

In the following lemma we show that almost surely an infinite forward cluster can not contain an infinite number of good points. The proof is inspired by similar results developped in Section 4.3 in \cite{CDLT}. 

\begin{lemma} \label{l:forward-good-finite}
Let $\alpha>0$. With probability one, for all $x \in \xi$, the set $\{y \in \forward(x) : y \text{ is good for }\overline\xi\}$ is finite.
\end{lemma}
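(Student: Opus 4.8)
The plan is to argue by contradiction via a mass transport / Palm measure computation, in the spirit of Claim~\ref{c:cluster-speciaux-finis} and of Section~4.3 of \cite{CDLT}. Suppose that with positive probability there exists $x \in \xi$ whose forward $\forward(x)$ contains infinitely many $\alpha$-good points. I would first reduce to the statement that, almost surely, \emph{every} $\alpha$-good point has a \emph{finite} backward component $\back(\cdot)$; indeed, if $y_1, y_2, \dots$ are distinct $\alpha$-good points in $\forward(x)$, then $x \in \back(y_k)$ for every $k$, so $x$ lies in the backward of infinitely many $\alpha$-good points, and conversely any point lying in the backward of infinitely many good points witnesses the phenomenon we want to exclude. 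Hence it suffices to show:
\[
\P\big[\exists\, y \in \xi : y \text{ is }\alpha\text{-good and } \#\back(y) = \infty \big] = 0,
\]
because a point with infinite backward that is moreover $\alpha$-good will lead to a contradiction once we know good points ``tend to break'' their own forward — and this is exactly where the defining property of $(\alpha,u)$-goodness (creating a loop after $x$ without decreasing $\#\back$) enters.

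\textbf{The key deterministic observation.} For each $\alpha$-good point $y$, pick (measurably, using a fixed order on $U(\alpha)$) a witness $u = u(y) \in U(\alpha)$ so that $y$ is $(\alpha,u)$-good. The point is that after resampling inside $B(y+u,\alpha)$ on the event that the resample realizes $y+u+\cL(\alpha)$ — an event of positive probability by Item~1 of the loop property — the vertex $y$ becomes followed by a loop while $\#\back(y)$ does not decrease. If $\#\back(y) = \infty$ in the original configuration, it stays infinite after resampling, so in the resampled configuration we have a vertex $y$ with infinite backward that is \emph{followed by a loop}; following the arrows backward from $y$ we then obtain an infinite forward component $\forward(z)$ (for some $z \in \back(y)$) that contains a loop, and by the argument behind Lemma~\ref{l:reduction-forward} (specifically \eqref{e:boucle_infini_impossible}) such configurations have probability zero. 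This should be packaged as: the event $\{\overline\xi' \in u+\cL(\alpha)\}$ has positive probability and is independent of $\overline\xi_{|B(u,\alpha)^c}$, so if the bad event had positive probability, the resampled process $\text{resample}(\overline\xi,\overline\xi')$ would have positive probability of exhibiting a configuration forbidden by \eqref{e:boucle_infini_impossible}; since $\text{resample}(\overline\xi,\overline\xi')$ is again a Poisson process with intensity $dx \otimes \mu$ (resampling a Poisson process on a region by an independent copy leaves the law invariant), this is the contradiction.

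\textbf{Organizing the mass transport.} To make ``$\#\back(y)=\infty$ has probability zero for $\alpha$-good $y$'' rigorous and to handle the quantifier over $y \in \xi$, I would run a mass transport as in \eqref{proofFC}: send unit mass from each $z \in \xi$ to the (again measurably chosen, e.g.\ closest to the origin, or first in $\back(y)$) point $y$ such that $z \in \back(y)$, $y$ is $\alpha$-good, and ... — actually the cleaner route is the one already used twice in the paper: a connected component in $\grapheoriente$ that contains an infinite forward necessarily contains \emph{no} loop (first property of Lemma~\ref{l:c_est_un_pog}), whereas after the resampling it \emph{would} contain one, which is impossible by stationarity plus \eqref{e:boucle_infini_impossible}. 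So the logical skeleton is: (i) assume positive probability of $x$ with infinitely many good points in $\forward(x)$; (ii) translate this to positive probability of a good point $y$ with $\#\back(y)=\infty$ via the pigeonhole/mass-transport step; (iii) resample near $y$ using the witness $u(y)$ and the loop property to produce, with positive probability, an infinite forward component containing a loop in a process with the same law as $\overline\xi$; (iv) contradict \eqref{e:boucle_infini_impossible}.

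\textbf{Main obstacle.} The delicate point is step~(iii): one must resample \emph{simultaneously and consistently} even though the witness $u(y)$ and the location $y$ are themselves random and depend on $\overline\xi$, and one must ensure that ``$\#\back(y)$ does not decrease'' genuinely transfers to ``infinite backward stays infinite'' — this is guaranteed by Item~2(b) in the definition of $\good(\alpha,u)$, but making the measurable selection of $(y,u(y))$ and the application of Slivnyak--Mecke clean (so that the resampled configuration is still Poisson and the forbidden event \eqref{e:boucle_infini_impossible} genuinely acquires positive probability) requires care. I expect this bookkeeping — choosing the special point of the cluster in a translation-equivariant way and checking the resample map's measurability (Lemma~\ref{l:good-definition-fondee}) — to be where the real work lies; the probabilistic content beyond that is a routine second application of the mass transport principle already deployed in the proofs of Claims~\ref{c:unique-singular-point} and~\ref{c:cluster-speciaux-finis}.
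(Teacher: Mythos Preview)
Your reduction in step~(ii) is incorrect, and this is a genuine gap rather than the bookkeeping issue you flag in the last paragraph. You claim it suffices to show that no $\alpha$-good point has infinite backward. But that statement does \emph{not} imply the lemma: if $\forward(x)=\{x=z_0\to z_1\to z_2\to\cdots\}$ is infinite and every $z_k$ is $\alpha$-good, then each $\back(z_k)$ contains $\{z_0,\dots,z_k\}$ and hence has cardinality at least $k+1$, yet each $\back(z_k)$ may still be finite. So ``every good point has finite backward'' is perfectly compatible with ``some forward contains infinitely many good points''. Your resampling argument in step~(iii) would correctly show that an $\alpha$-good point with \emph{infinite} backward leads, via \eqref{e:boucle_infini_impossible}, to a contradiction; but the hypothesis you are trying to contradict never furnishes such a point --- only good points with arbitrarily large \emph{finite} backward.

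The paper avoids this trap by arguing quantitatively rather than by contradiction. It proves (Lemma~\ref{l:forward-u-good-finite}) the explicit bound
\[
\E\Big[\sum_{y\in\forward(0,\overline\xi^0)}\1_{y\text{ is }(\alpha,u)\text{-good}}\Big]\le\frac{1}{\P[\overline\xi\in\cL(\alpha)]},
\]
then sums over the finite set $u\in U(\alpha)$ and applies Slivnyak--Mecke. The proof of this bound uses exactly your resampling ingredient, but couples it with \emph{two} applications of the mass transport principle (Lemma~\ref{l:mass-transport-palm}): one turns $\E[\#\back(0)\1_{0\text{ is }(\alpha,u)\text{-good}}]$ into the expected number of $(\alpha,u)$-good points in $\forward(0)$; the other turns $\E[\#\back(0)\1_{0\text{ followed by a loop}}]$ into the expected number of points in $\forward(0)$ that are followed by a loop, which is deterministically at most $1$. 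The resampling inequality (Item~2(b) plus ``followed by a loop'' in the definition of $\good(\alpha,u)$) links the two expectations through the factor $\P[\overline\xi\in\cL(\alpha)]$. The closing ingredient is thus the deterministic fact ``at most one vertex in any forward is followed by a loop'', not \eqref{e:boucle_infini_impossible}; it supplies a finite bound precisely where your contradiction scheme needed an infinity you cannot actually produce.
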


Before proving Lemma \ref{l:forward-good-finite} let us state an immediate consequence.
We say that a point $x \in \overline\xi$ is $\alpha$-bad when it is not $\alpha$-good.

\begin{lemma}
\label{l:csq-forward-good-finite}
Assume that there exists $\alpha>0$ such that, almost surely, there is no $x$ in $\xi$ such that 
\begin{itemize}
\item $\forward(x)$ is infinite.
\item $\forall y \in \forward(x)$, $y$ is $\alpha$-bad for $\overline\xi$.
\end{itemize}
Then, almost surely, for all $x \in \xi$, $\forward(x)$ is finite.
\end{lemma}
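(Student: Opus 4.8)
The plan is to derive Lemma \ref{l:csq-forward-good-finite} directly from Lemma \ref{l:forward-good-finite} by a short contradiction argument. Fix the value of $\alpha>0$ supplied by the hypothesis. I would work on the full-probability event $\Omega_0$ on which simultaneously (i) the conclusion of Lemma \ref{l:forward-good-finite} holds for this $\alpha$, i.e.\ for every $x\in\xi$ the set of $\alpha$-good points of $\forward(x)$ is finite, and (ii) there is no $x\in\xi$ with $\forward(x)$ infinite and all of its vertices $\alpha$-bad. Suppose, for a contradiction, that on $\Omega_0$ there exists some $x\in\xi$ with $\forward(x)$ infinite.

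The key structural observation is that $\grapheoriente(\overline\xi)$ has out-degree one (first item of Lemma \ref{l:c_est_un_pog}), so the forward component of any vertex is just the orbit of the successor map: $\forward(x)=\{s^{(n)}(x):n\ge 0\}$. Consequently, for every $n\ge 0$ one has $\forward(s^{(n)}(x))=\{s^{(m)}(x):m\ge n\}\subset\forward(x)$; and since $\forward(x)$ is infinite the points $s^{(n)}(x)$ are pairwise distinct, so each $\forward(s^{(n)}(x))$ is again infinite. In particular, a tail of a forward trajectory is itself a forward component, which is the only point of the argument that really uses the out-degree one property.

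Now I would apply Lemma \ref{l:forward-good-finite} to $x$: the set of $\alpha$-good vertices in $\forward(x)$ is finite, hence there is an integer $N$ with $s^{(n)}(x)$ $\alpha$-bad for every $n\ge N$. Put $x':=s^{(N)}(x)$. Then $\forward(x')$ is infinite and every one of its vertices is $\alpha$-bad, contradicting property (ii) of $\Omega_0$. Therefore on $\Omega_0$ every $\forward(x)$ is finite, and since $\P(\Omega_0)=1$ this is exactly the assertion of the lemma.

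I do not expect any genuine obstacle here: the statement is flagged as an immediate consequence, and the deduction is purely combinatorial once Lemma \ref{l:forward-good-finite} is in hand. The only thing to be slightly careful about is the bookkeeping of the almost-sure events (intersecting the full-probability event of Lemma \ref{l:forward-good-finite} with the hypothesised one) and the elementary remark that truncating a forward trajectory yields another forward component.
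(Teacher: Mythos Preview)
Your proof is correct and follows essentially the same route as the paper's: intersect the full-probability event from Lemma \ref{l:forward-good-finite} with the one given by the hypothesis, assume some $\forward(x)$ is infinite, and use finiteness of the $\alpha$-good points in it to find a tail vertex $x^*$ (your $x'=s^{(N)}(x)$) whose entire forward component consists of $\alpha$-bad points, contradicting the hypothesis. The paper's version is slightly terser---it does not spell out the orbit description $\forward(x)=\{s^{(n)}(x):n\ge 0\}$---but the argument is identical.
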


Combining Lemmas \ref{l:reduction-pog}, Lemmas \ref{l:reduction-forward} and Lemma \ref{l:csq-forward-good-finite} we get that, in order to prove that all the connected component of 
$\graphe(\overline\xi)$ are almost surely finite, it is sufficient to prove that all infinite forward components contains $\alpha$-good points.
In other words, there is no percolation in the original non oriented graph as soon as there is no forward percolation of $\alpha$-bad points for $\alpha$ small enough.
The problem is thus reduced to the proof of the existence of a subcritical regime in some oriented percolation model.
This is a huge improvement as it enables us to play with $\alpha$.

\begin{proof}[Proof of Lemma \ref{l:csq-forward-good-finite}] Let $\alpha$ be as in the statement. With probability one, by assumption and by Lemma \ref{l:forward-good-finite}, we have
\begin{equation}\label{e:course1}
\forall x \in \xi, \quad \forward(x) \text{ is finite or there exists } y \in \forward(x) \text{ such that } y \text{ is } \alpha\text{-good}
\end{equation}
and
\begin{equation}\label{e:course2}
\forall x \in \xi, \quad \{y \in \forward(x) : y \text{ is good for }\overline\xi\} \text{ is finite}.
\end{equation}
We work on this event. 
Let $x \in \xi$.
Assume that $\forward(x)$ is infinite.
By \eqref{e:course2} there exists $x^* \in \forward(x)$ such that all points of $\forward(x^*)$ are $\alpha$-bad.
This contradicts \eqref{e:course1}.
This establishes the lemma.
\end{proof}

Lemma \ref{l:forward-good-finite} is a consequence of the following result.

\begin{lemma} \label{l:forward-u-good-finite}
Let $\alpha>0$ and $u\in\R^2$ be such that $0 \not\in B(u,\alpha)$.
Then,
\[
\E\Big[\sum_{y \in \forward(0,\overline\xi^0)} \1_{y \text{ is } (\alpha,u)\text{-good for }\overline\xi^0}\Big]
\le
\frac{ 1}{\P\big[\overline\xi \in \cL(\alpha)\big]}.
\]
\end{lemma}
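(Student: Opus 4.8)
The plan is to apply the mass transport principle (Lemma \ref{l:mass-transport-palm}) to an appropriately chosen equivariant weight $M$, so that the expectation in the statement is recognized as the expected mass flowing *into* the origin rather than *out of* it, and then to argue that the incoming mass is at most $\P[\overline\xi\in\cL(\alpha)]^{-1}$. Concretely, I would define, for $\overline S\in\overline\cS$ and $x,y\in S$,
\[
M(x,y,\overline S)=\1_{y\in\forward(x,\overline S)}\cdot\1_{y\text{ is }(\alpha,u)\text{-good in }\overline S}\cdot\frac{1}{\P[\overline\xi\in\cL(\alpha)]},
\]
extended by $0$ when $x$ or $y$ is not in $S$. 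Since $\forward$, the notion of $(\alpha,u)$-goodness, and the constant are all translation-equivariant (using shift-invariance of $\grapheoriente$ from Lemma \ref{l:c_est_un_pog} and the definition via $\overline S-x\in\good(\alpha,u)$), the map $M$ is equivariant in the sense required. The left-hand side of the lemma is then $\P[\overline\xi\in\cL(\alpha)]\cdot\E[\sum_{y\in\xi^0}M(0,y,\overline\xi^0)]$, so by Lemma \ref{l:mass-transport-palm} it equals $\P[\overline\xi\in\cL(\alpha)]\cdot\E[\sum_{x\in\xi^0}M(x,0,\overline\xi^0)]$, i.e.
\[
\E\Big[\sum_{x\in\forward^{-1}(0)}\1_{0\text{ is }(\alpha,u)\text{-good in }\overline\xi^0}\Big]
=\E\Big[\1_{0\text{ is }(\alpha,u)\text{-good in }\overline\xi^0}\cdot\#\back(0,\overline\xi^0)\Big].
\]
So it suffices to show that the right-hand side above is at most $1/\P[\overline\xi\in\cL(\alpha)]^2$ — wait, more precisely, after multiplying back by the constant, it suffices to show $\E[\1_{0\text{ good}}\cdot\#\back(0,\overline\xi^0)]\le 1$.

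To bound $\E[\1_{0\text{ is }(\alpha,u)\text{-good}}\cdot\#\back(0,\overline\xi^0)]$ by $1$, I would use the resampling mechanism built into the definition of $\good(\alpha,u)$. Let $\overline\xi'$ be an independent copy of $\overline\xi$ and write $\overline\eta=\text{resample}(\overline\xi^0,\overline\xi')$. On the event that $0$ is $(\alpha,u)$-good in $\overline\xi^0$ and $\overline\xi'\in u+\cL(\alpha)$, property (a) in the definition of $\good(\alpha,u)$ says $0$ is followed by a loop in $\overline\eta$; in particular $0$ is *not* in a loop and $s(0)$ lies in a loop, so $0\notin\forward(y,\overline\eta)$ for any $y$ with $0$ in the loop structure — the key point being that $\back(0,\overline\eta)$ does not contain $0$ itself and is "trapped" behind the loop, hence each backward vertex of $0$ in $\overline\eta$ has its forward component ending at the loop. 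Meanwhile property (b) gives $\#\back(0,\overline\xi^0)\le\#\back(0,\overline\eta)$. Now the crucial observation is that in $\overline\eta$, all of $\back(0,\overline\eta)$ flows forward into a single loop, and by a mass-transport / counting argument over which loop absorbs a given backward cluster, the expected size of the backward set of a vertex that is "followed by a loop" is controlled: summing $\#\back(\cdot)$ over the at-most-one vertex per loop that plays the role of $0$, against the $\le 1$ normalization coming from the fact that distinct such backward sets (for distinct loops) are disjoint. This is where the $1/\P[\overline\xi\in\cL(\alpha)]$ factor enters: conditioning on $\overline\xi'\in u+\cL(\alpha)$ multiplies expectations by $1/\P[\overline\xi'\in u+\cL(\alpha)]=1/\P[\overline\xi\in\cL(\alpha)]$, and otherwise one has a genuine probability (bounded by $1$) because the backward-sets-behind-distinct-loops are disjoint subsets of $\xi$.

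The step I expect to be the main obstacle is making rigorous the claim that the expected (conditional) size of $\back(0,\overline\eta)$, or rather its aggregation over the mass-transport sum, is bounded by an absolute constant. The clean way is a second application of the mass transport principle *inside* $\overline\eta$: one transports mass from each vertex $z$ to the unique vertex in the loop that $\forward(z)$ terminates at (every finite forward component in $\grapheoriente$ ends in a unique loop, and the loop has a canonical marked vertex, e.g. its rightmost point), so that the mass received by any loop vertex equals the cardinality of the set of vertices feeding into that loop. Since each vertex feeds into at most one loop, the total mass is controlled, and $\#\back(0,\overline\eta)$ is bounded in expectation by $1$ after the normalization. The delicate points are: (i) checking that $\overline\eta$ is almost surely in $\overline\cA$ (this is exactly condition 1 in the definition of $\good(\alpha,u)$, so it is handed to us); (ii) ensuring all the indicator functions and component maps are jointly measurable in $(\overline\xi^0,\overline\xi')$, which follows from measurability of $\text{resample}$ and of $\phi$; and (iii) correctly tracking the independence so that conditioning on $\{\overline\xi'\in u+\cL(\alpha)\}$ and integrating out $\overline\xi'$ produces precisely the factor $1/\P[\overline\xi\in\cL(\alpha)]$ and nothing worse. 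Once these are in place, combining the two mass-transport identities yields the stated bound.
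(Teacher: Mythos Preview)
Your overall strategy is the same as the paper's: apply mass transport to rewrite the forward sum as a backward count, then use the resampling built into the definition of $\good(\alpha,u)$ together with a second mass transport to bound that backward count. The ingredients are right, but your write-up obscures the two clean observations that make the argument go through, and your proposed ``second mass transport'' is set up incorrectly.

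First, after the resampling step you never explicitly use that $\overline\eta = \text{resample}(\overline\xi^0,\overline\xi')$ has the \emph{same distribution} as $\overline\xi^0$ (this is where $0\notin B(u,\alpha)$ is used). Without this, you cannot pass from $\E[\#\back(0,\overline\eta)\1_{0\text{ followed by a loop in }\overline\eta}]$ back to the Palm process $\overline\xi^0$, and your second mass transport (which is stated under the Palm distribution) does not apply to $\overline\eta$ a priori. The paper uses this identity in law immediately, so the rest of the argument takes place entirely in $\overline\xi^0$.

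Second, your second mass transport --- sending each $z$ to a canonical vertex of the loop at which $\forward(z)$ terminates --- does not yield $\E[\1_{0\text{ followed by a loop}}\#\back(0)]\le 1$. It gives instead $\E[\1_{0\text{ is canonical loop vertex}}\cdot\#\{z:\forward(z)\text{ hits this loop}\}]\le 1$, which is a different quantity. The correct (and much simpler) move, which the paper makes, is to apply mass transport with $M'(x,y,\overline S)=\1_{y\in\forward(x,\overline S)}\1_{y\text{ is followed by a loop in }\overline S}$. This gives
\[
\E\big[\1_{0\text{ followed by a loop}}\#\back(0,\overline\xi^0)\big]
=\E\big[\#\{y\in\forward(0,\overline\xi^0):y\text{ is followed by a loop}\}\big],
\]
and the right-hand side is at most $1$ because \emph{any forward contains at most one vertex that is followed by a loop} (once the forward enters the loop it stays there). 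This single-line observation replaces your paragraph about disjoint backward sets and canonical loop vertices.

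Finally, drop the extraneous $1/\P[\overline\xi\in\cL(\alpha)]$ factor from $M$; it only creates the bookkeeping confusion you flagged yourself. After the first mass transport you need $\E[\1_{0\text{ good}}\#\back(0,\overline\xi^0)]\le 1/\P[\overline\xi\in\cL(\alpha)]$, not $\le 1$, and this factor is produced exactly once by conditioning on $\{\overline\xi'\in u+\cL(\alpha)\}$.
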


\begin{proof}
Let $\overline\xi'$ be a copy of $\overline\xi$ independent of $\overline\xi$ and of $\overline\xi^0$.
By definition of $(\alpha,u)$-good we have
\begin{align*}
 &\#\back(0,\overline\xi^0) \1_{0 \text{ is } (\alpha,u)\text{-good for }\overline\xi^0 } \1_{\overline\xi' \in u+\cL(\alpha)} \\
& \le
\#\back(0,\text{resample}(\overline\xi^0,\overline\xi')) 
\1_{0 \text{ is followed by a loop in } \text{resample}(\overline\xi^0,\overline\xi')}.
\end{align*}
But $\overline\xi^0$ is independent of $\overline\xi'$,
$\text{resample}(\overline\xi^0,\overline\xi')$ has the same distribution as $\overline\xi^0$ 
(as $0 \not\in B(u,\alpha)$) and $\overline\xi'-u$ has the same distribution as $\overline\xi$.
Therefore, taking expectation, we get
\begin{align} \label{e:goodbackfor1}
\E\left[\#\back(0,\overline\xi^0) \1_{0 \text{ est } (\alpha,u)\text{-good for }\overline\xi^0}\right] \P\big[\overline\xi \in \cL(\alpha)\big] 
\le \E\left[\#\back(0, \overline\xi^0) \1_{0 \text{ is followed by a loop in }\overline\xi^0}\right].
\end{align}
We plan to apply Lemma \ref{l:mass-transport-palm} (mass transport principle).
Recall $\overline\cA$ the set of tempered configuration whose existence is due to the fact that $\mu$ is a tempered distribution.
Recall $\P[\overline\xi^0 \in \overline\cA]=1$ by Lemma \ref{l:good-definition-fondee}.
We consider the map $M$ defined for $x, y \in \R^d$ and $\overline S \in \overline\cS$ by
\[
M(x,y,\overline S)
= \1_{\overline S \in \overline \cA} \1_{x \in \back(y,\overline S)} \1_{y \text{ is } (\alpha,u) \text{-good for } \overline S}
= \1_{\overline S \in \overline \cA} \1_{y \in \forward(x,\overline S)} \1_{y \text{ is } (\alpha,u) \text{-good for } \overline S}.
\]
In particular $M(x,y,\overline S)=0$ if $x$ or $y$ does not belong to $S$.
Now,
\begin{align}\label{e:goodbackfor2}
\E\left[\#\back(0,\overline\xi^0) \1_{0 \text{ is } (\alpha,u)\text{-good for }\overline\xi^0}\right] 
& = \E\left[\sum_{x \in \back(0,\overline\xi^0)} \1_{0 \text{ is } (\alpha,u)\text{-good for }\overline\xi^0}\right] \nonumber \\
& = \E\left[\sum_{x \in \overline\xi^0} M(x,0,\overline\xi^0)\right] \nonumber \\
& = \E\left[\sum_{y \in \overline\xi^0} M(0,y,\overline\xi^0)\right] \nonumber \qquad \text{ (by Lemma \ref{l:mass-transport-palm})}\\
& = \E\left[\sum_{y \in \forward(0,\overline\xi^0)} \1_{y \text{ is } (\alpha,u)\text{-good for }\overline\xi^0}\right].
\end{align}
Similarly,
\begin{equation}\label{e:goodbackfor3}
\E\left[\#\back(0,\overline\xi^0) \1_{0\text{ is followed by a loop in }\overline\xi^0}\right]
= 
\E\left[\sum_{y \in \forward(0,\overline\xi^0)} \1_{y\text{ is followed by a loop in }\overline\xi^0} \right].
\end{equation}
But there is at most one point followed by a loop in any forward.
Combining this fact, \eqref{e:goodbackfor1}, \eqref{e:goodbackfor2} and \eqref{e:goodbackfor3} we get the lemma.
\end{proof}

Lemma \ref{l:forward-u-good-finite} has a nice intuitive interpretation. 
One may think -- this is a rough picture -- that the forward ends by a loop after each $(\alpha,u)$-good point with probability at least $\P[\overline\xi \in \cL(\alpha)]$.
One may then hope that the number of $(\alpha,u)$-good points in a forward is dominated by a geometrical distribution with parameter $\P[\overline\xi \in \cL(\alpha)]$.
The expected value of this number of points would then be at most the inverse of this parameter.
While the actual picture is more complex, the lemma states that the latter prediction actually holds true.

\begin{proof}[Proof of Lemma \ref{l:forward-good-finite}]
For any $\overline S \in \overline \cS$ 
For any $x \in \xi$ write $\forward^\text{Good}(x ; \overline\xi)= \{y \in \forward(x ; \overline\xi) : y \text{ is } \alpha\text{-good for }\xi\}$.
Define similarly $\forward^\text{Good}(0 ; \overline\xi^0)$.
Set $C=[0,1]^d$. Thanks to Slivnyak-Mecke Formula \eqref{Slivnyak}, the Lemma \ref{l:forward-u-good-finite} and the definition of $\#U(\alpha)$, we have 
\[
\E\Big[\sum_{x \in \xi \cap C} \#\forward^\text{Good}(x;\overline\xi)\Big] 
=
\E\Big[\#\forward^\text{Good}(0;\overline\xi^0)\Big] \le \frac{ \#U(\alpha)}{\P\big[\overline\xi \in \cL(\alpha)\big]} < \infty.
\]
We get the result.
\end{proof}

\subsection{Absence of forward percolation of $\sharp$-bad points is sufficient}
\label{sect:DieseBadPoints}

In this section, we introduce a new model, an augmented model in the sense that the collection of new stopped curves contains the curves of the original models. This model has better stochastic properties and its study will be easier. In particular, the proof of Theorem \ref{t} will be reduced to the absence of forward percolation of bad points in this new model.

Before starting its construction, recall that we work under the framework and assumptions of Theorem \ref{t}. In particular, there exists $t>0$ such that 
\[
\E\left[\sup_{s \in [0,t]} \|h(s)\|^2\right]<\infty.
\]
In fact we can assume $t=1$. If $t \ge 1$, this is straightforward. If $t < 1$, it suffices to perform a linear change of time for the probability measure $\mu$: replace $\mu$ by its image under the map $h \mapsto (s \mapsto h(st))$. The assumptions are still satisfied for the new probability measure (for the loop property at scale $\alpha$, one can use the fact that the initial measure satisfies the loop property at scale $\alpha t$ and argue as in the proof of the monotony of the loop property) and the graph obtained does not change.

We therefore assume $t=1$ to simplify notations,
\[
\E\left[\sup_{s \in [0,1]} \|h(s)\|^2\right]<\infty.
\]

\subsubsection{The $\sharp$-model}

\label{s:sharp-model}

Recall $\cL(1)$ and $r(1)$ from the definition of the loop property at scale $1$.
We say that there is an obstacle around $c \in \R^2$ if the following conditions hold:
\begin{enumerate}
\item $\overline\xi - c \in \cL(1)$.
\item For all $x \in \xi \setminus B(c,1)$, $G_1(x) \cap B(c,1) = \emptyset$.
\end{enumerate}
We then define $\cO_c = \cO_c(\overline\xi)$ as follows.
\begin{itemize}
\item If there is an obstacle around $c \in \R^2$ we define $\cO_c$ as the union of the terminal state of all grains which started in $B(c,1)$:
\[
\cO_c = \bigcup_{x \in \xi \cap B(c,1)} G_x(\tau(x)).
\]
\item Otherwise, we set $\cO_c=\emptyset$.
\end{itemize}
We then define the global obstacle by
\[
\cO = \cO(\overline\xi) = \bigcup_{c \in \R^2} \cO(c).
\footnote{Note that if there is an obstacle around $c \in \R^2$, then for any $c' \in \R^2$ close enough to $c$ one has $\cO_c'=\cO_c$.
This yields $\cO = \bigcup_{c \in \Q^2} \cO_c$. This alternative description of $\cO$ enables to solve some potential measurability issues.}
\]

For all $x \in \xi$ we define 
\[
\tau^\sharp(x) = \tau^\sharp(x ; \overline\xi) = \inf \{ t \ge 1 : g_t(x) \in \cO \}
\]
and
\[
G^\sharp(x) = G^\sharp(x ; \overline\xi) = G_x(\tau^\sharp(x)).
\]
As mentioned above, this defines a $\sharp$-model which is an augmented version of the initial model.
More precisely, we have the following lemma.

\begin{lemma} \label{l:le_modele_augmente_domine}
For all $x \in \overline\xi$, $\tau(x) \le \tau^\sharp(x)$.
\end{lemma}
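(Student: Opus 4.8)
The goal is to show $\tau(x) \le \tau^\sharp(x)$ for every $x \in \xi$. The key is to notice that $\tau^\sharp(x)$ is defined so that the grain $x$ "meets the global obstacle $\cO$" at time $\tau^\sharp(x)$, and that the points realizing this obstacle are genuine points of $\xi$ whose final grains lie inside $\cO$. The strategy is to show that if $\tau^\sharp(x) < \infty$ then $x$ $\tau$-hits one of the grains forming $\cO$ at some time $\le \tau^\sharp(x)$, which forces $\tau(x) \le \tau^\sharp(x)$ by the Stopping property of the lifetime function $\tau$.

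First I would dispose of the trivial case: if $\tau^\sharp(x) = \infty$ there is nothing to prove, so assume $\tau^\sharp(x) < \infty$. By definition, $\tau^\sharp(x) = \inf\{t \ge 1 : g_x(t) \in \cO\}$, and since $\cO$ is a closed set (being a locally finite union of compact grains — this is where the footnote's reduction $\cO = \bigcup_{c \in \Q^2} \cO_c$ together with Condition 2 in the definition of an obstacle, which guarantees local finiteness, is used) and $g_x$ is continuous, this infimum is attained: $g_x(\tau^\sharp(x)) \in \cO$. Hence there is some $c \in \R^2$ with an obstacle around it and some $y \in \xi \cap B(c,1)$ such that $g_x(\tau^\sharp(x)) \in G_y(\tau(y)) = g_y([0,\tau(y)])$. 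So there is a time $t_y \le \tau(y)$ with $g_x(\tau^\sharp(x)) = g_y(t_y)$.

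Next I would check that $x \ne y$ and that $x$ $\tau$-hits $y$ at a time no larger than $\tau^\sharp(x)$. To see $x \ne y$: the point $y$ belongs to a loop entirely contained in $B(c,1)$ with all its grains of diameter controlled by the loop property, so in particular $\tau(y) \le 1$; but $g_x(\tau^\sharp(x))$ is reached at a time $\tau^\sharp(x) \ge 1$, and if $x = y$ we would need $g_x(\tau^\sharp(x)) = g_x(t_x)$ for some $t_x \le \tau(x) \le 1 \le \tau^\sharp(x)$, which — using the injectivity of $g_x$ up to its stopping time provided by the Hard-core property, or more simply the fact that $x \in \xi \cap B(c,1)$ would then already be one of the obstacle points stopped by time $1$ — is incompatible with $g_x$ being still "alive and exploring" at the strictly later time $\tau^\sharp(x)$; I expect this to require a short case analysis but no real difficulty. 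Granting $x \ne y$, set $t_x = \tau^\sharp(x)$. We have $t_y \le \tau(y)$, $g_x(t_x) = g_y(t_y)$, and $t_y \le \tau(y) \le 1 \le t_x$, so $t_y \le t_x$; the only remaining ingredient is $t_x \le \tau(x)$, and this is exactly what we want to prove, so we cannot assume it. Instead I would argue directly: let $t^* \le t_x$ be the \emph{first} time $\ge 0$ at which $g_x(t^*) = g_y(t_y')$ for some $t_y' \le \tau(y)$ with $t_y' \le t^*$ — such a time exists and is $\le t_x$ because the pair $(t_x, t_y)$ witnesses the set of such times is nonempty and, after possibly decreasing, one can assume $t^* \le \tau(x)$ does \emph{not} obstruct the argument since the Stopping property only needs $t_x \le \tau(x)$ which we instead reach as a conclusion. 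Cleaner: apply the Stopping property with the understanding that "$x$ $\tau$-hits $y$ at time $t_x$" requires $t_x \le \tau(x)$; so instead consider $t' = \min\{t_x, \tau(x)\}$. If $t' = \tau(x) < t_x$ we are already done since $\tau(x) < t_x = \tau^\sharp(x)$. Otherwise $t_x \le \tau(x)$, and then $x$ $\tau$-hits $y$ at time $t_x$, so by the Stopping property $\tau(x) = t_x = \tau^\sharp(x)$, and in particular $\tau(x) \le \tau^\sharp(x)$.

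The main obstacle I anticipate is the careful bookkeeping in the previous paragraph: the definition of "$x$ $\tau$-hits $y$ at time $t_x$" builds in the hypothesis $t_x \le \tau(x)$, which is the very inequality we are after, so one must phrase the argument as a dichotomy ($\tau(x) < \tau^\sharp(x)$ outright, or else $\tau^\sharp(x) \le \tau(x)$ and then the Stopping property collapses the inequality to an equality). A secondary point requiring care is the closedness/local finiteness of $\cO$, needed to conclude that the infimum defining $\tau^\sharp(x)$ is attained and that the attaining point lies in one of the compact grains $G_y(\tau(y))$; this rests on Condition 2 in the definition of an obstacle around $c$, which ensures that distinct obstacles do not accumulate in a way that destroys local finiteness, together with the fact that each $\cO_c$ is a finite union of compact sets.
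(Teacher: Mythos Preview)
Your dichotomy at the end is correct and is essentially a clean reformulation of the paper's case split (obstacle grain versus not): either $\tau(x)<\tau^\sharp(x)$ outright, or $\tau^\sharp(x)\le\tau(x)$ and then the Stopping property forces equality. The $x=y$ case is also fine once you observe that $x$ is then itself an obstacle grain.

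However, you skip the one step that carries the actual content of the lemma. You write that ``$y$ belongs to a loop entirely contained in $B(c,1)$ \dots\ so in particular $\tau(y)\le 1$''. But the loop property $\cL(1)$ only gives $\tau(y;\overline\xi_{|B(c,1)})\le 1$, the lifetime in the \emph{restricted} configuration; the set $\cO_c$ is defined using the full lifetimes $\tau(y)=\tau(y;\overline\xi)$, and your inequality $t_y\le\tau(y)\le 1\le t_x$ needs the full one. A priori a grain starting outside $B(c,1)$ could enter the ball before time $1$ and alter the inside dynamics. Ruling this out is exactly the work done in the paper's first paragraph, and it uses the ingredient you never invoke: Condition~2 in the definition of ``obstacle around $c$'', namely $G_1(x)\cap B(c,1)=\emptyset$ for every $x\in\xi\setminus B(c,1)$. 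Combined with item~3(c) of the loop property (the loop grains stay inside $\interior B(c,1)$ up to their restricted stopping times), this shows that up to time $1$ the inside and outside dynamics do not interact, hence $\tau(y;\overline\xi)=\tau(y;\overline\xi_{|B(c,1)})\le 1$ and $G_y(\tau(y))\subset B(c,1)$. Once this isolation is established, your argument is complete.

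Your secondary worry, closedness of $\cO$ so that the infimum defining $\tau^\sharp(x)$ is attained, also rests on this same isolation: knowing each obstacle grain sits inside a ball of radius $1$, local finiteness of $\xi$ makes $\cO$ locally a finite union of compact sets, hence closed.
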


\begin{proof} 
Assume there is an obstacle around some point $c$.
As $G_1(x) \cap B(c,1) = \emptyset$ for all $x \in \xi \setminus B(c,1)$, no grain starting outside $B(c,1)$ reached $B(c,1)$ at time $1$.
As $\overline\xi - c \in \cL(1)$, when considering only the dynamics of the grains inside $B(c,1)$, those grains stops before time $1$ and their union at time $1$ is $\cO(c)$.
Therefore, when considering the whole dynamic, the evolution of the grains starting inside $B(c,1)$ is not modified by the evolution of the grains starting  outside $B(c,1)$.
Thus the set $\cO(c)$ is created before time $1$ and is, indeed, an obstacle for the other grains after time $1$.

Let now $x \in \xi$. 
There are two cases.
\begin{itemize}
\item The grain $x$ is one of the grains involved in the creation of the obstacles.
In this case, we know that $\tau(x)$ is at most $1$.
But by definition $\tau^\sharp(x)$ is at least $1$.
Therefore $\tau(x) \le \tau^\sharp(x)$.
\item The grain $x$ is not one of the grains involved in the creation of the obstacles.
In this case, the grain starting at $x$ is stopped at the latest at the first time after $1$ at which it meets an obstacle.
We thus also get $\tau(x) \le \tau^\sharp(x)$ in this case as well.
\end{itemize}
\end{proof}

Fix $\rho > 1$. For all $c \in \R^2$ and $n \ge 1$, we define the annulus  
\[
A(c,n) =  B(c,\rho^n) \setminus \interior{B}(c,\rho^{n-1})
\]
and $A(n) = A(0,n)$ when the center $c$ equals to the origin. For $x \in \xi$, we consider the fattened set 
\[
G^{\sharp\text{-fat}}(x)=G^\sharp(x)+B(3) = \{g+b : g \in G^\sharp(x), b \in B(3)\}.
\]
For $n \ge 1$, we say $G^{\sharp\text{-fat}}(x)$ crosses the annulus $A(n)$ if 
\[
G^{\sharp\text{-fat}}(x) \cap \interior{B}(\rho^{n-1}) \neq \emptyset \text{ and } G^{\sharp\text{-fat}}(x) \cap B(\rho^n)^c \neq \emptyset.
\]
The following result will enable us to control the dependencies in the $\sharp$-model.

\begin{prop}
\label{p:33}
There exists $\rho_0>1$ such that, for any $\rho \ge \rho_0$, with probability $1$, there exists $n_0$ such that:
\begin{enumerate}
\item For all $n \ge n_0$ and all $x \in \xi$, $G^{\sharp\text{-fat}}(x)$ does not cross $A(n)$.
\item For all $n \ge n_0$ and all $x \in \xi \cap B(\rho^n)$, $G^\sharp(x) = G^\sharp(x ; \overline\xi_{B(\rho^{n+1})})$.
\end{enumerate}
\end{prop}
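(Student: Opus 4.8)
The plan is to deduce both items from a single quantitative statement: with high probability the fattened augmented grains $G^{\sharp\text{-fat}}(x)$ are small compared with the distances involved, more precisely that no grain $G^{\sharp\text{-fat}}(x)$ with $x$ near scale $\rho^n$ has diameter comparable to $\rho^n$ for $n$ large. Both conclusions are then Borel--Cantelli consequences of summable bounds on the probability that such a large augmented grain exists at scale $n$. The engine producing these bounds is the multi-scale analysis mentioned in the introduction, and which is stated later as Proposition~\ref{p:lestnsontsympas} (a lower bound on the passage time in a first-passage-type problem attached to a subcritical Boolean model), together with the second moment assumption $\E[\sup_{[0,1]}\|h(s)\|^2]<\infty$, which controls how far a grain can travel before time $1$ and hence before it can even start to be confined by obstacles.

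First I would make precise the link between $\tau^\sharp(x)$ and the Boolean model of obstacles. By definition $G^\sharp(x)=G_x(\tau^\sharp(x))$ where $\tau^\sharp(x)=\inf\{t\ge 1: g_x(t)\in\cO\}$, so $G^\sharp(x)$ is the initial segment of the curve $g_x$ (up to time $1$ already of controlled size, by the moment assumption) prolonged until it first meets the obstacle set $\cO$. The set $\cO$ is a union of the compact sets $\cO_c$ over centres $c$ around which an obstacle occurs, and ``there is an obstacle around $c$'' is an event depending only on $\overline\xi_{|B(c,1)}$ (via $\overline\xi-c\in\cL(1)$, a local event by Item~2 of the loop property) together with the local condition that no outside grain has reached $B(c,1)$ by time $1$. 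This last condition, thanks to the moment bound, has probability bounded below uniformly in $c$ after conditioning suitably, so the random set of ``obstacle centres'' dominates a genuinely subcritical Poisson-driven Boolean model with bounded grains. Crucially, the augmented grain $G^\sharp(x)$ is forced to be short once it has to cross a region densely sprinkled with such obstacles: a curve cannot traverse an annulus without hitting $\cO$ if that annulus is crossed, in the Boolean-model sense, by obstacles. This is exactly where Proposition~\ref{p:lestnsontsympas} enters: it gives, for the subcritical Boolean model of obstacles, that the ``passage time'' (here, the Euclidean length a curve must have to reach distance $R$ from its start without being absorbed) grows at least linearly in $R$ with stretched-exponentially small deviations. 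Combined with the tail of $\sup_{[0,1]}\|h(s)\|$ via a union bound over the (Poisson many) points $x$ in $B(\rho^{n+1})$, this yields $\P[\exists x\in\xi\cap B(\rho^{n+1}): \operatorname{diam} G^{\sharp\text{-fat}}(x)\ge \tfrac14(\rho^n-\rho^{n-1})]\le$ a summable sequence, provided $\rho$ is chosen large enough so that the geometric growth of the number of points is beaten by the stretched-exponential decay; this fixes the threshold $\rho_0$.

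Granting that, both items follow. For Item~1: if $G^{\sharp\text{-fat}}(x)$ crosses $A(n)$ it meets both $\interior B(\rho^{n-1})$ and $B(\rho^n)^c$, hence has diameter at least $\rho^n-\rho^{n-1}$ and, being connected and meeting $\interior B(\rho^{n-1})$, the generating point $x$ lies within distance $\rho^{n}+3$ of the origin, so $x\in B(\rho^{n+1})$ for $n$ large; by the previous paragraph and Borel--Cantelli this happens for at most finitely many $n$, almost surely, which gives an $n_0$. For Item~2: I would argue that $G^\sharp(x)$ depends on $\overline\xi$ only through the grains and obstacles it actually ``sees''. Concretely, $G^\sharp(x)$ has diameter less than $\rho^n-\rho^{n-1}$ for all $x\in\xi\cap B(\rho^n)$ once $n\ge n_0$ (again by the diameter bound, since such an $x$ certainly does not produce a crossing of $A(n)$), so $G^\sharp(x)\subset B(\rho^{n}+ (\rho^n-\rho^{n-1}))\subset B(\rho^{n+1})$; then the value of $\inf\{t\ge1: g_x(t)\in\cO\}$ is unchanged if we replace $\cO(\overline\xi)$ by $\cO(\overline\xi_{|B(\rho^{n+1})})$, because only obstacle centres $c$ with $\cO_c$ meeting $B(\rho^{n+1})$ matter, such $c$ lie in $B(\rho^{n+1}+1)$, and the event ``obstacle around $c$'' for such $c$ — together with the resulting set $\cO_c$ — is measurable with respect to $\overline\xi_{|B(c,1)}\subset\overline\xi_{|B(\rho^{n+1})}$ (here one uses that the stopping times $\tau(x)$ of the grains building $\cO_c$ are $\le 1$ and are determined by the dynamics inside $B(c,1)$ alone, as established in the proof of Lemma~\ref{l:le_modele_augmente_domine}). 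Hence $G^\sharp(x)=G^\sharp(x;\overline\xi_{|B(\rho^{n+1})})$ for all $x\in\xi\cap B(\rho^n)$ and $n\ge n_0$. One takes the maximum of the two $n_0$'s.

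The main obstacle is the quantitative input behind the summable bound, i.e.\ getting from the soft domination ``obstacle centres contain a subcritical Boolean model'' to the stretched-exponential control on the length a curve needs to escape — this is precisely the content of the multi-scale analysis of Section~\ref{s_prop17} / Proposition~\ref{p:lestnsontsympas}, and the only genuinely nontrivial ingredient; the rest is a careful but routine combination of that estimate with the second-moment tail of $\sup_{[0,1]}\|h(s)\|$, a union bound over a Poisson point set, and two Borel--Cantelli arguments. A secondary subtlety is the measurability bookkeeping for $\cO$ (handled by the $\Q^2$-countable description in the footnote) and the verification that replacing $\overline\xi$ by its restriction to a large ball does not change the relevant obstacles — both conceptually clear but requiring the locality facts about $\cL(1)$ and about the grains involved in obstacle creation that were already proved above.
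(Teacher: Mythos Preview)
Your high-level skeleton (multi-scale estimate $\Rightarrow$ summable probabilities $\Rightarrow$ Borel--Cantelli) matches the paper, but two of your key steps are wrong, and they are precisely the places where the real work lies.

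\textbf{Misreading of the multi-scale input.} You describe Proposition~\ref{p:lestnsontsympas} as saying that the set of obstacle centres ``dominates a genuinely subcritical Poisson-driven Boolean model'' so that a curve cannot cross an annulus without being absorbed. That is not what it says, and the picture is essentially inverted. The quantity $T_n$ in Proposition~\ref{p:lestnsontsympas} is defined via the \emph{pollution} $\Sigma=\bigcup_{x:V(x)\ge V_0} B(x,V^+(x))$, the Boolean model of balls around \emph{high-speed} grains; the proposition asserts that any path crossing $A_n$ contains at least $(4/3)^n$ well-separated points lying \emph{outside} $\Sigma$. This alone says nothing about hitting $\cO$. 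The paper then uses a second, independent step (Claim~\ref{c:utilisation_bons_points}): at each such separated point $w\notin\Sigma$, the event ``local low-speed obstacle at $w$'' is (i) equivalent to ``obstacle at $w$'' by \eqref{e:local_paslocal_obstacle}, (ii) measurable with respect to the low-speed process $\overline\xi^{<V_0}$, and (iii) independent across separated points. This produces the $\exp(-\alpha(4/3)^\ell)$ factor. Your proposal collapses this two-layer mechanism (pollution control, then independent obstacle tries) into a single ``obstacles form a subcritical Boolean model'' statement, which is both the wrong object and the wrong direction: obstacles being \emph{subcritical} would hurt, not help. Without separating pollution from obstacles you cannot get independence, and without independence you have no stretched-exponential bound.

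\textbf{Locality error in Item~2.} You assert that ``the event `obstacle around $c$' \dots is measurable with respect to $\overline\xi_{|B(c,1)}$''. This is false: condition~2 in the definition of ``obstacle around $c$'' is ``for all $x\in\xi\setminus B(c,1)$, $G_1(x)\cap B(c,1)=\emptyset$'', which depends on \emph{every} point of $\xi$ outside $B(c,1)$. The proof of Lemma~\ref{l:le_modele_augmente_domine} only shows that, \emph{once} that external condition holds, the obstacle is built entirely by grains from $B(c,1)$; it does not localise the event itself. In Section~\ref{s:p_finale_33} the paper handles this by introducing $N_2'=\inf\{n_0: \forall n\ge n_0,\ \nexists\, x\in\xi \text{ with } B(x,V^+(x)) \text{ crossing } A_n(\rho)\}$, shown to be a.s.\ finite via $\E[(V^+)^2]<\infty$; for $n\ge N_2'$ and $c\in B(\rho^{n+1}+1)$, any $x$ with $B(x,V^+(x))$ touching $B(c,1)$ must lie in $B(\rho^{n+3})$, and only then does condition~2 localise. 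Your argument for Item~2 omits this step entirely and therefore does not go through; note also that the paper obtains $G^\sharp(x)=G^\sharp(x;\overline\xi_{|B(\rho^{n+3})})$ first and only recovers the $\rho^{n+1}$ in the statement by taking $\rho_0=(100^{13})^3$, a detail absent from your $\rho_0$ discussion.
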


Let us note that the last object $ G^\sharp(x ; \overline\xi_{B(\rho^{n+1})})$ is well defined since the definition of obstacles can be extended to the case of any finite configuration, in particular $\overline\xi_{B(\rho^{n+1})}$. The definitions of  $\tau^\sharp(x,\overline\xi_{B(\rho^{n+1})})$ and $G^\sharp(x ; \overline\xi_{B(\rho^{n+1})})$ follow. The proof of the proposition is postponed in Section \ref{s_prop17}.

%\[
%\P[\text{There exists } n_0 \text{ such that for all } n\ge n_0 \text{ and all } x \in \xi, G^{\sharp\text{-fattened}}(x) \text{ does not cross } A_n(\rho)]=1
%\]

\subsubsection{Definition of $\sharp$-good points}
\label{s:p:33}

Let us define a new graph structure on the set of vertices $\xi$ by putting an edge between $x,y \in \xi$ when 
\[
G^{\sharp\text{-fat}}(x) \cap G^{\sharp\text{-fat}}(y) \neq \emptyset ~.
\]
In this case, we write $x \sim^\sharp y$. A $\sharp$-path is a (finite or not) path in this graph. In other words, this is a (possibly infinite) sequence $(x_i)$ of points of $\xi$ such that, for any index $i$, $x_{i-1} \sim^\sharp x_i$. Using the natural distance in this graph, we denote by $B^\sharp(x,k)$ the ball with center $x \in \xi$ and radius $k \geq 0$.

The next proposition which considers the case where $0$ belongs to the configuration, introduces the notion of \textbf{$(\alpha,\sharp)$-goodness} through the event $\good^\sharp(\alpha)$.

\begin{prop}
\label{p:DieseGoodImplyGood}
There exists a family of events $\good^\sharp(\alpha) \in \cF^{\overline \cS}$  indexed by $\alpha>0$ such that the following properties hold.
\begin{enumerate}
\item For all $\alpha>0$, $\good^\sharp(\alpha) \subset \{\overline S \in \overline\cS : 0 \in S\}$.
\item A.s. for all $\alpha>0$, any $x \in \xi$ stopped in a regular way (in $\overline\xi$) and such that $\overline\xi-x \in \good^\sharp(\alpha)$ satisfies $\overline\xi-x \in \good(\alpha)$.
\item For all $\alpha>0$, the event $\good^\sharp(\alpha)$ only depends on $\{(y,g_y), y \in B^\sharp(0,4) \}$.
\item  A.s. for any $x \in \xi$, there exists $\alpha_0>0$ such that for all $0<\alpha < \alpha_0$, $\overline\xi-x \in \good^\sharp(\alpha)$.
\end{enumerate}
\end{prop}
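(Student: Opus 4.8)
The plan is to build $\good^\sharp(\alpha)$ as the intersection of two ingredients: a ``scenario-robust $\alpha$-goodness'' clause and a local control clause saying that the $\sharp$-neighbourhood genuinely witnesses what happens around $0$. First I would make precise the notion of \emph{scenario}. Given the data $\{(y,g_y) : y \in B^\sharp(0,4)\}$ only, one does not know the true configuration $\overline\xi$: one is ignorant of all grains $z$ with $z \sim^\sharp$-distance at least $5$ from $0$, and also of the precise lifetimes $\tau(\cdot)$ even for nearby grains (because of the long-range dependence). However, by Lemma~\ref{l:le_modele_augmente_domine} and the very definition of $\tau^\sharp$, the grain $G^\sharp(y)$ already contains $G_y(\tau(y))$, so any grain that can possibly hit or be hit by a grain seen from $0$ within, say, two steps, must itself be $\sharp$-adjacent. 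The key combinatorial observation is that, modulo the $\sharp$-neighbourhood data, the restriction of the out-degree-one graph $\grapheoriente$ to the relevant grains, together with the impact points $g_x(\tau(x))$ and the backward-count comparison appearing in the definition of $\good(\alpha)$, can take only \emph{finitely many} values — a ``scenario'' — because each $G^\sharp(y)$ is a fixed compact curve and the qualitative hitting pattern among finitely many compact curves is finite. I would then define $x$ to be $(\alpha,\sharp)$-good if, \emph{in every scenario consistent with the $\sharp$-neighbourhood data}, $x$ is $\alpha$-good, i.e.\ some $u \in U(\alpha)$ works in all of them simultaneously.

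With this definition, item~(1) is immediate (we only declare goodness for configurations containing $0$), and item~(3) holds by construction: the list of consistent scenarios, hence membership in $\good^\sharp(\alpha)$, is a function of $\{(y,g_y) : y \in B^\sharp(0,4)\}$ alone; the choice of depth $4$ is dictated by the fact that verifying $(\alpha,u)$-goodness of $x=0$ involves $\back(0)$ and the grains that could enter $B(u,\alpha)$, each of which is at $\sharp$-distance at most some fixed constant that I would track and absorb into the constant $4$. Item~(2) is the logical heart: if $\overline\xi - x \in \good^\sharp(\alpha)$ and $x$ is stopped in a regular way, then the \emph{true} configuration realizes one particular consistent scenario, and by definition $x$ is $\alpha$-good in that scenario — so $\overline\xi - x \in \good(\alpha)$. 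Here I must be careful that ``consistent scenario'' is defined so that the true configuration always realizes one of them; this is where Lemma~\ref{l:le_modele_augmente_domine} and the obstacle construction are used, to guarantee that no unseen grain (at $\sharp$-distance $>4$) can affect the hitting pattern near $x$ or the relevant portion of $\back(x)$.

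For item~(4) I would argue as follows. Fix $x \in \xi$ with $\tau(x) < \infty$; by Proposition~\ref{p:33} (applied with any admissible $\rho$), almost surely the grains $G^\sharp(y)$ for $y$ in a fixed-depth $\sharp$-neighbourhood of $x$ are uniformly bounded and, for $n$ large, stabilize with respect to finite restrictions. In particular the set of grains relevant to $x$ is finite and occupies a bounded region. Now choose $\alpha_0$ small: the impact point $g_x(\tau(x))$ of $x$ on its (regular) successor $s(x)$ lies strictly inside the grain $G_{s(x)}$, so there is a small ball $B(v,2\alpha)$ sitting \emph{just before} that impact, disjoint from every other grain in the bounded relevant region; this is exactly the $\alpha$-goodness witness from the discussion around Figure~\ref{fig:good}, and it uses only that finitely many compact curves leave room around a point of strict self-interior. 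Moreover, once $\alpha_0$ is small enough that the center $v$ plus $r(\alpha)$-balls cover the relevant annulus (the role of $U(\alpha)$), the \emph{same} ball works in every consistent scenario, because all consistent scenarios agree on the compact grains $G^\sharp(\cdot)$ and hence on the only obstruction that matters — whether $B(v,2\alpha)$ overlaps another grain. Thus $x$ is $(\alpha,\sharp)$-good for all $\alpha<\alpha_0$. The main obstacle, and the step I expect to require the most care, is making the notion of ``scenario'' simultaneously (a) finite, (b) a measurable function of the depth-$4$ $\sharp$-data only, and (c) rich enough that the true configuration always realizes one — reconciling these three demands is precisely the ``lose information to gain locality'' trade-off flagged in the introduction, and it is where Proposition~\ref{p:33} does its real work behind the scenes.
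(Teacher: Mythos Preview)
Your overall architecture matches the paper's: define finitely many local ``scenarios'', declare $x_0$ to be $(\alpha,\sharp)$-good when a suitable loop-placement works in every scenario, and read items (1)--(4) off this definition. However, two of the load-bearing steps are either missing or wrong.

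\textbf{How the paper gets finitely many scenarios.} Your claim that ``the qualitative hitting pattern among finitely many compact curves is finite'' is not justified and, as stated, not true: the unknown lifetimes $\tau(y)$ of the grains in $B^\sharp(x_0,1)$ range over a continuum, and different values of $\tau(y)$ can produce genuinely different configurations near $x_0$. The paper resolves this by a \emph{time discretisation} (the ``blurring''): it first constructs a finite \emph{dominant time} $\mathbb{T}(x_0)$, depending only on $B^\sharp(x_0,4)$, which upper-bounds every finite $\tau(y)$ for $y\in B^\sharp(x_0,2)$; then it partitions $[0,\mathbb{T}(x_0)]$ into $N$ sub-intervals chosen (via uniform continuity of the finitely many $g_{x_i}$) so that each blurred segment has diameter at most $1/(K+1)$. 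A scenario is then a choice, for each grain in $B^\sharp(x_0,1)$, of whether it is ``blurred'' and, if so, in which sub-interval. This is a genuinely finite list, and the key Lemma~\ref{lem:saucisse} shows that the true configuration inside the ``Heart'' $g_{x_0}([0,\tau^\sharp(x_0)])+B(1)$ coincides with one of the admissible blurred configurations. The construction of $\mathbb{T}(x_0)$ (Proposition~\ref{prop:DominantTime}) is itself delicate --- it requires a second layer of blurring around each $z\in B^\sharp(x_0,2)$ --- and is exactly what forces the depth-$4$ dependence you want in item~(3). Your plan does not identify the need for a time horizon, and without it there is no mechanism to make the scenario set finite.

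\textbf{The error in item~(4).} You assert that ``the same ball works in every consistent scenario, because all consistent scenarios agree on the compact grains $G^\sharp(\cdot)$''. This is incorrect. The placement conditions for the ball $B(v,2\alpha)$ are not merely ``avoid the $\sharp$-grains'': the ball must sit on $g_{x_0}$ strictly between the last backward impact $i^-(\mathbf s)$ and the forward impact $i(\mathbf s)$, and it must avoid the actual (blurred) grains $G(\mathbf s,x_i)$ in that scenario --- all of which depend on $\mathbf s$, not just on $G^\sharp$. The paper allows the centre $v(\mathbf s)$ to depend on the scenario (only the radius $2\alpha$ is common), and explicitly remarks that ``our strategy does not allow us to specify locally where the suitable ball $B(v,2\alpha)$ is located''. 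With your stronger requirement that a single $u\in U(\alpha)$ work for all scenarios, item~(4) would fail: in a scenario where $x_0$ is stopped early, the required window on $g_{x_0}$ lies in a completely different place from the true one. The fix is exactly the paper's: take $\alpha^\ast=\min_{\mathbf s\in\mathbf S}\alpha(\mathbf s)$ over the finite scenario set, with a possibly different $v(\mathbf s)$ for each $\mathbf s$.
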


An element $x \in \xi$ is an \textbf{$(\alpha,\sharp)$-good point} if $\overline\xi-x$ belongs to $\good^\sharp(\alpha)$. Otherwise, $x$ is an \textbf{$(\alpha,\sharp)$-bad point}. The statements of Proposition \ref{p:DieseGoodImplyGood} can thus be paraphrased as follows:
\begin{itemize}
\item A.s. any $(\alpha,\sharp)$-good point stopped in a regular way is an $\alpha$-good point.
\item Whether $x \in \xi$ is $(\alpha,\sharp)$-good or not only depends on $\{(y,g_y), y \in B^\sharp(x,4)\}$.
\item A.s. any point of $\xi$ is $(\alpha,\sharp)$-good for $\alpha$ small enough.
\end{itemize}

The proof of the proposition is postponed to Section \ref{s-5}. While it may be frustrating for the reader to encounter only an abstract definition of $\good^\sharp(\alpha)$ points at this stage, providing a precise definition now is impractical due to its technical nature, which involves extensive work, such as introducing scenarios, blurred lifetime functions, and more. Therefore, we have chosen to extract and present only the significant properties of $\good^\sharp(\alpha)$ points necessary to complete the proof of Theorem \ref{t}.

\subsection{Proof of Theorem \ref{t} using Propositions \ref{p:33} and \ref{p:DieseGoodImplyGood}}
\label{sect:ProofTh2From}

In the previous section, we successively reduced the proof of Theorem \ref{t} to the proof of the existence of a subcritical regime for forward percolation of $(\alpha,\sharp)$-bad points. In other words, it remains to prove the following lemma.

\begin{lemma}
\label{l:existence-phase-sous-critique-pour-sharp}
There exists $\alpha>0$ such that, with probability $1$, there exists no $x \in \xi$ such that $\forward(x)$ is infinite and contains only $(\alpha,\sharp)$-bad points.
\end{lemma}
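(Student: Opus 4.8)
The plan is to prove Lemma~\ref{l:existence-phase-sous-critique-pour-sharp} by a multi-scale (renormalisation) argument in the spirit of \cite{G-AOP08}, exploiting the two crucial features supplied by Propositions~\ref{p:33} and \ref{p:DieseGoodImplyGood}: first, that $(\alpha,\sharp)$-goodness is \emph{local} in the $\sharp$-graph (it depends only on $B^\sharp(x,4)$), and second, that the $\sharp$-graph itself has good geometric control, namely that $G^{\sharp\text{-fat}}(x)$ eventually stops crossing large annuli. The strategy is: fix $\rho \ge \rho_0$ from Proposition~\ref{p:33}; an infinite forward set $\forward(x)$ is in particular an infinite connected subset of $\graphe(\overline\xi)$, hence an infinite connected set for the coarser relation $\sim^\sharp$ (since $x\to y$ implies the grains meet, so $G^{\sharp\text{-fat}}(x)\cap G^{\sharp\text{-fat}}(y)\ne\emptyset$). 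So it suffices to show that, for $\alpha$ small, there is a.s.\ no infinite $\sharp$-connected set of $(\alpha,\sharp)$-bad points.

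First I would set up the renormalisation scales. For an integer $k\ge 1$ declare a box (a dyadic or $\rho$-adic cube of side $L_k$, with $L_k$ growing geometrically) to be \emph{$k$-good} if: (i) no $G^{\sharp\text{-fat}}(x)$ with $x$ in the neighbourhood crosses the corresponding annulus $A(\cdot,n)$ at the matching scale, so that $\sharp$-clusters touching the box stay confined; and (ii) every $x\in\xi$ whose relevant $\sharp$-neighbourhood meets the box is $(\alpha,\sharp)$-good. By locality of $(\alpha,\sharp)$-goodness (Item 3 of Proposition~\ref{p:DieseGoodImplyGood}) and by Proposition~\ref{p:33}, the event ``box $Q$ is $k$-good'' depends only on $\overline\xi$ restricted to a bounded enlargement of $Q$; moreover Item~4 of Proposition~\ref{p:DieseGoodImplyGood} guarantees that, as $\alpha\downarrow 0$, the probability that a fixed box is $k$-good tends to $1$, uniformly in the sense needed to launch the recursion. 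The bulk of the work is the inductive estimate: one shows that if the base-scale probability $p_1(\alpha)$ that a box is bad is small enough, then the probabilities $p_k(\alpha)$ that a $k$-box is $k$-bad satisfy a recursion $p_{k+1}\le C p_k^{2}$ (or similar), forcing $p_k\to 0$ superexponentially, which by a Borel--Cantelli argument rules out an infinite bad $\sharp$-cluster. The finite-range dependence coming from the locality statements is what makes the standard renormalisation lemma of \cite{G-AOP08} applicable.

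Concretely the steps I would carry out, in order: (1) reduce to the $\sharp$-graph as above and record that $\forward(x)$ infinite $\Rightarrow$ an infinite $\sim^\sharp$-path of $(\alpha,\sharp)$-bad points; (2) on the almost-sure event of Proposition~\ref{p:33}, use confinement of $G^{\sharp\text{-fat}}$ to show that such an infinite $\sharp$-path would have to produce, at every sufficiently large scale $k$, a "crossing" of the annulus $A(n_k)$ by a connected union of bad $\sharp$-clusters; (3) define $k$-good boxes, verify their local (finite-range) dependence using Proposition~\ref{p:33}(2) and Proposition~\ref{p:DieseGoodImplyGood}(3), and verify the base-case smallness using Proposition~\ref{p:DieseGoodImplyGood}(4) by choosing $\alpha$ small; (4) run the multi-scale recursion to conclude $\sum_k p_k<\infty$, hence a.s.\ no infinite crossing, hence no infinite bad $\sharp$-path, hence no such $\forward(x)$.

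I expect the main obstacle to be step (3)–(4): making precise the combinatorial geometry of how a bad $\sharp$-cluster forces crossings at all large scales, and setting up the recursion so that the dependence really is finite-range. The delicate point is that $(\alpha,\sharp)$-goodness depends on $B^\sharp(x,4)$ — a ball in the \emph{random} $\sharp$-graph — so "finite range" only holds on the event that $\sharp$-neighbourhoods are spatially bounded; this is exactly why Proposition~\ref{p:33} (control of the size of $G^\sharp$ and stability under restriction to $\overline\xi_{B(\rho^{n+1})}$) is needed, and interleaving the "$\sharp$-clusters are small" event with the "points are good" event in a single hierarchy of $k$-good events is the technical heart of the argument. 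Once that bookkeeping is done, the recursion and Borel--Cantelli are routine, and uniformity in $\alpha$ is only used to start the recursion, after which $\alpha$ is frozen.
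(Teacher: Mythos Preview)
Your proposal is correct and follows essentially the same multi-scale strategy as the paper: reduce to the absence of infinite $\sharp$-paths of $(\alpha,\sharp)$-bad points (the paper's Claim~\ref{c:fastoche}), then run a renormalisation argument in which Proposition~\ref{p:33} provides the geometric confinement and finite-range localisation, and Item~4 of Proposition~\ref{p:DieseGoodImplyGood} initialises the recursion for $\alpha$ small. The only cosmetic difference is packaging: the paper keeps the geometric control event $\nice(c,n)$ separate from the bad-crossing event $\cross_\alpha(c,n)$ and obtains a recursion of the form $\P[\cross_\alpha(0,n+1)]\le C\P[\cross_\alpha(0,n)]^2 + C\P[\nice(0,n)^c]$, first choosing $n_0$ large so the additive error is small and then choosing $\alpha$, whereas you merge both conditions into a single ``$k$-good box'' event.
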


\begin{proof}[Proof of Therorem \ref{t} using Lemma \ref{l:existence-phase-sous-critique-pour-sharp}]
Let $\alpha>0$ be as in the statement of the lemma. First remark that any element $y$ belonging to an infinite forward set is stopped in a regular way. We can then use the second item of Proposition \ref{p:DieseGoodImplyGood} to assert that if such $y$ is an $\alpha$-bad point, it is also an $(\alpha,\sharp)$-bad point. Hence, Lemma \ref{l:existence-phase-sous-critique-pour-sharp} implies that a.s. there exists no $x \in \xi$ such that $\forward(x)$ is infinite and contains only $\alpha$-bad points.

We conclude by invoking almost sure results stated previously.  
\begin{itemize}
\item By Lemma \ref{l:csq-forward-good-finite}, all forward clusters $\forward(x)$ are finite.
\item By Lemma \ref{l:reduction-forward}, all undirected clusters $\cluster(x)$ are finite.
\item By Lemma \ref{l:reduction-pog}, all the clusters $C(x)$ of the original undirected graph $\graphe(\overline\xi)$ are finite.
\end{itemize}
Note that in the first two bullet points, forward clusters and undirected clusters of the directed graph $\grapheoriente(\overline\xi)$ are considered while the third one concerns clusters of the original undirected graph $\graphe(\overline\xi)$. Theorem \ref{t} is proven.
\end{proof}

\begin{proof}[Proof of Lemma \ref{l:existence-phase-sous-critique-pour-sharp} using Propositions \ref{p:33} and \ref{p:DieseGoodImplyGood}] Let $\pi$ be a $\sharp$-path (recall the definition in Section \ref{s:p:33}). We say that $\pi$ is a $\sharp$-path of $(\alpha,\sharp)$-bad points if its all its vertices are $(\alpha,\sharp)$-bad points. We say that $\pi$ is self-avoiding if all its vertices are distinct.

Besides, two consecutive elements $x_i$ and $x_{i+1}$ in a forward cluster in $\grapheoriente(\overline\xi)$ satisfies $x_i \to x_{i+1}$. This means that there are different and $x_i \sim^\sharp x_{i+1}$ by Lemma \ref{l:le_modele_augmente_domine}. Hence, any infinite forward set is an infinite self-avoiding $\sharp$-path. So, in order to get Lemma \ref{l:existence-phase-sous-critique-pour-sharp}, it is sufficient to prove that:

\begin{claim}
\label{c:fastoche}
There exists $\alpha>0$ (small) such that a.s. there is no infinite self-avoiding $\sharp$-path of $(\alpha,\sharp)$-bad points.
\end{claim}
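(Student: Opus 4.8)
## Plan for the proof of Claim \ref{c:fastoche}

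\textbf{Overall strategy.} The plan is to run the multi-scale renormalization argument of \cite{G-AOP08}, adapted to the present setting. The key structural facts I have available are: (i) by Proposition \ref{p:DieseGoodImplyGood}, Item 3, whether a vertex $x$ is $(\alpha,\sharp)$-good depends only on the data $\{(y,g_y) : y \in B^\sharp(x,4)\}$, i.e. on the $\sharp$-neighbourhood of $x$ up to depth $4$; (ii) by Proposition \ref{p:DieseGoodImplyGood}, Item 4, for each fixed $x$ the vertex $x$ is $(\alpha,\sharp)$-good once $\alpha$ is small enough, so the probability that a given vertex is $(\alpha,\sharp)$-bad tends to $0$ as $\alpha\to 0$; (iii) by Proposition \ref{p:33}, with probability one the $\sharp$-fat grains eventually fail to cross large annuli and, for large $n$, the $\sharp$-grain of a vertex in $B(\rho^n)$ is already determined by the restriction of $\overline\xi$ to $B(\rho^{n+1})$. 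These last facts are what allow us to localize the $\sharp$-graph at large scales and to turn the $\sharp$-connectivity structure into something amenable to a block/renormalization argument.

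\textbf{Setting up the renormalization.} First I would tile $\R^2$ at scale $L$ by the boxes $(Lk + [0,L)^2)_{k\in\Z^2}$, for a length $L$ to be chosen large. I would declare a box \emph{black} if it carries some "bad local event" — roughly, that it contains a vertex $x$ which is $(\alpha,\sharp)$-bad, together with the auxiliary event that one of its $\sharp$-fat grains, or the $\sharp$-fat grain of one of its $\sharp$-neighbours up to depth $4$, reaches too far away from the box (a scale-$L$ crossing event in the spirit of Proposition \ref{p:33}). The point of including the auxiliary event is that, on its complement, the $(\alpha,\sharp)$-status of vertices in the box — and, more importantly, the whole local $\sharp$-adjacency structure around the box — depends only on $\overline\xi$ restricted to a bounded dilation $CL$ of the box; this is exactly what makes the colouring of boxes a finite-range dependent field. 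A $\sharp$-path of $(\alpha,\sharp)$-bad points that is, in addition, not "too spread out locally" then induces a $*$-connected path of black boxes in $\Z^2$. The standard Peierls argument for subcritical $*$-percolation on $\Z^2$ (for finite-range dependent fields with small marginals) will then kill infinite such paths, provided $\P[\text{a box is black}]$ is small enough. The probability of the auxiliary crossing part is made small by choosing $L$ large (using Proposition \ref{p:33}, or rather its quantitative ingredient), and then, with $L$ fixed, the probability of the "$(\alpha,\sharp)$-bad vertex in the box" part is made small by choosing $\alpha$ small (using Proposition \ref{p:DieseGoodImplyGood}, Item 4, together with the fact that the number of Poisson points in a box of side $L$ is integrable). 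This two-parameter tuning — first $L$, then $\alpha$ — is the heart of the scheme.

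\textbf{Handling the long-range part.} The delicate point is that a $\sharp$-path of $(\alpha,\sharp)$-bad points need \emph{not} be "locally spread out": two $\sharp$-consecutive vertices $x_i \sim^\sharp x_{i+1}$ can be arbitrarily far apart in Euclidean distance, because a $\sharp$-fat grain can be large. This is precisely why a naive one-scale block argument fails, and why one must use the genuine multi-scale scheme of \cite{G-AOP08} rather than a single renormalization step. Here the plan is: on the probability-one event of Proposition \ref{p:33}, there is a (random) $n_0$ beyond which no $\sharp$-fat grain crosses the annulus $A(n)$; so an infinite self-avoiding $\sharp$-path, after discarding its finitely many early vertices, can only make "short jumps" at each scale, in a precise sense that is controlled scale by scale. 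The multi-scale construction of \cite{G-AOP08} is designed exactly to feed a decaying sequence of "bad-event" probabilities at geometrically growing scales into the conclusion that no infinite path of bad points exists; the inputs it needs are (a) a good event at scale $1$ with probability close to $1$ — here supplied by taking $\alpha$ small in Proposition \ref{p:DieseGoodImplyGood} — and (b) a mechanism propagating control from one scale to the next — here supplied by the quantitative version of Proposition \ref{p:33}, i.e. the bound on how often $\sharp$-fat grains cross annuli, which is the content of the multi-scale analysis of Section \ref{s_prop17}. I would therefore set up the scale-$n$ bad events, verify the recursive inequality relating their probabilities (using finite range of dependence of the $\sharp$-goodness field together with the crossing bounds), check that the base case holds for $\alpha$ small, and conclude by Borel–Cantelli / the renormalization lemma of \cite{G-AOP08} that a.s. no infinite self-avoiding $\sharp$-path of $(\alpha,\sharp)$-bad points exists. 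By the reduction already carried out in the excerpt, this proves Lemma \ref{l:existence-phase-sous-critique-pour-sharp} and hence Theorem \ref{t}.

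\textbf{Main obstacle.} The genuinely hard step is the interface between the two propositions: verifying the recursive probability estimate that drives the multi-scale induction, because it requires simultaneously (a) the finite-range-of-dependence property of $(\alpha,\sharp)$-goodness (Item 3 of Proposition \ref{p:DieseGoodImplyGood}), expressed in terms of $\sharp$-graph distance rather than Euclidean distance, and (b) quantitative control — not merely the a.s. statement of Proposition \ref{p:33} — on the Euclidean sizes of the $\sharp$-fat grains, so that $\sharp$-graph distance up to depth $4$ translates into a bounded Euclidean region with overwhelming probability. Reconciling the combinatorial $\sharp$-distance bookkeeping with the metric crossing estimates, uniformly across scales, is where the real work lies; everything else is the now-standard machinery of \cite{G-AOP08}.
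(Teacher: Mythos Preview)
Your plan is correct and follows essentially the same route as the paper: a multi-scale renormalization in the style of \cite{G-AOP08}, with annulus-crossing events $\cross_\alpha(0,n)$ localized on a ``nice'' event via Proposition \ref{p:33} (Lemma \ref{l:cross_local}), a quadratic recursion $\P[\cross_\alpha(0,n+1)] \le C\P[\cross_\alpha(0,n)]^2 + C\P[\nice(0,n)^c]$ (Lemma \ref{l:multiscale}), and initialization at some large $n_0$ for small $\alpha$ via Item 4 of Proposition \ref{p:DieseGoodImplyGood}. One clarification on your ``main obstacle'': you do not in fact need extra quantitative input beyond the almost-sure statement of Proposition \ref{p:33}, since that statement already forces $\P[\nice(0,n)^c]\to 0$, which is the only control on the error term the recursion requires.
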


Given two subsets $A$ and $B$ of $\R^2$, we say that there exists a $\sharp$-path from $A$ to $B$ if there is a $\sharp$-path $(x_0,\ldots,x_k)$ such that $G^{\sharp\text{-fat}}(x_0) \cap A \neq \emptyset$ and $G^{\sharp\text{-fat}}(x_k) \cap B \neq \emptyset$. Let $\rho_0 > 1$ given by Proposition \ref{p:33} and pick $\rho \ge \max\{\rho_0 , 2\}$. For any $\alpha > 0$, $n \ge 1$ and $c \in \R^2$, let us consider the event
\[
\cross_\alpha(c,n) = \big\{ \text{$\exists$ a self-avoiding $\sharp$-path of $(\alpha,\sharp)$-bad points from $B(c,\rho^{10n})$ to $B(c,\rho^{10(n+1)})^c$} \big\} ~.
\]
By stationarity, it is sufficient-- to get Claim \ref{c:fastoche} --to prove that
\begin{equation}
\label{e:pcrosstendvers0}
\P[\cross_\alpha(0,n)] \to 0 \text{ as } n \to \infty
\end{equation}
for $\alpha > 0$ small enough. Indeed, the previous limit asserts that any self-avoiding $\sharp$-path of $(\alpha,\sharp)$-bad points starting from $c+[0,1]^2$ is a.s. finite, for any $c \in \R^2$, and Claim \ref{c:fastoche} immediatly follows.

The proof of (\ref{e:pcrosstendvers0}) relies on a multiscale analysis: see \cite{G-AOP08} for details. One key point is that the event $\cross_\alpha(c,n)$ can be localized on the following nice event:
\[
\nice(c,n) = \left\lbrace \begin{array}{c}
\text{for all $k \ge 10 n$ and all $x \in \xi$, $G^{\sharp\text{-fat}}(x)$ does not cross $A(c,k)$ and} \\
\text{for all $k \ge 10 n$ and all $x \in \xi \cap B(c,\rho^k)$, $G^\sharp(x) = G^\sharp(x;\overline\xi_{|B(c,\rho^{k+1})})$}
\end{array} \right\rbrace
\]
recalling that $A(c,k) = B(c,\rho^k)\setminus\interior{B}(c,\rho^{k-1})$. This is Lemma \ref{l:cross_local} below. Later, we will use that the event $\nice(c,n)$ has a probability tending to $1$ as $n$ tends to $\infty$ (uniformly on the center $c$ by stationarity) thanks to Proposition \ref{p:33}.

\begin{lemma}
\label{l:cross_local}
For any $\alpha > 0$, $c \in \R^2$ and $n \ge 1$, there exists an event $\crosslocal_\alpha(c,n)$, depending only on $\overline\xi_{|B(c,\rho^{10n+16})}$, such that
\[
\cross_\alpha(c,n) \cap \nice(c,n) = \crosslocal_\alpha(c,n) \cap \nice(c,n) ~.
\]
\end{lemma}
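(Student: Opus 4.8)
The idea is to exhibit explicitly the local witness event $\crosslocal_\alpha(c,n)$ by mimicking the definition of $\cross_\alpha(c,n)$ but replacing every non-local object that occurs in it by its localized counterpart, and then to argue that on the event $\nice(c,n)$ the two coincide because the nice event is precisely designed to force the non-local and local objects to agree. Concretely, for each $x\in\xi\cap B(c,\rho^{10n+12})$ (a constant number of radii of slack will be needed; $16$ in the statement is a safe choice) define the localized augmented grain $G^{\sharp,\text{loc}}(x) := G^\sharp\big(x;\overline\xi_{|B(c,\rho^{10n+14})}\big)$, define $\sim^{\sharp,\text{loc}}$ from the intersections of the fattened localized grains, define $(\alpha,\sharp)$-bad-locally via the event $\good^\sharp(\alpha)$ read off from $\{(y,g_y): y\in B^{\sharp,\text{loc}}(x,4)\}$, and set
\[
\crosslocal_\alpha(c,n) = \big\{\exists \text{ a self-avoiding $\sharp^{\text{loc}}$-path of locally-$(\alpha,\sharp)$-bad points from } B(c,\rho^{10n}) \text{ to } B(c,\rho^{10(n+1)})^c\big\}.
\]
By construction this event depends only on $\overline\xi_{|B(c,\rho^{10n+16})}$, since every grain, every $\sharp$-neighbour relation, and every four-step $\sharp$-ball used in its definition is computed from configurations restricted to balls of radius at most $\rho^{10n+14}$, and one fattening by $B(3)$ together with the locality statement (item 3) of Proposition~\ref{p:DieseGoodImplyGood} costs only a bounded further enlargement.

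The core of the argument is the double inclusion on $\nice(c,n)$. First I would observe that any self-avoiding $\sharp$-path of $(\alpha,\sharp)$-bad points realizing $\cross_\alpha(c,n)$ stays, until it first exits $B(c,\rho^{10(n+1)})^c$, inside a bounded enlargement of $B(c,\rho^{10(n+1)})$: each step of a $\sharp$-path moves by at most $\sup_{x}\mathrm{diam}\,G^{\sharp\text{-fat}}(x)$ within one annulus, and on $\nice(c,n)$ no $G^{\sharp\text{-fat}}(x)$ crosses any annulus $A(c,k)$ with $k\ge 10n$, so the path cannot jump over an annulus; hence all its vertices $x$ lie in $B(c,\rho^{10n+2})$, say. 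For every such vertex, the second clause of $\nice(c,n)$ gives $G^\sharp(x)=G^\sharp(x;\overline\xi_{|B(c,\rho^{k+1})})$ with $k=10n+2$, and a short induction (again using the no-crossing clause to control which points can be $\sharp$-relevant) propagates this equality to all grains encountered among the $\sharp$-neighbours, their $\sharp$-neighbours, and so on up to depth four, so that $G^{\sharp,\text{loc}}$ agrees with $G^\sharp$ on the whole portion of the path that matters, $B^{\sharp,\text{loc}}(x,4)=B^\sharp(x,4)$ for each path vertex $x$, and therefore local $(\alpha,\sharp)$-badness coincides with $(\alpha,\sharp)$-badness for these vertices. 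This shows $\cross_\alpha(c,n)\cap\nice(c,n)\subset\crosslocal_\alpha(c,n)\cap\nice(c,n)$; the reverse inclusion is symmetric, since on $\nice(c,n)$ a locally-bad self-avoiding $\sharp^{\text{loc}}$-path from $B(c,\rho^{10n})$ to the complement of $B(c,\rho^{10(n+1)})$ is, by the same equalities, an honest self-avoiding $\sharp$-path of $(\alpha,\sharp)$-bad points with the same endpoints.

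The main obstacle I anticipate is bookkeeping the radii: one must check that four levels of $\sharp$-neighbour expansion, each level being enlarged both by the fattening $B(3)$ and by whatever spatial footprint a depth-four $\sharp$-ball has, together with the slack needed so that the no-crossing property of $\nice(c,n)$ actually confines everything, all fit inside $B(c,\rho^{10n+16})$ — and that the passage to the localized configuration $\overline\xi_{|B(c,\rho^{10n+14})}$ does not itself create spurious obstacles or destroy genuine ones near the boundary. The no-crossing clause of $\nice(c,n)$ is exactly what rules out a far-away obstacle influencing a $\sharp$-grain in the window (that is the content of item 2 of Proposition~\ref{p:33}, localized at scale $c$), so the argument is essentially a careful transcription; I would keep the constant $16$ comfortably large rather than optimize it, since only finiteness matters for the multiscale analysis that uses this lemma.
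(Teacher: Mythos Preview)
Your approach is essentially the same as the paper's: define a localized crossing event by replacing $G^\sharp$ with $G^\sharp(\,\cdot\,;\overline\xi_{|B(c,\rho^{10n+\text{const}})})$, use the no-crossing clause of $\nice(c,n)$ to confine a minimal crossing $\sharp$-path to a bounded ball, then invoke item~2 of $\nice(c,n)$ and item~3 of Proposition~\ref{p:DieseGoodImplyGood} to make local and global quantities coincide up to depth four. One bookkeeping slip: your claim that the path vertices lie in $B(c,\rho^{10n+2})$ is too optimistic---a minimal path ends at a vertex whose fattened grain touches $B(c,\rho^{10n+10})^c$, and the no-crossing property only forces that grain inside $B(c,\rho^{10n+11})$, so the paper lands the vertices in $B(c,\rho^{10n+11})$ and the depth-four $\sharp$-balls in $B(c,\rho^{10n+15})$; since you explicitly flag the radius bookkeeping as needing care and keep $16$ as a safe margin, this does not affect the validity of the plan.
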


\begin{proof}[Proof of Lemma \ref{l:cross_local}]
Assume that $\nice(c,n)$ holds. In order to know whether $\cross_\alpha(c,n)$ holds or not, we proceed in two steps. First, we focus on all possible self-avoiding $\sharp$-path $(x_0,\dots,x_k)$ from $B(c,\rho^{10n})$ to $B(c,\rho^{10n+10})^c$. W.l.o.g. we can assume that $(x_0,\dots,x_k)$ is minimal in the following sense: $G^{\sharp\text{-fat}}(x_i)$ overlaps $B(c,\rho^{10n+10})^c$ only when $i=k$. Then, the fattened set $G^{\sharp\text{-fat}}(x_k)$ (and in particular the vertex $x_k$) is included in $B(c,\rho^{10n+11})$. Otherwise, it would touch $B(c,\rho^{10n+11})^c$ and $B(c,\rho^{10n+10})$ since by hypothesis $x_k$ is the unique element among the $x_i$'s such that $G^{\sharp\text{-fat}}(x_i) \cap B(c,\rho^{10n+10})^c \not= \emptyset$. This is forbidden when $\nice(c,n)$ holds. Consequently, $x_0,\ldots,x_k$ are all points of $\xi \cap B(c,\rho^{10n+11})$. On $\nice(c,n)$, the $\sharp$-grains $G^{\sharp}(x_i)$'s (and also the fattened sets $G^{\sharp\text{-fat}}(x_i)$'s) only depend on $\overline\xi_{|B(c,\rho^{10n+12})}$.

In a second step, we have to check wether each $x_i$ is an $(\alpha,\sharp)$-bad point or not. Thanks to the second item of Proposition \ref{p:DieseGoodImplyGood}, this only involves elements in $B^\sharp(x_i,4)$. Repeating the same argument as in the first step on the event $\nice(c,n)$, we prove that any vertex belonging to some $B^\sharp(x_i,4)$, for $0\leq i\leq k$, has to be inside $B(c,\rho^{10n+15})$. On $\nice(c,n)$, the $\sharp$-grains of vertices of $\xi \cap B(c,\rho^{10n+15})$ only depend on $\overline\xi_{|B(c,\rho^{10n+16})}$.
%\jb{Techniquement on aboutit à : sur $\nice$, on a suffisamment d'info dans $\overline\xi_{|B(c,\rho^{10n+16})}$ pour savoir si $\cross_\alpha$ est réalisé ; c'est bien ce qu'on veut ? peut-être faudrait-il expliciter $\crosslocal$ et donc $G^\sharp(x ; \xi_{|\cdots})$ ?}
\end{proof}

The next step toward the proof of (\ref{e:pcrosstendvers0}) consists in the following induction relation.

\begin{lemma}
\label{l:multiscale}
There exists a constant $C=C(\rho,d) \ge 1$ such that, for all $n \ge 1$ and all $\alpha > 0$,
\[
\P[\cross_\alpha(0,n+1)] \le C \P[\cross_\alpha(0,n)]^2 + C \P[\nice(0,n)^c] ~.
\]
\end{lemma}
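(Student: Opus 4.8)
The plan is to bound the probability of a crossing of the large annulus $B(0,\rho^{10(n+1)})\setminus B(0,\rho^{10n})$ by requiring such a crossing to produce two crossings of sub-annuli at the previous scale, located far apart, plus an error term coming from the possible failure of the localization event $\nice$. First I would intersect $\cross_\alpha(0,n+1)$ with $\nice(0,n+1)$, trivially estimating $\P[\cross_\alpha(0,n+1)]\le \P[\cross_\alpha(0,n+1)\cap\nice(0,n+1)]+\P[\nice(0,n+1)^c]$, and then note that $\nice(0,n+1)\subset\nice(0,n)$ (the defining condition for $\nice(c,n)$ is ``for all $k\ge 10n$'', which is weaker for larger $n$), so the error term is at most $\P[\nice(0,n)^c]$. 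By Lemma \ref{l:cross_local}, on $\nice(0,n+1)$ (hence on $\nice(0,n)$) the event $\cross_\alpha(0,n+1)$ coincides with the local event $\crosslocal_\alpha(0,n+1)$, which depends only on $\overline\xi_{|B(0,\rho^{10(n+1)+16})}$.

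The geometric heart of the argument is the standard "two far-apart crossings" observation: a self-avoiding $\sharp$-path of $(\alpha,\sharp)$-bad points from $B(0,\rho^{10n})$ to $B(0,\rho^{10(n+1)})^c$ must, for suitably chosen centers $c_1,c_2$ at distance of order $\rho^{10(n+1)}$ from the origin, cross both $B(c_1,\rho^{10n})\setminus$ outer-ball-of-scale-$n$ around $c_1$ and $B(c_2,\rho^{10n})\setminus$ outer-ball-of-scale-$n$ around $c_2$ — that is, it induces occurrences of $\cross_\alpha(c_1,n)$ and $\cross_\alpha(c_2,n)$. The point of choosing $\rho\ge 2$ (as done just before the lemma, picking $\rho\ge\max\{\rho_0,2\}$) is that the two balls $B(c_i,\rho^{10(n+1)+16})$ can be taken disjoint while still sitting inside $B(0,\rho^{10(n+1)+16})$; a covering/counting argument then produces a bounded number $C=C(\rho,d)$ of candidate center pairs so that on $\crosslocal_\alpha(0,n+1)$ at least one pair $(c_1,c_2)$ has both $\cross_\alpha(c_i,n)$ occurring. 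Since those two events are measurable with respect to the restrictions of $\overline\xi$ to the two disjoint balls $B(c_i,\rho^{10n+16})$ — here one must check $\rho^{10n+16}$ is small compared to the separation, again using $\rho\ge 2$ and the factor-$\rho^{10}$ gap between consecutive scales — they are independent, and by stationarity each has probability $\P[\cross_\alpha(0,n)]$. A union bound over the $\le C$ pairs gives $\P[\crosslocal_\alpha(0,n+1)]\le C\,\P[\cross_\alpha(0,n)]^2$, and combining with the error term yields the claimed inequality, after possibly enlarging $C$.

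The main obstacle I expect is the bookkeeping in the geometric step: one has to verify carefully that the radii line up — that the outer radius $\rho^{10(n+1)}$ of a scale-$(n+1)$ annulus genuinely forces two disjoint scale-$n$ annular crossings whose associated localization balls $B(c_i,\rho^{10n+16})$ remain disjoint and inside $B(0,\rho^{10(n+1)+16})$. This is where the precise exponents ($10n$, $10n+16$, and the ratio $\rho^{10}$ between successive scales) and the choice $\rho\ge 2$ are used, and it is essentially the only place real work is needed; once the combinatorial geometry is pinned down, independence follows from the Poisson structure and the rest is a union bound. The details of this sort of argument are classical and are exactly the multiscale scheme of \cite{G-AOP08}, so I would carry out the geometry explicitly but invoke that reference for the overall template.
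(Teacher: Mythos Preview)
Your overall architecture is right (two far-apart sub-crossings, independence from disjoint Poisson restrictions, union bound), but the localization step is misplaced, and as written your independence claim is false. You localize at scale $n+1$ by intersecting with $\nice(0,n+1)$, and then assert that the resulting sub-events $\cross_\alpha(c_i,n)$ are ``measurable with respect to the restrictions of $\overline\xi$ to the two disjoint balls $B(c_i,\rho^{10n+16})$''. They are not: the whole point of introducing $\crosslocal$ in Lemma~\ref{l:cross_local} is that $\cross_\alpha(c_i,n)$ itself is \emph{not} local, and it only coincides with the local version on $\nice(c_i,n)$. The event $\nice(0,n+1)$ controls annuli centered at $0$ at scales $\ge 10(n+1)$; it says nothing about annuli centered at $c_i$ at scales $\ge 10n$, so it does not give you $\nice(c_i,n)$, and you cannot conclude independence of the two $\cross_\alpha(c_i,n)$.

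The paper's route fixes this by localizing at scale $n$ rather than $n+1$. One first uses the purely geometric inclusion
\[
\cross_\alpha(0,n+1)\ \subset\ \bigcup_{c_1,c_2}\cross_\alpha(c_1,n)\cap\cross_\alpha(c_2,n)
\]
(with $c_1$ on the sphere of radius $\rho^{10(n+1)}$ and $c_2$ on the sphere of radius $\rho^{10(n+2)}$, so that $\|c_1-c_2\|>2\rho^{10n+16}$), then replaces each $\cross_\alpha(c_i,n)$ by $\crosslocal_\alpha(c_i,n)\cup\nice(c_i,n)^c$ via Lemma~\ref{l:cross_local}. Independence now applies to the genuinely local events $\crosslocal_\alpha(c_i,n)$; the $\nice(c_i,n)^c$ terms are collected separately and, by stationarity, each has probability $\P[\nice(0,n)^c]$. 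A final application of Lemma~\ref{l:cross_local} gives $\P[\crosslocal_\alpha(0,n)]\le \P[\cross_\alpha(0,n)]+\P[\nice(0,n)^c]$, which converts the square in $\crosslocal$ back into a square in $\cross$ plus lower-order terms. Your plan is easily repaired along these lines, but as stated the independence step does not go through.
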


\begin{proof}[Proof of Lemma \ref{l:multiscale}]
Let $\cC_1$ be a finite subset of $\cS(\rho^{10})$ and $\cC_2$ be a finite subset of $\cS(\rho^{20})$ such that 
\[
\cS(\rho^{10}) \subset \bigcup_{c \in \cC_1} B(c,1) \text{ and } \cS(\rho^{20}) \subset \bigcup_{c \in \cC_2} B(c,1).
\]
Denote by $K=K(\rho,d)$ the maximum of the cardinal of $\cC_1$ and $\cC_2$. For any $n \ge 1$, we immediately get
\[
\cS(\rho^{10(n+1)}) \subset \bigcup_{c \in \rho^{10n}\cC_1} B(c,\rho^{10n}) \text{ and } \cS(\rho^{10(n+2)}) \subset \bigcup_{c \in \rho^{10n}\cC_2} B(c,\rho^{10n})
\]
where $\rho^{10n}\cC_1$ (resp. $\rho^{10n}\cC_2$) is a subset of $\cS(\rho^{10(n+1)})$ (resp. $\cS(\rho^{10(n+2)})$) with cardinality at most $K$.

Assume that $\cross_\alpha(0,n+1)$ occurs and let us denote by $\pi = (x_0,\dots,x_k)$ a path realizing this event. W.l.o.g. we can assume that $\pi$ is minimal in the following sense: $G^\sharp(x_i)$ touches $B(0,\rho^{10(n+1)})$ (resp. $B(0,\rho^{10(n+2)})^c$) only when $i=0$ (resp. $i=k$). So, $G^{\sharp\text{-fat}}(x_0)$ touches $B(0,\rho^{10(n+1)})$ and its complement: it also touches $\cS(\rho^{10(n+1)})$. Therefore there exists $c_1 \in \rho^{10n}\cC_1$ such that $G^{\sharp\text{-fat}}(x_0)$ touches $B(c_1,\rho^{10n})$. Similarly there exists $c_2 \in \rho^{10n}\cC_2$ such that $G^{\sharp\text{-fat}}(x_k)$ touches $B(c_2,\rho^{10n})$. As both distances between $c_1$ and $\cS(\rho^{10(n+2)})$ and between $c_2$ and $\cS(\rho^{10(n+1)})$ are at least $\rho^{10(n+2)}-\rho^{10(n+1)} > \rho^{10(n+1)}$ (since $\rho \ge 2$), we get that both events $\cross_\alpha(c_1,n)$ and $\cross_\alpha(c_2,n)$ occur. To sum up,
\[
\cross_\alpha(0,n+1) \subset \bigcup_{c_1,c_2} \cross_\alpha(c_1,n) \cap \cross_\alpha(c_2,n)
\]
where $c_1 \in \rho^{10n}\cC_1$ and $c_2 \in \rho^{10n}\cC_2$.

Thus Lemma \ref{l:cross_local} leads to the following inclusion
\[
\cross_\alpha(0,n+1) \subset \Big( \bigcup_{c_1,c_2} \crosslocal_\alpha(c_1,n) \cap \crosslocal_\alpha(c_2,n) \Big) \cup \Big( \bigcup_{c_3} \nice(c_3,n)^c \Big) 
\]
where $c_1, c_2$ are as before and where $c_3 \in \rho^{10n}\cC_1 \cup \rho^{10n}\cC_2$. By Lemma \ref{l:cross_local} and for $i=1,2$, the event $\crosslocal_\alpha(c_i,n)$ is measurable w.r.t. $\overline\xi_{B(c_i,\rho^{10n+16})}$. So they are independent since $\|c_2-c_1\| \ge \rho^{10(n+2)}-\rho^{10(n+1)} > 2 \rho^{10n+16}$ as $\rho \ge 2$. Therefore, the previous inclusion yields
\[
\P[\cross_\alpha(0,n+1)] \le \sum_{c_1,c_2} \P[\crosslocal_\alpha(c_1,n)]\P[\crosslocal_\alpha(c_2,n)] + \sum_{c_3} \P[\nice(c_3,n)^c] ~.
\]
By stationarity, we can write 
\[
\P[\cross_\alpha(0,n+1)] \le K^2 \P[\crosslocal_\alpha(0,n)]^2 + 2K \P[\nice(0,n)^c] ~.
\]
Using Lemma \ref{l:cross_local} once again, we get $\P[\crosslocal_\alpha(0,n)] \le \P[\cross_\alpha(0,n)] + \P[\nice(0,n)^c]$ and then
\[
\P[\cross_\alpha(0,n+1)] \le K^2 \P[\cross_\alpha(0,n)]^2 + (3K^2+2K) \P[\nice(0,n)^c] ~.
\]
The lemma follows with $C=3K^2+2K$.
\end{proof}

Let us conclude the proof of (\ref{e:pcrosstendvers0}). Let $C \geq 1$ be the constant given in Lemma \ref{l:multiscale}. Since the event $\nice(0,n)^c$ has a vanishing probability as $n \to \infty$ by Proposition \ref{p:33}, we can pick $n_0$ large enough such that, for all $n \ge n_0$, 
\begin{equation}
\label{e:nice-1-4}
C \P[\nice(0,n_0)^c] \le C^2 \P[\nice(0,n)^c] \le \frac 1 4 ~.
\end{equation}

As a consequence of the fourth item of Proposition \ref{p:DieseGoodImplyGood}, any point in a given bounded set is $(\alpha,\sharp)$-good with probrobablity tending to $1$ as $\alpha$ tends to $0$. Hence, pick $\alpha > 0$ small enough such that 
\[
C \P\big[\text{$\exists x \in \xi \cap B(0,\rho^{10n_0+16})$ such that $x$ is $(\alpha,\sharp)$-bad} \big] \le \frac 1 4.
\]

The argument used in the first step of Lemma \ref{l:cross_local} says that the conjunction of both events $\cross_\alpha(0,n_0)$ and $\nice(0,n_0)$ ensures the existence of a $(\alpha,\sharp)$-bad point in $B(0,\rho^{10n_0+11})$. Using what preceedes, we get:
\begin{align}
\label{e:cross-init-1-2}
C \P[\cross_\alpha(0,n_0)]
 & \le C \P\big[ \text{$\exists x \in \xi \cap B(0,\rho^{10n_0+16})$ such that $x$ is $(\alpha,\sharp)$-bad} \big]+ C \P[\nice(0,n_0)^c]  \nonumber \\
 & \le \frac 1 2 ~.
\end{align}
Henceforth, combining \eqref{e:nice-1-4}, \eqref{e:cross-init-1-2} and the induction relation
\begin{equation}
\label{e:multiscale}
C \P[\cross_\alpha(0,n+1)] \le \big( C \P[\cross_\alpha(0,n)] \big)^2 + C^2 \P[\nice(0,n)^c]
\end{equation}
providing by Lemma \ref{l:multiscale} (for all $n \ge n_0$), we easily show by induction that $C \P[\cross_\alpha(0,n)] \le \frac 1 2$ for all $n \ge n_0$. In other words,
\[
0 \le \limsup_n C \P[\cross_\alpha(0,n)] \le \frac 1 2 ~.
\]

Besides, from \eqref{e:nice-1-4} and \eqref{e:multiscale} we also deduce that
\[
\limsup_n C \P[\cross_\alpha(0,n)] \le \big( \limsup_n C \P[\cross_\alpha(0,n)] \big)^2 ~.
\]
Putting together the two previous displays, we finally get \eqref{e:pcrosstendvers0}. Lemma \ref{l:existence-phase-sous-critique-pour-sharp} is proven.
\end{proof}

\section{Proof of Proposition \ref{p:33}: study of the $\sharp$-model}
\label{s_prop17}

\subsection{Local obstacles, pollution and strategy}

\paragraph{Rough strategy.} We mainly have to prove that augmented grains $G^{\sharp}(x)$ do not travel too far.
As augmented grains are stopped when they hit an obstacle, we have to prove a conveniently quantified version of "any long path touches an obstacle".
The difficulty lies in the fact that the definition of the event "there is an obstacle around point $c$" is not local.
Indeed, the second item in the definition requires that there is no $x \in \xi \setminus B(c,1)$ such that $G_1(x)$ touches $B(c,1)$.
This leads to long range dependence.
To deal with that difficulty we introduce two objects:
\begin{itemize}
\item The event "there is a local obstacle around $c$". This is a local event. They are therefore easily dealt with.
\item The pollution $\Sigma$. This is a random subset of $\R^2$ such that, for any point $c$ out of $\Sigma$, "there is a local obstacle around $c$" if and only if "there is
an obstacle around $c$".
\end{itemize}
The difficulty is thus concentrated in the study of the pollution $\Sigma$.
We are led to show that on any long path one can find many points outside of $\Sigma$.
This is related, at least in spirit, to percolation and first passage percolation.
We make a few comment on this point in Section \ref{sect:Comments}. In the remaining of this section, we give precise definitions.

\paragraph{$V_0$-local obstacle.} We define a notion of local obstacle. This notion depends on a positive real number $V_0>0$.
We also write $V_0^+=V_0+1$.
For any $x \in \xi$ we define
\[
V(x) = \sup_{s \in [0,1]} \|g_x(s)-g_x(0)\| = \sup_{s \in [0,1]} \|h_x(s)\| \quad \text{ and } \quad V^+(x)=V(x)+1.
\]
In the line-segment model introduced in Section \ref{sect:Line-Segment}, $V(x)$ denotes the speed of growth of the associated grain. We keep this interpretation as a speed in the general case and speak of low-speed or high-speed grains $x$ depending on whether $V(x) < V_0$ or not. Recall the discussion at the beginning of Section \ref{sect:DieseBadPoints}, the moment assumption of Theorem \ref{t} can be rewritten with $t=1$. Hence, in the whole of Section \ref{s_prop17}, we assume
\[
\E \big[ V^2 \big] = \E \big[ \sup_{s \in [0,1]} \|h(s)\|^2 \big] < \infty
\]
or equivalently $\E[ (V^+)^2] < \infty$.

Recall also $\cL(1)$ and $r(1)$ from the definition of the loop property at scale $1$.
Let us first reformulate the definition of obstacle as stated in Section \ref{s:sharp-model}.
There is an obstacle around $c \in \R^2$ if the following conditions hold:
\begin{enumerate}
\item $\overline\xi - c \in \cL(1)$.
\item For all $x \in \xi \setminus B(c,1)$, if $V(x) < V_0$, then $G_1(x) \cap B(c,1) = \emptyset$.
\item For all $x \in \xi \setminus B(c,1)$, if $V(x) \ge V_0$, then $G_1(x) \cap B(c,1) = \emptyset$.
\end{enumerate}
We say that there is a local obstacle around $c \in \R^2$ if the first two of the above properties are true.
Let us make a few elementary remarks.
\begin{itemize}
\item If there is an obstacle around $c$, then there is a local obstacle around $c$.
\item By definition of the loop property, the probability of having an obstacle around a point is positive. The same therefore holds for the probability of having a local
obstacle around a point.
\item Property 1 only depends on $\overline \xi_{|B(c,1)}$. Property 2 only depends on $\xi_{|B(c,V_0^+)}$.
Indeed any grain $x$ such that $G_1(x) \cap B(c,1) \neq \emptyset$ satisfies $B(x,V(x)) \cap B(c,1) \neq \emptyset$.
If the grain is moreover low-speed, then $B(x,V_0) \cap B(c,1) \neq \emptyset$ and thus $x$ belongs to $B(c, V_0^+)$.
Therefore, 
\begin{equation}\label{e:local_obstacle_local}
\text{The event "there is a local obstacle at $c$" is measurable with respect to $\xi_{|B(c,V_0^+)}$.}
\end{equation}
\item If there is no high-speed grain $x \in \xi$ such that $B(x,V(x))$ touches $B(c,1)$
-- which is equivalent to $c \in B(x,V^+(x))$ --, then there is a local obstacle around $c$ if and only if there is an obstacle around $c$.
\end{itemize}

\paragraph{$V_0$-pollution and strategy.} The latter remark motivates the definition of the following object.
We define a Boolean model by
\[
\Sigma = \Sigma(V_0) = \bigcup_{x \in \xi : V(x) \ge V_0} B(x,V^+(x)).
\]
We can reformulate the remark ending the previous paragraph as follows:
\begin{equation} \label{e:local_paslocal_obstacle}
\text{If $c \in \R^2 \setminus \Sigma$, then there is a local obstacle around $c$ if and only if there is an obstacle around $c$.}
\end{equation}
%
%The aim is basically to show that augmented grains $G^\sharp(x)$ are unlikely to cross large annuli. 
%The plan is roughly the following. 
%\begin{enumerate}
%\item On any path crossing a large annulus one can find, with high probability, many points outside the $V_0$-polluted set $\Sigma(V_0)$ that are at least at distance $2V^+_0$
%apart from each other.
%\item By locality of "local obstacle", one then shows that, with high probability, there are some local obstacles at some of the above points.
%\item As those point are outside $\Sigma(V_0)$, the above local obstacles are actually obstacles.
%If the path considered is the trajectory $t \mapsto g_x(t)$ of some grain $x$, this thus prevent the augmented grain $G^\sharp(x)$ from crossing  the annulus.
%%\end{enumerate}
%
%The difficult point is the first one.

\paragraph{Plan of the section.} 
We fix some constants in Section \ref{s:choice_constant}.
The key result on pollution, Proposition \ref{p:lestnsontsympas}, is stated and proven in Section \ref{s:pollution_control}.
Its main consequence, which is essentially the first item of Proposition \ref{p:33}, is then stated and proven in Section \ref{s:augmented_grains_do_not_cross_large_annuli}.
The conclusion of the proof of Proposition \ref{p:33} is then given in Section \ref{s:p_finale_33}.

\subsection{Choice of constants for Sections \ref{s:pollution_control} and \ref{s:augmented_grains_do_not_cross_large_annuli}}
\label{s:choice_constant}
We fix $\rho$ and $V_0$ large enough for Sections \ref{s:pollution_control} and \ref{s:augmented_grains_do_not_cross_large_annuli}.
In Section \ref{s:p_finale_33} we do not use $V_0$ any more and $\rho$ become a free parameter again.

\paragraph{Scale parameter $\rho$.} We fix $\rho=100$.

\paragraph{Speed parameter $V_0$.} In order to choose $V_0$, we need to introduce a further event.
We say that there is a local low-speed obstacle around $c \in \R^2$ if the following conditions hold:
\begin{enumerate}
\item $\overline\xi^{<V_0} - c \in \cL(1)$ where $\overline\xi^{<V_0}=\{(x,h_x) \in \overline\xi : V(x) < V_0\}$.
\item For all $x \in \xi \setminus B(c,1)$, if $V(x) < V_0$, then $G_1(x) \cap B(c,1) = \emptyset$.
\end{enumerate}
The advantage of this new event is that it is measurable with respect to $\overline\xi^{<V_0}$.
If all the grains of $\xi \cap B(c,1)$ are low-speed, then there is a local low-speed obstacle around $c \in \R^2$ if and only if there is a local obstacle around $c \in \R^2$.
Recall that the probability of having a local obstacle raound $c$ is positive.
Therefore, for $V_0$ large enough, the probability of having a local low-speed obstacle around $c$ is positive.

Let now $C$ be the absolute constant define in \eqref{e:def_C}. Fix $V_0$ large enough to ensure
\begin{equation} \label{e:locallowspeedpositive}
\P[\text{local low-speed obstacle at }0]>0
\end{equation}
and there exists an integer $n_0$ such that
\begin{equation} \label{e:V0etnetqueuedelaloi}
\rho^{n_0-1} < V_0^+ \text{ and } 6V_0^+ < \rho^{n_0-1}(\rho-1) \text{ and } C^2 \int_{(\rho^{n_0-2},+\infty)} (v^+)^2 \d \P_{V^+}(v^+) \le \frac 1 4.
\end{equation}
We can have \eqref{e:locallowspeedpositive} thanks to the discussion of the previous paragraph.
We can also have \eqref{e:V0etnetqueuedelaloi} as $\E[(V^+)^2]$ is finite and as $\rho-1 = 99 > 6$.

\subsection{Pollution control}
\label{s:pollution_control}

Recall that $\rho=100$ and that $V_0$ has been chosen in Section \ref{s:choice_constant}.

\subsubsection{Notations, result and plan}

Let $n \ge 1$.
We consider the annulus 
\[
A_n = B(\rho^n)\setminus \interior B(\rho^{n-1})
\]
and the Boolean model 
\[
\Sigma_n =  \bigcup_{x \in \xi : V_0^+ \le V^+(x) \le \rho^{n-1}} B(x,V^+(x)).
\]
Let us recall the definition of the pollution $\Sigma$:
\[
\Sigma = \bigcup_{x \in \xi : V_0 \le V(x)} B(x, V^+(x)) = \bigcup_{x \in \xi : V_0^+ \le V^+(x)} B(x, V^+(x)).
\]
Thus $\Sigma_n$ is a local version of the pollution $\Sigma$.

To any deterministic continuous path $\pi$ crossing $A_n$ (that is starting from the inner boundary of $A_n$, ending on the outer boundary of $A_n$
and remaining within the annulus $A_n$ in the meantime) we associate the score
\[
S_n(\pi) = \max \{\card(W) : W \subset \pi, W \subset \Sigma^c(V_0), W \text{ is separated}\}
\]
and the score
\[
S'_n(\pi)= \max \{\card(W) : W \subset \pi, W \subset \Sigma^c_n(\rho,V_0), W \text{ is separated}\}
\]
where "$W$  is separated"  means that any two distinct points of $W$ are at least at distance $3V_0^+$ from each other and that each point is at least
at distance $3V_0^+$ from the boundaries of the annulus.

Finally, we introduce
\[
T_n= \inf_{\pi \text{ crosses } A_n} S_n(\pi)
\]
and
\[
T'_n=\inf_{\pi \text{ crosses } A_n} S'_n(\pi).
\]
The latter quantity is local:
\begin{equation}\label{e:Tprimelocal}
T'_n \text{ is measurable with respect to } \xi_{|B(2\rho^n)}.
\end{equation}
Indeed $T'_n$ depends only on $A_n \cap \Sigma_n$, and the radius of balls considered in the definition of $\Sigma_n$ is bounded from above by $\rho^{n-1}$ (and thus by $\rho^n$).

Let us collect a few results in the following lemma.

\begin{lemma}\label{l:basic_W}
\begin{itemize}
\item If $W$  is separated, then the events "$w$ is a local obstacle" indexed by $w \in W$ are independent. 
\item If $W \subset \Sigma^c$, then for any $w \in W$,  "$w$ is a local obstacle" if and only if "$w$ is an obstacle". 
\end{itemize}
\end{lemma}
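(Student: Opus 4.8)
The plan is to establish each bullet point directly from the localization properties already recorded in the excerpt, with no new machinery.

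For the first bullet, recall from \eqref{e:local_obstacle_local} that the event ``there is a local obstacle around $w$'' is measurable with respect to $\xi_{|B(w,V_0^+)}$. If $W$ is separated, then any two distinct points $w, w' \in W$ satisfy $\|w-w'\| \ge 3V_0^+$, so the balls $B(w,V_0^+)$ for $w \in W$ are pairwise disjoint (indeed pairwise at distance at least $V_0^+$). Since $\xi$ is a Poisson point process on $\R^2 \times \cC$ with intensity $\d x \otimes \mu$, its restrictions to disjoint regions $B(w,V_0^+) \times \cC$ are independent. Hence the events ``$w$ is a local obstacle'', $w \in W$, being each measurable with respect to a distinct one of these independent restrictions, are mutually independent. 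I expect this to be routine; the only point to be slightly careful about is that the separation constant $3V_0^+$ comfortably exceeds the $2V_0^+$ that would already suffice to make the balls of radius $V_0^+$ disjoint, so there is room to spare (the extra slack is used elsewhere, e.g. to keep the balls inside the annulus).

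For the second bullet, this is essentially a restatement of \eqref{e:local_paslocal_obstacle}. Fix $w \in W$. Since $W \subset \Sigma^c$ we have $w \notin \Sigma = \bigcup_{x \in \xi : V(x) \ge V_0} B(x,V^+(x))$. By \eqref{e:local_paslocal_obstacle} (or directly: the last elementary remark preceding the definition of $\Sigma$), for any point $c \in \R^2 \setminus \Sigma$ there is a local obstacle around $c$ if and only if there is an obstacle around $c$. Applying this with $c = w$ gives exactly the claim. The only thing worth spelling out is the equivalence, used in that remark, between ``there is a high-speed grain $x$ such that $B(x,V(x))$ touches $B(c,1)$'' and ``$c \in B(x,V^+(x))$'': if $B(x,V(x)) \cap B(c,1) \ne \emptyset$ then $\|x-c\| \le V(x)+1 = V^+(x)$, so $c \in B(x,V^+(x))$, and conversely; thus $c \notin \Sigma$ precisely means no such high-speed grain exists, which is the only way a local obstacle around $c$ can fail to be a genuine obstacle.

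There is no real obstacle in this lemma: both statements are bookkeeping consequences of the localization facts \eqref{e:local_obstacle_local} and \eqref{e:local_paslocal_obstacle} already proved, combined with the independence of a Poisson process over disjoint regions. If anything, the only mild care needed is to confirm that the separation distance $3V_0^+$ in the definition of ``$W$ is separated'' indeed forces disjointness of the radius-$V_0^+$ balls indexing the local-obstacle events.
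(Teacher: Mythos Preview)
Your proposal is correct and follows exactly the paper's approach: the paper's own proof is a one-liner citing \eqref{e:local_obstacle_local} for the first item and \eqref{e:local_paslocal_obstacle} for the second, and you have simply spelled out the (routine) details behind those citations.
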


\begin{proof} The first item is a consequence of \eqref{e:local_obstacle_local}. The second idem is \eqref{e:local_paslocal_obstacle}.  \end{proof}

Our aim is to show that any crossing of an annulus touches an obstacle with high probability.
Thanks to the previous lemma, the task is mainly reduced to showing that $T_n$ is large.
This motivates the following key result about pollution.

\begin{prop}
\label{p:lestnsontsympas} 
With probability $1$, for any $n$ large enough, $T_n \ge (4/3)^n$.
\end{prop}

The lower bound $(4/3)^n$ is negligible compare with the width of the annulus $A_n$ which is of order $100^n$.
The lower bound is thus sub-linear.
This is however sufficient for our purpose.
The proposition is an immediate consequence of the following results and of Borel-Cantelli lemma.

\begin{lemma} \label{l:encoredumultiechelle} There exists $A>0$ such that $\sum_n \P[T'_n < (3/2)^n A] < \infty$.
\end{lemma}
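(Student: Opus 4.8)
We follow the multiscale renormalisation strategy introduced in \cite{G-AOP08}, exploiting that the pollution is generated only by the high--speed grains, whose total mass $\int_{\{v^+\ge V_0^+\}}(v^+)^2\,d\P_{V^+}(v^+)$ has been made small through the choice of $V_0$ in \eqref{e:V0etnetqueuedelaloi}. For an integer $k\ge n_0$ and a fixed box $Q_k$ of side $\rho^k$, say informally that $Q_k$ is \emph{$k$-bad} if either (i) some continuous path crossing $Q_k$ carries fewer than $A(3/2)^k$ pairwise $3V_0^+$--separated points lying in $Q_k\setminus\Sigma_k$ (the crossing has ``deficient score''), or (ii) $Q_k$ meets a ball $B(x,V^+(x))$ with $x\in\xi$ and $\rho^{k-1}<V^+(x)\le\rho^k$. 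Condition (ii) isolates the only balls whose presence can change the $\Sigma$--membership of a point of $Q_k$ when one passes from scale $k$ to scale $k+1$, and such a ``fresh'' ball can cover $Q_k$ entirely. Let $p_k=\P[Q_k\text{ is }k\text{-bad}]$; by stationarity this does not depend on the chosen box.

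As in the elementary Boolean--model estimates of Section~\ref{s:choice_constant}, the fresh balls are controlled by
\[
\epsilon_k:=\P[Q_k\text{ meets a fresh ball}]\le C\!\int_{(\rho^{k-1},\rho^k]}(v^+)^2\,d\P_{V^+}(v^+),
\]
the centre of such a ball lying in a $C\rho^k$--neighbourhood of $Q_k$ of area $\le C'\rho^{2k}$ while $\P[\rho^{k-1}<V^+\le\rho^k]\le\rho^{-2(k-1)}\E[(V^+)^2\1_{\rho^{k-1}<V^+\le\rho^k}]$; hence $\sum_k\epsilon_k\le C\,\E[(V^+)^2]<\infty$. The heart of the argument is the recursion
\[
p_{k+1}\le C\,p_k^{\,2}+C\,\epsilon_{k+1},\qquad k\ge n_0,
\]
for a constant $C=C(\rho)$ (which may be taken to be the one of \eqref{e:def_C}). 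To obtain it, cover $Q_{k+1}$ by a bounded number of boxes of side $\rho^k$; any path crossing $Q_{k+1}$ must successively cross at least $\rho/2$ of these that are mutually at distance much larger than $2\rho^k$, so the corresponding local configurations, which involve only balls of radius $\le\rho^k$, are independent. If $Q_{k+1}$ is $(k+1)$-bad through a deficient crossing, then at least half of those $\rho/2$ sub-boxes must be $k$-bad: a sub-box that is $k$-good contributes at least $A(3/2)^k$ separated points of $Q_{k+1}\setminus\Sigma_{k+1}$ to the crossing (its $\Sigma_k$--good points survive because no fresh ball covers it), and $(\rho/2)(3/2)^k\ge(3/2)^{k+1}$ with ample room, a short extra check absorbing the points possibly lost to fresh balls of the new generation. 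Picking two of the far--apart bad sub-boxes and summing over the bounded number of ordered pairs gives the displayed recursion, the term $C\epsilon_{k+1}$ collecting the generation-$(k+1)$ balls. The very same reasoning applied to the genuine annulus $A_n$ (of width $(\rho-1)\rho^{n-1}$, crossed through at least $\rho/2$ mutually far sub-boxes of side $\rho^{n-1}$, with no truncation issue since $\Sigma_n$ already discards radii $>\rho^{n-1}$) yields $\P[T'_n<A(3/2)^n]\le C\,p_{n-1}^{\,2}+C\,\epsilon_n$.

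It remains to initialise and close the recursion. At the base scale $k=n_0$ one has $\Sigma_{n_0}=\emptyset$, because $\rho^{n_0-1}<V_0^+$ by \eqref{e:V0etnetqueuedelaloi} while every ball in the pollution has radius $\ge V_0^+$; a crossing of $Q_{n_0}$ thus has length $\ge\rho^{n_0}\ge 6V_0^+$ (again by \eqref{e:V0etnetqueuedelaloi}) and carries at least one $3V_0^+$--separated point of $Q_{n_0}\setminus\Sigma_{n_0}$. Choosing once and for all $A>0$ small enough that $A(3/2)^{n_0}\le 1$ makes type-(i) badness impossible at scale $n_0$, so $p_{n_0}\le\epsilon_{n_0}$. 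By \eqref{e:V0etnetqueuedelaloi} and the monotonicity $\epsilon_k\le C\int_{(\rho^{n_0-2},\infty)}(v^+)^2\,d\P_{V^+}$ for $k\ge n_0$, all the $\epsilon_k$ are at most $1/(4C^2)$ and $\sum_k\epsilon_k<\infty$; an immediate induction on the recursion then keeps $p_k\le 1/(2C)$ for every $k\ge n_0$, whence $C p_k^2\le p_k/2$, $p_{k+1}\le p_k/2+C\epsilon_{k+1}$, and therefore $\sum_k p_k\le 2p_{n_0}+2C\sum_k\epsilon_k<\infty$. Combining this with $\P[T'_n<A(3/2)^n]\le C p_{n-1}^{\,2}+C\epsilon_n$ gives $\sum_n\P[T'_n<A(3/2)^n]<\infty$, which is the claim.

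The main obstacle is the geometric step inside the recursion: converting ``a crossing of the large box carries few separated good points'' into ``many far--apart sub-boxes are individually bad''. This demands care in propagating the $3V_0^+$--separation and the distance--to--boundary conditions of the score down to the sub-boxes, in checking that the sub-paths induced on them are genuine crossings to which the induction hypothesis applies (here one may have to pass through intermediate rectangles of bounded aspect ratio), and above all in the bookkeeping of which balls can flip the $\Sigma$--membership of a point between two consecutive scales — precisely what forces the generation-indexed truncations $\Sigma_k$ and the error terms $\epsilon_k$, and explains why only a finite second moment of $V$ is required.
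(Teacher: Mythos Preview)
Your global architecture matches the paper exactly: a one-step recursion of the form $p_n\le Cp_{n-1}^2+C\epsilon_n$ with $\epsilon_n=\int_{(\rho^{n-2},\rho^{n-1}]}(v^+)^2\,d\P_{V^+}$, initialised at $n_0$ via $\Sigma_{n_0}=\emptyset$ and closed using $\E[(V^+)^2]<\infty$. The initialisation (choose $A=(3/2)^{-n_0}$ so that the base probability vanishes) and the summation argument you give are essentially the paper's.

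The gap is in the geometric step producing the recursion. You assert that a path crossing $Q_{k+1}$ must cross at least $\rho/2$ sub-boxes of side $\rho^k$ that are \emph{mutually at distance much larger than $2\rho^k$}. This is false as stated: a nearly straight crossing visits roughly $\rho$ adjacent sub-boxes in a single row, none of which are far apart, and there is no way to extract $\rho/2=50$ pairwise $2\rho^k$-separated ones from a box of side $100\rho^k$ that a given path is forced to cross. Your contrapositive ``half of them must be $k$-bad'' and the subsequent choice of two independent bad boxes both rest on this claim. The hint about ``rectangles of bounded aspect ratio'' does not rescue it: the natural strip crossings one does get have aspect ratio $1\!:\!\rho$, and turning those into square sub-crossings is exactly the missing argument.

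The paper avoids this by working with annuli throughout and using only \emph{three} sub-annuli at a time. Inside $A_n$ one fixes three concentric circles $\partial B(20\rho^{n-1})$, $\partial B(40\rho^{n-1})$, $\partial B(60\rho^{n-1})$; any crossing $\pi$ of $A_n$ hits all three, and near each hit $\pi$ genuinely crosses a translated copy of $A_{n-1}$. The three copies are automatically at mutual distance $\ge 18\rho^{n-1}$, giving independence, and their scores add into $S'_n(\pi)$. The arithmetic is then: if $a+b+c<(3/2)A$ with $a\le b\le c$, then $a+b<A$, so at least two of the three local scores are below $A$. A union bound over ordered pairs of centres yields the $Cp_{n-1}^2$ term; the event that a ball of radius in $(\rho^{n-2},\rho^{n-1}]$ meets $B(\rho^n)$ gives the $\epsilon_n$ term. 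This three-layer, two-smallest device is precisely what replaces your unproven ``$\rho/2$ far-apart crossed boxes'' claim.
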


\begin{lemma} \label{l:easy} With probability $1$, for any $n$ large enough, $T'_n= T_n$.
\end{lemma}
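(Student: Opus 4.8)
The plan is to deduce the equality $T_n = T'_n$ from the fact that, eventually, the pollution $\Sigma$ and its local truncation $\Sigma_n$ agree on the annulus $A_n$. Concretely, I would first establish that, almost surely, for all $n$ large enough no ball $B(x,V^+(x))$ with $x\in\xi$ and $V^+(x) > \rho^{n-1}$ meets $A_n$. Granting this, every ball entering the definition of $\Sigma$ with radius at most $\rho^{n-1}$ already enters the definition of $\Sigma_n$, while the remaining balls miss $A_n$; hence $\Sigma\cap A_n = \Sigma_n\cap A_n$. Since a path $\pi$ crossing $A_n$ is contained in $A_n$, and since any competitor $W$ in the definition of $S_n(\pi)$ or $S'_n(\pi)$ lies in $\pi\subset A_n$, the two constraints $W\subset\Sigma^c$ and $W\subset\Sigma_n^c$ then coincide for such $W$; as the ``separated'' requirement is the same in both cases, this yields $S_n(\pi) = S'_n(\pi)$ for every crossing path $\pi$, and therefore $T_n = T'_n$. (Incidentally $T_n\le T'_n$ for every $n$, because $\Sigma_n\subset\Sigma$ gives $S_n\le S'_n$ pointwise, but this inequality is not needed.)

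It then remains to prove the ``no big ball'' statement, and I would split the balls $B(x,V^+(x))$ with $V^+(x) > \rho^{n-1}$ meeting $A_n$ into two families, using that such a ball satisfies $\|x\|\le\rho^n + V^+(x)$ since $A_n\subset B(\rho^n)$. For the \emph{moderate} balls, those with $\rho^{n-1} < V^+(x)\le\rho^n$, the centre lies in $B(2\rho^n)$, so by the Slivnyak--Mecke formula their expected number is at most $\pi(2\rho^n)^2\,\P[V^+>\rho^{n-1}] = 4\pi\rho^2\,\rho^{2(n-1)}\,\P[V^+>\rho^{n-1}]$. I would then invoke $\E[(V^+)^2] = \int_0^\infty 2s\,\P[V^+>s]\,\d s \ge \sum_{m\ge0}(\rho^{2m+2}-\rho^{2m})\,\P[V^+>\rho^{m+1}]$, which together with $\E[(V^+)^2]<\infty$ gives $\sum_n\rho^{2(n-1)}\P[V^+>\rho^{n-1}]<\infty$, and conclude by Borel--Cantelli that moderate balls are absent for all $n$ large enough. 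For the \emph{large} balls, those with $V^+(x) > \rho^n$, one has $\|x\|\le\rho^n+V^+(x) < 2V^+(x)$, i.e.\ $V^+(x) > \|x\|/2$; here the key is that the \emph{scale-independent} quantity $\E\big[\#\{x\in\xi : V^+(x) > \|x\|/2\}\big]$ is finite, because the Slivnyak--Mecke formula and the substitution $s=\|z\|/2$ give $\int_{\R^2}\P[V^+>\|z\|/2]\,\d z = 8\pi\int_0^\infty s\,\P[V^+>s]\,\d s = 4\pi\,\E[(V^+)^2]<\infty$. Hence, almost surely, only finitely many $x\in\xi$ satisfy $V^+(x) > \|x\|/2$, so their speeds are bounded by some almost surely finite $M$, and as soon as $\rho^n > M$ there can be no large ball meeting $A_n$. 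Combining the two families proves the ``no big ball'' statement, and the lemma follows.

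The step I expect to be delicate is precisely the treatment of large balls: a naive scale-by-scale union bound on ``some ball of radius larger than $\rho^{n-1}$ meets $A_n$'' is \emph{not} summable under a second moment assumption alone, since far-away huge balls contribute a logarithmically divergent total over the scales. The remedy is the scale-free observation that any large ball relevant to $A_n$ is large compared with the distance of its centre to the origin (namely $V^+(x) > \|x\|/2$), a property shared by only finitely many Poisson points as a direct consequence of $\E[(V^+)^2]<\infty$ via the identity $\int_{\R^2}\P[V^+>\|z\|/2]\,\d z = 4\pi\,\E[(V^+)^2]$. Once this is secured, both the moderate-ball estimate and the implication $\Sigma\cap A_n = \Sigma_n\cap A_n \Rightarrow T_n = T'_n$ are routine.
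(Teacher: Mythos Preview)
Your proof is correct and follows the same underlying idea as the paper, but you make an unnecessary case distinction. The paper observes that \emph{any} ball $B(x,V^+(x))$ with $V^+(x)>\rho^{n-1}$ meeting $A_n\subset B(\rho^n)$ satisfies
\[
\|x\|\le \rho^n+V^+(x)\le \rho\cdot\rho^{n-1}+V^+(x)\le \rho\,V^+(x)+V^+(x)=(1+\rho)\,V^+(x),
\]
since $\rho^{n-1}<V^+(x)$. Thus your scale-free ``large ball'' argument, with the constant $1+\rho$ in place of $2$, already covers both your moderate and large families simultaneously: the set $\{x\in\xi:\|x\|\le(1+\rho)V^+(x)\}$ is almost surely finite by Campbell's formula and $\E[(V^+)^2]<\infty$, and for $n$ large enough no such $x$ has $V^+(x)>\rho^{n-1}$. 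The Borel--Cantelli step for moderate balls is therefore superfluous. Your diagnosis in the final paragraph---that a naive scale-by-scale union bound fails and that a scale-free inclusion is the cure---is exactly right; you simply did not push that observation far enough to see that it handles all the balls at once.
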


\begin{proof}[Proof of Lemma \ref{l:easy}]
Let $n \ge 0$.
If $T'_n \neq T_n$, then there exists $x \in \xi$ such that
\[
V^+(x) \ge \rho^{n-1} \text{ and } B(x,V^+(x)) \cap B(\rho^n) \neq \emptyset
\]
and therefore such that
\[
\|x\| \le (1+\rho)V^+(x).
\]
But the expected value of the number of $x \in \xi$ fulfilling the previous condition is $\pi(1+\rho)^2\E[(V^+)^2]$, using the Campbell's formula, which is finite as $V^+=V+1$ and $\E[V^2]$ is finite. Therefore, almost surely, the number of such $x$ is finite. The lemma follows.
\end{proof}

The proof of Lemma \ref{l:encoredumultiechelle} is given in the next section.

\subsubsection{Proof of Lemma \ref{l:encoredumultiechelle}}

We need the following lemma, which relates the behavior at scales $n$ and $n-1$.

\begin{lemma} \label{l:encoredumultiechelle_unpas}  
There exists an absolute constant $C$ -- given by \eqref{e:def_C} -- such that, for all $n \ge 1$ and all $A>0$,  
\[
\P[T'_n < (3/2)A] 
\le 
C\P[T'_{n-1} < A]^2
+
C\int_{(\rho^{n-2},\rho^{n-1}]} (v^+)^2 d \P_{V^+}(v^+).
\] 
\end{lemma}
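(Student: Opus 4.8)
The strategy is a one-step multiscale renormalization, comparing the crossing score of the annulus $A_n$ with that of $A_{n-1}$. The plan is to cover the large annulus $A_n$ by a bounded number of (translated, rescaled) copies of small annuli of the form $A(c,n-1) = B(c,\rho^{n-1}) \setminus \interior B(c,\rho^{n-2})$, so that any path crossing $A_n$ must cross at least two of these small annuli through disjoint regions. First I would fix a finite set $\cC$ of centers $c$, of cardinality bounded by an absolute constant $C_0 = C_0(\rho,d)$, such that the shells $\cS(\rho^{n-1})$ and $\cS(\rho^n)$ are covered by balls $B(c,\rho^{n-2})$ with $c$ ranging over suitable rescaled copies of $\cC$; this is exactly the kind of geometric covering already used in the proof of Lemma \ref{l:multiscale}. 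Then, given a path $\pi$ crossing $A_n$, one extracts two sub-paths $\pi_1, \pi_2$ each crossing one of the small annuli $A(c_1,n-1)$, $A(c_2,n-1)$, with $c_1, c_2$ chosen so that these two small annuli are \emph{disjoint} and in fact at distance $\ge 3V_0^+$ from each other — this uses $\rho$ large (here $\rho = 100$) so that $\rho^{n-1} - \rho^{n-2}$ dominates $\rho^{n-2}$ and the separation is genuine. Because the two small annuli are disjoint and far apart, the separated subsets of $\Sigma_{n-1}^c$ realizing the scores on $\pi_1$ and on $\pi_2$ can be concatenated into one separated subset of $\Sigma_n^c$ along $\pi$, \emph{provided} no large ball of $\Sigma_n$ that is absent from $\Sigma_{n-1}$ intersects the relevant region.

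The key accounting step is to separate two sources of "failure": either the pollution from balls of radius in $(\rho^{n-2},\rho^{n-1}]$ (present in $\Sigma_n$ but not in $\Sigma_{n-1}$) already intersects the pertinent part of $A_n$ — and the expected number of Poisson points $x$ with $V^+(x) \in (\rho^{n-2},\rho^{n-1}]$ and $B(x,V^+(x))$ touching $A_n$ is, by Campbell's formula, bounded by a constant times $\int_{(\rho^{n-2},\rho^{n-1}]} (v^+)^2 \, d\P_{V^+}(v^+)$, giving the additive error term — or else no such ball interferes, in which case $\Sigma_n$ and $\Sigma_{n-1}$ agree on the region swept by $\pi_1$ and $\pi_2$, so that $S'_n(\pi) \ge S'_{n-1}(\pi_1) + S'_{n-1}(\pi_2)$. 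In the latter scenario, $T'_n < (3/2)A$ forces $S'_{n-1}(\pi_1) + S'_{n-1}(\pi_2) < (3/2)A$, hence $\min(S'_{n-1}(\pi_1),S'_{n-1}(\pi_2)) < (3/4)A < A$, so both small annuli $A(c_1,n-1)$ and $A(c_2,n-1)$ are crossed with score $< A$, i.e. both events $\{T'_{n-1}(c_1) < A\}$ and $\{T'_{n-1}(c_2) < A\}$ occur (writing $T'_{n-1}(c)$ for the natural translate of $T'_{n-1}$). Taking a union bound over the at most $C_0^2$ pairs $(c_1,c_2)$, and using that $T'_{n-1}(c_1)$ and $T'_{n-1}(c_2)$ depend on disjoint regions of the Poisson process when $c_1,c_2$ are far enough apart (again using $\rho$ large, as in Lemma \ref{l:cross_local}), hence are independent, we get $\P[T'_n < (3/2)A] \le C_0^2 \,\P[T'_{n-1} < A]^2 + (\text{Campbell error})$. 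Defining $C$ as the maximum of $C_0^2$ and the constant in the Campbell bound, and recording this choice as equation \eqref{e:def_C}, gives the claimed inequality.

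The main obstacle I anticipate is the bookkeeping in the independence step: one must verify that the two small annuli can always be chosen both disjoint (so the scores concatenate) \emph{and} far enough apart that the local statistics $T'_{n-1}(c_i)$ — which by \eqref{e:Tprimelocal} are measurable with respect to $\xi$ restricted to $B(c_i, 2\rho^{n-1})$ — live on disjoint balls. Since $2\rho^{n-1} + 2\rho^{n-1} = 4\rho^{n-1}$ must be less than $\|c_1 - c_2\|$, and $\|c_1-c_2\|$ is of order $\rho^n$, this is where $\rho = 100$ (rather than just $\rho > 1$) is used crucially; one should check the geometry carefully, but it is routine. A secondary subtlety is making sure the "separated" condition near the annulus boundaries behaves well under the concatenation — the distance-$3V_0^+$-from-the-boundary requirement for the outer score must be reconciled with the inner-boundary requirement of the sub-scores — but the generous spacing afforded by large $\rho$ again absorbs this, and the factor $3/2$ (rather than $2$) on the left gives the slack needed to pass from "$<(3/2)A$ on $A_n$" to "$<A$ on each $A_{n-1}$".
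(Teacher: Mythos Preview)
Your overall multiscale strategy is the right one, and the treatment of the additive error via Campbell's formula and of the independence via locality of $T'_{n-1}$ is fine. But there is a genuine gap in the arithmetic step, and it is exactly the step that produces the crucial \emph{squared} probability.

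You extract only \emph{two} sub-annuli. From $S'_{n-1}(\pi_1)+S'_{n-1}(\pi_2)<(3/2)A$ you correctly deduce $\min\big(S'_{n-1}(\pi_1),S'_{n-1}(\pi_2)\big)<(3/4)A<A$, but then you assert that ``both events $\{T'_{n-1}(c_1)<A\}$ and $\{T'_{n-1}(c_2)<A\}$ occur''. This is a non sequitur: the bound on the minimum guarantees only \emph{one} of the two events. (Take $S'_{n-1}(\pi_1)=0$ and $S'_{n-1}(\pi_2)=1.4A$.) With only one event guaranteed, the union bound yields at best $C_0\,\P[T'_{n-1}<A]$, a linear recursion that is useless for the subsequent summation in Lemma \ref{l:encoredumultiechelle}. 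No choice of the ratio $3/2$ (or any constant $>1$) rescues this with two layers: if $a+b<cA$ forces both $a<A$ and $b<A$, then necessarily $c\le 1$.

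The fix, and this is what the paper does, is to use \emph{three} concentric layers of small annuli (centered on three spheres $\partial B(20\rho^{n-1})$, $\partial B(40\rho^{n-1})$, $\partial B(60\rho^{n-1})$ inside $A_n$). One then obtains $T'^{\,w_1}_{n-1}+T'^{\,w_2}_{n-1}+T'^{\,w_3}_{n-1}<(3/2)A$, and the elementary observation is that if three nonnegative numbers sum to less than $(3/2)A$ then the two smallest sum to less than $A$, hence are each $<A$. Thus \emph{two} of the three events $\{T'^{\,w_i}_{n-1}<A\}$ hold, and since any two of $w_1,w_2,w_3$ lie in distinct layers they are automatically far enough apart for the locality \eqref{e:Tprimelocal} to give independence. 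The union bound over pairs of layers then produces the term $C\,\P[T'_{n-1}<A]^2$, with $C$ recording the product of the layer cardinalities as in \eqref{e:def_C}. Once you add the third layer, the rest of your plan goes through essentially unchanged.
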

\begin{proof} Let $n \ge 1$.
Consider the event
\[
\cD(n) = \{ \text{there exists }x \in \xi
\text{ such that }
V^+(x) \in (\rho^{n-2},\rho^{n-1}]
\text{ and } 
B(x,V^+(x)) \text{ touches } B(\rho^{n})\}.
\]
On the complement of this event, we have
\begin{equation}\label{e:ploufplouf}
\Sigma_n \cap B(\rho^n) = \Sigma_{n-1}\cap B(\rho^n).
\end{equation}
Recall that we set $\rho = 100$.
Fix three finite sets $L_1, L_2$ and $L_3$ such that, for all $i \in \{1,2,3\}$,
\[
L_i \subset \partial B(20i) \text{ and } \partial B(20 i) \subset \bigcup_{w \in L_i} B(w,\rho^{-1}).
\]
We define an absolute constant $C$ by
\begin{equation}\label{e:def_C}
C=\max(\card(L_1)\card(L_2)+\card(L_1)\card(L_3)+\card(L_2)\card(L_3), \pi(1+\rho^2)^2).
\end{equation}
Fix $n \ge 1$. For all $i\in \{1,2,3\}$ we have
\[
\rho^{n-1}L_i \subset \partial B(20i\rho^{n-1}) 
\text{ and } 
\partial B(20 i\rho^{n-1}) \subset \bigcup_{w \in \rho^{n-1}L_i} B(w,\rho^{n-2}).
\]
The key is the inclusion
\begin{equation} \label{e:key-encoredumultichelle}
\{T'_n < (3/2)A\} \subset \cD(n) \cup 
\bigcup_{w_1 \in \rho^{n-1}L_1}
\bigcup_{w_2 \in \rho^{n-1}L_2}
\bigcup_{w_3 \in \rho^{n-1}L_3}
\{T_{n-1}^{'w_1} + T_{n-1}^{'w_2} + T_{n-1}^{'w_3}<(3/2)A\}
\end{equation}
where $T_{n-1}^{'w}$ is the event $T'_{n-1}$ "centered at $w$" (in other words this is a translate of $T_{n-1}$).
Let us prove \eqref{e:key-encoredumultichelle}. We work on $\{T'_n < (3/2)A\} \setminus \cD(n)$. Consider a path $\pi$ which realizes the minimum in the definition of $T'_n$. Hence,
\[
T'_n = S'_n(\pi) \ge \card(W)
\]
for any subset $W$ such that $W \subset \pi$, $W \subset \Sigma^c_n(\rho,V_0)$ and $W$ is separated. Let us construct a 'suitable' set $W$. For each $i$ choose $w_i \in \rho^{n-1} L_i$ such that $B(w_i,\rho^{n-2})$ contains a point of $\pi$. Then consider for each $i$ some restrictions $\pi_i$ of $\pi$ crossing the corresponding annuli. One can for example define $\pi_i$ as follows:
\[
a_i = \sup \{ t : \pi(t_i) \in B(w_i,\rho^{n-2})\}  \text{ and } b_i = \inf \{t \ge a_i : \pi(t_i) \not\in B(w_i,\rho^{n-1})\}
\]
and then set $\pi_i = \pi_{|[a_i,b_i]}$. See Fig. \ref{fig:Anneaux}. Now define for each $i$ a set $W_i$ which realizes the maximum in the definition of $S'_{n-1}(\pi_i)$ (with a definition for an annulus centered at $w_i$). So, for any $i$,
\[
\card(W_i) = S'_{n-1}(\pi_i) \geq T^{'w_i}_{n-1} ~.
\]
Finally, set $W = W_1 \cup W_2 \cup W_3$. This set enjoys the following properties.
\begin{enumerate}
\item $\card(W)=\card(W_1)+\card(W_2)+\card(W_3)$. Indeed each $W_i$ in included in $B(w_i,\rho^{n-1})$ and these balls are disjoint.
\item $W \subset \pi$.
\item $W \subset \Sigma_n^c(\rho,V_0)$. This is due to \eqref{e:ploufplouf}.
\item $W$ is $V_0$-separated. This is a consequence of the fact that each $W_i$ is $V_0$-separated (which implies in particular that each point of $W_i$ is at
distance at least $3V_0^+$ of the complement of $B(w_i,\rho^{n-1})$), the fact that the $W_i$ are included in the disjoint balls $B(w_i,\rho^{n-1})$
and the fact that these balls are included in the annulus at scale $A_n$.
\end{enumerate}
Therefore 
\begin{eqnarray*}
T'_n = S'_n(\pi) \ge \card(W) & = & \card(W_1)+\card(W_2)+\card(W_3) \\
& = & S'_{n-1}(\pi_1)+S'_{n-1}(\pi_2)+S'_{n-1}(\pi_3) \ge T^{'w_1}_{n-1} + T^{'w_2}_{n-1} + T^{'w_3}_{n-1} ~.
\end{eqnarray*}
This establishes \eqref{e:key-encoredumultichelle}.

\begin{figure}[!ht]
\begin{center}
\includegraphics[width=10cm,height=5cm]{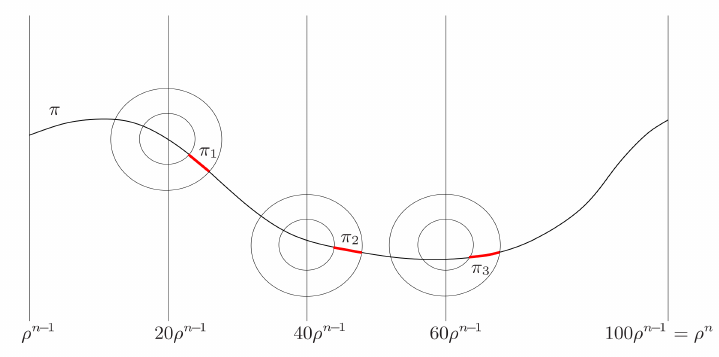}
\caption{\label{fig:Anneaux} On this picture, circular arcs of $\partial B(\rho^{n-1})$, $\partial B(20 \rho^{n-1})$ etc. are represented with vertical lines. The (black) path $\pi$ crosses the annulus $A_n = B(\rho^n)\setminus \interior B(\rho^{n-1})$. When $\pi$ crosses the smaller annuli $w_i+A_{n-1}$ for $i=1,2,3$, it provides the three (red) paths $\pi_1, \pi_2, \pi_3$.}
\end{center}
\end{figure}

If the sum of three numbers is smaller than $(3/2)A$, then the sum of the two smallest of them is smaller than $A$.
Indeed, if $a \le b \le c$ are such that $a+b+c \le (3/2)^n$, then $(a+b)/2 \le c$ and thus $(a+b)+(a+b)/2 \le a+b+c \le (3/2)A$ from which we get $a+b \le A$.
From this simple remark (which is the reason why we need the three layers $L_1, L_2$ and $L_3$ in \eqref{e:key-encoredumultichelle})
and \eqref{e:key-encoredumultichelle} we get, by union bound, by \eqref{e:Tprimelocal} and by stationarity  
\[
\P[T'_n < (3/2)A] \le \P[\cD(n)] + C \P[T'_{n-1} < A]^2
\]
where $C$ is defined by \eqref{e:def_C}.

Finally, using the Campbell's formula,
\begin{align*}
\P[\cD(n)]
& \le \E[\card(x \in \xi : V^+(x) \in (\rho^{n-2},\rho^{n-1}] \text{ and } B(x,V^+(x)) \text{ touches } B(\rho^{n}))] \\
& = \E[\card(x \in \xi : V^+(x) \in (\rho^{n-2},\rho^{n-1}] \text{ and } \|x\| \le V^+(x)+\rho^{n})] \\
& \le \E[\card(x \in \xi : V^+(x) \in (\rho^{n-2},\rho^{n-1}] \text{ and } \|x\| \le (1+\rho^2)V^+(x))] \\
& = \pi(1+\rho^2)^2 \int_{(\rho^{n-2},\rho^{n-1}]} (v^+)^2 \d \P_{V+}(v^+).
\end{align*}
Plugging this inequality in the previous one and using the definition of $C$ by \eqref{e:def_C} ends the proof.
\end{proof}

\begin{proof}[Proof of Lemma \ref{l:encoredumultiechelle}] Let $C$ be the constant given by Lemma \ref{l:encoredumultiechelle_unpas}.
%Recall $\rho=100$.
Fix $n_0$ such that \eqref{e:V0etnetqueuedelaloi} holds.
As $\rho^{n_0-1} < V_0^+$, $\Sigma_{n_0}=\emptyset$.
For any path $\pi$ crossing $A_{n_0}$, as $6V_0^+ < \rho^{n_0-1}(\rho-1)$, we thus get $S'_{n_0}(\pi) \ge 1$ by considering for $W$ the singleton containing 
some point of $\pi$ in the middle of $A_n$ (that is at distance $\rho^{n_0-1} + \rho^{n_0-1}(\rho-1)/2$ of $0$).
Thus $T'_{n_0}  \ge 1$.
Set $A=(3/2)^{-n_0}$.
We get
\begin{equation}\label{e:init_csq1}
\P[T'_{n_0}<(3/2)^{n_0} A] = 0.
\end{equation}
By Lemma \ref{l:encoredumultiechelle_unpas} we have, for all $n \ge 1$,
\begin{equation}\label{e:rec}
C\P[T'_n < (3/2)^nA] 
\le 
C^2\P[T'_{n-1} < (3/2)^{n-1} A]^2
+
C^2\int_{(\rho^{n-2},\rho^{n-1}]} (v^+)^2 d \P_{V^+}(v^+).
\end{equation}
From \eqref{e:V0etnetqueuedelaloi}, \eqref{e:init_csq1} and \eqref{e:rec} we first get, by induction, that for all $n \ge n_0$,
\[
C\P[T'_n < (3/2)^n A] \le \frac 1 2.
\]
From \eqref{e:rec} we then get, for all $n \ge n_0+1$,
\[
C\P[T'_n< (3/2)^nA] 
\le 
\frac 1 2 C\P[T'_{n-1} < (3/2)^{n-1} A]
+
C^2\int_{(\rho^{n-2},\rho^{n-1}]} (v^+)^2 d \P_{V^+}(v^+).
\]
Summing over $n$ we obtain, for all $k \ge n_0$,
\[
\sum_{n =n_0+1 }^{k+1}C \P[T'_n < (3/2)^n A] 
\le 
\frac 1 2 \sum_{n = n_0}^k C\P[T'_n < (3/2)^n A] 
+ C\int_{(\rho^{n_0-1},\rho^k]} (v^+)^2 d \P_{V^+}(v^+)
\]
and then (using \eqref{e:init_csq1})
\[
\sum_{n =n_0+1 }^kC \P[T'_n < (3/2)^n A] 
\le 
\frac 1 2 \sum_{n = n_0+1}^k C\P[T'_n < (3/2)^n A] 
+ C\int_{(\rho^{n_0-1},+\infty)} (v^+)^2 d \P_{V^+}(v^+)
\]
and thus
\[
\sum_{n =n_0+1 }^kC \P[T'_n < (3/2)^n A] 
\le 
2C\int_{(\rho^{n_0-1},+\infty)} (v^+)^2 d \P_{V^+}(v^+) < \infty.
\]
The lemma follows by letting $k$ go to $\infty$. \end{proof}

\subsubsection{Comments on some related results}
\label{sect:Comments}

We review here some results which are related to the first part of the plan (Proposition \ref{p:lestnsontsympas}) but which are not sufficient to prove it, explaining that we have to develop new strategies here.

We can consider $\Sigma(V_0)$ from the percolation point of view. As $\E[V^2]$ is finite, for $V_0$ large enough, all the connected components of $\Sigma(V_0)$ are bounded with probability one. Actually, the following stronger result is true. For all $V_0$ large enough, for all $\rho>1$,
\begin{equation}
\label{e:pollution-perco}
\P[\text{there exists a path in $\Sigma(V_0)$ from $B(0,r)$ to the complement of $B(0,\rho r)$}] \to 0 \text{ as }r \to \infty.
\end{equation}
In words, with high probability, we can find at least one point outside $\Sigma(V_0)$ on any path crossing a large annulus. Those results are implicit in \cite{G-AOP08}. They are not sufficient to establish the first part of the plan which requires many points and qualitative results (on the number of points and on the probability).

We can also consider $\Sigma(V_0)$ from the first passage percolation point of view. In \cite{GT-EJP17}, the following model is considered. Let $T(x,y)$ be the minimum time needed to go from $x \in \R^2$ to $y \in \R^2$ by a traveler who walks at speed $\infty$ inside $\Sigma(V_0)$ and at speed $1$ outside $\Sigma(V_0)$. By standard techniques in first passage percolation, one can show  
\[
\frac{T(0,x)}{\|x\|} \to \mu \text{ a.s. as } \|x\| \to \infty
\]
where $\mu$ is a deterministic constant. As consequence of the main result of \cite{GT-EJP17} and of \eqref{e:pollution-perco}, $\mu$ is positive for $V_0$ large enough under a condition which is slightly stronger than $\E[V^2]<\infty$. This is closer to what we need here but it is not sufficient to prove the first part of the plan as it gives no good probabilistic estimates.

%The difficulty is to get some qualitative estimates. However there is no need (as in \cite{GT-EJP17}) to get some linear estimates \jb{mouais ?} on the number of points etc. De fait, on va se contenter d'une croissance sous-linéaire en $r^\alpha$ pour un $\alpha \in (0,1)$.

\subsection{Augmented grains do not cross large annuli}
\label{s:augmented_grains_do_not_cross_large_annuli}

Recall that $\rho=100$ and that $V_0$ has been chosen in Section \ref{s:choice_constant}.
For $n \ge 1$, consider the following bad event:
\[
\text{Bad}_n = \{\text{there exists $x \in \xi$ such that $G^{\sharp\text{-fat}}(x)$ crosses the big annulus $ \bigcup_{i=0}^5 A_{n+i}$}\}.
\]
Let 
\[
N = \inf \{ n_0 \ge 1 : \forall n \ge n_0, \; \text{Bad}_n \text{ does not occur}\}.
\]

\begin{lemma}
\label{l:N3fini}
With probability one, $N$ is finite.
\end{lemma}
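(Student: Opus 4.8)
The strategy is to show that the "bad" event $\text{Bad}_n$ fails for all large $n$, almost surely, and then conclude by Borel--Cantelli together with Proposition~\ref{p:lestnsontsympas}. The key point is that a fattened augmented grain $G^{\sharp\text{-fat}}(x)$ that crosses the big annulus $\bigcup_{i=0}^5 A_{n+i}$ must, in particular, contain a continuous path $\pi$ crossing one of the intermediate annuli, say $A_m$ for some $m \in \{n+1,\dots,n+4\}$ chosen so that the crossing is "deep" (the grain enters $\interior B(\rho^{m-1})$ and exits $B(\rho^m)$ with a margin, using the $+B(3)$ fattening and the width $\rho^m(1-\rho^{-1})$ being huge compared to $3$). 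Along such a path $\pi$, by Proposition~\ref{p:lestnsontsympas} there are, for $m$ large, at least $(4/3)^m$ separated points lying in $\Sigma(V_0)^c$; by Lemma~\ref{l:basic_W}, each of these points is an obstacle iff it is a local obstacle, and the corresponding local-obstacle events are independent. Each has a fixed positive probability $p_0 := \P[\text{local obstacle at }0] > 0$ (which does not depend on the point by stationarity, and is positive by the loop property). The grain $G^\sharp(x)$, however, is stopped as soon as it meets an obstacle, so it cannot traverse a region containing an obstacle — contradiction once at least one of those $(4/3)^m$ candidate points is actually an obstacle.

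First I would make precise the reduction from a crossing of the big six-annulus block to a deterministic path crossing a single annulus $A_m$. A subtlety is that $G^{\sharp\text{-fat}}(x) = G^\sharp(x) + B(3)$ is fattened, so "crossing $A_m$" for the fat set does not immediately give a path of the core set $G^\sharp(x)$ crossing $A_m$; but since the block has six layers and each annulus has width $\rho^{m-1}(\rho-1) \gg 3$, one can choose $m$ so that $G^\sharp(x)$ itself — which is the image $g_x([0,\tau^\sharp(x)])$ of a continuous function — contains a continuous subpath $\pi$ that genuinely crosses $A_m$ in the sense required by Proposition~\ref{p:lestnsontsympas} (starting on the inner boundary, ending on the outer boundary, staying inside). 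Restricting to the relevant parameter sub-interval of $[0,\tau^\sharp(x)]$ gives such a $\pi$.

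Next comes the quantitative estimate. Fix $\rho=100$ and $V_0$ as in Section~\ref{s:choice_constant}. By Proposition~\ref{p:lestnsontsympas}, almost surely there is a random $m_0$ such that for all $m \ge m_0$, every path crossing $A_m$ contains a separated set $W$ of cardinality at least $(4/3)^m$ with $W \subset \Sigma(V_0)^c$. On the event that some $\pi \subset G^\sharp(x)$ crosses $A_m$, the associated obstacle-or-not events $\{w \text{ is an obstacle}\}_{w\in W}$ coincide with the independent local-obstacle events (Lemma~\ref{l:basic_W}), each of probability $p_0>0$; hence
\[
\P[\text{no } w \in W \text{ is an obstacle}] \le (1-p_0)^{(4/3)^m}.
\]
But if $G^\sharp(x)$ contains $\pi$ and some $w \in W \subset \pi$ is an obstacle, then $g_x$ hits the obstacle set $\cO$ at a time at which it is still being tracked, so $\tau^\sharp(x)$ occurs no later — i.e. $G^\sharp(x)$ cannot actually contain the portion of $\pi$ beyond $w$, contradicting the crossing. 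Summing over a suitable countable family of candidate grains: the only grains $x$ whose $G^\sharp(x)$ could possibly reach $A_m$ (given they start somewhere and the relevant passage times are controlled) form, after a union bound handled via Campbell's formula as in the proof of Lemma~\ref{l:easy}, a set contributing a summable-in-$m$ total. More carefully, one writes
\[
\P[\text{Bad}_n] \le \P[m_0 > n] + \sum_{m=n+1}^{n+4} \E\!\Big[\#\{x\in\xi : G^\sharp(x)\text{ crosses }A_m\}\Big](1-p_0)^{(4/3)^m},
\]
and the first term is summable since $m_0<\infty$ a.s. (use $\P[m_0>n]\to 0$ and, if needed, a Borel--Cantelli-friendly rate — alternatively invoke directly that a.s.\ only finitely many $n$ have $m_0>n$), while in the second term the expected count grows at most polynomially (a crude bound via the number of Poisson points in a large ball, or a second-moment bound using $\E[V^2]<\infty$), which is crushed by the doubly-exponential factor $(1-p_0)^{(4/3)^m}$. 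Hence $\sum_n \P[\text{Bad}_n] < \infty$ and Borel--Cantelli gives that $\text{Bad}_n$ occurs for only finitely many $n$, i.e.\ $N$ is finite almost surely.

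The main obstacle I anticipate is the bookkeeping in the second step: turning "the fat augmented grain crosses a six-layer block" into "the core augmented grain contains a genuine crossing path of a single annulus to which Proposition~\ref{p:lestnsontsympas} applies", while simultaneously keeping the family of relevant grains $x$ countable and its expected cardinality polynomially controlled so that the union bound closes. This requires care with the definitions of $\Sigma$-separation and the margins $3V_0^+$ versus the fattening radius $3$, and with the fact that $\tau^\sharp(x) \ge 1$ but is otherwise only controlled a posteriori; one sidesteps the latter by noting that a crossing of $A_m$ forces $G^\sharp(x)$ to have diameter at least $\rho^{m-1}(\rho-1)$, and grains of such large extent that start far from $A_m$ are rare by Campbell's formula exactly as in Lemma~\ref{l:easy}. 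Everything else is a routine combination of Borel--Cantelli, independence of local obstacles on separated sets, and the sub-linear-but-exponential lower bound $(4/3)^n$ on $T_n$.
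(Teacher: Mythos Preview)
Your overall architecture matches the paper's: reduce $\text{Bad}_n$ to a crossing of a single annulus $A_m$ by the core path $g_x$, invoke Proposition~\ref{p:lestnsontsympas} to find $\ge (4/3)^m$ separated unpolluted points on that crossing, and argue that at least one of them must carry an obstacle, yielding a summable bound. However, the heart of your argument has a genuine gap.

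\textbf{The decoupling problem.} You write that the events $\{w\text{ is a local obstacle}\}_{w\in W}$ are independent with common probability $p_0$, and conclude $\P[\text{no }w\in W\text{ is an obstacle}]\le (1-p_0)^{(4/3)^m}$. But the set $W$ is \emph{random}: it depends on the crossing path $g_x$ (hence on $(x,h_x)\in\overline\xi$) and on the pollution $\Sigma$ (hence on the high-speed part of $\overline\xi$). The local-obstacle events at points of $W$ also depend on $\overline\xi$. So you cannot simply multiply probabilities; Lemma~\ref{l:basic_W} gives independence only for a \emph{deterministic} separated set. Your displayed inequality for $\P[\text{Bad}_n]$ therefore does not follow. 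The paper resolves this with three ingredients you are missing: (i) Slivnyak--Mecke, to treat the crossing grain as an added point $(x,h)$ independent of $\overline\xi$; (ii) the introduction of \emph{local low-speed} obstacles, which are measurable with respect to $\overline\xi^{<V_0}$ alone; and (iii) conditioning on $\cF=\sigma(g_x,\overline\xi^{\ge V_0})$, after which $W$ and the event $\{m\ge N_1\}$ are fixed while the low-speed local-obstacle events at the (now deterministic) points of $W$ are genuinely i.i.d.\ with success probability bounded away from zero by \eqref{e:locallowspeedpositive}. This is Claim~\ref{c:utilisation_bons_points} in the paper.

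\textbf{The timing issue.} You also assert that if some $w\in W\subset g_x([0,\tau^\sharp(x)])$ is an obstacle then $\tau^\sharp(x)$ occurs no later. This is false as stated: by definition $\tau^\sharp(x)=\inf\{t\ge 1:g_x(t)\in\cO\}$, so hitting $\cO$ during $[0,1)$ does not stop the augmented grain. You must ensure the crossing of $A_m$ occurs within $[1,\tau^\sharp(x)]$. The paper handles this via the auxiliary random index $N_2$ (no ball $B(x,V^+(x))$ crosses $A_n$ for $n\ge N_2$), which forces $g_x([0,1])$ to stay within one annulus and hence guarantees that any crossing of an intermediate $A_m$ is performed during $[1,\tau^\sharp(x)]$; this is the content of Claim~\ref{c:baddecomposition}. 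Your ``six-layer'' buffer handles the $+B(3)$ fattening but not this time-$1$ constraint.

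Finally, your union bound over candidate grains is left vague (``polynomially controlled''); the paper makes this explicit by restricting to $x\in B(\rho^{k-2})\cup A_{k+2}$ in the definition of $\text{SmallBad}_k$, giving the factor $\pi\rho^{2\ell+4}$.
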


\begin{proof}[Proof of Lemma \ref{l:N3fini}]
Let
\[
N_1 = \inf \{n_0 \ge 1 : \forall n \ge n_0, T_n \ge (4/3)^n\}.
\]
By Proposition \ref{p:lestnsontsympas}, $N_1$ is finite with probability $1$.
This is our control on the influence of high-speed grains.
We say that a ball $B$ crosses an annulus $A_n$ if $B$ touches $\interior B(\rho^{n-1})$ and $B(\rho^n)^c$.
Let
\[
N_2 = \inf \{n_0 \ge 1 : \forall n \ge n_0, \text{ there exists no $x \in \xi$ such that } B(x,V^+(x)) \text{ crosses } A_n\}.
\]
If $n$ is such that $A_n$ is crossed by $B(x,V^+(x))$ for some $x \in \xi$, then $V^+(x) \ge \rho^{n-1}(\rho-1)/2 \ge \rho^{n-1}$ and $B(x,V^+(x)) \cap B(\rho^n)$ is non empty.
Arguing as in the proof of Lemme \ref{l:easy} we deduce that $N_2$ is finite with probability $1$.

If $k$ is an integer, we consider the event
\begin{align*}
\text{SmallBad}_k = \{ & \exists x \in \xi \cap B(\rho^{k-2}) \cup A_{k+2} \text{ such that }  g_x \text{ crosses } A_k \\
& \text{ and there is no obstacle at any point of } \text{Crossing}(g_x, A_k) \}
\end{align*}
where $\text{Crossing}(g_x, A_k)$ denotes the first crossing of the annulus $A_k$ by the path $g_x$,
that is the path $g_x$ restricted to the time interval $(s,t)$ where
\[
t = \inf \{u \ge 0 : \|g_x(u)\| = \rho^k \} \text{ and } s = \sup \{ u \le t : \|g_x(u)\| = \rho^{k-1}\}
\]
for a crossing from the inner to the outer and similarly for a crossing in the opposite direction.

\begin{claim} \label{c:baddecomposition} For all $n \ge 1$, $\{n \ge N_2\}  \cap  \text{Bad}_n    \subset \bigcup_{k \ge n+1} \text{SmallBad}_k$.
\end{claim}
\begin{proof}[Proof of Claim \ref{c:baddecomposition}]
Let $n \ge 1$. Assume that $\text{Bad}_n$ occurs. Let $x \in \xi$ be as in its definition. Assume also $n \ge N_1$ and $n \ge N_2$.
The idea is that, wherever $x$ is:
\begin{itemize}
\item $G^{\sharp-\text{fat}}(x)$ must cross three annuli toward $0$ or toward $\infty$.
\item $g_x([0,1])$ cross no annuli (because $n \ge N_2$).
\item The fattening $+B(3)$ is responsible for no annuli crossing.
\end{itemize}
As a consequence $g_x([1,\tau^\sharp(x)])$ must cross at least one annuli and there can be no obstacle on the crossing.

Let us detail for example the case $x \in  B(\rho^{n-1})$.
As $n \ge N_2$, $g_x([0,1])$ does not cross $A_n$ and therefore remains inside $B(\rho^n)$.
As $3$ is smaller than the width of all annuli (the minimal width is $\rho^{1-1}(\rho-1)=99)$) and as $G^\sharp(x)+B(3)$ touches $A_{n+3}$, $g_x([0,\tau^\sharp(x)])$ touches $A_{n+2}$.
In particular, $g_x$ crosses $A_{n+1}$ during the time interval $[1,\tau^\sharp(x)]$.
As a consequence (by definition of $\tau^\sharp(x)$) there is no point of $\cO$ \jb{ref} on $\text{Crossing}(g_x, A_{n+1})$. 
Therefore there is no obstacle on any point of $\text{Crossing}(g_x, A_{n+1})$ (otherwise this would contradict the previous statement by the separating property
of a loop and by the fact that a loop is contained in a ball of radius much smaller than the width of the annuli).
We thus get that $\text{SmallBad}_{n+1}$ holds.

The cases $x \in A_n$, $x \in A_{n+1}$, $x \in A_{n+2}$ are proven similarly. 
We get (according to the case considered) that $\text{SmallBad}_{n+2}, \text{SmallBad}_{n+3}$ or $\text{SmallBad}_{n+4}$ hold.
The cases $x \in A_{n+k}$ for some $k \ge 3$ are proven in the same way except that we look for a crossing from the outer to the inner.
We then get that $\text{SmallBad}_{n+k-2}$ holds.
\end{proof}
%
%
%From Claim \ref{c:baddecomposition} we get, for all $n_0 \ge 1$,
%\begin{equation}  \label{e:csq-baddecomposition}
%\{n_0 \ge N_2\} \cap \bigcup_{n \ge n_0} \text{Bad}_n
%\subset
%\bigcup_{\ell \ge n_0+1} \text{SmallBad}_\ell.
%\end{equation}

\begin{claim} \label{c:utilisation_bons_points} There exists $\alpha>0$ such that, for all $\ell \ge 2$, 
$
\P[\{\ell \ge N_1\} \cap \text{SmallBad}_\ell] \le \pi \rho^{2\ell+4} \exp(- \alpha (4/3)^\ell).
$
\end{claim}

\begin{proof}[Proof of Claim \ref{c:utilisation_bons_points}] 
Write $D = B(\rho^{\ell-2}) \cup A_{\ell+2}$ to shorten notations.
Then,
\begin{align*}
& \P[  \{\ell \ge N_1\} \cap \text{SmallBad}_\ell] \\
& \le 
 \E\left[  \sum_{x \in \xi \cap D} \1_{\{g_x \text{ crosses } A_\ell \text{ and there is no obstacle at any point of } \text{Crossing}(g_x, A_\ell)\}}\1_{\ell \ge N_1}\right] \\
&\le 
\E\left[  \sum_{x \in \xi \cap D} \1_{\{g_x \text{ crosses } A_\ell \text{ and there is no local obstacle at any point of } W(\text{Crossing}(g_x, A_\ell))\}}\1_{\ell \ge N_1}\right]
\end{align*}
where $W(\text{Crossing}(g_x, A_\ell))$ is a maximiser of $S(\text{Crossing}(g_x, A_\ell))$. The previous inequality is a consequence of $W(\text{Crossing}(g_x, A_\ell)) \subset \text{Crossing}(g_x, A_\ell)$, $W(\text{Crossing}(g_x, A_\ell)) \subset \Sigma^c$ and \eqref{e:local_paslocal_obstacle}.

By Slivnyak-Mecke formula, we then get
\begin{align*}
& \P[  \{\ell \ge N_1\} \cap  \text{SmallBad}_\ell] \\
& \le \int_D  \E\left[ \1_{\{g_x \text{ crosses } A_\ell \text{ and there is no local obstacle at any point of } W(\text{Crossing}(g_x, A_\ell))\}}\1_{\ell \ge N_1} \right] \d x
\end{align*}
where the underlying point process is $\{(x,g_x) \}\cup \overline\xi$, where $g_x=x+h$ and where $h$ is independent of $\overline\xi$
and distributed according to $\mu$. Write $\overline\xi$ as the disjoint union of $\overline\xi^{\ge V_0}$ (the high-speed grains) and $\overline\xi^{<V_0}$ (the low-speed grains) which can be assumed from each other:
\[
\overline\xi^{\ge V_0} = \{(y,g_y) \in \overline\xi : V(y) \ge V_0\} \text{ and } \overline\xi^{<V_0} = \{(y,g_y) \in \overline\xi : V(y) < V_0\}.
\]
Note that $W(\text{Crossing}(g_x, A_\ell))$, the family $(T_n)_n$ and $N_1$ are measurable with respect to $\cF := \sigma(g_x, \overline\xi^{\ge V_0})$.
Moreover, on $\{\ell \ge N_1\}$, we have $T_\ell \ge (4/3)^\ell$ and thus $W(\text{Crossing}(g_x, A_\ell)) \ge (4/3)^\ell$.
For $x \in D$, we have
\begin{align*}
& \E\left[ \1_{\{g_x \text{ crosses } A_\ell \text{ and there is no local obstacle at any point of } W(\text{Crossing}(g_x, A_\ell))\}}\1_{\ell \ge N_1} \right] \\
& = \E\left[ \E\left[ \1_{\{g_x \text{ crosses } A_\ell \text{ and there is no local obstacle at any point of } W(\text{Crossing}(g_x, A_\ell))\}}\1_{\ell \ge N_1} \left.\right | \cF \right] \right] \\ 
& \le \E\left[ \E\left[ \1_{\{g_x \text{ crosses } A_\ell \text{ and there is no local low-speed obstacle at any point of } W(\text{Crossing}(g_x, A_\ell))\}}\1_{\ell \ge N_1} \left.\right | \cF \right] \right] \\
& \le \exp(-\alpha (4/3)^\ell)
\end{align*}
where $\exp(-\alpha)$ is the probability of having no local low-speed obstacle at $0$ which is smaller that $1$ by \eqref{e:locallowspeedpositive}. The claim follows.
\end{proof}

From Claim \ref{c:utilisation_bons_points} and Borel-Cantelli Lemma we get that
\[
N_3 = \inf \{n_0 \ge 1 : \forall \ell \ge n_0, \{\ell \ge N_1\} \cap  \text{SmallBad}_\ell \text{ does not occur}\}
\]
is finite with probability $1$. Let $n \ge \max(N_1,N_2,N_3)$.  Let $\ell \ge n$.
As $\ell \ge n \ge \max(N_1,N_3)$, $\text{SmallBad}_\ell$ does not occur.
By Claim \ref{c:baddecomposition} we deduce that $\{n \ge N_2\} \cap  \text{Bad}_n$ does not occur.
But $n \ge N_2$. Therefore $\text{Bad}_n$ does not occur.
As a consequence, $N \leq \max(N_1,N_2,N_3)$ which is finite with probability $1$.
%
%
%Let $n_0 \ge 1$. We have
%\begin{align*}
% \{n_0 \ge & \max(N_1,N_2,N_3)\} \cap \bigcup_{n \ge n_0} \text{Bad}_n \\
%& \subset \{n_0 \ge \max(N_1,N_2,N_3)\} \cap \bigcup_{\ell \ge n_0+1}  \text{SmallBad}_\ell \qquad & \text{ by \eqref{e:csq-baddecomposition}} \\
%& \subset \{n_0 \ge N_3\} \cap \bigcup_{\ell \ge n_0+1} \{\ell \ge N_1\} \cap \text{SmallBad}_\ell  &
%\end{align*}
%which is empty by definition of $N_3$. Therefore $N \le \max(N_1,N_2,N_3)$ which is finite with probability~$1$.
\end{proof}

\subsection{Proof of Proposition \ref{p:33}}
\label{s:p_finale_33}

In this section $\rho$ becomes a free variable again (we do not fix $\rho=100$ anymore).
By Lemma \ref{l:N3fini}, $N$ is finite with probability $1$.
But for $n \ge N$, there exists no $x \in \xi$ such that $G^{\sharp-\text{fat}}(x)$ crosses the big annulus 
\[
\bigcup_{i=0}^5 A_{n+i}(100) = B(100^{n+5}) \setminus \interior B(100^{n-1}).
\] 
Therefore for any $\rho \ge 100^{13}$, with probability $1$, 
there exists $n_0$ such that for all $n \ge n_0$ and all $x\in\xi$, $G^{\sharp-\text{fat}}(x)$ does not cross $A_n(\rho)$.
%Indeed (considering the logarithm of the distance of a point to the origin) 
%any interval of length $13\ln(100)$ contains at least one interval of the form $[(n-1)\ln(100), (n+5)\ln(100)]$.
The first item of Proposition \ref{p:33} holds for any choice of $\rho_0$ larger than $100^{13}$.

Let $\rho \ge 100^{13}$. Let
\[
N' = \inf \{ n_0 \ge 1 : \forall n \ge n_0, \; G^{\sharp-\text{fat}}(x) \text{ does not cross } A_n(\rho)\}
\]
and
\[
N_2' = \inf \{n_0 \ge 1 : \forall n \ge n_0, \text{ there exists no $x \in \xi$ such that } B(x,V^+(x)) \text{ crosses } A_n(\rho)\}.
\]
We know by the first item that $N'$ is almost surely finite.
By the same argument as for $N_2$ \jb{ref} (we only change the value of $\rho$) we know that $N'_2$ is also almost surely finite.
Let $n \ge \max(N',N'_2)$ and $x \in \xi \cap B(\rho^n)$.
Let us check, for such $n$ and $x$,
\begin{equation}\label{e:lastdisplay}
G^\sharp(x ; \overline\xi) = G^\sharp(x ; \overline\xi_{|B(\rho^{n+3})}).
\end{equation}
This will establish the second idem -- and thus Proposition \ref{p:33} -- with $\rho_0 = (100^{13})^3$.

As $n \ge N'$ we have $G^\sharp(x) \subset B(\rho^{n+1})$.
Therefore, in order to prove \eqref{e:lastdisplay}, we only have to check that the obstacles touching $B(\rho^{n+1})$ are the same
in the configuration $\overline\xi$ and in the configuration $\overline\xi_{|B(\rho^{n+3})}$.
If $c \in \R^2 \setminus B(\rho^{n+1}+1)$ then, even if there is an obstacle at $c$, the obstacle does not touch $B(\rho^{n+1})$.
Therefore it is sufficient to consider $c \in B(\rho^{n+1}+1)$.
Let us inspect to two conditions defining "there is an obstacle at $c$" (we write it for $\overline\xi$) :
\begin{enumerate}
\item $\overline\xi - c \in \cL(1)$ : this only depends on $\overline\xi \cap B(c,1)$ and $B(c,1) \subset B(\rho^{n+3})$.
\item For all $x \in \xi \setminus B(c,1)$, $G_1(x) \cap B(c,1) = \emptyset$ : this only depends on $x \in \xi$ such that $B(x,V^+(x))$ touches $B(c,1)$
and (as $n \ge N'_2$) all such $x$ belongs to $B(\rho^{n+3})$.
\end{enumerate}
This proves \eqref{e:lastdisplay} and ends the proof. \qed

\section{Proof of Proposition \ref{p:DieseGoodImplyGood}: study of $\sharp$-good points}\label{s-5}
\label{sect:ProofDieseGood}

\paragraph{The blurred approach with scenarios.} We want to witness $\alpha$-goodness of $x_0$ by looking only in a limited neighborhood of the graph associated with the $\sharp$-model in Section \ref{s:p:33} (for short we will talk of $\sharp$-neighborhood). With so few information, we have no access to what really happens. For example we do not know $s(x_0)$ or $\tau(x_0)$. We therefore have to accept to lose some information. We develop what we call a blurred approach with scenarios in which we agree to lose information of two kinds:
\begin{itemize}
    \item We accept to lose the exact time at which some grain stops and, instead, keep only the knowledge of the interval of time in which the grain stops. We refer to this as blurring.
    \item We accept to only know that what happens is described (up to the above blurring) by one of a finite number of scenarios.
    Basically, a scenario consists of some meta-information giving the list of particles stopping in each interval of time of the above time discretization.
\end{itemize}
We emphasize the fact that some of these scenarios may have no sense (for example making a particle stop at some empty place). The point is that one of these scenarios corresponds to the true situation. Moreover, this blurred approach with scenarios is local and contains nevertheless enough information to witness $\alpha$-goodness. The basic idea is that $x_0$ is declared $(\alpha,\sharp)$-good if it is good in each of the scenarios. 
This is developed in Section \ref{sect:BlurredApproach}.

\paragraph{The dominant time.} The number of scenarios is finite partly because we only consider a finite number of particles (the ones in a relevant $\sharp$-neighborhood of $x_0$) and partly because we only consider what happens up to some finite time which we call the dominant time. We therefore need to define this dominant time in a local way. The first idea would be to define it as a (local) upper bound on all possible hitting times of the particles which may stop $x_0$ or be stopped by $x_0$. One could think that knowing the local configuration (up to blurring and scenarios) up to that time would be sufficient to place a loop stopping $x_0$ without interfering with its backward component. This is true up to the following subtlety: there may exist non stopped grains $G^{\sharp}(x)=G(x)$ with $\tau^{\sharp}(x)=\tau(x)=+\infty$ which are not compact and could be tangent to $G^{\sharp}(x_0)$. The presence of such pathological grains could make impossible the existence of a suitable small ball $B(v,2\alpha)$ avoiding all the grains other than $x_0$ in the final configuration. Overcome this second obstacle requires considering potential hitting times in a larger $\sharp$-neighborhood to make sure that, even if some grains other than $x_0$ are modified, this will not reduce the backward of $x_0$. See Section \ref{sect:Dominant}. 

%\jb{Version 2 : } The idea is to define it as a (local) upper bound on all possible hitting times of particles in a well chosen $\sharp$-neighborhood of $x_0$. See Section \ref{sect:Dominant}. 

\paragraph{Conclusion.}
Finally, Section \ref{sect:ProofProp15} contains the definition of $\sharp$-good points and the proof of Proposition \ref{p:DieseGoodImplyGood}, i.e. any point is $(\alpha,\sharp)$-good for $\alpha$ small enough, the notion of $\sharp$-goodness is local and any $(\alpha,\sharp)$-good point stopped in a regular way is an $\alpha$-good point.

\subsection{A blurred approach}
\label{sect:BlurredApproach}

In this section, we develop our blurred approach, which generally implies that the stopping character of some particles during an interval is indeterminate. We retain only the information that the particle is stopped at some point within this interval. Some information is lost, forcing us to work with a finite sequence of scenarios, but this enables us to achieve locality.

\paragraph{Setting for the whole Section \ref{sect:BlurredApproach}.} Recall that $x \sim^\sharp y$ means that $G^\sharp(x)+B(3)$ and $G^\sharp(y)+B(3)$ overlap. We denote by $B^\sharp(x,1)$ the ball with center $x$ and radius $1$ w.r.t. the graph distance associated to $\sim^\sharp$. Proposition \ref{p:33} asserts that any ball $B^\sharp(x,1)$ admits finitely many elements with probability $1$. In Section \ref{sect:ProofDieseGood}, the word \textit{locality} has to be understood in terms of $\sharp$-neighborhood.

For the whole Section \ref{sect:BlurredApproach}, we consider an element $x_0 \in \xi$ and we denote by $x_0,x_1,\ldots,x_K$ the elements of $B^\sharp(x_0,1)$. For the moment, no hypothesis is required about the grain $x_0$ (such as stopped in a regular way or with a finite lifetime $\tau(x_0)$).

\paragraph{Scenarios and modified dynamical algorithm.} Let $T_0 > 0$ be a real number. Let us select the integer $N > 0$ as the smallest integer such that 
\begin{equation}
\label{UnifCont}
\forall 0 \leq i \leq K \; \mbox{ and } \; \forall t,t' \in [0,T_0] \; \mbox{ with } \; |t-t'| \leq \frac{T_0}{N} , \; \mbox{ we have } \; | g_{x_i}(t) - g_{x_i}(t') | \leq \frac{1}{K+1} ~.
\end{equation}
Such an integer exists by uniform continuity of the $g_{x_i}$'s on the compact time interval $[0,T_0]$. Remark that $N$ only depends on $T_0$ and the vertices $x_0,x_1,\ldots,x_K$.

Let us set for any $k \in \{1,\dots,N\}$,
\[
I_k(T_0,N) = I_k := \Big[ (k-1) \frac{T_0}{N}, k \frac{T_0}{N} \Big) ~.
\]
Respectively to $x_0,x_1,\ldots,x_K$, $T_0$ and $N$, a \textbf{scenario} $\textbf{s}$ is a couple $(J , (I(j) : j \in J))$ where $J \subset \{0,1,\ldots,K\}$ and for any $j \in J$, $I(j)$ is taken among the intervals $I_1,\dots,I_N$. According to the scenario $\textbf{s}$, the grain $x_j$ is declared \textbf{blurred} if and only if $j \in J$. In this case, $g_{x_j}(I(j))$ represents its blurred part. Note that there are finitely many scenarios.

Given a scenario $\textbf{s} = (J , (I(j) : j\in J))$, we define the \textbf{modified dynamical algorithm} for the growth of the grains $x_0,x_1,\ldots,x_K$. The dynamics is the same as for the dynamical algorithm defined in Section \ref{sect:DynamicView} except for the following points: for any $0\leq i\leq K$,
\begin{itemize}
\item if $i \in J$ and if $x_i$ is still alive at time $t(i) := \inf I(i)$, then it is (artificially) stopped at time $t(i)$;
\item if $i \notin J$ and if $x_i$ is still alive at time $T_0$, then it is (artificially) stopped at time $T_0$;
\end{itemize}
In particular, any blurred part $g_{x_j}(I(j))$ is invisible to the other grains $x_i$, $i\not= j$: it cannot stop them. This modified dynamics could be formalized as we did in Section \ref{sect:DynamicView} by working successively on time intervals of length $T_0/N$ and adding the above extra rules. Note that, in the modified dynamics, a grain $x_i$ with $i\in J$ may be stopped before $t(i) = \inf I(i)$, in this case it does not reach its blurred part. However it can never be stopped after $t(i)$.

This modified dynamics, restricted to the grains $x_0,x_1,\ldots,x_K$ and to the time interval $[0,T_0]$, is realized according to the scenario $\textbf{s}$. It allows to associate to the scenario $\textbf{s}$ a \textbf{blurred configuration}, i.e.\ the union of the grains $x_0,\ldots,x_K$ produced by the modified dynamics with their blurred parts. We denote this union of blurred grains by $C(\textbf{s})$. Note also that the modified dynamics involves only the elements of $B^{\sharp}(x_0,1)$. A blurred configuration is thus locally determined.

\paragraph{Locality through admissible scenarios.} Respectively to $x_0,x_1,\ldots,x_K$ and $T_0$, a scenario $\textbf{s} = (J , \{I(i) : i\in J\})$ (or the corresponding blurred configuration) is said \textbf{admissible} when its set of blurred parts $\cup_{i\in J} g_{x_i}(I(i))$ does not overlap the set
\[
\textrm{Heart} := g_{x_0}([0,\tau^\sharp(x_0)]) + B(1) ~.
\]
As claimed previously, the true configuration of grains around $x_0$ (and till time $T_0$) is known up to a finite number of admissible scenarios.

\begin{lemma}
\label{lem:saucisse}
Using the previous notations, the following holds:
\begin{equation}
\label{saucisse}
\textrm{Heart} \cap \Big( \bigcup_{x \in \xi} g_x([0,\tau(x)\wedge T_0]) \Big) \, \in \, \big\{ \textrm{Heart} \cap C(\mathbf{s}) : \, \mbox{$\mathbf{s}$ is admissible} \big\}
\end{equation}
\end{lemma}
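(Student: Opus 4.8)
The goal is to identify, among the finitely many admissible scenarios $\mathbf{s}$, one whose blurred configuration $C(\mathbf{s})$ agrees with the true configuration on $\textrm{Heart}$. The plan is to build that scenario directly from the true dynamics run up to time $T_0$, and then to verify two things: (i) that the scenario so constructed is admissible, and (ii) that the blurred configuration it produces coincides with the true one on $\textrm{Heart}$.

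First I would describe the construction of the candidate scenario $\mathbf{s}^\star = (J,(I(j):j\in J))$. Run the true dynamics of the whole process (whose lifetimes $\tau(x)$ are well defined since $\mu$ is tempered), and restrict attention to the grains $x_0,\dots,x_K$ of $B^\sharp(x_0,1)$; by Proposition \ref{p:33} (or rather the finiteness of $B^\sharp(x_0,1)$ it guarantees) this is a finite list, and moreover no grain outside $B^\sharp(x_0,1)$ can touch $G^\sharp(x_i)$, hence cannot affect the part of these grains that lies in $\textrm{Heart}$ before time $T_0$ — this is the key locality fact I will lean on, and I should spell it out: any grain $y\notin B^\sharp(x_0,1)$ has $G^\sharp(y)+B(3)$ disjoint from $G^\sharp(x_0)+B(3)\supset\textrm{Heart}$, so the restriction of the true dynamics relevant to $\textrm{Heart}$ is a function of $\overline\xi_{|B^\sharp(x_0,1)}$ only, and is computed by the (ordinary) dynamical algorithm applied to those $K+1$ grains. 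Now put $i\in J$ precisely when $\tau(x_i)<T_0$, and in that case let $I(i)=I_k$ be the unique dyadic-type interval $I_k(T_0,N)$ containing $\tau(x_i)$; for $i\notin J$ we have $\tau(x_i)\ge T_0$. One then checks that running the modified dynamical algorithm with this $\mathbf{s}^\star$ reproduces, on $[0,T_0]$, exactly the true trajectories of $x_0,\dots,x_K$: the artificial stopping at $\inf I(i)$ for $i\in J$ occurs no earlier than the true stop could matter because $\tau(x_i)\in I(i)$ so $\inf I(i)\le \tau(x_i)$ and by the uniform-continuity choice \eqref{UnifCont} of $N$ the grain only moves by at most $1/(K+1)$ between $\inf I(i)$ and $\tau(x_i)$; conversely blurred parts are invisible and cannot create spurious earlier stops. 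This gives $g_{x_i}([0,\tau(x_i)\wedge T_0]) \subset C(\mathbf{s}^\star)$ for each $i$, and also $C(\mathbf{s}^\star)$ restricted to the "visible" (non-blurred) parts equals the union of the true trajectories up to time $\tau(x_i)\wedge T_0$.

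Next I would verify admissibility of $\mathbf{s}^\star$: the blurred parts $g_{x_i}(I(i))$, $i\in J$, must not meet $\textrm{Heart}=g_{x_0}([0,\tau^\sharp(x_0)])+B(1)$. Here is where I expect the main obstacle, and it is exactly the subtlety flagged in the "dominant time" paragraph: a priori a grain $x_i$ with $\tau(x_i)$ finite could, after its true stopping time, have its continuation $g_{x_i}(I(i))$ wander into $\textrm{Heart}$. However, the blurred part $g_{x_i}(I(i))$ is contained in a $1/(K+1)$-neighborhood (via \eqref{UnifCont}, since $|I(i)|=T_0/N$) of the point $g_{x_i}(\tau(x_i))$, which is a genuine point of the true grain $G_{x_i}(\tau(x_i))$. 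Points of $G_{x_i}(\tau(x_i))$ with $x_i\ne x_0$ lie outside the open interior of $G_{x_0}(\tau(x_0))$ by the hard-core property (Lemma \ref{lem:hard-core}); so the real work is to choose $T_0$ (equivalently to have chosen it, upstream in Section \ref{sect:Dominant}, as the "dominant time") large enough that all grains whose true trajectory could come within distance $1+1/(K+1)$ of $g_{x_0}([0,\tau^\sharp(x_0)])$ have already been stopped well before $T_0$, so that $\tau(x_i)$ is comfortably inside $[0,T_0)$ and the interval $I(i)$ together with its $1/(K+1)$-fattening stays away from $\textrm{Heart}$. In the write-up I would invoke the defining property of the dominant time $T_0$ (from the section preceding this one) rather than re-derive it: for the grains $x_i\in B^\sharp(x_0,1)$ that are relevant, either $\tau(x_i)\ge T_0$ (so $i\notin J$, no blurred part at all), or $\tau(x_i)<T_0$ and then by the choice of $T_0$ the point $g_{x_i}(\tau(x_i))$ — and hence its $1/(K+1)$-ball which contains $g_{x_i}(I(i))$ — is at distance $>1$ from $g_{x_0}([0,\tau^\sharp(x_0)])$. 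This yields $\big(\bigcup_{i\in J} g_{x_i}(I(i))\big)\cap\textrm{Heart}=\emptyset$, i.e. $\mathbf{s}^\star$ is admissible.

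Finally I would assemble the equality \eqref{saucisse}. On the left side, any point of $\textrm{Heart}\cap g_x([0,\tau(x)\wedge T_0])$ for some $x\in\xi$: if $x\notin B^\sharp(x_0,1)$ then $g_x([0,\tau(x)\wedge T_0])\subset g_x([0,\tau^\sharp(x)])\subset G^\sharp(x)$ is disjoint from $\textrm{Heart}\subset G^\sharp(x_0)+B(1)\subset G^\sharp(x_0)+B(3)$, so no such point exists; hence only $x=x_i$ for some $i\le K$ contribute, and for those we showed $g_{x_i}([0,\tau(x_i)\wedge T_0])\subset C(\mathbf{s}^\star)$, giving $\subset$. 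For the reverse inclusion, a point of $\textrm{Heart}\cap C(\mathbf{s}^\star)$ lies either on a visible part of some $x_i$ — which is $g_{x_i}([0,\tau(x_i)\wedge T_0])$, so it is in the left-hand set — or on a blurred part $g_{x_i}(I(i))$, which is impossible since $\mathbf{s}^\star$ is admissible. Therefore $\textrm{Heart}\cap\big(\bigcup_{x\in\xi} g_x([0,\tau(x)\wedge T_0])\big)=\textrm{Heart}\cap C(\mathbf{s}^\star)$ with $\mathbf{s}^\star$ admissible, which is exactly the assertion. The one step I would be most careful about in the final writing is making precise, and consistent with the as-yet-unstated definition of the dominant time, the quantitative separation used for admissibility; everything else is bookkeeping with \eqref{UnifCont} and Lemma \ref{lem:hard-core}.
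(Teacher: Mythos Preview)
Your candidate scenario $\mathbf{s}^\star$ --- blur every $x_i$ with $\tau(x_i)<T_0$ on the interval $I(i)\ni\tau(x_i)$ --- is in general \emph{not} admissible, and this kills the argument. The simplest obstruction: take any $x_i$ that is stopped by $x_0$ itself (the paper's Figure \ref{fig:AllDiese} shows several). Then $g_{x_i}(\tau(x_i))\in g_{x_0}([0,\tau(x_0)))\subset\textrm{Heart}$, so the blurred part $g_{x_i}(I(i))$, which contains $g_{x_i}(\tau(x_i))$, meets $\textrm{Heart}$. The same happens with $x_0$: if $\tau(x_0)<T_0$ you put $0\in J$, and $g_{x_0}(I(0))\ni g_{x_0}(\tau(x_0))\in\textrm{Heart}$. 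Your appeal to the dominant time to force ``$g_{x_i}(\tau(x_i))$ at distance $>1$ from $g_{x_0}([0,\tau^\sharp(x_0)])$'' is a misreading of Proposition \ref{prop:DominantTime}: that result only says $\tau(y)\le\mathbb{T}(x_0)$ for the relevant $y$; it gives no spatial separation whatsoever, and indeed none can hold when $y$ hits $x_0$.

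There is a second, related gap: you assert that the modified dynamics with $\mathbf{s}^\star$ reproduces the true trajectories, but this also fails. A grain $x_i\in B^\sharp(x_0,1)$ can be stopped (in the true dynamics) by some $y\notin B^\sharp(x_0,1)$; when you discard $y$, the modified dynamics lets $x_i$ run on, and this propagates: any $x_j$ that should have been stopped by $x_i$ on the portion $g_{x_i}([\inf I(i),\tau(x_i)])$ --- which is invisible in your scenario --- is no longer stopped there either. The paper handles exactly this issue by an iterative \emph{blurring algorithm}: it does \emph{not} blur every stopped grain, but only those whose stopping point lies in a growing ``blurred region'' anchored in $(g_{x_0}([0,\tau^\sharp(x_0)])+B(6))^c$, propagating the uncertainty inward step by step. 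Admissibility then follows because each connected component of the union of blurred parts contains at most $K+1$ pieces of diameter $\le 1/(K+1)$ (by \eqref{UnifCont}) and touches $(g_{x_0}([0,\tau^\sharp(x_0)])+B(6))^c$ by construction, hence stays outside $\textrm{Heart}=g_{x_0}([0,\tau^\sharp(x_0)])+B(1)$. Consistency of the modified dynamics with $\tau^b$ is then checked via the Reconciliation Lemma. Your approach would need to be reworked along these lines: blur selectively, starting from stops that occur far from $x_0$, not wholesale.
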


The left hand side of (\ref{saucisse}) is the true configuration of grains in \textrm{Heart} and till time $T_0$: this is a non-local information. However, the right hand side of (\ref{saucisse}) is a finite collection of blurred configurations which are all locally determined. Roughly speaking, to get locality we have weakened our knowledge of what really happens inside $\textrm{Heart}$ and until time $T_0$.\\

The rest of this section is devoted to the proof of Lemma \ref{lem:saucisse}.\\

\noindent
\textbf{Proof.} The proof is splitted into three steps.\\

\noindent
\textbf{Step 1 : Blurring algorithm.} Our first goal is to deteriorate the lifetime function $\tau$ (non-local information) into a \textbf{blurred lifetime function} $\tau^b$ (local information), leading to a scenario $\mathbf{s}_{\tau^b}$.

Let $x_0,x_1,\ldots,x_K, T_0, N$ as before. Given also a configuration $\overline S \in \overline\cS$, a blurred lifetime function $\tilde{\tau}$ is defined as a function 
\[
\tilde{\tau} : S \to [0,T_0] \cup \{I_1, \dots, I_N\} \cup \{\emptyset\} ~.
\]
Each of the possible values of $\tilde{\tau}(x)$, with $x \in S$, will be interpreted as follows:
\begin{itemize}
\item $\tilde{\tau}(x) \in [0,T_0]$: the grain $x$ stops growing at time $\tilde{\tau}(x)$.
\item $\tilde{\tau}(x) = I_k$ for some $k$: the grain $x$ stops growing at some \emph{unknown} time in the interval $I_k$. It is blurred.
\item $\tilde{\tau}(x) = \emptyset$: the grain $x$ is discarded.
\end{itemize}

Let $\overline S \in \overline\cS$ be a tempered configuration and let $\tau$ be its lifetime function. \textbf{The Blurring algorithm} allows to blur the lifetime function $\tau$ around the grain $x_0$ and until time $T_0$, leading to a blurred lifetime function denoted by $\tau^b$.

Start by setting $\tau^b = \tau$. We will then change the values of some of the $\tau^b(x)$ by performing the following steps. We first need some notations. If $t \in [0,T_0)$, $\blur(t)$ is defined as the unique interval $I_k$ containing $t$. We define the blurred region by
\[
\blurred(\overline S, \tau^b,x_0,T_0,N) := \big( g_{x_0}([0,\tau^{\sharp}(x_0)])+B(6) \big)^c \, \cup \Big( \bigcup_{x \in S : \, \tau^b(x) \in \{I_1, \dots, I_N\}} g_x(\tau^b(x)) \Big) ~.
\]

Now we perform the algorithm to deteriorate $\tau^b$. The first two steps below reduces the algorithm to grains of $B^{\sharp}(x_0,1)$ and to the time interval $[0,T_0]$. The third steps is repeated until the algorithm stop.

\begin{enumerate}
\item For all $x \notin B^{\sharp}(x_0,1)$, set $\tau^b(x) = \emptyset$.
\item For all $x \in B^{\sharp}(x_0,1)$ with $\tau(x) \geq T_0$, set $\tau^b(x) = T_0$.
\item Consider the set of all $x \in B^{\sharp}(x_0,1)$ such that $\tau^b(x)$ is a real number in $[0,T_0)$ and $g_x(\tau^b(x)) \in \blurred(\overline S, \tau^b,x_0,T_0,N)$.
\begin{itemize}
\item If this set is not empty:
\begin{itemize}
\item For each $x$ in this set, blur $\tau^b(x)$ by setting $\tau^b(x) = \blur(\tau^b(x))$.
\item Go back to Step 3.
\end{itemize}
\item If this set is empty, the algorithm stops.
\end{itemize}
\end{enumerate}

When Step 3 is realized for the first time, the blurred region is equal to $(g_{x_0}([0,\tau^{\sharp}(x_0)])+B(6))^c$ and at each new Step 3, it increases by adding blurred parts of grains, overlapping the current blurred region. Hence, when the blurred algorithm stops, any connected component of the set
\[
\bigcup_{i \leq K : \, \tau^b(x_i) \in \{I_1, \dots, I_N\}} g_{x_i}(\tau^b(x_i))
\]
overlaps $(g_{x_0}([0,\tau^{\sharp}(x_0)])+B(6))^c$. See Fig. \ref{fig:blur}.

Besides, the blurred lifetime function $\tau^b$ provides a unique scenario $\mathbf{s}_{\tau^b}$ to $x_0,x_1,\ldots,x_K$ and $T_0$, indicating which are the blurred grains (among the $x_i$'s) and what is their blurred part. More formally, $\mathbf{s}_{\tau^b} = (J,(I(j) : j \in J)$ with
\[
J=\{j \in \{0,\dots,K\} : \tau^b(x_j) \text{ is a time interval}\} \text{ and for all } j\in J, I(j)=\tau^b(x_j) ~.
\]

\begin{figure}[!ht]
\begin{center}
\includegraphics[width=11cm,height=5cm]{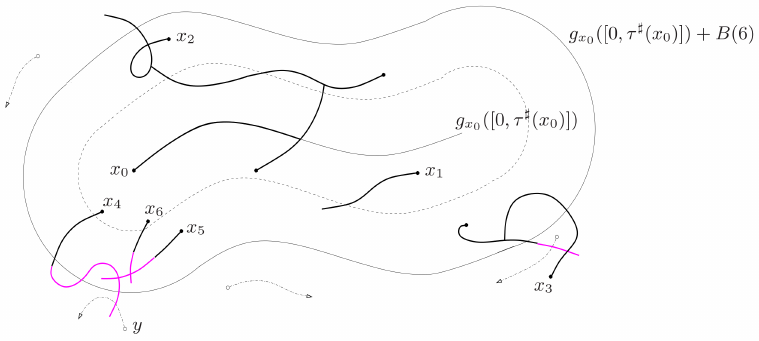}
\caption{\label{fig:blur} Here is the blurred configuration associated to the scenario $\mathbf{s}_{\tau^b}$, itself provided by the blurred lifetime function $\tau^b$. The dotted trajectories (as $y$) are some discarded grains, i.e. with $\tau^b(y) = \emptyset$, by Step 1 of the blurred algorithm. The pink pieces are the blurred parts of grains. Some grains, as $x_1$ and $x_2$, are stopped at time $T_0$ by Step 2. In the whole configuration $\overline\xi$, the grain $x_4$ is stopped before $T_0$ and outside the set $g_{x_0}([0,\tau^\sharp(x_0)])+B(6)$ by $y$. Because $y$ is discarded, this information is lost. The role of the blurred part of the grain $x_4$ is to encode this lack of information. Since, we do not know when $x_4$ is stopped (before or after crossing $x_5$?), this uncertainty is extended to $x_5$, then $x_6$ during Step 3.}
\end{center}
\end{figure}

\noindent
\textbf{Step 2.} Let us prove that the modified dynamics generates from the scenario $\mathbf{s}_{\tau^b}$, a blurred configuration which is consistent with the blurred lifetime function $\tau^b$. In other words, the blurred lifetime function implictly given by the modified dynamics with scenario $\mathbf{s}_{\tau^b}$ is $\tau^b$.

Since the list of discarded grains is the same for the scenario $\mathbf{s}_{\tau^b}$ and the blurred lifetime function $\tau^b$, we only need to focus on the grains $x_0,x_1,\ldots,x_K$. Let us consider the blurred configuration provided by the modified dynamics from the scenario $\mathbf{s}_{\tau^b}$ and the grains $(x_i,g_{x_i})$. Let us call the cleaned configuration the previous blurred configuration without its blurred parts. The key remark is that this cleaned configuration is also provided by the Dynamical algorithm (defined in Section \ref{sect:DynamicView}) from the modified grains $(x_i,f_{x_i})$ where the trajectories $f_{x_i}$ are defined as follows. For any index $i$ such that $\tau^b(x_i) = T_0$, we set $t(i) := T_0$. For any $i$ such that $\tau^b(x_i) \in \{I_1,\ldots,I_N\}$, we set $t(i) := \inf \tau^b(x_i)$. In both cases, we define
\[
f_{x_i}(t) := g_{x_i}(t) \1_{t \leq t(i)} + g_{x_i}(t(i)) \1_{t > t(i)} ~,
\]
Otherwise, $f_{x_i} := g_{x_i}$. Now, the Reconciliation lemma (Lemma \ref{l_rec}) asserts that the Dynamical algorithm provides a unique grain configuration-- which is the cleaned configuration --with a unique lifetime function, say $\tau_0$. Hence, the lifetime function $\tau_0$ satisfies $\tau_0(x_i) = t(i)$ whenever $\tau^b(x_i)$ equals to $T_0$ or one of the intervals $I_1,\ldots,I_N$, and $\tau_0(x_i) = \tau^b(x_i)$ otherwise. Finally, adding the blurred parts to that unique grain configuration and the values $I_1,\ldots,I_N$ and $\emptyset$ to the lifetime function $\tau_0$, we just have proved that to the blurred configuration (associated to $\mathbf{s}_{\tau^b}$), it corresponds a unique blurred lifetime function which is $\tau^b$.\\

\noindent
\textbf{Step 3:} Let us prove that the scenario $\mathbf{s}_{\tau^b}$ is admissible. This means that the set of blurred parts
\begin{equation}
\label{BlurredParts}
\bigcup_{x \in B^\sharp(x_0,1) : \, \tau^b(x) \in \{I_1, \dots, I_N\}} g_x(\tau^b(x))
\end{equation}
does not overlap $\textrm{Heart} = g_{x_0}([0,\tau^\sharp(x_0)]) + B(1)$. Recall that the scenario $\mathbf{s}_{\tau^b}$ is associated to the blurred lifetime function $\tau^b$.

Thanks to (\ref{UnifCont}), each blurred part $g_{x_i}(\tau^b(x_i))$ is included in a ball with diameter $\frac{1}{K+1}$. Moreover each connected component $D$ of the set given in (\ref{BlurredParts}) is made up with at most $K+1$ such blurred parts (since each grain is blurred at most one time). So $D$ is included in a ball with diameter $1$. Combining with the fact that $D$ overlaps $(g_{x_0}([0,\tau^\sharp(x_0)]) + B(6))^c$ by construction of the Blurring algorithm, we get the result.\\

Let us now conclude. Recall that $C(\textbf{s}_{\tau^b})$ denotes the union of (possibly blurred) grains of the blurred configuration provided by the modified dynamics from the scenario $\textbf{s}_{\tau^b}$. Let us write:
\begin{equation}
\label{ConcBlur}
\textrm{Heart} \cap \Big( \bigcup_{x \in \xi} g_x([0,\tau(x)\wedge T_0]) \Big) \, = \, \textrm{Heart} \cap \Big( \bigcup_{x \in B^\sharp(x_0,1)} g_x([0,\tau(x)\wedge T_0]) \Big) \, = \, \textrm{Heart} \cap C(\mathbf{s}_{\tau^b}) ~.
\end{equation}
The second equality of (\ref{ConcBlur}) is due to Steps 2 and 3. The true configuration of grains around $x_0$ and described by the lifetime function $\tau$ is corrupted into a blurred configuration obtained from $\tau^b$ and corresponding to the scenario $\textbf{s}_{\tau^b}$ thanks to Step 2. Hence, the true configuration and the blurred one, i.e. $C(\mathbf{s}_{\tau^b})$, differ only on the set of blurred parts given in (\ref{BlurredParts}). By Step 3, the scenario $\mathbf{s}_{\tau^b}$ is admissible which means that these blurred parts are outside $\textrm{Heart}$. As a consequence, the true configuration and the blurred one coincide on $\textrm{Heart}$. This achieves the proof of Lemma \ref{lem:saucisse}. $\blacksquare$

\subsection{The dominant time $\mathbb{T}(x_0)$}
\label{sect:Dominant}

Up to a finite number of admissible scenarios, we know what happens around the grain $x_0$ until time $T_0$. 
The goal of this section is to ensure that it is possible to choose the time $T_0$ in a local way and large enough so that, up to time $T_0$, we have enough information to place a loop stopping $x_0$ without reducing its backward.
We call this time the \textbf{dominant time} $\mathbb{T}(x_0)$.
The basic idea is that $\mathbb{T}(x_0)$ depends on a suitable $\sharp$-neighbourhood of $x_0$ and provides an upper bound on all possible impact between grains in a relevant $\sharp$-neighbourhood of $x_0$.

% all impacts involving $x_0$ have occurred before $T_0$-- in order to properly locate the small ball $B(v,2\alpha)$ --but finite 
% to avoid the possible presence of non-stopped grains which could be not compact and tangent to $G^{\sharp}(x_0)$. 

Remark that for the whole Section \ref{sect:Dominant}, no hypothesis is required about the grain $x_0$ (such as stopped in a regular way or with a finite lifetime $\tau(x_0)$).

\begin{prop}
\label{prop:DominantTime}
Let $x_0 \in \xi$. Then there a.s. exists $\mathbb{T}(x_0) < \infty$, called the dominant time for $x_0$, such that:
\begin{itemize}
\item[$(i)$] Domination: For any $y \in B^{\sharp}(x_0,2)$ such that $\tau(y)<\infty$ then $\tau(y) \leq \mathbb{T}(x_0)$.
\item[$(ii)$] Locality: $\mathbb{T}(x_0)$ only depends on grains $(z,g_z)$ with $z\in B^{\sharp}(x_0,4)$.
\end{itemize}
\end{prop}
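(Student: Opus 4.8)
The plan is to build $\mathbb{T}(x_0)$ by iterating, a bounded number of times, a local "one-step" bound on hitting times, propagating outward along the $\sharp$-graph. First I would recall that by Proposition \ref{p:33} the ball $B^\sharp(x_0,4)$ is a.s. finite, so we may enumerate its vertices as $z_1,\dots,z_M$; everything that follows will be a finite expression in the data $(z_j,g_{z_j})_{j\le M}$, which immediately gives locality, item $(ii)$. The heart of the matter is item $(i)$: we must produce, from purely local data, an a.s.-finite upper bound for $\tau(y)$ for every $y\in B^\sharp(x_0,2)$ with $\tau(y)<\infty$. The difficulty — emphasized in the excerpt's discussion of "long sequences of causalities" — is that $\tau(y)$ is genuinely non-local: $y$ can be stopped by some $w$, which was stopped by $w'$, etc., and this chain can leave any fixed $\sharp$-neighbourhood. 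So we cannot hope to compute $\tau(y)$ locally; we can only hope to \emph{dominate} it.

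The key observation to exploit is the following monotonicity-type fact, which I would isolate as a lemma: if $y$ is stopped (i.e. $\tau(y)<\infty$), then it is stopped by some grain $w$ with $g_y(\tau(y))=g_w(t_w)$, $t_w\le \tau(y)$, and $t_w\le\tau(w)$; in particular the impact point $g_y(\tau(y))$ lies on the curve $g_w$. Now $g_y([0,\tau(y)])\subset G^\sharp(y)$ by Lemma \ref{l:le_modele_augmente_domine}, so the impact point lies in $G^\sharp(y)\cap g_w$; since $G^\sharp(y)+B(3)$ and $G^\sharp(w)+B(3)$ then overlap (the impact point is in both $G^\sharp(y)$ and $G^\sharp(w)$), we get $y\sim^\sharp w$. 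Hence the stopper of any stopped grain $y$ is a $\sharp$-neighbour of $y$. Iterating: the entire "backward causality chain" that determines $\tau(y)$ — the sequence $y=w_0, w_1=\text{stopper of }w_0, w_2=\text{stopper of }w_1,\dots$ — is a $\sharp$-path. This chain must eventually reach a grain that is never stopped, or cycle; in either case, walking it, the hitting times are \emph{non-increasing} ($\tau(w_{i+1})\ge t_{w_{i+1}}$ where $t_{w_{i+1}}$ is the time at which $w_i$'s impact point sits on $w_{i+1}$, and one reads off $\tau(w_i)=t_{w_i\to w_{i-1}}\le \dots$; the precise monotone quantity is the sequence of impact times read along the chain). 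Consequently $\tau(y)$ is bounded by the largest impact time occurring anywhere along this $\sharp$-path.

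With that in hand, here is the construction. For a pair $x\sim^\sharp x'$ define the local quantity
\[
h(x,x') \;=\; \sup\Big\{\, t\ge 0 : \ g_x(t)\in G^\sharp(x') \ \text{ or } \ g_{x'}(t)\in G^\sharp(x)\,\Big\},
\]
i.e. the last time either curve can possibly touch the other's $\sharp$-grain; this is finite because $G^\sharp(x')$ is compact and $g_x$ is continuous and (being contained in the compact $G^\sharp(x)$ up to its own $\sharp$-lifetime... more simply) because any impact time of $x$ on $x'$ is at most $\tau^\sharp(x)<\infty$. Then set
\[
\mathbb{T}(x_0)\;=\;\max\Big\{\, h(z,z') \ :\ z,z'\in B^\sharp(x_0,4),\ z\sim^\sharp z' \,\Big\},
\]
a finite maximum over a finite set, hence locally determined: item $(ii)$ holds. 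For item $(i)$, fix $y\in B^\sharp(x_0,2)$ with $\tau(y)<\infty$. Walk the backward causality $\sharp$-path from $y$; each step moves to a $\sharp$-neighbour, so after two steps we are still within $B^\sharp(x_0,4)$, and I would argue (this is the one place needing care) that $\tau(y)$ is dominated by $h(w_i,w_{i+1})$ for the \emph{first} edge of the chain — because $\tau(y)=\tau(w_0)$ equals the impact time of $w_0$ on its stopper $w_1$, and that impact time is $\le h(w_0,w_1)$ by definition, with $w_0=y, w_1\in B^\sharp(x_0,3)\subset B^\sharp(x_0,4)$. So actually only the \emph{single} first edge of the chain is needed, and $\tau(y)\le h(y,w_1)\le \mathbb{T}(x_0)$. (The larger radius $4$ rather than $3$ is reserved for the later subtlety flagged in Section \ref{sect:Dominant}: non-compact unstopped grains $G^\sharp(x)=G(x)$ tangent to $G^\sharp(x_0)$, which forces us to also bound potential impacts one neighbourhood further out when we later place the blocking loop; including all edges inside $B^\sharp(x_0,4)$ in the max above handles this uniformly.)

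The main obstacle I anticipate is the bookkeeping in the domination step: making rigorous that "$\tau(y)$ equals the impact time of $y$ on its stopper, and that stopper lies one $\sharp$-step away" requires carefully invoking the Hitting property together with the inclusion $g_y([0,\tau(y)])\subset G^\sharp(y)$ and the definition of $\sim^\sharp$ (which involves the $+B(3)$ fattening — one must check the impact point, not just a nearby point, lies in both $\sharp$-grains, which it does since the impact point is literally on both curves up to their $\sharp$-lifetimes). A secondary point is verifying $h(x,x')<\infty$: one uses that any time at which $g_x(t)$ meets $G^\sharp(x')$ forces $g_x(t)$ into a compact set, but $g_x$ need not be proper; the clean fix is to bound $h(x,x')$ by $\max(\tau^\sharp(x),\tau^\sharp(x'))$ plus a term controlling how long $g_x$ can linger in the compact $G^\sharp(x')$ after $\tau^\sharp(x)$ — and in fact $h(x,x')\le \tau^\sharp(x)\vee\tau^\sharp(x')$ already suffices if we define $h$ via impact \emph{in the sense of hitting} rather than mere set intersection, since a hit of $x$ on $x'$ must occur before $\tau^\sharp(x)$ by Lemma \ref{l:le_modele_augmente_domine}. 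I would adopt that cleaner definition and note $\tau^\sharp(z)<\infty$ a.s. for all $z\in B^\sharp(x_0,4)$ as a consequence of Proposition \ref{p:33}.
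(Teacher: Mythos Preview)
Your proposal has a genuine gap at precisely the point you yourself flagged as ``the one place needing care'': the finiteness of $h(x,x')$, and more fundamentally the claim that $\tau^\sharp(z)<\infty$ a.s.\ for all $z\in B^\sharp(x_0,4)$. This claim is \emph{false}, and Proposition \ref{p:33} does not imply it. That proposition only says $G^{\sharp\text{-fat}}(x)$ does not cross large annuli, i.e.\ $G^\sharp(x)$ is bounded; it says nothing about $\tau^\sharp(x)$ being finite. A curve $g_x$ can stay forever in a bounded region without ever hitting the obstacle set $\cO$, so $\tau^\sharp(x)=\infty$ while $G^\sharp(x)=g_x([0,\infty))$ is bounded (and need not even be compact --- think of a spiral accumulating at a point). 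The paper singles out exactly this pathology in the paragraph introducing the dominant time: ``there may exist non stopped grains $G^\sharp(x)=G(x)$ with $\tau^\sharp(x)=\tau(x)=+\infty$ which are not compact''. Worse, even a grain $y$ with $\tau(y)<\infty$ can have $\tau^\sharp(y)=\infty$, since the inequality $\tau\le\tau^\sharp$ of Lemma \ref{l:le_modele_augmente_domine} goes only one way. So your bound $\tau(y)\le h(y,w_1)\le \tau^\sharp(y)\vee\tau^\sharp(w_1)$ can be vacuous, and no variant of $h$ defined as a supremum of times at which $g_y$ meets $G^\sharp(w_1)$ (or $g_{w_1}(\R_+)$) is finite in general.

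The paper's route is genuinely different and does not pass through $\tau^\sharp$ at all. For each potential stopper $z\in B^\sharp(y,1)$ it defines the purely geometric quantity $t_{z,y}=\inf\{t\ge 0:g_z(t)\in g_y(\R_+)\}$ and argues, via a case analysis on who reaches the first intersection point first, that ``$y$ hits $z$'' forces $\tau(y)$ to equal one of a few explicit finite times built from $t_{z,y}$ and the curves $g_y,g_z$ alone (Case~1). When the infimum $t_{z,y}$ is not attained --- the ``tangent'' situation --- a further blurred-scenario argument around $z$ (using $B^\sharp(z,1)$, hence $B^\sharp(y,2)$) produces a local $\delta>0$ with $\tau(z)\ge t_{z,y}+\delta$, from which a third explicit finite bound $T^{(3)}_{z,y}$ is extracted (Case~2). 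Taking the maximum of these $T(y,z)$ over $z\in B^\sharp(y,1)$ gives a local $T(y)$, and then $\mathbb T(x_0)=\max\{T(y):y\in B^\sharp(x_0,2),\,T(y)<\infty\}$. The point is that these bounds come from \emph{first} intersection times of the raw curves, not from last times or from $\tau^\sharp$; this is what makes them finite and local. Your one-edge idea (``$\tau(y)$ is the impact time of $y$ on its stopper, which is a $\sharp$-neighbour'') is correct and is also the paper's starting observation, but turning it into a finite local bound requires this finer analysis rather than a sup over $G^\sharp$-intersection times.
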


Note that Item $(i)$ above concerns not only the elements of $B^{\sharp}(x_0,1)$, i.e. the $x$'s such that $x \sim^\sharp x_0$, but all those in $B^{\sharp}(x_0,2)$. We will see why in Section \ref{sect:DieseGoodIsGood}. Determining the dominant time $\mathbb{T}(x_0)$ in a local way (Item $(ii)$) will be needed so that the $\sharp$-goodness is a local notion.

Proposition \ref{prop:DominantTime} is an immediate consequence of the next result.

\begin{lemma}
\label{lem:MajorTime}
Let $y \in \xi$. Then there a.s. exists $T(y) \in [0,+\infty]$ only depending on grains $(z,g_z)$ with $z \in B^{\sharp}(y,2)$ and such that:
\[
\tau(y) < +\infty \, \Rightarrow \, \tau(y) \leq T(y) < +\infty ~.
\]
\end{lemma}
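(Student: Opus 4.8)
The plan is to produce $T(y)$ explicitly from the data $\{(z,g_z):z\in B^\sharp(y,2)\}$, which is a.s.\ a finite collection of grains by Proposition~\ref{p:33}.

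First, two preliminary observations. Recall from Lemma~\ref{l:le_modele_augmente_domine} that $\tau(x)\le\tau^\sharp(x)$ for every $x$, and from Proposition~\ref{p:33} that every $G^\sharp(x)$ is a.s.\ bounded. \textbf{(a) Localizing the stopper.} Suppose $\tau(y)<\infty$ and that $y$ $\tau$-hits some $z\ne y$ at time $\tau(y)$, i.e.\ $g_y(\tau(y))=g_z(t_z)$ with $t_z\le\tau(z)\le\tau^\sharp(z)$ and $t_z\le\tau(y)\le\tau^\sharp(y)$. Then $g_y(\tau(y))$ lies in $G^\sharp(y)\cap G^\sharp(z)$, hence $G^{\sharp\text{-fat}}(y)\cap G^{\sharp\text{-fat}}(z)\ne\emptyset$, so $z\in B^\sharp(y,1)$. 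Thus the grain that stops $y$, whenever $y$ is stopped, is one of the finitely many grains $\sharp$-adjacent to $y$. \textbf{(b) Locality of the augmented grains.} I would show that $\tau^\sharp(w)$ and $G^\sharp(w)$ depend only on $\{(w',g_{w'}):w'\in B^\sharp(w,1)\}$. Indeed, moving along $g_w$, the set $\cO$ it meets is a union of obstacles $\cO_c$, and by the loop property each $\cO_c\subseteq B(c,1)$; so $g_w$ can meet $\cO_c$ only if $G^\sharp(w)$ enters $B(c,1)$, and for such $c$ every grain starting in $B(c,1)$, as well as every grain whose time-$1$ state meets $B(c,1)$ (these are exactly the grains that decide whether an obstacle actually forms around $c$), lies within distance $2$ of $G^\sharp(w)$ and hence — after fattening by $B(3)$ — is a $\sharp$-neighbour of $w$. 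Scanning $g_w(t)$ for $t\ge 1$ up to the first meeting with $\cO$ therefore only involves grains of $B^\sharp(w,1)$ (the circularity in the definition of $B^\sharp(w,1)$ being harmless thanks to the eventual Euclidean locality in the second item of Proposition~\ref{p:33}). In particular, for every $z\in B^\sharp(y,1)$, both $\tau^\sharp(z)$ and $G^\sharp(z)$ are computable from the data of $B^\sharp(z,1)\subset B^\sharp(y,2)$.

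Now the construction. Working inside the finite configuration indexed by $B^\sharp(y,2)$ and using (b) to reconstruct $\tau^\sharp(z)$ and $G^\sharp(z)$ for each $z\in B^\sharp(y,1)$, I would set, for $z\in B^\sharp(y,1)\setminus\{y\}$,
\[
\sigma(y,z)=\sup\big\{\,t\ge 0:\ \exists\, t'\le\min(t,\tau^\sharp(z))\ \text{with}\ g_z(t')=g_y(t)\,\big\}
\]
(with $\sup\emptyset=0$), and define $T(y)=\max\big(\tau^\sharp(y),\ \max_{z\in B^\sharp(y,1)\setminus\{y\}}\sigma(y,z)\big)$, read as $+\infty$ if this is not finite. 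By (b), $T(y)$ is a function of $\{(z,g_z):z\in B^\sharp(y,2)\}$ only. If $\tau(y)<\infty$, pick a grain $z$ that $y$ $\tau$-hits at $\tau(y)$; by (a) one has $z\in B^\sharp(y,1)\setminus\{y\}$, and the witnessing identity $g_z(t_z)=g_y(\tau(y))$ with $t_z\le\min(\tau(y),\tau^\sharp(z))$ gives $\tau(y)\le\sigma(y,z)\le T(y)$. It remains to verify that $T(y)<\infty$ whenever $\tau(y)<\infty$.

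The main obstacle is precisely this finiteness. The mark trajectory $g_y$ is defined for all times and keeps moving after $y$ has been stopped; if it re-enters some $G^\sharp(z)$ at arbitrarily late times then $\sigma(y,z)=+\infty$, and likewise $\tau^\sharp(y)$ may be infinite — exactly when $g_y(\R_+)$ is bounded (which Proposition~\ref{p:33} guarantees in that case) yet never meets $\cO$. To control this I would exploit the boundedness in Proposition~\ref{p:33}: all of $G^\sharp(y)$ and all of the $G^\sharp(z)$, $z\in B^\sharp(y,1)$, lie in a common bounded region, so when $\tau(y)<\infty$ one argues that only a compact portion of that region can be responsible for a \emph{fresh first} hit of $g_y$ onto a real grain. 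Concretely, one replaces the raw supremum above by the supremum over times $t$ for which $g_y([0,t])$ is still a \emph{feasible} partial trajectory in view of the locally known augmented grains (so that past a finite, locally computable time $g_y$ can no longer contribute a new stopping opportunity), and one shows by a continuity/compactness argument that feasibility indeed fails after a finite time once $\tau(y)<\infty$. Granting this, $T(y)$ is finite and depends only on $B^\sharp(y,2)$; setting $\mathbb{T}(x_0):=\max_{y\in B^\sharp(x_0,2)}T(y)$ then yields Proposition~\ref{prop:DominantTime}, the domination being (a) and the locality coming from $B^\sharp(y,2)\subset B^\sharp(x_0,4)$ for $y\in B^\sharp(x_0,2)$.
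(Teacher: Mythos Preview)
Your localization observations (a) and (b) are fine, but the construction has a genuine gap at exactly the point you flag as the ``main obstacle''. As you yourself note, both $\tau^\sharp(y)$ and $\sigma(y,z)$ can be $+\infty$ even when $\tau(y)<\infty$: take $g_y$ with bounded range that repeatedly revisits a point of $G^\sharp(z)$ at arbitrarily late times, and that never meets $\cO$ after time $1$. Your $T(y)=\max(\tau^\sharp(y),\max_z\sigma(y,z))$ is then $+\infty$, and the required implication fails. The proposed fix --- restricting to times at which $g_y([0,t])$ is a ``feasible partial trajectory'' and invoking an unspecified ``continuity/compactness argument'' --- is not a proof: you have not defined feasibility, and more importantly you have not explained why feasibility must fail at a \emph{finite, locally computable} time. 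That last claim is the entire content of the lemma.

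The paper's proof shows why this step is nontrivial and why a supremum-based approach is the wrong tool. Instead of bounding $\tau(y)$ by the last potential hit (which may be at $+\infty$), one manufactures a \emph{specific} finite hitting time from $(g_y,g_z)$ by looking at $t_{z,y}=\inf\{t:g_z(t)\in g_y(\R_+)\}$, the first time the stopper's curve enters the range of $g_y$. When this infimum is attained, two explicit first-passage times $T_{z,y}^{(1)},T_{z,y}^{(2)}$ of $g_y$ through the single point $g_z(t_{z,y})$ give a finite bound depending only on $(y,g_y),(z,g_z)$. When it is not attained --- the delicate case --- one needs to know that the grain $z$ genuinely survives a little past $t_{z,y}$, and establishing this \emph{locally} (i.e.\ without knowing $\tau(z)$) is exactly where the blurred approach over $B^\sharp(z,1)\subset B^\sharp(y,2)$ enters. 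Your write-up never confronts this case, and the $B^\sharp(y,2)$ dependence in the statement is there precisely for it.
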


\noindent
\textbf{Proof of Proposition \ref{prop:DominantTime}.} Let $x_0 \in \xi$. The set $\{ y \in \xi : y \in B^{\sharp}(x_0,2) \mbox{ and } T(y) < +\infty \}$ is a.s. finite by Proposition \ref{p:33}. If it is empty then we set $\mathbb{T}(x_0) := 0$. Otherwise
\[
\mathbb{T}(x_0) := \max \{ T(y) : \, y \in B^{\sharp}(x_0,2) \mbox{ and } T(y) < +\infty \}
\]
is well defined and finite. By Lemma \ref{lem:MajorTime}, it clearly depends only on grains $(z,g_z)$ with $z \in B^{\sharp}(x_0,4)$. Moreover, for any $y \in B^{\sharp}(x_0,2)$ with $\tau(y)<\infty$, Lemma \ref{lem:MajorTime} asserts that $\tau(y) \leq T(y)$ which is itself smaller than $\mathbb{T}(x_0)$ by definition. $\blacksquare$\\

\noindent
\textbf{Proof of Lemma \ref{lem:MajorTime}.} Let $y \in \xi$ such that $\tau(y) < +\infty$. By the Hitting property, there exists $z \in \xi$, $z \not= y$, such that $y$ hits $z$ (at time $\tau(y)$). But knowing precisely who is the stopping grain $z$ of $y$ is a non-local information. We only know that it belongs to $B^{\sharp}(y,1)$. We are going to prove:\\

\noindent
\textbf{Claim:} For all $z \in B^{\sharp}(y,1)$, there exists $T(y,z) \in [0,+\infty]$ depending only on grains of $B^{\sharp}(y,2)$ such that ``$y$ hits $z$'' implies $\tau(y) \leq T(y,z) < +\infty$.\\

We conclude easily from the Claim with setting
\[
T(y) := \sup \big\{ T(y,z) : z \in B^{\sharp}(y,1) \; \mbox{ and } \; T(y,z) < +\infty \big\} ~.
\]
First, the variable $T(y)$ only depends on grains of $B^{\sharp}(y,2)$. Moreover, if $\tau(y) < +\infty$ then $y$ hits some $z$ in $B^{\sharp}(y,1)$ and, by the Claim, the variable $T(y,z)$ is finite and bounds $\tau(y)$. The set over which the supremum $T(y)$ is taken is then non-empty. It is also finite by Proposition \ref{p:33}. So $\tau(y) \leq T(y,z) \leq T(y) < +\infty$.\\

Hence our goal is to prove the Claim. Let $z \in B^{\sharp}(y,1)$ and assume that $y$ hits $z$. Let us consider
\[
t_{z,y} := \inf \{ t\geq 0 : \, g_z(t) \in g_y(\R_+) \}
\]
which is finite since $y$ hits $z$. First remark that $t_{z,y} \leq \tau(z)$: the grain $z$ is still alive at time $t_{z,y}$. Otherwise $\tau(z) < t_{z,y}$ and $g_z([0,\tau(z)])$ would not overlap $g_y(\R_+)$ which would contradict that $y$ hits $z$. Two cases must be distinghuished depending on whether the infimum $t_{z,y}$ is reached or not. At a first stage, the reader may only focus on Case 1. The second case requires more work.\\

\textbf{Case 1:} The infimum $t_{z,y}$ is reached, i.e. $g_z(t_{z,y}) \in g_y(\R_+)$. Let us introduce the first passage time of the trajectory $g_y$ at $g_z(t_{z,y})$:
\[
T_{z,y}^{(1)} := \inf \{ t \geq 0 : \, g_y(t) = g_z(t_{z,y}) \} ~.
\]
By construction, $T_{z,y}^{(1)}$ is finite and reached. If $T_{z,y}^{(1)} > t_{z,y}$, i.e. the trajectory $g_z$ visits $g_z(t_{z,y})$ before $g_y$, then $\tau(y) \leq T_{z,y}^{(1)}$ by the Stopping property. On the contrary, if $T_{z,y}^{(1)} \leq t_{z,y}$, the trajectory $g_y$ visits $g_z(t_{z,y})$ before $g_z$. We then have $T_{z,y}^{(1)} \leq \tau(y)$. Otherwise $\tau(y) < T_{z,y}^{(1)} \leq t_{z,y} \leq \tau(z)$ would mean that the grain $y$ is stopped before arriving at $g_z(t_{z,y})$: $g_y([0,\tau(y)])$ and $g_z([0,\tau(y)])$ would be disjoint that would contradict the hypothesis ``$y$ hits $z$''. From $T_{z,y}^{(1)} \leq \tau(y)$ combined with $g_z(t_{z,y}) = g_y(T_{z,y}^{(1)})$, $t_{z,y} \leq \tau(z)$ and $T_{z,y}^{(1)} \leq t_{z,y}$, the Stopping property asserts that $t_{z,y} = \tau(z)$, i.e. the grain $z$ is stopped by $y$ at time $t_{z,y}$. The equality $\tau(z)=t_{z,y}$ yields, by definition of $t_{z,y}$, that if $y$ hits $z$ it happens at the point $g_z(t_{z,y})$. Therefore, ``$y$ hits $z$'' forces the grain $y$ to visit $g_z(t_{z,y})$ at some time $t \ge t_{z,y} $ to be stopped. As a conqequence, in this case,
\[
\tau(y) = T_{z,y}^{(2)} := \inf \{ t\geq t_{z,y} : \, g_y(t) = g_z(t_{z,y}) \} \, < \, +\infty ~.
\]
To sum up, in Case 1, the lifetime $\tau(y)$ is a.s. bounded by
\[
T(y,z) := T_{z,y}^{(1)} \1_{T_{z,y}^{(1)}>t_{z,y}} + T_{z,y}^{(2)} \1_{T_{z,y}^{(1)} \leq t_{z,y}}
\]
which is finite and only depends on grains $(z,g_z)$ and $(y,g_y)$.\\

\textbf{Case 2:} The infimum $t_{z,y}$ is not reached. In this case, we have $\tau(z) > t_{z,y}$. Otherwise $\tau(z) \leq t_{z,y}$ would mean that $g_z([0,\tau(z)])$ and $g_y(\R_+)$ are disjoint since $t_{z,y}$ is not reached, and this would contradict the fact that $y$ hits $z$. For the same reason, we know that $g_z([t_{z,y},\tau(z)])$ is not reduced to the singleton $\{g_z(t_{z,y})\}$, i.e. the grain $z$ continues to grow after time $t_{z,y}$. Actually, we need a local version of what preceedes:
\begin{equation}
\label{t+delta}
\begin{array}{c}
\mbox{A.s. $\exists \delta > 0$ only depending on grains $(z',g_{z'})$ with $z' \in B^{\sharp}(z,1)$} \\
\mbox{such that $\tau(z) \geq t_{z,y}+\delta$ and $g_z(t_{z,y}+\delta) \not= g_z(t_{z,y})$.}
\end{array}
\end{equation}

In what follows, we first prove Statement (\ref{t+delta}) by a blurring approach and thus how it allows to conclude. 

It could be possible that the grain $z$ remains stuck at $g_z(t_{z,y})$ before restarting. To take into account this possibility in our proof, we have to introduce the time $t'_{z,y}$ at which the trajectory $g_z$ leaves the point $g_z(t_{z,y})$: $t'_{z,y} := \sup\{t'\geq t_{z,y} : g_z([t_{z,y},t']) = \{g_z(t_{z,y})\} \}$. As $t_{z,y}$, we know that $t'_{z,y}$ is finite and strictly smaller than $\tau(z)$. At first reading, the reader may assume that $t'_{z,y} = t_{z,y}$.

Let us use a blurring approach around the grain $z$ until time $t'_{z,y}$ (instead of $x_0$ and $T_0$ w.r.t. Section \ref{sect:BlurredApproach}). By Proposition \ref{p:33}, the set $B^{\sharp}(z,1)$ is a.s. finite, say $z'_0=z, z'_1,\ldots, z'_K$ be its elements, with $K$ random and finite. Thus, by uniform continuity of the $g_{z'_i}$'s on $[0,t'_{z,y}]$, we choose an integer $N'>0$ such that, for any $0\leq i\leq K$ and for any $t,t'\in [0,t'_{z,y}]$ with $|t-t'|\leq t'_{z,y} / N'$, one has $| g_{x_i}(t) - g_{x_i}(t') | \leq (K+1)^{-1}$. The time interval $[0,t'_{z,y}]$ is then splitted into the disjoint intervals $I_1,\ldots,I_{N'}$. So, any scenario $\textbf{s} = (J , (I(j) : j\in J))$, where $J \subset \{0,1,\ldots,K\}$ and for any $j \in J$, $I(j)$ is taken among the $I_k$'s, generates a blurred configuration of grains $z'_0,z'_1,\ldots,z'_K$ using the modified dynamics. As before, the number of admissible scenarios is a.s. finite and, among them, there is the one corresponding to the blurred lifetime function $\tau^b$ (thanks to the choice of $N'$) and then to the true grain configuration inside $g_z([0,\tau^\sharp(z)])+B(1)$.

Let us consider an admissible scenario $\textbf{s}$ according to which the grain $z$ has been stopped at time $t'_{z,y}$ (artificially by the modified dynamics) and, in the corresponding blurred configuration, any other grain of $B^{\sharp}(z,1)$ is at positive distance from $g_z(t'_{z,y})$. Let $\textbf{S}$ be this set of scenarios. Recall that the true scenario $\textbf{s}_{\tau^b}$ belongs to $\textbf{S}$. Now, let us restart the growth of grains of $B^{\sharp}(z,1)$ which have been stopped at time $t'_{z,y}$ according to $\textbf{s}$. Then, by continuity of trajectories, there exists $\delta(\textbf{s}) > 0$ small enough such that $g_z([0,t'_{z,y} + \delta(\textbf{s})])$ does not intersect any of the sets $g_{z'}([0,t'_{z,y}+\delta(\textbf{s})])$ for $z'$ stopped at time $t'_{z,y}$. The parameter $\delta(\textbf{s}) > 0$ can be chosen small enough so that $g_z([0,t'_{z,y} + \delta(\textbf{s})])$ also does not intersect the grains stopped before $t'_{z,y}$ or blurred according to the scenario $\textbf{s}$ (since $\textbf{s}$ is admissible). Roughly speaking, according to the scenario $\textbf{s}$, the grain $z$ is still alive at time $t'_{z,y} + \delta(\textbf{s})$. By definition of $t'_{z,y}$, decreasing $\delta(\textbf{s}) > 0$ if it is needed, one can also assume that $g_z(t'_{z,y} + \delta(\textbf{s})) \not= g_z(t'_{z,y})$.

Let $\bar\delta := \min \delta(\textbf{s}) > 0$ where the minimum is taken over the finite set $\textbf{S}$. Then,
\[
\tau(z) \geq t'_{z,y} + \delta(\textbf{s}_{\tau^b}) \geq t'_{z,y} + \bar\delta = t_{z,y} + \delta
\]
where $\delta :=\bar\delta + t'_{z,y}-t_{z,y} > 0$  and
\[
g_z(t_{z,y} + \delta) = g_z(t'_{z,y} + \bar\delta) \not= g_z(t'_{z,y}) = g_z(t_{z,y}) ~.
\]
Moreover the construction of $\delta$ (and $\bar\delta$) only depends on time $t_{z,y}, t'_{z,y}$ and on grains $(z',g_{z'})$ with $z'\in B^{\sharp}(z,1)$.\\

It remains to explain how Statement (\ref{t+delta}) implies the result. Because the infimum $t_{z,y}$ is not reached and by definition of $\delta$, there exist $s_0,s_1$ such that $t_{z,y} < s_0 < t_{z,y}+\delta$ and $g_z(s_0) = g_y(s_1)$.  By continuity of trajectories $g_y, g_z$, we can also require that $s_1 > t_{z,y} + \delta$. Henceforth
\[
T_{z,y}^{(3)} := \inf \{ t \geq t_{z,y}+\delta : \, g_y(t) \in g_z((t_{z,y},t_{z,y}+\delta]) \} 
\]
is well defined and finite (it is also reached). Since the grain $z$ is still alive at time $t_{z,y}+\delta$, we necessarily have $\tau(y) \leq T_{z,y}^{(3)}$ by the Stopping property. Let us add that $T_{z,y}^{(3)}$ only depends on grains $(y,g_y)$ and $ (z,g_z)$, and on the parameter $\delta$, i.e. only on grains $z' \in B^{\sharp}(y,2)$. In conclusion, $T(y,z) := T_{z,y}^{(3)}$ satisfies the Claim. $\blacksquare$\\

\subsection{Proof of Proposition \ref{p:DieseGoodImplyGood}}
\label{sect:ProofProp15}

\subsubsection{All grains are $\sharp$-good points}
\label{sect:AllisDiese}

Let $x_0 \in \xi$. Since we only want to use local information, we have to forget the true configuration around $x_0$ and work with admissible scenarios and blurred configurations. So, let us use a blurred approach around the grain $x_0$ and until the dominant time $\mathbb{T}(x_0)$, involving grains of $B^{\sharp}(x_0,1)$: as before they are denoted by $x_0,x_1,\ldots,x_K$. We then pick the integer $N$ as in (\ref{UnifCont}) but this time w.r.t. the vertices $x_0,x_1,\ldots,x_K$ and the dominant time $\mathbb{T}(x_0)$: $N$ is the smallest integer such that 
\begin{equation}
\label{UnifCont2}
\forall 0 \leq i \leq K \; \mbox{ and } \; \forall t,t' \in [0,\mathbb{T}(x_0)] \; \mbox{ with } \; |t-t'| \leq \frac{\mathbb{T}(x_0)}{N} , \; \mbox{ we have } \; | g_{x_i}(t) - g_{x_i}(t') | \leq \frac{1}{K+1} ~.
\end{equation}

In the sequel, we focus our attention on the (finite) set $\textbf{S}$ of admissible scenarios (w.r.t. $x_0,x_1,\ldots,x_K$ and $\mathbb{T}(x_0)$) in which $x_0$ is stopped in a regular way at time $0 < t(\mathbf{s},x_0) \leq \tau^\sharp(x_0)$. Let us point out here that we do not assume that $x_0$ is stopped in a regular way in the true grain configuration or that $x_0$ has a finite lifetime ($\tau(x_0) = \tau^\sharp(x_0) = +\infty$ is allowed). The set $\textbf{S}$ could be empty. Besides, when $\textbf{S} \not= \emptyset$, either $x_0$ is not stopped in a regular way in the true grain configuration and then none of the scenarios in $\textbf{S}$ correspond to reality. Or $x_0$ is stopped in a regular way and then $\textbf{S}$ contains the scenario $\mathbf{s}_{\tau^b}$ provided by the blurred lifetime function $\tau^b$. However, with only local informations, we cannot distinguish these two alternatives, nor the the scenario $\mathbf{s}_{\tau^b}$ in $\textbf{S}$ if there is. This is why we focus our efforts on all the scenarios of the set $\textbf{S}$.

Let $\textbf{s} \in \textbf{S}$ such a scenario. Our strategy consists in determining a small ball $B(v,2\alpha)$ located \textit{just before} the impact of $x_0$ on its stopping grain and overlapping no other grains (for the blurred configuration corresponding to $\mathbf{s}$). To do it, we need extra notations. Let $i(\mathbf{s})$ be the impact point of $x_0$ on its stopping grain. Since $x_0$ is stopped in a regular way (for the scenario $\mathbf{s}$), it passes only one time through $i(\mathbf{s})$. Hence,
\[
t(\mathbf{s},x_0) = \inf\{ t : g_{x_0}(t) = i(\mathbf{s}) \} < +\infty ~.
\]
Since the scenario $\mathbf{s}$ is admissible, there is no blurred parts inside $g_{x_0}([0,t(\mathbf{s},x_0)])+B(1)$ included in $\textrm{Heart}$ since $t(\mathbf{s},x_0) \le \tau^\sharp(x_0)$. We can then read on the blurred configuration the impact points on $g_{x_0}([0,t(\mathbf{s},x_0)])$ created by some of the $x_i$'s. Each impact point on $g_{x_0}([0,t(\mathbf{s},x_0))) = g_{x_0}([0,t(\mathbf{s},x_0)])\!\setminus\!\{i(\mathbf{s})\}$ is created by a $x_i$ which is stopped in a regular way by $x_0$ and then belongs to the Backward set of $x_0$. Nothing prevents the occurrence of an impact point exactly at $i(\mathbf{s})$. But the corresponding stop is not regular by definition and the grain creating this impact point is not in the Backward set of $x_0$. See Fig. \ref{fig:AllDiese}. Let us denote by $i^-(\mathbf{s})$ the last impact point-- when the grain $x_0$ is browsed from time $0$ to $t(\mathbf{s},x_0)$ --on $g_{x_0}([0,t(\mathbf{s},x_0)))$. We set $i^-(\mathbf{s}) := x_0$ if there is no such impact points. Notice that $i^-(\mathbf{s})$ is necessarily different from $i(\mathbf{s})$, otherwise the grain creating this last impact point would not be stopped in a regular way by $x_0$ (or because $t(\mathbf{s},x_0)>0$ in the case where $i^-(\mathbf{s}) = x_0$). Let us finally define $t^-(\mathbf{s},x_0)$ as the last passage time before $t(\mathbf{s},x_0)$ of the grain $x_0$ at $i^-(\mathbf{s})$:
\[
t^{-}(\mathbf{s},x_0) := \sup \{ t < t(\mathbf{s},x_0) : g_{x_0}(t) = i^-(\mathbf{s}) \} ~.
\]
By construction, $i(\mathbf{s})$ does not belong to the compact set $g_{x_0}([0,t^-(\mathbf{s},x_0)])$: $i(\mathbf{s})$ is at positive distance to $g_{x_0}([0,t^-(\mathbf{s},x_0)])$. We can then choose
\[
v(\mathbf{s}) \in g_{x_0}([0,t(\mathbf{s},x_0))) \setminus g_{x_0}([0,t^-(\mathbf{s},x_0)]) \, \subset \, G^\sharp(x_0)
\]
(since $t(\mathbf{s},x_0) \leq \tau^\sharp(x_0)$) and $0 < \alpha < 1/2$ small enough such that $B(v(\mathbf{s}),2\alpha)$ avoids $g_{x_0}([0,t^{-}(\mathbf{s},x_0)])$ and $i(\mathbf{s})$ (Items $(a)$ and $(b)$ of Definition \ref{def:DieseGood} below).

\begin{figure}[!ht]
\begin{center}
\includegraphics[width=10.6cm,height=4.6cm]{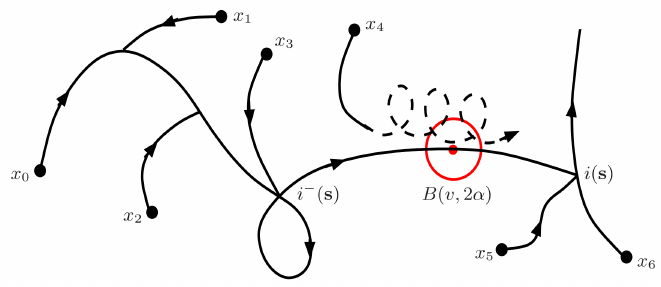}
\caption{\label{fig:AllDiese}On this picture $x_0$ is stopped in a regular way by $x_6$ at $i(\mathbf{s})$. $x_1$, $x_2$ and $x_3$ are all stopped by $x_0$ in a regular way, but not $x_5$ (actually $x_5$ is stopped in a regular way by $x_6$ and is in its Backward set). The small ball $B(v,2\alpha)$ (in red) has to be located between $i^-(\mathbf{s})$ and $i(\mathbf{s})$. Let us point out the presence of the grain $x_4$, with $\tau(x_4) = \infty$, which is non-compact and tangent to the grain $x_0$. By construction of the dominant time $\mathbb{T}(x_0)$, the piece of trajectory $g_{x_{4}}((\mathbb{T}(x_0),+\infty))$ (the dotted curve) is irrelevant and can then be ignored without reducing the backward set of $x_0$. This is why $B(v,2\alpha)$ may overlap $g_{x_{4}}((\mathbb{T}(x_0),+\infty))$.}
\end{center}
\end{figure}

For $1\leq i\leq K$, let $G(\mathbf{s},x_i)$ be the (possibly partially blurred) grain of $x_i$ in the blurred configuration corresponding to the scenario $\mathbf{s}$. By construction, $v(\mathbf{s})$ does not belong to the compact set $\cup_{i\leq K} G(\mathbf{s},x_i)$ (all these grains are compact sets since they are at the latest stopped at time $\mathbb{T}(x_0)$). Hence, $\alpha > 0$ can be chosen small enough so that $B(v(\mathbf{s}),2\alpha)$ avoids $\cup_{i\leq K} G(\mathbf{s},x_i)$. This is Item $(c)$ of Definition \ref{def:DieseGood}.

Since $x_0$ does not belong to $B(v(\mathbf{s}),2\alpha)$, the parameter $\alpha > 0$ can be further reduced so that $\inf\{t : g_{x_0}(t) \in B(v(\mathbf{s}),2\alpha)\}$ is larger than $\alpha$ (Item $(d)$ of Definition \ref{def:DieseGood}).

Clearly, the value $\alpha = \alpha(\textbf{s}) > 0$ previously obtained depends on the scenario $\textbf{s}$. Let us minimize $\alpha(\textbf{s})$ over the (finite) set of scenarios in $\textbf{S}$:
\[
\alpha^\ast := \min_{\textbf{s} \in \textbf{S}} \alpha(\textbf{s}) > 0 ~.
\]

In a last step, we know by Proposition \ref{p:33} that a.s. any $\sharp$-grain is bounded (but not necessarily stopped). Henceforth, the radius
\[
R^{\sharp}_{x_0} := \inf\{ R > 0 : \, G^{\sharp}_{x_0} \subset B(x_0,R) \}
\]
of the $\sharp$-grain $G^{\sharp}_{x_0}$ is a.s. finite. We can then choose $\alpha^\ast > 0$ sufficiently small such that $R^{\sharp}_{x_0} \leq 1/\alpha^\ast$.\\

We are now ready to introduce the notion of $\sharp$-good point.

\begin{definition}
\label{def:DieseGood}
Let $0<\alpha<1/2$ and $x_0 \in \xi$. Let us denote by $x_0,x_1,\ldots,x_K$ the elements of the (almost surely finite) set $B^{\sharp}(x_0,1)$. The point $x_0$ is said $(\alpha,\sharp)$-good if $R^{\sharp}_{x_0} \leq \alpha^{-1}$ and, for any scenario $\mathbf{s}$ (resp. to $x_0,x_1,\ldots,x_K$ and the dominant time $\mathbb{T}(x_0)$) in $\mathbf{S}$ (previously defined in this section), there exists a ball $B(v(\mathbf{s}),2\alpha)$ satisfying the following properties:
\begin{itemize}
\item[$(a)$] $v(\mathbf{s}) \in g_{x_0}([0,t(\mathbf{s},x_0))) \!\setminus\! g_{x_0}([0,t^-(\mathbf{s},x_0)]) \subset G^\sharp(x_0)$.
\item[$(b)$] $B(v(\mathbf{s}),2\alpha)$ avoids the set $g_{x_0}([0,t^{-}(\mathbf{s},x_0)]) \cup \{i(\mathbf{s})\}$.
\item[$(c)$] $B(v(\mathbf{s}),2\alpha)$ avoids all the (possibly blurred) grains $G(\mathbf{s},x_i)$ of the blurred configuration corresponding to the scenario $\mathbf{s}$.
\item[$(d)$] $\inf\{t : g_{x_0}(t) \in B(v(\mathbf{s}),2\alpha)\} \geq \alpha$.
\end{itemize}
Finally, we denote by $\good^\sharp(\alpha) \subset \{\overline S \in \overline\cS : 0 \in S\}$ the following event: $\overline\xi - x_0$ belongs to $\good^\sharp(\alpha)$ iff $x_0$ is an $(\alpha,\sharp)$-good point.
\end{definition}

In the case where the set $\mathbf{S}$ is empty, it becomes easy to be an $(\alpha,\sharp)$-good point since it suffices to check if $R^{\sharp}_{x_0} \leq \alpha^{-1}$. However, the arguments developed before Definition \ref{def:DieseGood} prove that any $x_0 \in \xi$ is $(\alpha^\ast,\sharp)$-good for the (random) value $\alpha^\ast = \alpha^\ast(x_0)$ previously obtained. This is the fourth item of Proposition \ref{p:DieseGoodImplyGood}:

\begin{prop}
\label{prop:AllIsDieseGood}
A.s. any $x_0 \in \xi$ is a $(\alpha,\sharp)$-good point for $\alpha$ small enough.
\end{prop}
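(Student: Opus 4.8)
The plan is to read off Proposition~\ref{prop:AllIsDieseGood} almost directly from the construction carried out between the beginning of Section~\ref{sect:AllisDiese} and Definition~\ref{def:DieseGood}, since that construction was engineered precisely so that, for a fixed vertex $x_0\in\xi$, a suitable value $\alpha^\ast=\alpha^\ast(x_0)>0$ exists witnessing that $x_0$ is $(\alpha^\ast,\sharp)$-good. Concretely, I would fix $x_0\in\xi$ and work on the full-probability event on which Proposition~\ref{p:33} holds (so $B^\sharp(x_0,1)$ is finite, say with elements $x_0,\dots,x_K$, and every $\sharp$-grain is bounded) and on which Proposition~\ref{prop:DominantTime} holds (so the dominant time $\mathbb{T}(x_0)$ is finite). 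The intersection of these events still has probability one, and since $\xi$ is countable, it suffices to exhibit, on this event, for each such $x_0$ a value of $\alpha$ for which $x_0$ is $(\alpha,\sharp)$-good; a.s. simultaneity over all $x_0\in\xi$ then follows by a countable union.

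Next I would split into the two cases appearing in Definition~\ref{def:DieseGood}. If the finite set $\mathbf{S}$ of admissible scenarios in which $x_0$ is stopped in a regular way at some time $0<t(\mathbf{s},x_0)\le\tau^\sharp(x_0)$ is empty, then being $(\alpha,\sharp)$-good reduces to the single requirement $R^\sharp_{x_0}\le\alpha^{-1}$, which holds for every $\alpha\le 1/R^\sharp_{x_0}$ (a positive threshold, since $R^\sharp_{x_0}<\infty$ by Proposition~\ref{p:33}). If $\mathbf{S}\ne\emptyset$, then for each $\mathbf{s}\in\mathbf{S}$ I would invoke the construction of Section~\ref{sect:AllisDiese}: since $x_0$ is stopped in a regular way in the scenario $\mathbf{s}$, the impact point $i(\mathbf{s})$ is visited exactly once, the last earlier impact point $i^-(\mathbf{s})$ on $g_{x_0}([0,t(\mathbf{s},x_0)))$ is at positive distance from $i(\mathbf{s})$, and one can pick $v(\mathbf{s})\in g_{x_0}([0,t(\mathbf{s},x_0)))\setminus g_{x_0}([0,t^-(\mathbf{s},x_0)])\subset G^\sharp(x_0)$. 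Because $g_{x_0}([0,t^-(\mathbf{s},x_0)])\cup\{i(\mathbf{s})\}$ and the finitely many compact (possibly blurred) grains $\bigcup_{i\le K}G(\mathbf{s},x_i)$ are all closed sets not containing $v(\mathbf{s})$, and because $x_0\notin B(v(\mathbf{s}),2\alpha)$ forces $\inf\{t:g_{x_0}(t)\in B(v(\mathbf{s}),2\alpha)\}>0$, there is $\alpha(\mathbf{s})\in(0,1/2)$ for which Items $(a)$--$(d)$ simultaneously hold. Then $\alpha^\ast:=\min\big(\min_{\mathbf{s}\in\mathbf{S}}\alpha(\mathbf{s}),\,1/R^\sharp_{x_0}\big)>0$ works, since $\mathbf{S}$ is finite and each $\alpha(\mathbf{s})>0$, and moreover every $0<\alpha\le\alpha^\ast$ works too because Items $(a)$--$(d)$ are monotone in $\alpha$ (shrinking $B(v(\mathbf{s}),2\alpha)$ preserves "avoids" statements and can only increase the hitting time in $(d)$, while $R^\sharp_{x_0}\le(\alpha^\ast)^{-1}\le\alpha^{-1}$).

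The main subtlety — and the one point I would take care to state explicitly rather than treat as routine — is the role of the dominant time $\mathbb{T}(x_0)$ in guaranteeing that the grains $G(\mathbf{s},x_i)$ one must avoid in Item $(c)$ are genuinely compact, so that the positive-distance argument for $v(\mathbf{s})$ goes through. This is exactly the pathology flagged in Section~\ref{sect:Dominant} and illustrated by the grain $x_4$ in Figure~\ref{fig:AllDiese}: a non-compact grain with $\tau=\tau^\sharp=+\infty$ could be tangent to $G^\sharp(x_0)$ and obstruct every small ball along $g_{x_0}$. The resolution is that in the blurred dynamics each grain of $B^\sharp(x_0,1)$ is artificially stopped at the latest at time $\mathbb{T}(x_0)$, hence $G(\mathbf{s},x_i)\subset g_{x_i}([0,\mathbb{T}(x_0)])$ is compact, and Proposition~\ref{prop:DominantTime} ensures this truncation does not discard any part of a grain that is actually relevant to the backward set of $x_0$ (that is the purpose of the Domination item $(i)$ stated for $B^\sharp(x_0,2)$ rather than merely $B^\sharp(x_0,1)$). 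With this in hand the remaining verifications are elementary topology on finitely many compact sets, so I would present them briefly and conclude.
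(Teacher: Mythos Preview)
Your proposal is correct and follows essentially the same approach as the paper: the proposition is read off directly from the construction in Section~\ref{sect:AllisDiese} preceding Definition~\ref{def:DieseGood}, which was designed precisely to produce the value $\alpha^\ast(x_0)>0$. Your explicit verification of monotonicity in $\alpha$ (to pass from ``some $\alpha^\ast$ works'' to ``all smaller $\alpha$ work'') and your remark on the compactness of the $G(\mathbf{s},x_i)$ via truncation at $\mathbb{T}(x_0)$ are helpful clarifications that the paper leaves largely implicit, but they do not change the underlying argument.
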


Whereas the radius of the ball $B(v(\mathbf{s}),2\alpha)$ is the same for any scenario considered in Definition \ref{def:DieseGood} and is locally determined, it is not the case for its center $v(\mathbf{s})$, which may then depend on the scenario $\mathbf{s}$. This means that our strategy does not allow us to specify locally where the suitable ball $B(v,2\alpha)$ is located.

\subsubsection{The $\sharp$-goodness is a local notion}

Unlike the notion of $\alpha$-good point, the one of $(\alpha,\sharp)$-good point is local in the following sense. This is the third item of Proposition \ref{p:DieseGoodImplyGood}.

\begin{prop}
\label{prop:LocalIsDieseGood}
Let $0<\alpha<1/2$ and $x_0 \in \xi$. The property ``$x_0$ is an $(\alpha,\sharp)$-good point'' only depends on grains $(z,g_z)\in\overline{\xi}$ such that $z\in B^{\sharp}(x_0,4)$.
\end{prop}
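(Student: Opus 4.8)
The plan is to trace through every ingredient entering the definition of $(\alpha,\sharp)$-goodness (Definition \ref{def:DieseGood}) and check that each is measurable with respect to the grains $(z,g_z)$ with $z\in B^\sharp(x_0,4)$. Recall that the relation $x\sim^\sharp y$ is defined through the $\sharp$-grains $G^\sharp$; by the second item of Proposition \ref{p:33}, a.s. for $\rho$ fixed and $n$ large the $\sharp$-grain $G^\sharp(x)$ of any $x\in\xi\cap B(\rho^n)$ coincides with $G^\sharp(x;\overline\xi_{|B(\rho^{n+1})})$, and moreover (first item) $\sharp$-fattened grains do not cross large annuli. Iterating this localization property four times, one sees that the combinatorial ball $B^\sharp(x_0,4)$, as well as the $\sharp$-grains $G^\sharp(z)$ for all $z\in B^\sharp(x_0,4)$, are determined by $\overline\xi$ restricted to a bounded (random but a.s.\ finite) region, and in particular only by the grains $(z,g_z)$ with $z\in B^\sharp(x_0,4)$ together with their trajectories; this is the sense of locality used throughout Section \ref{sect:ProofDieseGood}.

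Next I would list the objects appearing in Definition \ref{def:DieseGood} and invoke the locality statements already proved for each. First, $R^\sharp_{x_0}$ depends only on $G^\sharp(x_0)=G^\sharp(x_0;\overline\xi)$, which depends only on $(x_0,g_{x_0})$ and the obstacles it meets, hence only on grains of $B^\sharp(x_0,1)\subset B^\sharp(x_0,4)$. Second, the list $x_0,x_1,\dots,x_K$ of elements of $B^\sharp(x_0,1)$ is, by the discussion above, determined by grains in $B^\sharp(x_0,4)$ (indeed in $B^\sharp(x_0,2)$ suffices). Third, the dominant time $\mathbb{T}(x_0)$ depends only on grains $(z,g_z)$ with $z\in B^\sharp(x_0,4)$ — this is exactly Item $(ii)$ of Proposition \ref{prop:DominantTime}. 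Fourth, once $x_0,\dots,x_K$ and $\mathbb{T}(x_0)$ are fixed, the integer $N$ defined by \eqref{UnifCont2} is a deterministic function of the trajectories $g_{x_0},\dots,g_{x_K}$ on $[0,\mathbb{T}(x_0)]$, hence local in the required sense; consequently the intervals $I_1,\dots,I_N$ are local, the (finite) collection of scenarios $\mathbf{s}$ is local, and for each scenario the modified dynamical algorithm of Section \ref{sect:BlurredApproach} involves only the grains $x_0,\dots,x_K\in B^\sharp(x_0,1)$, so the blurred configuration $C(\mathbf{s})$ and the grains $G(\mathbf{s},x_i)$ are local. The notion of admissibility of a scenario is tested against $\mathrm{Heart}=g_{x_0}([0,\tau^\sharp(x_0)])+B(1)$, which again depends only on $G^\sharp(x_0)$; hence the set $\mathbf{S}$ of admissible scenarios in which $x_0$ is stopped in a regular way — together with, for each $\mathbf{s}\in\mathbf{S}$, the impact point $i(\mathbf{s})$, the time $t(\mathbf{s},x_0)$, the previous impact point $i^-(\mathbf{s})$ and the time $t^-(\mathbf{s},x_0)$ — is determined by the blurred configuration, hence by grains in $B^\sharp(x_0,4)$. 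Finally, Items $(a)$–$(d)$ only compare $B(v(\mathbf{s}),2\alpha)$, the trajectory $g_{x_0}$, the points $i(\mathbf{s}),i^-(\mathbf{s})$ and the grains $G(\mathbf{s},x_i)$ — all already shown to be local — so the predicate ``there exists such a ball'' is a function of the grains $(z,g_z)$ with $z\in B^\sharp(x_0,4)$.

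Assembling these observations, the event $\{x_0 \text{ is } (\alpha,\sharp)\text{-good}\}$ is, on the full-probability event where Proposition \ref{p:33} holds, $\sigma\big(\{(z,g_z):z\in B^\sharp(x_0,4)\}\big)$-measurable, which is precisely the claim; by translating, $\good^\sharp(\alpha)$ depends only on $\{(y,g_y):y\in B^\sharp(0,4)\}$, giving also the third item of Proposition \ref{p:DieseGoodImplyGood}. I expect the main obstacle to be purely bookkeeping rather than conceptual: one must make sure that the four levels of $\sharp$-neighborhood are actually enough — in particular that computing $\mathbb{T}(x_0)$ (which by Proposition \ref{prop:DominantTime} reaches out to $B^\sharp(x_0,4)$) does not itself require data beyond $B^\sharp(x_0,4)$, and that the localization of $\sharp$-grains via Proposition \ref{p:33} is genuinely a statement about $\sigma(\{(z,g_z):z\in B^\sharp(x_0,k)\})$ and not merely about a Euclidean neighborhood. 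The cleanest way to handle this is to first record, as a preliminary lemma, that a.s.\ for every $x$ and every $k\ge 1$ the set $B^\sharp(x,k)$ is finite and the data $\{(z,g_z,G^\sharp(z)):z\in B^\sharp(x,k)\}$ are $\sigma(\{(z,g_z):z\in B^\sharp(x,k)\})$-measurable — a direct consequence of the two items of Proposition \ref{p:33} by induction on $k$ — and then everything else falls into place by inspection.
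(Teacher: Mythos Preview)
Your proposal is correct and follows essentially the same approach as the paper: both proofs are a bookkeeping exercise that traces the ingredients of Definition \ref{def:DieseGood} ($R^\sharp_{x_0}$, the list $x_0,\dots,x_K$, the dominant time $\mathbb{T}(x_0)$, the integer $N$, the scenarios, the blurred grains $G(\mathbf{s},x_i)$, the impact data $i(\mathbf{s}),i^-(\mathbf{s}),t(\mathbf{s},x_0),t^-(\mathbf{s},x_0)$, and Items $(a)$--$(d)$) and checks that each depends only on grains in $B^\sharp(x_0,4)$, with Proposition \ref{prop:DominantTime} supplying the key step for $\mathbb{T}(x_0)$. Your version is more explicit and raises a genuine measurability subtlety (whether the $\sharp$-graph data itself is local in the intended sense), which the paper glosses over; this extra care is harmless and arguably an improvement, but it is not a different argument.
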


\noindent
\textbf{Proof.} Let $x_0 \in \xi$. We develop a blurred approach respectively to the elements of $B^{\sharp}(x_0,1)$ and the dominant time $\mathbb{T}(x_0)$. By Proposition \ref{prop:DominantTime}, $\mathbb{T}(x_0)$ only depends on grains of $B^{\sharp}(x_0,4)$. The same holds for the choice of $N$ in (\ref{UnifCont2}) depending on $g_{x_i}$, with $x_i \in B^{\sharp}(x_0,1)$, and on $\mathbb{T}(x_0)$. This leads to a finite number of scenarios. For each scenario $\mathbf{s}$ in the set $\mathbf{S}$ (defined in the previous section), the locations $i(\mathbf{s}), i^-(\mathbf{s})$, the times $t(\mathbf{s},x_0), t^-(\mathbf{s},x_0)$ and the grains $G(\mathbf{s},x_i)$ (also defined in the previous section) only depend on grains of $B^{\sharp}(x_0,1)$ and $\mathbb{T}(x_0)$. These quantites allow to check if it is possible or not to find a ball $B(v(\mathbf{s}),2\alpha)$ satisfying Items $(a)-(d)$ of Definition \ref{def:DieseGood}. Finally, let us add that the radius $R^{\sharp}_{x_0}$ also depends on $B^{\sharp}(x_0,1)$. $\blacksquare$\\

\subsubsection{$\sharp$-good implies good}
\label{sect:DieseGoodIsGood}

Let us prove the second item of Proposition \ref{p:DieseGoodImplyGood}.

\begin{prop}
\label{prop:DieseGoodisGood}
Let $0 < \alpha < 1/2$. A.s. any $(\alpha,\sharp)$-good point stopped in a regular way is also an $\alpha$-good point.
\end{prop}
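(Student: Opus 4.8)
The plan is to show that $(\alpha,\sharp)$-goodness of a regularly stopped point $x_0$ witnesses $\alpha$-goodness by exhibiting, among the vectors $v(\mathbf{s})$ provided by Definition \ref{def:DieseGood}, the one corresponding to the \emph{true} scenario, and then checking that the ball $B(v,2\alpha)$ is placed exactly where the definition of $\good(\alpha)$ requires: just before the impact of $x_0$ on its successor, overlapping no other grain of $\overline\xi$, so that resampling inside it by a configuration in $v+\cL(\alpha)$ creates a loop after $x_0$ and does not shrink $\back(x_0)$.

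First I would fix $x_0$ stopped in a regular way in $\overline\xi$, with successor $y_0=s(x_0)$ and impact point $i_0=g_{x_0}(\tau(x_0))=g_{y_0}(t_{y_0})$ with $t_{y_0}<\tau(y_0)$. Running the blurred approach of Section \ref{sect:BlurredApproach} with $T_0=\mathbb{T}(x_0)$ (legitimate since, by Proposition \ref{prop:DominantTime}(i), all finite lifetimes in $B^\sharp(x_0,2)$, in particular $\tau(x_0)$ and $\tau(y_0)$, are $\le \mathbb{T}(x_0)$), Lemma \ref{lem:saucisse} produces the true blurred lifetime function $\tau^b$ and its scenario $\mathbf{s}_{\tau^b}$, which is admissible and in which $x_0$ is stopped in a regular way at $t(\mathbf{s}_{\tau^b},x_0)=\tau(x_0)\le\tau^\sharp(x_0)$; hence $\mathbf{s}_{\tau^b}\in\mathbf{S}$. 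By $(\alpha,\sharp)$-goodness there is a ball $B(v,2\alpha)$, $v=v(\mathbf{s}_{\tau^b})$, satisfying Items $(a)$--$(d)$. I would then check $0\notin B(x_0+u,\alpha)$-type normalisation and, fixing $u$ so that $v=x_0+u$, argue that $x_0$ is $(\alpha,u)$-good in the sense of Section \ref{SectionGP}: take $\overline\xi'$ an independent copy, condition on $\{\overline\xi'\in u+\cL(\alpha)\}$, and analyse $\text{resample}(\overline\xi-x_0,\overline\xi')$.

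The core of the argument is the following two claims about the resampled configuration. (1) \emph{A loop is created after $x_0$}: by Item $(d)$, $g_{x_0}$ does not enter $B(v,2\alpha)$ before time $\alpha$, and by Item $(a)$, $v\in g_{x_0}([0,\tau(x_0)))$, so the grain $x_0$ reaches the annulus $B(v,\alpha)\setminus B(v,r(\alpha))$; the loop property (Item 3 of Section \ref{s:absence-percolation}) then stops $x_0$ on this loop before it can reach $i_0$, because by Item $(b)$ the ball $B(v,2\alpha)$ avoids $g_{x_0}([0,t^-(\mathbf{s}_{\tau^b},x_0)])$ and $\{i_0\}$ so $x_0$ enters $B(v,2\alpha)$ only on the last sub-arc before $i_0$, and by Item $(c)$ no grain $x_i\in B^\sharp(x_0,1)$ meets $B(v,2\alpha)$, hence the loop is not itself disturbed; thus $s(x_0)$ in the resampled configuration lies on the loop, i.e. $x_0$ is followed by a loop. (2) \emph{The backward of $x_0$ does not decrease}: any $z$ with $x_0\in\forward(z,\overline\xi)$ is connected to $x_0$ by a directed path, and every grain along that path has finite lifetime $\le\mathbb{T}(x_0)$ and lives inside a relevant $\sharp$-neighbourhood of $x_0$; since $B(v,2\alpha)$ avoids $g_{x_0}([0,t^-])$, avoids all the $G(\mathbf{s}_{\tau^b},x_i)$, and meets no grain started outside $B^\sharp(x_0,1)$ before its time $\mathbb{T}(x_0)$ (this is exactly why the dominant time was defined using $B^\sharp(x_0,2)$, cf. the remark in Section \ref{sect:DieseGoodIsGood} about non-compact tangent grains being cut at $\mathbb{T}(x_0)$), none of these grains is stopped, frozen or unfrozen by the resampling, so the whole directed tree flowing into $x_0$ is unchanged and $\#\back(x_0)$ can only increase (the new loop grains may add to it).

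The main obstacle will be claim (2): ruling out the domino effect by which inserting a loop alters, far away, the lifetime of some grain in $\back(x_0)$. The point is that only grains whose current trace $g_x([0,\tau(x)\wedge\mathbb{T}(x_0)])$ meets $B(v,2\alpha)$ can be affected, and Items $(b)$ and $(c)$ together with the domination property of $\mathbb{T}(x_0)$ (Proposition \ref{prop:DominantTime}) exclude exactly these; the non-compact grains with $\tau=\infty$ that are merely tangent to $G^\sharp(x_0)$ are handled because their relevant portion up to $\mathbb{T}(x_0)$ is still at positive distance from $v$ by Item $(c)$, and their continuation beyond $\mathbb{T}(x_0)$ plays no role in any finite backward path. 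I would make this precise by the uniqueness-of-lifetime argument of Lemma \ref{l_rec}: the restriction of the lifetime function to the $\mathbb{T}(x_0)$-truncated dynamics in $B(v,2\alpha)^c$ is forced, so it coincides before and after resampling on the whole of $\back(x_0)$, which completes the proof that $\overline\xi-x_0\in\good(\alpha,u)\subset\good(\alpha)$, i.e. $x_0$ is $\alpha$-good.
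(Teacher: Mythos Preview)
Your strategy is the paper's: pick the true scenario $\mathbf{s}_{\tau^b}\in\mathbf{S}$, use the ball $B(v,2\alpha)$ from Definition~\ref{def:DieseGood}, and verify the two items of $\good(\alpha,u)$ after resampling. Two points, however, need correction.

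First, you set $u=v-x_0$, but $\good(\alpha)=\bigcup_{u\in U(\alpha)}\good(\alpha,u)$ requires $u$ to lie in the fixed finite set $U(\alpha)$. The paper instead uses the part of Definition~\ref{def:DieseGood} you never invoke, $R^\sharp_{x_0}\le\alpha^{-1}$: combined with Item~$(d)$ (which gives $\|v-x_0\|>2\alpha>\alpha$), it places $v-x_0$ in the annulus $B(0,\alpha^{-1})\setminus B(0,\alpha)$ covered by the balls $B(u,r(\alpha))$, $u\in U(\alpha)$, so one can pick $u\in U(\alpha)$ with $v\in B(x_0+u,r(\alpha))\subset B(x_0+u,\alpha)\subset B(v,2\alpha)$. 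Without this you have not shown $\overline\xi-x_0\in\good(\alpha)$.

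Second, your backward-preservation argument rests on the claim that ``every grain along that path has finite lifetime $\le\mathbb{T}(x_0)$ and lives inside a relevant $\sharp$-neighbourhood of $x_0$''. This is false: $\back(x_0)$ can be arbitrarily large in $\sharp$-distance and its lifetimes are not bounded by $\mathbb{T}(x_0)$. The paper's argument does not try to confine the backward; it instead localises the \emph{modification}. One shows that the only pieces of grains of $\overline\xi$ that can meet $B(v,2\alpha)$ are $g_{x_0}\big((t^-(\mathbf{s}_{\tau^b},x_0),\tau(x_0))\big)$ and the tails $g_{x_i}\big((\mathbb{T}(x_0),\infty)\big)$ for $x_i\in B^\sharp(x_0,1)$ with $\tau(x_i)=\infty$ (grains outside $B^\sharp(x_0,1)$ never meet $B(v,2\alpha)$ at all, since $\alpha<1/2$ and $v\in G^\sharp(x_0)$). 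Call the union of these pieces $W$. Then one checks that no grain $z\in\xi\setminus\{x_0\}$ is stopped by $W$: nothing is stopped by the first piece by definition of $t^-$; and a $z$ stopped by one of the tails would satisfy $\tau(z)>\mathbb{T}(x_0)$ while $z\in B^\sharp(x_i,1)\subset B^\sharp(x_0,2)$, contradicting Proposition~\ref{prop:DominantTime}(i). \emph{This} is where $B^\sharp(x_0,2)$ enters, not through any bound on the backward. Since the resampling can only shorten pieces in $W$ (and add the loop grains inside $B(v,2\alpha)$), every finite lifetime in $\xi\setminus\{x_0\}$ is unchanged, hence $\back(x_0,\overline\xi)\subset\back(x_0,r_{x_0}(\overline\xi,\overline\xi'))$.
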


\noindent
\textbf{Proof.} Let $x_0 \in \xi$ which is \textbf{stopped in a regular way}. Respectively to the elements $x_0,x_1,\ldots,x_K$ of $B^\sharp(x_0,1)$ and the dominant time $\mathbb{T}(x_0)$, the Blurring algorithm deteriorates the lifetime function $\tau$ into the blurring lifetime function $\tau^b$ (see Section \ref{sect:BlurredApproach}). Associated to $\tau^b$, we consider the scenario $\mathbf{s}_{\tau^b}$ which is admissible thanks to the choice of $N$ in (\ref{UnifCont2}). Since the corresponding blurred configuration coincides with the true configuration inside $\textrm{Heart} = g_{x_0}([0,\tau^\sharp(x_0)]) + B(1)$ thanks to Lemma \ref{lem:saucisse}, then $x_0$ is still stopped in a regular way in the blurred configuration (w.r.t. $\mathbf{s}_{\tau^b}$) and  $t(\mathbf{s}_{\tau^b},x_0) = \tau(x_0) \leq \tau^\sharp(x_0) \wedge \mathbb{T}(x_0)$ (recall that $\tau(x_0) < \infty$ implies $\tau(x_0) \leq \mathbb{T}(x_0)$ by Proposition \ref{prop:DominantTime}). Hence, the scenario $\mathbf{s}_{\tau^b}$ belongs to the set $\mathbf{S}$ defined in Section \ref{sect:AllisDiese}. By assumption, $x_0$ is an $(\alpha,\sharp)$-good point, i.e. $R^{\sharp}_x\leq\alpha^{-1}$ and there exists a ball $B(v(\mathbf{s}_{\tau^b}),2\alpha)$ satisfying Items $(a)-(d)$ of Definition \ref{def:DieseGood}. We can first deduce the existence of a deterministic $u \in U(\alpha)$ such that
\[
v(\mathbf{s}_{\tau^b}) \in B(x_0+u , r(\alpha)) \subset B(x_0+u , \alpha) \subset B(v(\mathbf{s}_{\tau^b}),2\alpha) ~.
\]
We are going to prove that $x_0$ is an $(\alpha,u)$-good point (in $\overline\xi$) which means that $x_0$ is an $\alpha$-good point, then proving Proposition \ref{prop:DieseGoodisGood}.\\

Let us resample the PPP $\overline\xi$ in $B(x_0+u,\alpha)$. Let $\overline\xi'$ be an independent copy of $\overline\xi$ and set
\[
r_{x_0}(\overline \xi, \overline \xi') := \big( \overline \xi \cap B(x_0+u,\alpha)^c \big) \cup \big( \overline\xi' \cap B(x_0+u,\alpha) \big) ~.
\]
Moreover let us assume that $\overline\xi' \in x_0+u+\cL(\alpha)$: for the configuration $\overline\xi'\cap B(x_0+u,\alpha)$, a small loop is created inside $B(x_0+u,\alpha)$ and surrounding $B(x_0+u,r(\alpha))$. We have to investigate which grains are modified by this resampling; we inventory which grains of $\overline\xi$ may visit the ball $B(v(\mathbf{s}_{\tau^b}),2\alpha)$ and at what time such visits occur.

First remark that, by Item $(b)-(c)$, the ball $B(v(\mathbf{s}_{\tau^b}),2\alpha)$ avoids the point process $\xi$: no grains of $\overline\xi$ are deleted by the resampling. Hence, replacing $\overline\xi$ with $r_{x_0}(\overline \xi, \overline \xi')$ adds only some points inside $B(x_0+u,\alpha)$ (i.e. those of $\overline \xi' \cap B(x_0+u,\alpha)$). Let us gather various facts:
\begin{itemize}
\item Since $\overline\xi' \in x_0+u+\cL(\alpha)$, the new grains provided by $\overline\xi' \cap B(x_0+u,\alpha)$ are stopped before time $\alpha$ and remains completely included in $B(x_0+u,\alpha) \subset B(v(\mathbf{s}_{\tau^b}),2\alpha)$.
\item By Item $(d)$, the trajectory $g_{x_0}$ visits for the first time $B(v(\mathbf{s}_{\tau^b}),2\alpha)$ at time 
\[
t_0 := \inf \{ t : g_{x_0}(t) \in B(v(\mathbf{s}_{\tau^b}),2\alpha) \} \geq \alpha ~.
\]
\item By Item $(c)$, no grains of $B^{\sharp}(x_0,1)\!\setminus\!\{x_0\}$ may visit $B(v(\mathbf{s}_{\tau^b}),2\alpha)$ before the dominant time $\mathbb{T}(x_0) \geq \tau(x_0) > t_0$.
\item Since $\alpha < 1/2$, no grains (or $\sharp$-grains) of $B^{\sharp}(x_0,1)^c$ may visit $B(v(\mathbf{s}_{\tau^b}),2\alpha)$.
\end{itemize}
Hence all the grains of $\xi\!\setminus\!\{x_0\}$ evolve in the same way until time $\mathbb{T}(x_0)$ for both configurations $\overline \xi$ and $r_{x_0}(\overline \xi, \overline \xi')$. This has two consequences. First, for $r_{x_0}(\overline \xi, \overline \xi')$, nothing prevents grains of $\overline\xi' \cap B(x_0+u,\alpha)$ to form a loop (before time $\alpha$) separating $v(\mathbf{s}_{\tau^b})$ from $B(v(\mathbf{s}_{\tau^b}),2\alpha)^c$. Thus, nothing prevents the grain $x_0$ to grow until $t_0$, time at which it comes into the ball $B(v(\mathbf{s}_{\tau^b}),2\alpha)$, and to be stopped before $t_1 := \inf\{ t : g_{x_0}(t) = v(\mathbf{s}_{\tau^b})\} \leq \mathbb{T}(x_0)$ by the loop created by the grains of $\overline\xi' \cap B(x_0+u,\alpha)$. In other words, for the configuration $r_{x_0}(\overline \xi, \overline \xi')$, $x_0$ is followed by a loop.\\

Let us now check that $\# \back(x_0,\overline \xi) \leq \# \back(x_0,r_{x_0}(\overline \xi, \overline \xi'))$. Let us first recall that the piece of trajectory $g_{x_0}([0,t^-(\mathbf{s}_{\tau^b},x_0)])$ which gathers all the impacts of the grains of $\overline\xi$ whose $x_0$ is the successor, remains unchanged when $\overline\xi$ is replaced with $r_{x_0}(\overline \xi, \overline \xi')$ ($t^-(\mathbf{s}_{\tau^b},x_0)$ is defined in Section \ref{sect:AllisDiese}). Moreover, by Item $(b)-(c)$ and the construction of the dominant time, only the pieces of grains $g_{x_i}((\mathbb{T}(x_0),\infty))$, with $1\leq i\leq K$ and $\tau(x_i) = +\infty$ on the one hand and $g_{x_0}((t^-(\mathbf{s}_{\tau^b},x_0),\tau(x_0)))$ on the other hand, may overlap $B(v(\mathbf{s}_{\tau^b}),2\alpha)$ and could be possibly removed by the resampling. Let us set $W$ the union of these pieces of grains.

We claim that no grains in $\overline\xi\!\setminus\!\{x_0\}$ are stopped by the set $W$. Indeed, by absurd, assume that such a grain, say $z$, exists. First, no grains are stopped by $g_{x_0}((t^-(\mathbf{s}_{\tau^b},x_0),\tau(x_0)))$ by construction. So the grain $z$ is stopped by some $g_{x_i}((\mathbb{T}(x_0),\infty))$ which means $\tau(z) > \mathbb{T}(x_0)$. Besides, $z$ being stopped by one of the $x_i$'s, it necessarily belongs to $B^{\sharp}(x_0,2)$. Hence, $\tau(z) < +\infty$ implies $\tau(z) \leq \mathbb{T}(x_0)$ by Proposition \ref{prop:DominantTime}. This is our contradiction. Actually, this is the only place where we use that the dominant time $\mathbb{T}(x_0)$ involves all the grains of $B^{\sharp}(x_0,2)$, and not only those of $B^{\sharp}(x_0,1)$.

Hence, the (possible) deletion of any element in $W$ has no effect on the grains of $z \in \overline\xi\!\setminus\!\{x_0\}$ with $\tau(z)(\overline \xi) < +\infty$, which includes in particular the elements of $\back(x_0,\overline \xi)$. In other words the backward set of $x_0$ may only increase when the configuration $\overline \xi$ is replaced with $r_{x_0}(\overline \xi, \overline \xi')$. Let us specify that the only grains which could belong to $\back(x_0,r_{x_0}(\overline \xi, \overline \xi'))\setminus \back(x_0,\overline \xi)$ are the $x_i$'s with $i \leq K$ and $\tau(x_i) = +\infty$.\\

\subsection*{Acknowledgement}

\emph{DC, DD et J-BG are supported in part by the CNRS GdR 3477 GeoSto. DC is supported by the ANR project GrHyDy (ANR-20-CE40-0002). DD is also supported by the Labex CEMPI (ANR-11-LABX-0007-01) and the ANR project RANDOM (ANR-19-CE24-0014).}

\section{Appendix}

\subsection{Proof of Proposition \ref{p:existence}}
\label{s:proof-p:existence}

Let us consider a probability measure $\mu$ on $(\cC,\cF^{\cC})$ satisfying the moment condition (\ref{SuffTempered}): for all $t \ge 0$,
\[
\E \left[ \sup_{s \in [0,t]} \| h(s) \|^2 \right] < \infty
\]
where $h$ is a random variable with distribution $\mu$. Our goal is to define, for a.e. realization of $\overline \xi$, a function $\tau_{\dyn}^{\infty} : \xi \to (0,+\infty]$ specifying the lifetimes of all the grains of $\xi$. In a second time we will prove that the constructed function $\tau_{\dyn}^{\infty}$ is a.s. a $\overline \xi$-lifetime function and this is the only one, proving that $\mu$ is tempered.\\

The first step is the following technical result that we will proved later.

\begin{lemma}
\label{lem:subdivision}
Under the moment condition (\ref{SuffTempered}), for any $\alpha > 0$, there exists an increasing sequence $(t_n)_{n\geq 0}$ in $\R_+$ (depending on $\alpha$) starting at $t_0 = 0$ and tending to $\infty$ such that, for all $n$,
\[
\E \left[ \sup_{s \in [t_n,t_{n+1}]} \| h(s) - h(t_n) \|^2 \right] \leq \alpha ~.
\]
\end{lemma}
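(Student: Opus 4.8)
The statement to prove is Lemma~\ref{lem:subdivision}: given the moment condition \eqref{SuffTempered} and $\alpha>0$, one can build an increasing sequence $(t_n)$ with $t_0=0$, $t_n\to\infty$, and $\E[\sup_{s\in[t_n,t_{n+1}]}\|h(s)-h(t_n)\|^2]\le\alpha$ for every $n$. The plan is to construct the $t_n$ greedily, one step at a time: having fixed $t_n$, I would show that one can always pick $t_{n+1}>t_n+1$ (the ``$+1$'' is just to force $t_n\to\infty$) for which the displayed inequality holds. The only thing that needs an argument is that such a choice is possible, and the key tool is dominated convergence.

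First I would fix $n$ and set $s_0=t_n$. Consider the family of random variables $X_t:=\sup_{s\in[s_0,t]}\|h(s)-h(s_0)\|^2$ for $t\ge s_0$. Each $X_t$ is dominated by $4\sup_{s\in[0,t]}\|h(s)\|^2$, which is integrable by \eqref{SuffTempered}, so $\E[X_t]<\infty$. Moreover, as $t\downarrow s_0$, the paths of $h$ being continuous, $X_t\to X_{s_0}=0$ almost surely (pointwise in $\omega$, by uniform continuity of $s\mapsto h(s)$ on the compact interval $[s_0,s_0+1]$, say). Since the family $\{X_t:t\in[s_0,s_0+1]\}$ is uniformly dominated by the integrable variable $4\sup_{s\in[0,s_0+1]}\|h(s)\|^2$, the dominated convergence theorem gives $\E[X_t]\to 0$ as $t\downarrow s_0$. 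Hence there exists $t_{n+1}\in(s_0,s_0+1)$ — in particular $t_{n+1}>t_n$, and actually I would instead ask for $t_{n+1}\in(t_n,t_n+1)$ \emph{or} simply take $t_{n+1}=t_n+1$ if that already works, but to guarantee divergence I would just impose $t_{n+1}\ge t_n+1$ by combining: pick $t_{n+1}$ in $(t_n+1,t_n+2)$ with $\E[X_{t_{n+1}}]\le\alpha$, using the same dominated-convergence argument but now letting $t\downarrow t_n+1$ and noting $X_{t_n+1}$ need not be zero — so instead I keep the cleaner route: choose $t_{n+1}\in(t_n,t_n+1)$ with $\E[X_{t_{n+1}}]\le\alpha$, which only ensures $t_n\nearrow$ some limit $t_\infty\le\infty$. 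To force $t_\infty=\infty$, re-run the argument on shifted increments.

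The cleanest implementation is therefore: define the sequence recursively so that $t_{n+1}$ is the larger of $t_n+1$ and of a point $t^\ast>t_n$ obtained from dominated convergence with $\E[\sup_{s\in[t_n,t^\ast]}\|h(s)-h(t_n)\|^2]\le\alpha$; but since $t\mapsto\E[X_t]$ is nondecreasing, choosing $t^\ast$ small makes the constraint easier, yet we need $t_{n+1}\ge t_n+1$. To reconcile these, I would argue directly: the function $f(t):=\E[\sup_{s\in[t_n,t]}\|h(s)-h(t_n)\|^2]$ is nondecreasing, right-continuous at $t_n$ with $f(t_n)=0$ and finite for all $t$ by \eqref{SuffTempered}. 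If $f(t_n+1)\le\alpha$, set $t_{n+1}=t_n+1$. Otherwise $f(t_n+1)>\alpha$; but this cannot persist for the whole construction — here is the subtlety. Actually the honest fix is: \emph{the lemma as stated does not need $t_{n+1}-t_n$ bounded below}, only $t_n\to\infty$; and one cannot get $t_n\to\infty$ from increments that shrink. So I would instead prove, via dominated convergence applied to $X_t=\sup_{s\in[0,t]}\|h(s)\|^2$, that $h$ has, with positive lower bound uniformly, ``almost-constant'' stretches — no: the correct and simplest statement is that for \emph{every} $\beta>0$ there is $\delta>0$ with $\E[\sup_{|s-u|\le\delta,\,s,u\in[0,T]}\|h(s)-h(u)\|^2]\le\beta$, which fails in general (no uniform modulus of continuity in expectation over $[0,\infty)$).

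So the genuinely correct plan, and the main obstacle, is this: one \emph{cannot} take arbitrarily long increments. Instead I would build $(t_n)$ with increments $t_{n+1}-t_n$ that are \emph{allowed to go to zero}, but chosen so their sum diverges. Concretely: inductively, given $t_n$, since $f(t)\downarrow 0$ as $t\downarrow t_n$ by dominated convergence (domination by $4\sup_{[0,t_n+1]}\|h\|^2$, continuity of paths), pick $t_{n+1}\in(t_n,\,t_n+2^{-n}\wedge\text{(something)})$... but then $\sum 2^{-n}<\infty$. The resolution the authors surely intend: apply dominated convergence to get, for each fixed starting time, that a positive-length increment works, and crucially observe that the \emph{length that works does not shrink to $0$ as the starting time grows}, because $\sup_{s\in[t,t+L]}\|h(s)-h(t)\|^2\le 4\sup_{s\in[t,\infty)}\dots$ — no. The clean argument: fix $\alpha$; by dominated convergence there is $L>0$ with $\E[\sup_{s\in[0,L]}\|h(s)\|^2]\le\alpha/4$? — false in general since even $\E\|h(L)\|^2$ need not be small. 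I therefore expect the \textbf{main obstacle} is exactly this: producing increments whose lengths are bounded below (or at least summing to infinity) while keeping each expected oscillation $\le\alpha$; the paper presumably does it by a stopping-time / greedy argument using right-continuity of $f$ at each $t_n$ together with the observation that if infinitely many increments were forced to be $<\epsilon$ then by monotone convergence $\E[\sup_{s\in[t_n,t_{n+1}]}\|h(s)-h(t_n)\|^2]$ would still be controlled and one extracts a subsequence, ultimately showing $t_n\to\infty$ because otherwise $\sup_{s\in[0,t_\infty]}\|h(s)\|^2$ is a.s.\ finite yet the oscillations over a vanishing-length tail cannot stay $\ge\alpha>0$ infinitely often — a contradiction with path continuity. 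That last remark is the crux: \textbf{if $t_n\uparrow t_\infty<\infty$, then a.s.\ $\sup_{s\in[t_n,t_{n+1}]}\|h(s)-h(t_n)\|\to 0$ by continuity of $h$ on $[0,t_\infty]$, and dominated convergence forces $\E[\sup_{s\in[t_n,t_{n+1}]}\|h(s)-h(t_n)\|^2]\to 0$, so for large $n$ one could have chosen a strictly longer increment still satisfying $\le\alpha$}; running the greedy construction ``always take the longest admissible increment, or at least one of length $\ge\tfrac12(\text{admissible supremum})$'' then rules out $t_\infty<\infty$. I would write the proof in that order: (1) define $f$, record its monotonicity, finiteness, and right-limit $0$ at any starting time via dominated convergence; (2) greedily define $t_{n+1}$ to satisfy $f_{t_n}(t_{n+1})\le\alpha$ with increment at least half the admissible supremum; (3) suppose for contradiction $t_n\uparrow t_\infty<\infty$ and derive a contradiction using a.s.\ continuity of $h$ on $[0,t_\infty]$ plus dominated convergence (domination by $4\sup_{[0,t_\infty]}\|h\|^2\in L^1$). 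The remaining ingredient, Lemma~\ref{lem:subdivision}'s downstream use, is not our concern here.
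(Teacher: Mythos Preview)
Your overall strategy matches the paper's: a greedy definition of $t_{n+1}$ followed by a contradiction argument if $t_n\uparrow t_\infty<\infty$, with dominated convergence as the engine. However, your contradiction step (3) as written does not close.

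You argue that $\E\big[\sup_{s\in[t_n,t_{n+1}]}\|h(s)-h(t_n)\|^2\big]\to 0$ by continuity and DCT, and conclude ``so for large $n$ one could have chosen a strictly longer increment''. But with your ``half the admissible supremum'' rule, $t_{n+1}-t_n=\tfrac12 L_n$ (say), and nothing prevents $f_{t_n}(t_n+\tfrac12 L_n)\to 0$ while $f_{t_n}(t_n+L_n)=\alpha$ for every $n$: the oscillation over the \emph{chosen} interval shrinking to zero says nothing about whether a \emph{longer} interval was admissible. All the information you extract lives in $[t_n,t_{n+1}]\subset[0,t_\infty]$ (your domination is even by $4\sup_{[0,t_\infty]}\|h\|^2$), so it never reaches past $t_\infty$, which is exactly what you need to contradict the greedy choice.

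The paper closes the gap by applying the continuity result \emph{at} $t_\infty$, two-sidedly: it shows that for some $\varepsilon>0$,
\[
\E\Big[\sup_{s\in[t_\infty-\varepsilon,\,t_\infty+\varepsilon]}\|h(s)-h(t_\infty)\|^2\Big]\le\frac{\alpha}{4},
\]
and then, for any $t_n\in[t_\infty-\varepsilon,t_\infty]$, uses the triangle inequality together with Cauchy--Schwarz to deduce $\E\big[\sup_{s\in[t_n,\,t_\infty+\varepsilon]}\|h(s)-h(t_n)\|^2\big]\le\alpha$. This gives $t_{n+1}\ge t_\infty+\varepsilon$, a direct contradiction. (The paper also takes the full supremum rather than half of it; this is legitimate because $L\mapsto f_{t_n}(t_n+L)$ is left-continuous by monotone convergence, so the supremum is attained.) Your plan becomes correct if in step (3) you replace the shrinking interval $[t_n,t_{n+1}]$ by an interval that reaches a fixed positive distance beyond $t_\infty$.
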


The key idea of the proof of Theorem 3.1 of \cite{CDlS-unbounded} is to subdivide $\R_+$ into small time intervals $[t_n,t_{n+1})$ during which the growth of grains can be compared to a subcritical Boolean model. Reducing the model with infinitely many grains to (infinitely many) finite clusters, i.e. each of them involving only a finite number of grains, will allow us to define step by step a lifetime function $\tau_{\dyn}^{\infty}$ (see further).\\

Let us first emphasize the comparison with subcritical Boolean models. Let us consider a Boolean model in $\R^2$ whose centers are given by an homogeneous Poisson point process $\zeta$ with intensity $1$ and i.i.d. radii $\{ R_x : x \in \zeta\}$:
\[
\Sigma := \bigcup_{x \in \zeta} B(x , R_x) ~.
\]
The Boolean model $\Sigma$ is known to be subcritical whenever $\E R^2$ is small enough (where $R$ denotes the common distribution of the $R_x$'s). Let $(t_n)_{n\geq 0}$ be the sequence given by Lemma \ref{lem:subdivision} and let us set
\[
R_{x,n} := \sup_{s \in [t_n,t_{n+1}]} \| g_x(s) - g_x(t_n) \| = \sup_{s \in [t_n,t_{n+1}]} \| h_x(s) - h_x(t_n) \| ~.
\]
Then, for any $n$, the random set
\[
\Sigma_n := \bigcup_{x \in \xi} B(g_x(t_n) , R_{x,n})
\]
is a Boolean model-- notice that $\{g_x(t_n) : x\in \xi\}$ is still a Poisson point process with intensity $1$ --which can be made subcritical by Lemma \ref{lem:subdivision} if $\alpha$ is chosen small enough.

Let us now build step by step a function $\tau_{\dyn}^{\infty} : \xi \to (0,+\infty]$. We proceed by induction with assuming that $\tau_{\dyn}^{\infty}$ has been already built until time $t_n$. I.e. for any grain $x \in \xi$, $\tau_{\dyn}^{\infty}(x) < t_n$ means that the grain $x$ has been stopped before time $t_n$ while $\tau_{\dyn}^{\infty}(x) = t_n$ indicates that $x$ is still alive at time $t_n$. Let us set $A_n := \{x \in \xi : \tau_{\dyn}^{\infty}(x) = t_n \}$. The random set 
\[
\Sigma_n' := \bigcup_{x\in A_n} B(g_x(t_n) , R_{x,n})
\]
is included in the Boolean model $\Sigma_n$ and then is subcritical; all its clusters are a.s. finite. Let $\mathcal{C}$ be one of them. So we can apply the dynamical algorithm introduced in Section \ref{sect:DynamicView} to the finite set of (alive) grains involved in $\mathcal{C}$ and during the time interval $[t_n,t_{n+1}]$. Let's be careful since for this step we also have to take into account the set of grains already realized till time $t_n$ and possibly important for the cluster $\mathcal{C}$, i.e. the set
\[
\{ x \in \xi : \, g_x([0,\tau_{\dyn}^{\infty}(x)]) \cap \mathcal{C} \not= \emptyset \}
\]
which could include grains already stopped at time $t_n$ or still alive but not in $\mathcal{C}$. Thanks to Lemma 4.2 of \cite{CDlS-unbounded} which only uses hypothesis (\ref{SuffTempered}), this set is a.s. finite and then the dynamical algorithm applies without extra difficulties. Henceforth, we treat independently each (finite) cluster of $\Sigma_n'$ which allows us to extend the construction of the function $\tau_{\dyn}^{\infty}$ until time $t_{n+1}$.\\

The last step consists in proving that the function $\tau_{\dyn}^{\infty}$ previously built is a.s. a $\overline \xi$-lifetime function and this is the only one. First consider $x\not= y\in \xi$ and $t_x\geq 0$ such that $x$ $\tau_{\dyn}^{\infty}$-hits $y$ at time $t_x$. Let $t_n$ be such that $t_x\in [t_n,t_{n+1})$ and consider the cluster $\mathcal{C}$ of the random set $\Sigma_n'$ (defined above) containing $x$-- with a slight abuse of notations, we use the same symbol $\mathcal{C}$ for the subset of $\Sigma_n'$ and the set of grains involved in $\mathcal{C}$). Remark that $y$ may not belong to $\mathcal{C}$ but necessarily $g_y([0,t_n \wedge \tau_{\dyn}^{\infty}(y)])$ overlaps $\mathcal{C}$. The dynamical algorithm applies to the (alive) grains of $\mathcal{C}$. A very similar proof to that of the Reconciliation Lemma implies that $t_x = \tau_{\dyn}^{\infty}(x)$, i.e. the Stopping property holds. The same arguments work for the Hitting property.

In order to prove that there is uniqueness of the $\overline \xi$-lifetime function, we proceed as in the proof of the Reconciliation Lemma by absurd. Assume that there exist two $\overline \xi$-lifetime function, say $\tau_1$ and $\tau_2$. Let $[t_n,t_{n+1})$ be the first interval on which a discrepancy between $\tau_1$ and $\tau_2$ occurs (the growths of all the grains until $t_n$ coincide for $\tau_1$ and $\tau_2$). Hence, consider a cluster $\mathcal{C}$ of $\Sigma_n'$ containing such discrepancy. Since $\mathcal{C}$ is a.s. finite, we can select a grain $x$ minimizing $\tau_1(x)\wedge \tau_2(x)$ among the grains of
\[
\{ x' \in \mathcal{C} : \, \tau_1(x') \not= \tau_2(x') \; \mbox{ and } \; t_n \leq \tau_1(x')\wedge \tau_2(x') < t_{n+1} \} ~.
\]
By symmetry, let us assume that $\tau_1(x) < \tau_2(x)$ which implies that $\tau_1(x) < \infty$. By the Hitting property (for $\tau_1$), there exists a grain $y\not= x$ such that $x$ $\tau_1$-hits $y$ at time $\tau_1(x)$. As in the proof of the Reconciliation Lemma, we can prove that $\tau_2(y) < t_y \leq \tau_1(y)$ and then deduce that $y$ satisfies $y \in \mathcal{C}$, $\tau_1(y) \not= \tau_2(y)$ and
\[
t_n \leq \tau_1(y) \wedge \tau_2(y) < \tau_1(x) \wedge \tau_2(x) < t_{n+1} ~. 
\] 
contradicting the minimality of $x$. This achieves the proof of Proposition \ref{p:existence}.

\begin{proof}[Proof of Lemma \ref{lem:subdivision}]
Let $t,t' \geq 0$. Let us first prove that
\begin{equation}
\label{LimitMom2}
\lim_{t' \to t} \E \left[ \sup_{s \in [t^-,t^+]} \| h(t) - h(s) \|^2 \right] = 0
\end{equation}
where $t^- := t \wedge t'$ and $t^+ := t \vee t'$. By continuity of the random variable $h$, $\sup_{s \in [t^-,t^+]} \| h(t) - h(s) \|^2$ a.s. tends to $0$ as $t' \to t$. It then remains to check the domination hypothesis to apply the Lebesgue's dominated convergence theorem and conclude that (\ref{LimitMom2}) holds. Restricting our attention to $|t'-t| \leq 1$,
\[
f(t) := \sup_{s \in [t-1,t+1]} \| h(t) - h(s) \|^2
\]
will play the role of the dominating function. It is not difficult to see that $f$ is integrable writting
\[
f(t) \leq \| h(t) \|^2 + \sup_{s \in [t-1,t+1]} \| h(s) \|^2 + 2 \| h(t) \| \sup_{s \in [t-1,t+1]} \| h(s) \| ~,
\]
and using the Cauchy–Schwarz inequality and the moment condition (\ref{SuffTempered}).

Now let us define by induction a non-decreasing sequence $(t_n)_{n\geq 0}$ as follows: $t_0 = 0$ and for any integer $n$, given $t_n$, we set
\[
t_{n+1} := \sup \left\{ t \geq t_n : \, \E \left[ \sup_{s \in [t_n,t]} \| h(s) - h(t_n) \|^2 \right] \leq \alpha \right\} \in [t_n , \infty]
\]
with convention that $t_n = \infty$ implies $t_{n+1} = \infty$. Let us assume for the moment that
\begin{equation}
\label{LimitFinite-tn}
\sup_{n\geq 0} t_n = \infty ~. 
\end{equation}
Either all the $t_n$'s are finite and by the monotone convergence theorem, we have
\[
\E \left[ \sup_{s \in [t_n,t_{n+1}]} \| h(s) - h(t_n) \|^2 \right] \leq \alpha
\]
leading to Lemma \ref{lem:subdivision}. Or the sequence $(t_n)_n$ is infinite from some index $k \geq 1$ and, in this case, the sequence $(s_n)_{n\geq 0}$ defined by $s_n = t_n$ for any $n \leq k-1$ and $s_n = s_{n-1}+1$ for any $n \geq k$, will work.

It then remains to prove (\ref{LimitFinite-tn}). Let us proceed by absurd with assuming that $t^\ast :=  \sup_n t_n$ is finite. Each time $t_n$ is then finite and smaller than $t^\ast$. Thus (\ref{LimitMom2}) gives the existence of $\varepsilon > 0$ such that
\begin{equation}
\label{Limit-t^ast}
\E \left[ \sup_{s \in [t^\ast-\varepsilon,t^\ast+\varepsilon]} \| h(s) - h(t^\ast) \|^2 \right] \leq \frac{\alpha}{4} ~.
\end{equation}
Let us pick $t_n \in [t^\ast-\varepsilon,t^\ast]$ and set $t := t^\ast+\varepsilon$. Proceeding as before with the Cauchy–Schwarz inequality and (\ref{Limit-t^ast}), we get
\[
\E \left[ \sup_{s \in [t_n,t]} \| h(s) - h(t_n) \|^2 \right] \leq \alpha
\]
proving that, by definition of $t_{n+1}$, $t^\ast \geq t_{n+1} \geq t = t^\ast + \varepsilon$. This is absurd and proves (\ref{LimitFinite-tn}).
\end{proof}

Let us point out that w.r.t. Theorem 3.1 of \cite{CDlS-unbounded}, two hypotheses have been deleted : 1. the Lebesgue measure of $(h-h')(\R_+)$ equals $0$ a.s. and 2. the Lebesgue measure of $h(\R_+)$ equals $0$ a.s., where $h,h'$ are two independent r.v.'s with distribution $\mu$. The first one was required to prevent collisions of grains at the same time and at the same location. Actually such collisions are allowed by the dynamical algorithm and thence they are absolutely not an obstacle to the existence of the model. The second hypothesis concerned the multi-branching case (i.e. various grain emanating from the same germ) which is not considered here.

\subsection{Stability of tempered configurationsby resampling}

\label{s:des_trucs_de_mesurabilite}

In this section, we provide a technical justification of an item in the definition of "good point" given in Section \ref{SectionGP}. Precisely we have the following lemma which claims that after resampling the point process in a ball  then almost surely the configuration remains tempered.     

\begin{lemma} \label{l:good-definition-fondee}
\begin{enumerate}
\item With probability one, $\overline\xi^0 \in \overline\cA$.
\item With probability one, $\overline\xi^0$ belongs to $\{\overline S \in \overline\cS^0 : \P[\text{resample}(\overline S, \overline \xi') \in \overline\cA]=1\}$.
\item With probability one, for all $x \in \xi$, $\overline\xi-x$ belongs to the set 
$\{\overline S \in \overline\cS^0 : \P[\text{resample}(\overline S, \overline \xi') \in \overline\cA]=1\}$.
\end{enumerate}
\end{lemma}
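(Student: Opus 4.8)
\textbf{Proof strategy for Lemma \ref{l:good-definition-fondee}.}

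The plan is to reduce all three items to a single measure-theoretic fact: the set of configurations that remain tempered after an arbitrary resampling in a fixed ball is a full-probability event for the Poisson process, and this is stable under adding a finite number of points. For Item 1, I would argue as follows. The law of $\overline\xi^0$ is absolutely continuous with respect to the law of $\overline\xi$ in the following sense: by the Slivnyak--Mecke formula \eqref{Slivnyak}, $\overline\xi^0 = \{(0,h)\}\cup\overline\xi$ where $h\sim\mu$ is independent of $\overline\xi$. Since $\mu$ is tempered, there is a tempered set $\overline\cA$ with $\P[\overline\xi\in\overline\cA]=1$; I must show $\P[\{(0,h)\}\cup\overline\xi\in\overline\cA]=1$ as well. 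The key point is that $\overline\cA$ can be chosen to be invariant under addition of a single marked point: indeed, if $\overline\cA$ is any tempered set, then $\overline\cA' := \{\overline S \in \overline\cS : \text{$\overline S$ is tempered and } \overline S \cup\{(x,h)\}\text{ is tempered for every }(x,h)\}$ is still translation-invariant and measurable, still consists of tempered configurations, and one checks (using Proposition \ref{p:existence} and the proof strategy behind it --- subdividing time and comparing to a subcritical Boolean model, where adding one grain does not destroy the finiteness of clusters) that $\P[\overline\xi\in\overline\cA']=1$. Then by the Mecke formula, $\P[\overline\xi^0\in\overline\cA']=1$, which gives Item 1 (after replacing $\overline\cA$ by $\overline\cA'$ in the definition, which is harmless).

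For Item 2, the observation is that $\text{resample}(\overline S,\overline\xi') = \overline S_{|B(u,\alpha)^c}\cup\overline\xi'_{|B(u,\alpha)}$, and since $\overline\xi'_{|B(u,\alpha)}$ is, conditionally, a Poisson process on $B(u,\alpha)\times\cC$ with finitely many points almost surely, resampling amounts to deleting the (finitely many) points of $\overline S$ inside $B(u,\alpha)$ and adding finitely many new points there. So I would enlarge $\overline\cA$ once more to a set $\overline\cA''$ that is invariant under deletion of the points in a ball and addition of finitely many points in that ball; again using the subcritical-Boolean-model argument underlying Proposition \ref{p:existence}, such modifications inside a bounded region do not affect temperedness, so $\P[\overline\xi\in\overline\cA'']=1$ and hence $\P[\text{resample}(\overline S,\overline\xi')\in\overline\cA'']=1$ for $\overline\xi^0$-almost every $\overline S$ --- this last step uses Fubini together with $\P[\{(0,h)\}\cup\text{resample}(\overline\xi,\overline\xi')\in\overline\cA'']=1$, which follows from $\overline\cA''$ being stable under both finite addition (to cover the extra point at $0$, noting $0\notin B(u,\alpha)$) and resampling. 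Item 3 is then a consequence of Item 2 by stationarity and the mass transport / Mecke formula: the event "$\overline\xi-x\notin\{\overline S : \P[\text{resample}(\overline S,\overline\xi')\in\overline\cA]=1\}$ for some $x\in\xi$" can be bounded, via Slivnyak--Mecke, by $\int_{[0,1]^2}\P[\overline\xi^y\notin\dots]\,\d y$ up to the usual periodization, and Item 2 says each such probability is zero; so the event has probability zero.

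The main obstacle is making precise the claim that temperedness --- existence and uniqueness of the lifetime function --- is preserved under finite local modifications of the configuration. This is not literally stated in the excerpt, but it is exactly the content of the construction in the proof of Proposition \ref{p:existence}: one subdivides $\R_+$ into intervals on which the grains are dominated by a subcritical Boolean model, so that the dynamics decouples into countably many finite clusters, each handled by the Reconciliation Lemma (Lemma \ref{l_rec}). Adding or removing finitely many grains in a bounded ball only perturbs finitely many of these clusters at each time step and keeps every cluster finite (a subcritical Boolean model union finitely many extra balls is still a.s. locally finite in the sense of Lemma 4.2 of \cite{CDlS-unbounded}), so the step-by-step construction of $\tau_{\dyn}^\infty$ still goes through and uniqueness is preserved by the same minimality argument. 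The careful point is to define the enlarged sets $\overline\cA', \overline\cA''$ measurably and check translation invariance and measurability of the map $\phi$ on them; this is routine but tedious, and is the reason the lemma is relegated to the appendix. Once one grants that local finite modifications preserve temperedness on a full-measure set, all three items follow from the Mecke formula and Fubini as above.
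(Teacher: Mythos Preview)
Your approach is workable in spirit but takes a substantially harder route than the paper, and it carries technical debts you have not discharged. The paper never modifies $\overline\cA$. For Item 1 it simply uses the translation invariance of $\overline\cA$ (one of the four axioms of a tempered set) together with the Palm/Mecke identity: writing $C=[0,1]^d$,
\[
\P[\overline\xi^0\in\overline\cA]=\E\Big[\sum_{x\in\xi\cap C}\1_{\overline\xi-x\in\overline\cA}\Big]=\E\Big[\sum_{x\in\xi\cap C}\1_{\overline\xi\in\overline\cA}\Big]=\E[\#(\xi\cap C)]=1,
\]
the second equality being precisely translation invariance of $\overline\cA$. For Item 2 the paper observes that, because $0\notin B(u,\alpha)$, the process $\text{resample}(\overline\xi^0,\overline\xi')$ has the \emph{same distribution} as $\overline\xi^0$; Item 1 then gives $\text{resample}(\overline\xi^0,\overline\xi')\in\overline\cA$ a.s., and Fubini yields Item 2. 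Item 3 is the same Palm computation run backwards. No enlargement of $\overline\cA$ and no appeal to the inner workings of Proposition \ref{p:existence} is needed.

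By contrast, your plan hinges on showing that a.s.\ $\overline\xi$ remains tempered after any finite local modification --- a statement that is plausible and probably provable along the lines you sketch, but not established in the paper. Moreover, your enlarged set $\overline\cA'=\{\overline S:\overline S\text{ and }\overline S\cup\{(x,h)\}\text{ are tempered for all }(x,h)\}$ is (i) an uncountable intersection over $\R^2\times\cC$, so its measurability is not automatic; (ii) not itself closed under adding a point, since $\overline S\cup\{(x_0,h_0)\}\in\overline\cA'$ would require temperedness after adding \emph{two} points, which your definition does not guarantee; and (iii) you still owe measurability of the lifetime map $\phi$ on whatever set you end up with. These obstacles are likely surmountable, but the paper's two-line distributional argument sidesteps all of them: the whole point is that translation invariance of $\overline\cA$ and equality in law of $\overline\xi^0$ and its resample already do the job.
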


\begin{proof} Let us prove the first item.
Fix $C = [0,1]^d$. Then
\[
\P[\overline\xi^0 \in \overline\cA] 
  = \E\left[\sum_{x \in \xi \cap C} \1_{\overline\xi-x \in \overline\cA}\right] 
  = \E\left[\sum_{x \in \xi \cap C} \1_{\overline\xi \in \overline\cA}\right] 
  = \E\left[\sum_{x \in \xi \cap C} \1\right] 
  = 1.
\]
The second equality is due to the invariance of $\overline\cA$ under translations. 
The third equality holds because $\overline\xi$ belongs almost surely to $\overline\cA$.

Let us consider the second item.
The point processes $\overline\xi^0$ and $\text{resample}(\overline\xi^0,\overline\xi')$ have the same distribution.
By the first item, we then know that $\text{resample}(\overline\xi^0,\overline\xi')$ belongs almost surely to $\overline\cA$.
The second item follows by Fubini theorem.

Set $\overline\cR^0 = \{\overline S \in \overline\cS^0 : \P[\text{resample}(\overline S, \overline \xi') \in \overline\cA]=1\}$.
We have just shown $\P[\overline\xi^0 \in \overline\cR^0]=1$.
As a consequence, 
\[
\E\left[ \sum_{x \in \xi \cap C} \1_{\overline\xi-x \in \overline\cR^0}\right] = \P[\overline\xi^0 \in \overline\cR^0]=1.
\]
Therefore with probability one, for any $x \in \xi \cap C$, we have $\overline\xi-x \in \overline\cR^0$.
The third item follow by stationarity.
\end{proof}

\bibliographystyle{plain}
\bibliography{biblio.bib}

\begin{thebibliography}{10}

\bibitem{BB}
F.~Baccelli and C.~Bordenave.
\newblock The {R}adial {S}panning {T}ree of a {P}oisson point process.
\newblock {\em Annals of Applied Probability}, 17(1):305--359, 2007.

\bibitem{BM}
N.~Bonichon and J.-F. Marckert.
\newblock Asymptotics of geometrical navigation on a random set of points in
  the plane.
\newblock {\em Adv. in Appl. Probab.}, 43(4):899--942, 2011.

\bibitem{CDLT}
D.~Coupier, D.~Dereudre, and S.~Le~Stum.
\newblock Absence of percolation for {P}oisson outdegree-one graphs.
\newblock {\em Ann. Inst. Henri Poincar\'{e} Probab. Stat.}, 56(2):1179--1202,
  2020.

\bibitem{CDlS-unbounded}
D.~Coupier, D.~Dereudre, and S.~Le~Stum.
\newblock Existence and percolation results for stopped germ-grain models with
  unbounded velocities.
\newblock {\em Stochastic Processes Appl.}, 142:549--579, 2021.

\bibitem{DEL}
D.~J. Daley, S.~Ebert, and G.~Last.
\newblock Two lilypond systems of finite line-segments.
\newblock {\em Probab. Math. Stat.}, 36(2):221--246, 2016.

\bibitem{Daley-Last-05}
D.~J. Daley and G.~Last.
\newblock Descending chains, the lilypond model, and mutual-nearest-neighbour
  matching.
\newblock {\em Advances in Applied Probability}, 37(3):604–628, 2005.

\bibitem{DSS}
D.~J. Daley, H.~Stoyan, and D.~Stoyan.
\newblock The volume fraction of a {Poisson} germ model with maximally
  non-overlapping spherical grains.
\newblock {\em Adv. Appl. Probab.}, 31(3):610--624, 1999.

\bibitem{EL}
S.~Ebert and G.~Last.
\newblock On a class of growth-maximal hardcore processes.
\newblock {\em Stoch. Models}, 31(1):153--185, 2015.

\bibitem{G-AOP08}
J.-B. Gou\'{e}r\'{e}.
\newblock Subcritical regimes in the {P}oisson {B}oolean model of continuum
  percolation.
\newblock {\em Ann. Probab.}, 36(4):1209--1220, 2008.

\bibitem{GT-EJP17}
J.-B. Gou\'{e}r\'{e} and M.~Th\'{e}ret.
\newblock Positivity of the time constant in a continuous model of first
  passage percolation.
\newblock {\em Electron. J. Probab.}, 22:Paper No. 49, 21, 2017.

\bibitem{HM}
O.~H{\"a}ggstr{\"o}m and R.~Meester.
\newblock Nearest neighbor and hard sphere models in continuum percolation.
\newblock {\em Random Struct. Algorithms}, 9(3):295--315, 1996.

\bibitem{HL}
M.~Heveling and G.~Last.
\newblock Existence, uniqueness, and algorithmic computation of general
  lilypond systems.
\newblock {\em Random Struct. Algorithms}, 29(3):338--350, 2006.

\bibitem{Hirsch}
C.~Hirsch.
\newblock On the absence of percolation in a line-segment based lilypond model.
\newblock {\em Ann. Inst. Henri Poincar\'{e} Probab. Stat.}, 52(1):127--145,
  2016.

\bibitem{Last-Penrose-livre}
G.~Last and M.~Penrose.
\newblock {\em Lectures on the {P}oisson process}, volume~7 of {\em Institute
  of Mathematical Statistics Textbooks}.
\newblock Cambridge University Press, Cambridge, 2018.

\end{thebibliography}

\end{document}